\def\@settitle{%
  \vspace*{-20pt}
  \begin{flushleft}%
    \baselineskip14\p@\relax
    \normalfont\bfseries\LARGE
    \@title
  \end{flushleft}%
}
\def\@setauthors{
  \begingroup
  \def\thanks{\protect\thanks@warning}%
  \trivlist
  \large \@topsep30\p@\relax
  \advance\@topsep by -\baselineskip
  \item\relax
  \author@andify\authors
  \def\\{\protect\linebreak}%

  \authors
  \ifx\@empty\contribs
  \else
    ,\penalty-3 \space \@setcontribs
    \@closetoccontribs
  \fi
  \normalfont
  \endtrivlist
  \endgroup
}
\def\@setabstracta{%
    \ifvoid\abstractbox
  \else
    \skip@25\p@ \advance\skip@-\lastskip
    \advance\skip@-\baselineskip \vskip\skip@
    \box\abstractbox
    \prevdepth\z@ 
    \vskip-10pt
  \fi
}
\renewenvironment{abstract}{%
  \ifx\maketitle\relax
    \ClassWarning{\@classname}{Abstract should precede
      \protect\maketitle\space in AMS document classes; reported}%
  \fi
  \global\setbox\abstractbox=\vtop \bgroup
    \normalfont\small
    \list{}{\labelwidth\z@
      \leftmargin0pc \rightmargin\leftmargin
      \listparindent\normalparindent \itemindent\z@
      \parsep\z@ \@plus\p@
      
    }%
    \item[\hskip\labelsep\bfseries\abstractname.]%
}{%
  \endlist\egroup
  \ifx\@setabstract\relax \@setabstracta \fi
}
\def\section{\@startsection{section}{1}%
  \z@{-1.2\linespacing\@plus-.5\linespacing}{.8\linespacing}%
  {\normalfont\bfseries\large}}
\def\subsection{\@startsection{subsection}{2}%
  \z@{-.8\linespacing\@plus-.3\linespacing}{.3\linespacing\@plus.2\linespacing}%
  {\normalfont\bfseries}}
\def\subsubsection{\@startsection{subsubsection}{3}%
  \z@{.7\linespacing\@plus.1\linespacing}{-1.5ex}%
  {\normalfont\itshape}}
\def\@secnumfont{\bfseries}
\newcommand{\bigslant}[2]{{\raisebox{.2em}{$#1$}\left/\raisebox{-.2em}{$#2$}\right.}}
\renewcommand{\tilde}{\widetilde}
\newcommand{\C}{\mathbb{C}}
\newcommand{\N}{\mathbb{N}}
\newcommand{\Q}{\mathbb{Q}}
\newcommand{\R}{\mathbb{R}}
\newcommand{\Z}{\mathbb{Z}}
\newcommand{\p}{\mathbb{P}}
\newcommand{\CP}{\mathbb{C}P}
\tikzstyle{startstop} = [rectangle, rounded corners, 
\tikzstyle{io} = [trapezium, 
\tikzstyle{process} = [rectangle, rounded corners, 
\tikzstyle{decision} = [diamond, 
\tikzstyle{arrow} = [thick,->,>=stealth]
\def\mcal{\mathcal}
\def\frak{\mathfrak}
\def\scr{\mathscr}
\numberwithin{equation}{section}
\theoremstyle{plain}
\newtheorem{theorem}{Theorem}[section]
\newtheorem{thmx}{Theorem}
\newtheorem{propx}[thmx]{Proposition}
\newtheorem{proposition}[theorem]{Proposition}
\newtheorem{corollary}[theorem]{Corollary}
\newtheorem{lemma}[theorem]{Lemma}
\theoremstyle{definition}
\newtheorem{definition}[theorem]{Definition}
\newtheorem{assumption}[theorem]{Assumption}
\newtheorem{example}[theorem]{Example}
\newtheorem{remark}[theorem]{Remark}
\newtheorem*{remarkA}{Remark}
\def\to{\mathchoice{\longrightarrow}{\rightarrow}{\rightarrow}{\rightarrow}}
\newcommand{\shortxra}[2][]{\ext@arrow 0359\rightarrowfill@{#1}{#2}}
\def\longrightarrowfill@{\arrowfill@\relbar\relbar\longrightarrow}
\newcommand{\longxra}[2][]{\ext@arrow 0359\longrightarrowfill@{#1}{#2}}
\numberwithin{equation}{section}
\newcommand{\g}{\mathfrak{g}}
\newcommand{\Gup}{\mathsf{G}^{\rm up}}
\newcommand{\Gupq}{\mathsf{G}_{q=1}^{\rm up}}
\newcommand{\rxw}{\underline{w}}
\newcommand{\fr}{\mathrm{fr}}
\newcommand{\uf}{\mathrm{uf}}
\newcommand{\cmA}{\mathsf{A}}
\newcommand{\vep}{\varepsilon}
\newcommand{\seed}{\mathsf{s}}
\newcommand{\bfa}{\mathbf{a}}
\newcommand{\bfb}{\mathbf{b}}
\newcommand{\Lb}{\mathcal{L}}
\newcommand{\Span}{\mathrm{Span}}
\newcommand{\supp}{\mathrm{supp}}
\newcommand{\bfi}{\mathbf{i}}
\newcommand{\bfe}{\mathbf{e}}
\newcommand{\bfu}{\mathbf{u}}
\newenvironment{Red}
{\relax\color{red}}
{\hspace*{.5ex}\relax}
\newenvironment{Yel}
{\relax\color{yellow}}
{\hspace*{.5ex}\relax}
\newcommand{\ber}{\begin{Red}}
\newcommand{\er}{\end{Red}}
\newcommand{\beb}{\begin{Yel}}
\newcommand{\eb}{\end{Yel}}
\newcommand{\berE}{\begin{Red}{}\marginnote{\fbox{\scshape\lowercase{E}}}{}}
\newcommand{\berMH}{\begin{Red}{}\marginnote{\fbox{\scshape\lowercase{MH}}}{}}
\newcommand{\berYS}{\begin{Red}{}\marginnote{\fbox{\scshape\lowercase{YS}}}{}}
\newcommand{\berYH}{\begin{Red}{}\marginnote{\fbox{\scshape\lowercase{YH}}}{}}
\begin{document}

\title{Cluster algebras and monotone Lagrangian tori }

\author{Yunhyung Cho}
\address{Department of Mathematics Education, Sungkyunkwan University, Seoul, Republic of Korea}
\email{yunhyung@skku.edu}
\thanks{The research of Y.\ Cho was supported by the  National Research Foundation of Korea(NRF) grant funded by the Korea government (MSIT) (NRF-2020R1C1C1A01010972 and NRF-2020R1A5A1016126).}

\author{Myungho Kim}
\address{Department of Mathematics, Kyung Hee University, Seoul, Republic of Korea}
\email{mkim@khu.ac.kr}
\thanks{The research of M.  \ Kim was supported by the  National Research Foundation of Korea(NRF) grant funded by the Korea government (MSIT) (NRF-2022R1F1A1076214 and NRF-2020R1A5A1016126).}

\author{Yoosik Kim}
\address{Department of Mathematics and Institute of Mathematical Science, Pusan National University, Busan, Republic of Korea}
\email{yoosik@pusan.ac.kr}
\thanks{The research of Y.  \ Kim was supported by the  National Research Foundation of Korea(NRF) grant funded by the Korea government (MSIT) (NRF-2021R1F1A1057739 and NRF-2020R1A5A1016126).}

\author{Euiyong Park}
\address{Department of Mathematics, University of Seoul, Seoul, Republic of Korea}
\email{epark@uos.ac.kr}
\thanks{The research of E.\ Park was supported by the  National Research Foundation of Korea(NRF) grant funded by the Korea government (MSIT) (RS-2023-00273425 and NRF-2020R1A5A1016126).}


\begin{abstract}
Motivated by the construction of Newton--Okounkov bodies and toric degenerations via cluster algebras in \cite{GrossHackingKeelKontsevich, FO20}, we consider a family of Newton--Okounkov polytopes of a complex smooth Fano variety $X$ related by a composition of tropicalized cluster mutations. According to the work of \cite{HaradaKaveh}, the toric degeneration associated with each Newton--Okounkov polytope $\Delta$ in the family produces a completely integrable system of $X$ over $\Delta$. We investigate circumstances in which each completely integrable system possesses a monotone Lagrangian torus fiber. We provide a sufficient condition, based on the data of tropical integer points and exchange matrices, for the family of constructed monotone Lagrangian tori to contain infinitely many monotone Lagrangian tori, no two of which are related by any symplectomorphism. By employing this criterion and exploiting the correspondence between the tropical integer points and the dual canonical basis elements, we generate infinitely many distinct monotone Lagrangian tori on flag manifolds of arbitrary type except in a few cases.
\end{abstract}
\maketitle
\setcounter{tocdepth}{1} 
\tableofcontents

\section{Introduction}\label{secIntroduction}

\subsection{Background}

Cluster algebras, introduced by Fomin--Zelevinsky \cite{FZ02}, are a class of $\Z$-subalgebras of a rational function field with a special combinatorial structure. This algebra possesses a generating set divided into subsets called \emph{clusters}, and these clusters are connected by special combination procedures called \emph{mutations}. The generating set can be recursively constructed from a \emph{seed} consisting of a cluster together with an \emph{exchange matrix}, a skew-symmetrizable matrix that determines mutations. Cluster algebras have naturally appeared in many research areas including geometry, representation theory, and combinatorics, numerous applications have been studied, and they now have become an active area of mathematical research. 

The algebro-geometric counterpart of a cluster algebra is a \emph{cluster variety}. By gluing split algebraic tori via two different types of mutations, one produces a pair of cluster varieties; an $\mathcal{A}$-cluster variety and an $\mathcal{X}$-cluster variety. The full Fock--Goncharov (FG) conjecture \cite{FockGoncharov06} asserts that the ring of regular functions of one cluster variety admits a basis whose elements are parametrized by tropical integer points of the other cluster variety.  A foundational work of Gross--Hacking--Keel--Kontsevich (GHKK) \cite{GrossHackingKeelKontsevich} reformulates and resolves the conjecture by constructing a \emph{theta basis}, whose elements correspond to a broken line in a scattering diagram associated with the cluster structure. The pair of cluster varieties is conceived as a mirror pair, see \cite{GrossHackingKeelMSCY1, HackingKeel} for a precise formulation. Additionally, it is expected that the mirror of a compactified $\mathcal{A}$-cluster variety is given by the \emph{disk potential function} on the dual cluster $\mathcal{X}$-variety, a generating function for invariants counting holomorphic disks in \cite{ChoOh, AurouxTduality, FOOOToric1}.

This paper focuses on a situation where the FG conjecture holds and explores an application of the theory of cluster algebras to symplectic topology. To put our work in the above context, we formulate the conjecture on mirror symmetry of the dual pair of cluster varieties more concretely. A work of Fujita--Oya \cite{FO20} revealed that a refinement of the Qin's dominance order in \cite{Qin17} on a lattice for an $\mathcal{A}$-cluster variety produces a Newton--Okounkov body $\Delta$. It is a rational polytope and generates GHKK's toric degeneration on a compactification $X$ of the $\mathcal{A}$-cluster variety. The tropical integer points of $\Delta$ parametrize a theta basis respecting the $\mathcal{X}$-cluster structure. Moreover, the polytope  $\Delta$ produces a completely integrable system on $X$ over $\Delta$ by Harada--Kaveh \cite{HaradaKaveh}. In particular, we obtain a Lagrangian torus fibration on (an open dense part of) $X$ associated with each seed. Conjecturally, by gluing the (Floer theoretical) SYZ mirrors together with disk potential functions of those completely integrable systems, one should (partially) recover the dual $\mathcal{X}$-cluster variety and the superpotential given by the sum of theta basis elements corresponding to an irreducible component of the anticanonical divisor for $X$ given by the frozen variables.

In this paper, we are concerned with completely integrable systems on the compactified $\mathcal{A}$-cluster variety $X$, constructed from a cluster algebra as described above. Specifically, our focus lies on \emph{monotone} Lagrangian tori, refer to Definition~\ref{def_monotoneLag} for a precise definition. However, \emph{not} every toric degeneration can yield a monotone Lagrangian torus. To describe situations where a monotone Lagrangian torus is constructed, we recall some notions. A full-dimensional rational polytope $\Delta \subset \R^m$ is called \emph{$\mathbb{Q}$-Gorenstein Fano polytope} if there exists a vector $\mathbf{u}_0 \in \R^m$ and $\nu \in \mathbb{N}$ such that $\nu \cdot (\Delta - {\bf u}_0)^\circ$ is a lattice polytope each of which vertex is a \emph{primitive lattice} vector. Such a point $\mathbf{u}_0$ uniquely exists and is called the \emph{center} of $\Delta$. The following proposition specifies a circumstance in which the toric degeneration produces a monotone Lagrangian torus, see Proposition~\ref{proposition_toricdegenerationsmonotone} for a more precise statement.

\begin{propx}[Proposition~\ref{proposition_toricdegenerationsmonotone}]\label{propA}
Let $X$ be a smooth Fano variety and let $\mathcal{L}$ be a very ample line bundle, given by a positive power of its anticanonical line bundle $K_X^{-1}$. Let us equip $X$ with the K\"{a}hler form inherited from the projective embedding induced by $\mathcal{L}$. Suppose that the Newton--Okounkov polytope $\Delta$ of $\mathcal{L}$ is a $\Q$-Gorenstein Fano polytope and the central fiber of the associated toric degeneration is a normal toric variety. We denote a completely integrable system constructed from this toric degeneration by $\Phi \colon X \to \Delta$.  Then the fiber of $\Phi$ located at the center of $\Delta$ is a monotone Lagrangian torus.
\end{propx}

\begin{remarkA}
Galkin--Mikhalkin in \cite{GalkinMikhalkin} proved an analogous statement, see Remark~\ref{remark_GM22}.
\end{remarkA}

The monotonicity condition ensures that disks bounded by a monotone Lagrangian torus $L$ do \emph{not} pose any serious obstruction to produce mirror spaces via Lagrangian Floer theory of $L$. Moreover, each monotone Lagrangian torus $L$ gives rise to a complex algebraic torus as mirror space, which fits into our interest in cluster varieties.

Besides the aspects of cluster duality and Floer theoretical SYZ mirror symmetry, there are additional motivations for considering monotone Lagrangian tori. One of the versions of mirror symmetry conjecture for Fano varieties asserts that for each $\Q$-factorial Fano variety $X$ having at worst terminal singularities, there exists a Laurent polynomial $W \colon (\C^*)^m \rightarrow \C$ called a {\em weak Landau--Ginzburg mirror} such that its period integral 
            \[
                \pi_W(t) \coloneqq \int_{|x_1| = \dots = |x_m| = 1} \frac{1}{1 - tW} \Omega, \quad \quad \Omega = \frac{dx_1}{x_1}
                \wedge \dots \wedge \frac{dx_m}{x_m}
            \]
         is equal to a {\em regularized quantum period}, i.e., a generating function of gravitational descendant one-point Gromov--Witten invariants of $X$, see \cite{Prz07, KP11} for instance.
         In \cite{Ton18}, Tonkonog proved that $W$ can be obtained as the disk potential function of a monotone Lagrangian torus in $X$. Thus the mirror symmetry conjecture will be proved if one can succeed in finding a monotone Lagrangian torus in a given Fano manifold.
         Proposition~\ref{propA} extends this result to an arbitrary smooth Fano variety admitting a {\em $\Q$-Gorenstein Fano} and \emph{normal toric degeneration}. 

The notion of monotonicity was introduced and used to construct Lagrangian Floer homology by Oh \cite{Ohmonotone} and Biran--Cornea \cite{BiranCornea}. In symplectic topology, it is an interesting problem to construct a new monotone Lagrangian submanifold not related to any pre-existing monotone Lagrangian submanifold through any symplectomorphism. In \cite{Chekanov}, Chekanov first constructed a monotone Lagrangian torus (called a \emph{Chekanov torus}) that is not Hamiltonian isotopic to any standard product torus in $\R^{2m}$. By suitably embedding a Chekanov torus into $\CP^m$, Chekanov and Schlenk in \cite{CS10} constructed a monotone Lagrangian torus distinct from the Clifford torus, the most standard monotone torus in $\CP^m$ for $m \geq 2$. Later, in \cite{Vianna1, Via16}, Vianna constructed an infinite family of new monotone tori in $\CP^2$, no two of which are related by any symplectomorphism. Also, in \cite{Aurouxinfinitely}, Auroux constructed infinitely many distinct monotone tori with the same monotonicity constant in the Euclidean space $\R^{2m}$ for $m \geq 3$. The following list attempts to include developments of construction exotic tori in different directions, see \cite{Via17, Cas, GalkinMikhalkin, CHW, Bre}. Also, there has been an interesting application of the theory of cluster algebras to construct and distinguish infinitely many Lagrangian fillings, see \cite{CG22, CZ22, CC23}. 

One of our motivations is to produce a general framework to produce {infinitely} many Lagrangian tori in a complex smooth projective variety with a monotone K\"{a}hler form obtained from different limits in the deformation space of $X$ and to find novel classes of Fano varieties admitting infinitely many distinct monotone Lagrangian tori.

\subsection{Main results}

The first half of this paper discusses a sufficient condition for a family of monotone Lagrangian tori constructed from a cluster structure to have \emph{infinitely} many monotone Lagrangian tori, no two of which are related by any symplectomorphism. 

Throughout this paper, all varieties are defined over the field $\C$ of complex numbers. Let $X$ be a smooth projective variety of complex dimension $m$. Consider an oriented rooted regular tree $\mathbb{T}$ whose vertices parametrize the seeds of a cluster algebra with outgoing edges indicating the direction of mutation from each vertex. Let $t_0$ be the root corresponding to the initial seed. Suppose that for each vertex $t \in \mathbb{T}$, there is a valuation $v_t \colon \C(X) \backslash \{0\} \to \Z^m$ with one-dimensional leaves associated with $t$. We denote by $S_t$ (resp. $\Delta_t$) the semigroup (resp. the Newton--Okounkov polytope) constructed from $v_t$. 

The family $\{ \Delta_t \mid t \in \mathbb{T} \}$ is said to have a \emph{tropical cluster structure} if, whenever two vertices $t$ and $t^\prime$ are connected by an oriented edge, the Newton--Okounkov polytopes $\Delta_t$ and $\Delta_{t^\prime}$ are related by a tropicalized cluster mutation in the direction corresponding to the oriented edge. Similarly, the family $\{S_t \mid t \in \mathbb{T} \}$ is said to have a \emph{tropical cluster structure} if, for each pair $(t, t^\prime)$ of vertices connected by an oriented edge, the semigroups $S_t$ and $S_{t^\prime}$ are related by a level-wise tropicalized cluster mutation in the direction corresponding to the connecting oriented edge. Refer to Definition~\ref{definition_tropicalclusterstr} for the precise definition. In the case where each polytope is given by the convex hull of the tropical integer points, the Newton--Okounkov polytopes have a tropical cluster structure if the FG conjecture holds. 

Assume in addition that the Newton--Okounkov polytope $\Delta_{t_0}$ at the initial seed is $\Q$-Gorenstein Fano and the central fiber of the associated toric degeneration is normal. The following proposition claims that the other Newton--Okounkov polytopes also have the same properties.

\begin{propx}[Proposition~\ref{proposition_Qgorennormalind}]\label{propB}
Suppose that the family $\{S_t \mid t \in \mathbb{T} \}$ of finitely generated semigroups has a tropical cluster structure parametrized by a rooted regular tree $\mathbb{T}$ with the initial seed $t_0$.
For $t \in \mathbb{T}$, we denote by $\Delta_t$ the Newton--Okounkov polytope of $X$ corresponding to a semigroup $S_t$. If the following conditions at $t_0$ hold$\colon$
\begin{enumerate}
\item the semigroup $S_{t_0}$ is saturated (and hence the central fiber of the toric degeneration associated with $\Delta_{t_0}$ is a normal toric variety), 
\item the Newton--Okounkov polytope $\Delta_{t_0}$ is  $\mathbb{Q}$-Gorenstein Fano, and
\item the center of $\Delta_{t_0}$ is fixed under the tropicalized cluster mutation of each direction,
\end{enumerate}
then each polytope $\Delta_t$ in the family is also  $\mathbb{Q}$-Gorenstein Fano and the central fiber of the toric degeneration associated with $\Delta_t$ is also a normal toric variety.
\end{propx}

Suppose that the K\"{a}hler form on $X$ inherited from the ambient space of the toric degeneration is monotone. By Propositions~\ref{propA} and~\ref{propB}, there are as many monotone Lagrangian tori in $X$ as seeds in the cluster algebra once the initial Newton--Okounkov body is $\Q$-Gorenstein Fano and the central fiber of the associated toric degeneration is normal. If the cluster algebra is of infinite type, we obtain {infinitely} many Lagrangian tori, some of which could be related by a symplectomorphism. 

The next crucial question, therefore, is how to distinguish the constructed monotone tori. The Newton polytope of invariants counting holomorphic disks or the displacement energy of neighboring Lagrangians has been employed to compare the constructed Lagrangians, see \cite{EliashbergPolterovich, Chekanov} for instance. To compute (or estimate) them, a description of the facets of the Newton--Okounkov polytope is quite crucial. In our case, \emph{however}, each Newton--Okounkov polytope is given by the convex hull of a discrete set of tropical integer points so that we do \emph{not} have a preferred description at hand. In fact, finding such an explicit ``polyhedral" description of a family of polytopes is a challenging problem, see \cite{BerensteinZelevinsky, RietschWilliams} for results on certain classes of examples in this problem. This difficulty motivates us to devise a \emph{practical} criterion that relies solely on the data of tropical integer points along with the exchange matrices (\emph{not} on the data of facets).

\begin{thmx}[Theorem~\ref{theorem_maininthebody}]\label{theoremC}
Let $X$ be a smooth Fano variety and let $\mathcal{L}$ be a very ample line bundle, given by a power of its anticanonical line bundle $K_X^{-1}$. Let us equip $X$ with the K\"{a}hler form inherited from the projective embedding induced by $\mathcal{L}$. Assume that $X$ admits the family $\{ \Delta_t \mid t \in \mathbb{T} \}$ of Newton--Okounkov polytopes of $\mcal{L}$ arising from a family of finitely generated semigroups having a tropical cluster structure parametrized by a rooted regular tree $\mathbb{T}$. Assume that all conditions in Proposition~\ref{propB} hold and the Newton--Okounkov polytope at the initial seed contains the origin. Let $L_{t}$ be the monotone Lagrangian torus constructed from $\Delta_{t}$ by Proposition~\ref{propA}. If there exists a sequence $\left( t_\ell \right)_{\ell \in \mathbb{N}}$ of vertices and a sequence $(r_\ell, s_\ell)_{\ell \in \mathbb{N}}$ of indices such that
\begin{enumerate}
\item the sequence $( \varepsilon_{r_\ell, s_\ell} )_{\ell \in \mathbb{N}}$ of the $(r_\ell, s_\ell)$-entry in the exchange matrix $\varepsilon_{t_\ell}$ associated with $t_\ell$ diverges to $- \infty$ as $\ell \to \infty$ and each $r_\ell$ indexes an unfrozen variable, 
\item both the Newton--Okounkov polytope $\Delta_{t_\ell}$ and the image of $\Delta_{t_\ell}$ under the tropicalized cluster mutation in the $r_\ell$-direction are contained in the half-space
$$
\{ \mathbf{u} \in \R^m \mid u_{s_\ell} \geq 0 \}.
$$
\end{enumerate}
then the family $\left\{ L_{t_\ell} \mid \ell \in \mathbb{N} \right\}$ contains infinitely many monotone Lagrangian tori, no two of which are related by any symplectomorphism.
\end{thmx}

To derive this criterion, we explore relations between various polytopes obtained from the Newton--Okounkov polytopes $\Delta_{t_\ell}$. Suppose that the Newton--Okounkov polytope $\Delta_{t_0}$ is $\Q$-Gorenstein Fano. Assume that the center $\mathbf{u}_0$ is preserved under the tropicalized mutation in each direction as in Proposition~\ref{propB}. It implies that every polytope $\Delta_{t_\ell}$ is a $\Q$-Gorenstein Fano polytope with the same center $\mathbf{u}_0$. Under this circumstance, we study a relation between the $(r_\ell, s_\ell)$-entry $\varepsilon_{r_\ell, s_\ell}$ in the exchange matrix $\varepsilon_{t_\ell}$ and the number of lattice points of the polar dual of $\Delta_{t_\ell} - \mathbf{u}_0$ in the lattice $\frac{1}{q} \Z^m$. Here the integer $q$ is completely determined by the center $\mathbf{u}_0$.

With the relation at hand, to extract a geometric consequence from the polar dual, we introduce a \emph{refined disk potential} of $L_{t_\ell}$. As \emph{not} every counting invariant bounded by $L_{t_\ell}$ is known, we only have partial information on counting invariants in general. The refined disk potential is designed to avoid undesired cancellations, and the Newton polytope of the refined disk potential is also invariant under symplectomorphism modulo unimodular equivalence. As a consequence, (a multiple of) the polar dual $(\Delta_{t_\ell} - \mathbf{u}_0)^\circ$ is contained in the Newton polytope of its refined disk potential of $L_{t_\ell}$, see Proposition \ref{proposition_deltaveedleta}. Therefore, to show that there are infinitely many \emph{distinct} monotone Lagrangian tori, it suffices to show that there are polar duals with an arbitrarily large number of lattice points. The problem turns into searching an exchange matrix with an arbitrarily large entry in the same mutation equivalence class because of the derived relation between the lattice points and the entry of the exchange matrix.

In the second part of the paper, we apply the above criterion to the flag manifolds of \emph{arbitrary} type to show that they have infinitely many distinct monotone Lagrangian tori.

\begin{thmx}[Theorem~\ref{theorem_2ndmain}]\label{theorem_main2}
Let $G$ be a simply connected and semisimple complex algebraic group and $B$ a Borel subgroup. Let $2 \rho$ be the anticanonical regular dominant weight. Then every flag manifold $X = G/B$ equipped with the K\"{a}hler form $\omega_{2 \rho}$ not of type $A_1, A_2, A_3, A_4$, and $B_2 = C_2$, that is, 
\begin{equation}\label{equ_nonfinitecase}
G \neq \mathrm{SL}_2(\C), \mathrm{SL}_3(\C), \mathrm{SL}_4(\C), \mathrm{SL}_5(\C), \mathrm{Spin}_5(\C) = \mathrm{Sp}_4(\C),
\end{equation}
has infinitely many monotone Lagrangian tori, no two of which are related by any symplectomorphism.
\end{thmx}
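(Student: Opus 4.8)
The plan is to verify the hypotheses of Theorem~\ref{theoremC} for $X = G/B$ with the Kähler form $\omega_{2\rho}$ by exhibiting an explicit family of Newton--Okounkov polytopes with a tropical cluster structure, and then to locate, inside the relevant mutation equivalence class, exchange matrices with arbitrarily large negative entries supported in the right place. First I would recall the cluster structure on (an open cell of) $G/B$, i.e. the cluster algebra structure on $\C[N^-]$ (or the double Bruhat cell $G^{e,w_0}$) coming from Berenstein--Fomin--Zelevinsky, together with the correspondence between the dual canonical basis and the tropical integer points of the associated $\mathcal{X}$-variety; this is what produces, for each seed $t$, a valuation $v_t$ on $\C(X)$ with one-dimensional leaves, hence a semigroup $S_t$ and a Newton--Okounkov polytope $\Delta_t$, and these assemble into a family $\{\Delta_t \mid t \in \mathbb{T}\}$ with a tropical cluster structure (this is essentially the content of the Fujita--Oya construction \cite{FO20} specialized to the flag variety case). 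The polytope $\Delta_{t_0}$ at a distinguished initial seed can be taken to be a string polytope (or a Nakashima--Zelevinsky polytope) for $\lambda = 2\rho$; it is known to be $\Q$-Gorenstein Fano with normal associated toric degeneration, to contain the origin after an appropriate translation, and to have its center fixed under tropicalized mutations — this is exactly what is needed to invoke Proposition~\ref{propB}, so that all $\Delta_t$ are $\Q$-Gorenstein Fano with normal toric degenerations and hence (by Proposition~\ref{propA}) each carries a monotone Lagrangian torus fiber $L_t$.

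With the setup in place, the remaining and genuinely combinatorial task is to produce the sequences $(t_\ell)$ and $(n_\ell)$ required by Theorem~\ref{theoremC}: a sequence of seeds in the mutation class of $X = G/B$ whose exchange matrices contain an entry $\varepsilon_{r_\ell, s_\ell}$, with $r_\ell$ an unfrozen index, such that $n_\ell = \varepsilon_{r_\ell, s_\ell} \to -\infty$, subject to the positivity/half-space condition (2) on $\Delta_{t_\ell}$ and its mutation in the $r_\ell$-direction. The key observation is that the exchange quivers of cluster algebras of \emph{infinite (non-finite) type} — which is precisely the list complementary to \eqref{equ_nonfinitecase}, since $\C[N^-]$ for $G/B$ is of finite cluster type exactly in types $A_1, A_2, A_3, A_4, B_2$ — admit, under iterated mutation, subquivers whose arrow multiplicities grow without bound. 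Concretely, one can localize the analysis to a rank-$2$ or rank-$3$ subquiver of Markov type (the $\widetilde{A}_1$ or Markov quiver sitting inside the exchange quiver, or a $\widetilde{A}_2$-type oriented $3$-cycle) and mutate repeatedly at its vertices to force an entry of the exchange matrix to diverge to $-\infty$; the existence of such a "wild" subquiver in every non-finite-type flag exchange quiver is the structural input. I would isolate one such vertex $r_\ell$ (unfrozen, by construction) and a partner column $s_\ell$, and then check the half-space condition (2) using the fact that all the $\Delta_{t_\ell}$ share the center $\mathbf{u}_0$ and the tropicalized mutation in direction $r_\ell$ only modifies coordinates in a controlled way; the positivity along the $s_\ell$-coordinate should follow from the nonnegativity of the relevant crystal/tropical coordinates (string coordinates are nonnegative by definition), possibly after a harmless change of the ambient lattice coordinates.

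The main obstacle I anticipate is twofold. The first and more serious difficulty is verifying condition (2) of Theorem~\ref{theoremC} simultaneously for $\Delta_{t_\ell}$ and its $r_\ell$-mutation: one must ensure the chosen column index $s_\ell$ is such that \emph{both} polytopes lie in $\{u_{s_\ell} \ge 0\}$, and since mutation can a priori introduce negative $s_\ell$-coordinates, this requires a careful choice of $s_\ell$ (for instance, a frozen direction, or a direction "far" from $r_\ell$ in the quiver) and a uniform argument valid along the whole sequence — this is where the interplay between the combinatorics of the scattering/mutation and the geometry of the string cone must be controlled. The second difficulty is the case analysis needed to confirm that every remaining type (all simple $G$ outside the list \eqref{equ_nonfinitecase}) genuinely has a non-finite-type cluster structure on $\C[N^-]$ admitting the required divergent-entry sequence; this should reduce to checking that the full rank-$n$ exchange quiver (which for $G/B$ has $n = \dim N^-$ unfrozen-or-frozen vertices, with the number of unfrozen vertices equal to $\ell(w_0) - \mathrm{rank}(G)$) is of infinite mutation type and contains a suitable subquiver, which one can verify by exhibiting an explicit induced subquiver on three vertices that is mutation-infinite (e.g. the Markov quiver) — a finite check per Dynkin type, completed by a rank-induction (passing to a parabolic/Levi subquiver) for the infinite families. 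Once these are settled, Theorem~\ref{theoremC} applies verbatim and yields the infinitely many pairwise non-symplectomorphic monotone Lagrangian tori claimed.
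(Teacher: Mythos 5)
Your setup (Fujita--Oya cluster polytopes for $\C[U^-_{w_0}]$, the string polytope at the initial seed being $\Q$-Gorenstein Fano with normal degeneration, Propositions~\ref{propA} and~\ref{propB}, and the reduction to smaller Dynkin types by subdiagram embedding) matches the paper. The genuine gap is in how you propose to produce the divergent exchange-matrix entries, i.e.\ condition (1) of Theorem~\ref{theoremC}, and in its compatibility with condition (2). Your mechanism rests on the principle that infinite cluster type forces arrow multiplicities to grow without bound under mutation, and you propose to realize this by mutating a Kronecker ($\widetilde{A}_1$), Markov, or oriented $3$-cycle subquiver. This is false: the Kronecker and Markov quivers are mutation-\emph{finite} (their mutation classes consist of a single quiver up to isomorphism, with entries staying at $2$), and the oriented $3$-cycle is in the mutation class of $A_3$; more generally, infinite cluster type (infinitely many cluster variables) does not imply mutation-infiniteness of the exchange matrices, as all affine and surface-type quivers show. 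So no such ``wild subquiver on unfrozen vertices'' argument produces the sequence $\varepsilon_{r_\ell,s_\ell}\to-\infty$.

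Moreover, even if you forced divergence between two \emph{unfrozen} indices, you could not verify condition (2): after arbitrary mutations the extended $g$-vectors of dual canonical basis elements can have negative unfrozen components, so the polytopes $\Delta_{t_\ell}$ need not lie in $\{u_{s}\ge 0\}$ for unfrozen $s$. The paper resolves both issues simultaneously by taking the column index $s$ \emph{frozen}: the nonnegativity of the frozen components of all extended $g$-vectors (Proposition~\ref{lem:frozen supp}) follows from the Laurent phenomenon in $\C[U^-]$, where the frozen variables are not inverted, and this gives conditions (2) and (3) for every seed and every mutation direction at once. The divergence in a frozen column is then obtained from the Felikson--Tumarkin classification of mutation-finite matrices with coefficients: one exhibits a full subquiver containing exactly one frozen vertex whose unfrozen part is mutation-finite of affine type ($\widetilde{A}_{p,q}$, $\widetilde{C}_3$, $\widetilde{G}_2$) but which becomes mutation-infinite after adjoining the frozen vertex, whence a pigeonhole argument (Lemma~\ref{lem:finiteinfinite}) forces the unfrozen-to-frozen entries to be unbounded; this is checked explicitly for $A_5$, $D_4$, $B_3$, $C_3$, $G_2$ and propagated to all remaining types via the subdiagram reduction you correctly anticipated. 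Without this (or an equivalent) argument, your proposal does not establish condition (1) in a way compatible with condition (2), so the proof is incomplete at its crucial combinatorial step.
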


To construct \emph{infinitely} many monotone Lagrangian tori in flag manifolds, we exploit Newton--Okounkov bodies of Schubert varieties constructed by the theory of cluster algebras by Fujita--Oya \cite{FO20}, which will be briefly recalled.

Let $G$ be a simply connected and semisimple complex algebraic group. The coordinate ring of a unipotent cell $U^-_w$ is isomorphic to an upper cluster algebra generated by unipotent minors, constructed by \cite{BFZ05, Williams13, GLS11}. In \cite{FO20}, Fujita--Oya showed that each seed gives rise to a valuation on the function field $\C(U^-_w)$ and produces a Newton--Okounkov polytope of $X_w$ in $G/B$. We call this polytope a \emph{cluster polytope}. Moreover, they proved that the lattice points of each cluster polytope parametrize the elements of a certain basis on $\C[U^-_w]$. This basis arises from the dual canonical basis of Lusztig in \cite{Lus90} or the upper global basis of Kashiwara in \cite{Kas90} of the quantum group of $G$. By specializing the basis at $q=1$, it gives rise to a basis on the coordinate ring of the unipotent radical and induces a basis on $\C[U^-_w]$. The dual canonical basis has remarkable properties, satisfying the axioms of a triangular basis in Qin \cite{Qin17, Qin20}. For instance, for each choice of seed, we obtain a parametrization of the induced basis given by the extended $g$-vectors. In particular, two parametrizations are related by a finite sequence of tropicalized cluster mutations. It in turn implies that each pair of cluster polytopes is related by a finite sequence of tropicalized cluster mutations. Therefore, the family of cluster polytopes has a tropical cluster structure.

This construction of cluster polytopes is crucial for Theorem~\ref{theorem_main2} as the cluster polytopes enable us to construct \emph{infinitely} many toric degenerations if the cluster algebra is of infinite type, which happens in~\eqref{equ_nonfinitecase}. Recall that the previously known toric degenerations of $G/B$ arise from the Gelfand--Zeitlin polytope or string polytopes, see \cite{GonciuleaLakshmibai, KoganMiller, Caldero} for instance. For a fixed group $G$ and a regular integral dominant weight $\lambda$, there are only \emph{finitely} many string polytopes and hence there are only finitely many toric degenerations of $G/B$ arising from a string polytope. Cluster polytopes are a generalization of string polytopes. The polyhedral description of a ``standard" string polytope $\Delta$ was known by Littelmann \cite{Littelmann}. Using this description, we prove that the polytope $\Delta$ is $\Q$-Gorenstein Fano. Moreover, every cluster polytope is $\Q$-Gorenstein Fano by Proposition~\ref{propB} and hence we have infinitely many monotone Lagrangian tori.

To apply Theorem~\ref{theoremC}, we need to check the two conditions. 
One of the key ingredients for the first condition is a classification of mutation finite skew-symmetrizable matrices with frozen indices, established in \cite{FT21}. Assume that a skew-symmetrizable matrix with one frozen index $s$ is mutation infinite but the submatrix of the unfrozen part is mutation-finite. Then its mutation equivalence class contains a sequence of matrices $(\varepsilon_{t_\ell})_{\ell\in \mathbb N}$ such that $\varepsilon_{r_\ell,s_\ell} \to -\infty$ as $\ell \to \infty$ for a sequence of unfrozen indices $(r_\ell)_{\ell \in \mathbb N}$. Since the mutation of matrices is compatible with the restriction, the problem is reduced to finding an exchange matrix that contains a submatrix with the above property. Using the compatibility of the Dynkin diagram embedding with the seed arising from a reduced expression of the longest element in the Weyl group, we reduce the problem to the five cases $\g= A_5, B_3, C_3, D_4$ and $G_2$. Finally, we provide a case-by-case analysis for those cases. 

To check the second condition, we make use of the correspondence between the lattice points of Newton--Okounkov bodies of a flag manifold $G/B$ and the dual canonical basis elements. In the case of $G/B$, the frozen components of the extended $g$-vectors of dual canonical basis elements are always \emph{non-negative}. A cluster polytope and its mutations in any direction are simultaneously supported by the half-space associated with each frozen variable. This observation determines a lower bound for the lattice points the polar dual of the cluster polytope.

\subsection*{Acknowledgement} 
The authors would like to thank the referee for detailed and invaluable feedback which greatly improved this manuscript. They also thank Cheol-Hyun Cho and Jae-Hoon Kwon for their helpful discussions, comments, and encouragement. Additionally, the authors are grateful to Roger Casals, Eunjeong Lee, and Felix Schlenk for their helpful feedback and comments. The third named author would like to thank Hyun Kyu Kim and Grigory Mikhalkin for helpful discussions.

\section{Newton--Okounkov bodies, toric degenerations, and Lagrangian tori}\label{sec_NObodiesandtoricdeg}

In this section, we briefly review a way of constructing a completely integrable system on a given smooth projective variety via the theory of Newton--Okounkov bodies, following \cite{HaradaKaveh, Anderson}. The outline of the construction is depicted below.

\begin{center}
\vspace{0.2cm}
\begin{tikzpicture}[node distance=3cm]
\node (start) [startstop] {Choice of valuation};
\node (in1) [startstop, below of=start, xshift=1.5cm, yshift=1cm] {Newton--Okounkov body};
\node (in2) [startstop, right of=start, xshift=2.0cm] {Toric degeneration};
\node (in3) [startstop, right of=in1, xshift=3.0cm, yshift=0cm] {Completely integrable system};
\node (in4) [startstop, right of=in2, xshift=2cm, yshift=0cm] {Lagrangian torus};
\draw [arrow] (start) -- (in1);
\draw [arrow] (in1) -- (in2);
\draw [arrow] (in2) -- (in3);
\draw [arrow] (in3) -- (in4);
\end{tikzpicture}
\vspace{0.2cm}
\end{center}

\subsection{Lagrangian tori from toric degenerations}

Recall that a {\em toric degeneration} of a smooth projective variety $X$ is a flat family $\frak{X} \subset \CP^N \times \C$ of projective varieties in $\CP^N$ with the 
following commutative diagram 
\begin{equation}\label{equ_toricdeg}
		    \xymatrix{
			      \frak{X}  ~\ar[drr]_{\pi} \ar@{^{(}->}[rr]^{i}
			      & & \CP^N \times \C \ar[d]^{\text{pr}} \\
			      & & \C
			    }
\end{equation}
such that 
\begin{enumerate}
\item the family is trivial over $\C^*$, $\pi^{-1}(\C^*) \cong X \times \C^*$, and 
\item $\pi^{-1}(0)$ is a (not necessarily normal) projective toric variety. 
\end{enumerate}
We denote by $X_t$ the fiber $\pi^{-1}(t)$ over $t \in \C$. 

\begin{example}
	Let $C = \{ [x : y : z] \in \CP^2 ~|~ y^3 = x^3 + z^3\}$ be an elliptic curve in $\CP^2$. For example, a toric degeneration of $C$ is given by 
	\[
		\frak{X} = \{ ([x : y : z], t) \in \CP^2 \times \C ~|~ y^2z = x^3 + t z^3 \},
	\]
	a flat family of curves such that $X_t \cong C$ for $t \in \C^*$. The variety $X_0$ over the origin is the cuspidal cubic. It is a (non-normal) toric variety because it admits the obvious $\C^*$-action on $X_0$ induced from 
	the $\C^*$-action on $\p^2$ given by 
	\[
		t \cdot [x : y : z] = [x : ty :t^{-2}z] \quad \mbox{for $t \in \C^*$}
	\]
	with two fixed points $[0 : 1 : 0]$ and $[0 : 0 : 1]$.
\end{example}

Let us equip $X$  with a K\"{a}hler form $\omega$ induced from the Fubini--Study form on $\CP^N$. Consider a K\"{a}hler form $\Omega$ on the smooth part $\mathring{\frak{X}}$ of $\frak{X}$ induced from the standard K\"{a}hler form $\omega_{\mathrm{FS}} \oplus \omega_{\mathrm{std}}$ on $\CP^N \times \C$ where $\omega_{\mathrm{FS}}$ 
and $\omega_{\mathrm{std}}$ are the Fubini--Study form on $\CP^N$ and the standard symplectic form on $\C$, respectively. Then each fiber $X_t$ inherits a K\"{a}hler form $\omega_t$ 
from $(\mathring{\frak{X}},\Omega)$, i.e., $\omega_t = \Omega|_{X_t}$. We call $\pi$ a {\em toric degeneration of $(X,\omega)$} if the equipped K\"{a}hler form $\omega$ agrees with the restricted form $\omega_1$. 

The following theorem produces Lagrangian tori in $X$ from a toric degeneration of $(X,\omega)$.

\begin{theorem}[\cite{HaradaKaveh}]\label{theorem_HaradaKaveh} Suppose that $\pi \colon \frak{X} \rightarrow \C$ is a toric degeneration of $(X,\omega)$. 
Then there is a continuous map $\phi \colon X = X_1 \to X_0$ satisfying the following$\colon$
\begin{enumerate}
	\item Let ${U_0}$ be the smooth locus of $X_0$ and $U \coloneqq \phi^{-1}(U_0) \subset X$. 
		   Then the map $\phi$ restricted to $U$ is a symplectomorphism onto $U_0$. 
	\item Let $\Phi_0 \colon X_0 \rightarrow \Delta$ be the moment map for $(X_0,\omega_0)$. Then the composition 
			\begin{equation}\label{equ_commutativitiy}
				\Phi_1 \coloneqq \Phi_0 \circ \phi \colon X \to \Delta
			\end{equation} is a Lagrangian torus fibration on $U$, an open dense subset of $X$.
\end{enumerate}
\end{theorem}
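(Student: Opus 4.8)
The plan is to follow the Harada--Kaveh strategy, which realizes the degenerating family as a symplectic fibration away from the singular locus and then transports the toric structure of the central fiber back to $X$ via a gradient-Hamiltonian flow. First I would restrict attention to the smooth part $\mathring{\frak{X}}$ of the total space and equip it with the K\"{a}hler form $\Omega$ obtained by restricting $\omega_{\mathrm{FS}} \oplus \omega_{\mathrm{std}}$ from $\CP^N \times \C$. The key device is the \emph{gradient-Hamiltonian vector field} $V$ on $\mathring{\frak{X}}$, defined as the $\Omega$-gradient of $-\mathrm{Re}(\pi)$ (equivalently, $V = -\nabla \,\mathrm{Re}(\pi)/|\nabla\,\mathrm{Re}(\pi)|^2$ suitably normalized on the open locus where $\pi$ is a submersion). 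Away from the critical points of $\pi$, this vector field is well defined and its flow moves fibers $X_t$ to fibers $X_{t'}$; the crucial point is that the flow of $V$ is a \emph{symplectomorphism} between smooth fibers, because $V$ is by construction symplectically orthogonal to the fibers and $\mathrm{Im}(\pi)$ is preserved, so the Lie derivative of $\omega_t$ along the flow vanishes on the fiber directions. Integrating $V$ from $t=1$ down to $t=0$ along the real segment $[0,1]$ defines the continuous map $\phi \colon X_1 \to X_0$.

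Next I would check the two assertions. For (1): over the locus $U_0 \subset X_0$ of smooth points of the central fiber, the flow of $V$ is defined for all time up to $t=0$ and its time-$1$ map is a diffeomorphism onto $U_0$; since $V$ is symplectically orthogonal to fibers, the pullback $\phi^*\omega_0$ equals $\omega_1 = \omega$ on $U = \phi^{-1}(U_0)$, so $\phi|_U$ is a symplectomorphism onto $U_0$. For (2): $X_0$ is a projective toric variety, and on its smooth locus $U_0$ the standard moment map $\Phi_0 \colon X_0 \to \Delta$ (for the torus action inherited from the ambient $(\C^*)^N$-action under which the degeneration is equivariant) restricts to a Lagrangian torus fibration onto the interior of $\Delta$. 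Precomposing with the symplectomorphism $\phi|_U$ yields $\Phi_1 = \Phi_0 \circ \phi$, which is then a Lagrangian torus fibration on $U$; the fibers are Lagrangian because $\phi|_U$ is a symplectomorphism and the fibers of $\Phi_0$ over the interior are Lagrangian tori in $(U_0, \omega_0)$.

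The main obstacle, and the point requiring genuine care, is the behavior of the gradient-Hamiltonian flow \emph{near the singular locus}: the vector field $V$ blows up where $\pi$ fails to be a submersion (and where $\frak{X}$ itself may be singular), so one must show that the flow from $t=1$ still \emph{converges} to a well-defined continuous limit map $\phi\colon X_1 \to X_0$ even though it need not be smooth there. This is handled by working with a resolution or by a careful local analysis ensuring the flow lines have bounded length in the degenerating directions — Harada--Kaveh carry this out using the completeness of the normalized flow and properness of $\pi$; I would invoke their argument rather than reproduce it. A secondary point is verifying that the degeneration is torus-equivariant so that $X_0$ genuinely carries a moment map $\Phi_0$ with image the prescribed polytope $\Delta$ (this is built into the construction of $\frak{X}$ from a valuation/Newton--Okounkov body), and that on the smooth locus the moment-map fibers are honest Lagrangian tori rather than degenerate orbits, which holds precisely over the interior of $\Delta$.
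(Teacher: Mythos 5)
Your proposal is essentially the standard Harada--Kaveh argument (gradient-Hamiltonian flow of $-\mathrm{Re}(\pi)$ on the smooth locus of $\frak{X}$, fiberwise symplectomorphisms, continuous extension to $\phi\colon X_1 \to X_0$, and pullback of the toric moment map), which is exactly the source the paper cites for this theorem without reproducing a proof. The one genuinely delicate point --- continuity of the limit map near the critical/singular locus --- you correctly identify and defer to Harada--Kaveh, which is consistent with how the paper itself treats the result.
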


Note that the toric variety $X_0$ carries a completely integrable system $\Phi_0$ on $U_0$ generated by the real torus action induced from the one in $\CP^N$. By pulling this system on $U_0$ back to $U$ via $\phi$, we produce the completely integrable system $\Phi_1$ in \eqref{equ_commutativitiy}. 

\begin{remark}\label{remark_interior_facet}
	If the toric variety $X_0$ in a toric degeneration $\pi$ is normal as an algebraic variety, $U_0$ contains 
	$\Phi_0^{-1}(\Delta \setminus \Delta_2)$ 
	where $\Delta_2$ is the union of codimension two faces of $\Delta$. Consequently, the Lagrangian fibration $\Phi_1 \colon X \rightarrow \Delta$ can be identified with the moment map 
	$\Phi_0 \colon X_0 \rightarrow \Delta$ on the open dense subset $\Phi^{-1}_1(\Delta \setminus \Delta_2) \subset X$ under the symplectomorphism $\phi$. 
\end{remark}

\subsection{Toric degenerations via Newton--Okounkov bodies} \label{subSec: NObody}
 
Let $X$ be a complex $m$-dimensional smooth projective variety with a very ample line bundle $\mcal{L}$ on $X$. Fix a total order $\geq$ on $\Z^m$ respecting the addition. To construct a Newton--Okounkov body, in addition to $\mcal{L}$, we need to choose two data; a valuation $v$ and a reference section $h$. 

Suppose that $v$ is a valuation on the function field, that is, a function $v \colon \C(X) \backslash \{0\} \to \Z^m$ such that for all rational functions $f, g \in \C(X) \backslash \{0\}$ and $c \in \C \backslash \{0\}$,
\begin{enumerate}
\item $v(fg) = v(f) + v(f)$,
\item $v(f+g) \geq \mathrm{min} (v(f), v(g))$, and
\item $v(cf) = v(f)$.
\end{enumerate}

We additionally assume that the valuation $v$ has \emph{one-dimensional leaves}, that is, $v$ satisfies that for all rational functions $f, g \in \C(X) \backslash \{0\} $ with $v(f) = v(g)$, there is $c \in \C \backslash \{0\}$ such that $v(g - cf) > v(g)$ or $g - cf = 0$. Equivalently, setting $\C(x)_\alpha = \{ f \in \C(x) ~|~ v(f) \geq \alpha \hspace{0.1cm} \text{or} \hspace{0.1cm}  f = 0 \}$,
\begin{equation}\label{equ_one_dimensional_leaves}
	 \dim_\C \left(\bigslant{\C(x)_\alpha}{\sum_{\beta; \beta > \alpha}  \C(x)_\beta}\right) \leq 1, \quad \mbox{ for each $\alpha \in \Z^m$.}
\end{equation}
Note that every divisorial valuation has one-dimensional leaves. For instance, a valuation obtained from a flag $X_\bullet = X = X_m \supset X_{m-1} \supset \dots \supset X_1 \supset X_0$ of subvarieties in $X$ has one-dimensional leaves.

Let $\scr{L} \coloneqq H^0(X, \mcal{L})$ be the space of global sections. Then
\[
	R = \bigoplus_{k \geq 0} R_k, \quad R_k = \scr{L}^{\otimes k}, \quad R_0 \coloneqq \C
\]
is the homogeneous coordinating ring of $X$. Choose a non-zero reference section $h \in \mathscr{L}$ and define a semigroup 
\begin{equation}\label{equ_SR}
	S(\mcal{L},v,h) = \bigcup_{k \geq 1} \{ (k, v(f/h^k)) \mid f \in \scr{L}^k \backslash \{0\} \}  \subset \Z_{\geq 0} \times \Z^m \subset \R_{\geq 0} \times \R^m. 
\end{equation}
The \emph{Newton--Okounkov body} associated with the triple $(\mcal{L}, v, h)$ is defined by 
\begin{equation}\label{equ_Deltav}
	\Delta(\mcal{L}, v, h) \coloneqq \overline{\mbox{the convex hull of} \left( \bigcup_{k \in \mathbb{N}} \{ x / k \mid (k,x) \in S(\mcal{L},v,h) \} \right) } \subset \R^m.
\end{equation}
We call $\Delta$ a \emph{Newton--Okounkov body of} $\mcal{L}$ if $\Delta = \Delta(\mcal{L}, v, h)$ for some pair $(v, h)$ of a valuation and a reference section.

One can systematically produce a toric degeneration of $X$ from a Newton--Okounkov body.  

\begin{theorem}[\cite{Anderson}]\label{theorem_Andersontoric}
If the semigroup $S(\mcal{L},v,h)$ in~\eqref{equ_SR} is finitely generated, then the Newton--Okounkov body $\Delta(\mcal{L}, v, h)$ is a rational polytope. 
Furthermore, there is a toric degeneration $\pi \colon \frak{X} \rightarrow \C$ of $X$ where the central fiber $X_0$ is a projective toric variety whose normalization is the toric variety associated with $\Delta(\mcal{L}, v, h)$. 
\end{theorem}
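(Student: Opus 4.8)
The plan is to realise $\C[S]$, where $S = S(\mcal{L}, v, h)$, as a flat one-parameter degeneration of the homogeneous coordinate ring $R = \bigoplus_{k\ge 0} R_k$ of $(X,\mcal{L})$ by the Rees-algebra construction, after first collapsing the $\Z^m$-valued filtration coming from $v$ to a filtration indexed by $\Z$. Rationality of $\Delta$ comes essentially for free: finite generation of the semigroup $S \subset \Z_{\ge 0}\times\Z^m$ forces the cone $\mathrm{Cone}(S)\subset\R_{\ge 0}\times\R^m$ it spans to be rational polyhedral, being generated by the finitely many rays through a generating set, so $\Delta(\mcal{L},v,h) = \mathrm{Cone}(S)\cap(\{1\}\times\R^m)$ is the intersection of a rational polyhedral cone with a rational affine hyperplane, hence a rational polytope, and the closure in \eqref{equ_Deltav} is superfluous.

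For the degeneration, $v$ induces on each finite-dimensional space $R_k$, via $f\mapsto v(f/h^k)$, a decreasing $\Z^m$-filtration $\{R_{k,\ge\alpha}\}$, and the one-dimensional-leaves hypothesis \eqref{equ_one_dimensional_leaves} identifies the associated graded algebra $\mathrm{gr}(R)=\bigoplus_{k,\alpha} R_{k,\ge\alpha}/R_{k,>\alpha}$ with the semigroup algebra $\C[S]$: the $(k,\alpha)$-graded piece is one-dimensional precisely when $(k,\alpha)\in S$ and zero otherwise, while the multiplication is governed by additivity of $v$ on products, i.e.\ it is the convolution product on $\C[S]$. I would then pass to a single parameter: since $S$ is finitely generated and each $R_k$ is finite-dimensional, only finitely many jumps of the filtration are relevant, and one can choose a group homomorphism $\lambda\colon\Z^m\to\Z$ that is compatible with the order $\ge$ on the finitely many values occurring and injective on a generating set, so that the coarsened $\Z$-filtration $\mcal{F}'_j R_k := \sum_{\lambda(\alpha)\ge j} R_{k,\ge\alpha}$ has $\mathrm{gr}_{\mcal{F}'}(R)\cong\mathrm{gr}(R)\cong\C[S]$.

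With the $\Z$-filtration $\mcal{F}'$ in hand I would form the extended Rees algebra $\mcal{R} = \bigoplus_{j\in\Z} t^{-j}\,\mcal{F}'_j R \subset R[t,t^{-1}]$, bigraded by the degree $k$ and the Rees parameter. It is torsion-free over $\C[t]$, hence flat; localising at $t\ne 0$ recovers $R[t,t^{-1}]$, while the fibre over $t=0$ is $\mathrm{gr}_{\mcal{F}'}(R)\cong\C[S]$. Taking $\mathrm{Proj}$ in the degree-$k$ grading over $\mathrm{Spec}\,\C[t]=\C$ yields a flat family $\pi\colon\frak{X} = \mathrm{Proj}(\mcal{R})\to\C$, trivial over $\C^*$ with fibre $X$, whose central fibre is $X_0 = \mathrm{Proj}(\C[S])$; after replacing $\mcal{L}$ by a Veronese power if necessary so that $R$ and $\C[S]$ are generated in degree one, the degree-one part gives a closed embedding $\frak{X}\hookrightarrow\CP^N\times\C$ commuting with $\pi$ and restricting the Fubini--Study form appropriately, so $\pi$ is a toric degeneration in the sense of \eqref{equ_toricdeg}. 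Finally $X_0 = \mathrm{Proj}(\C[S])$ inherits from the $\Z^m$-grading of $\C[S]$ an algebraic torus action with a dense orbit, so it is a projective toric variety, and its normalisation is $\mathrm{Proj}$ of the saturation of $S$ in the group it generates, which is exactly the normal projective toric variety attached to the rational polytope $\Delta$.

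The step I expect to be the real obstacle is the reduction from the $\Z^m$-filtration to the one-parameter filtration $\mcal{F}'$ without changing the associated graded algebra --- that is, producing the functional $\lambda$ and verifying $\mathrm{gr}_{\mcal{F}'}(R)=\C[S]$; this is precisely where finite generation of $S$ enters in an essential way. The remaining ingredients --- flatness of Rees algebras over the principal ideal domain $\C[t]$, the behaviour of $\mathrm{Proj}$ under the bigrading, and the toricity of $\mathrm{Proj}$ of an affine semigroup algebra --- are standard.
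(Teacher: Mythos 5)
The paper offers no proof of this statement: it is quoted verbatim from Anderson's work \cite{Anderson}, so there is nothing internal to compare against. Your proposal is in substance a correct reconstruction of Anderson's own argument (rationality of $\Delta$ from finite generation of $S$, identification of $\mathrm{gr}(R)$ with $\C[S]$ via one-dimensional leaves, coarsening the $\Z^m$-filtration to a $\Z$-filtration by a linear form chosen to respect the order on the finitely many valuation vectors of algebra generators and relations, and then the extended Rees algebra giving the flat family with central fiber $\mathrm{Proj}\,\C[S]$, whose normalization is the toric variety of $\Delta$), and the step you flag as the real obstacle is indeed exactly the point where finite generation is used in \cite{Anderson}.
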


\section{$\mathbb{Q}$-Gorenstein Fano polytopes and Monotone Lagrangian tori}
\label{sec_monoLagtoriQGor}

The aim of this section is to construct a monotone Lagrangian torus in a smooth projective variety $X$ equipped with a monotone K\"{a}hler form when $X$ admits a \emph{normal} toric degeneration arising from a \emph{$\Q$-Gorenstein Fano} Newton--Okounkov polytope. We also define a \emph{refined} version of Newton polytopes from counting invariants of a monotone Lagrangian torus, which will be employed to distinguish the constructed Lagrangian tori later on.

\subsection{Monotone Lagrangians and gradient holomorphic disks}

Let $X$ be a symplectic manifold with a symplectic form $\omega$. The symplectic form $\omega$ is said to be \emph{monotone} if $c_1(TX) = \nu \cdot [\omega] \in H^2(X;\Z)$ for some real number $\nu > 0$. By scaling the symplectic form $\omega$ if necessary, we may assume that $\omega$ is {\em normalized}, that is, 
\begin{equation}\label{equ_integralmonotonesym}
	c_1 (TX) = [\omega] \quad \mbox{ in $H^2(X; \Z)$}.
\end{equation} 
If $X$ is simply connected, then the cohomology classes $[\omega]$ and $c_1(TX)$ can be regarded as a homomorphism assigning the symplectic area and the Chern number to each spherical homotopy class in $\pi_2(X)$, respectively, that is, $[\omega]$ and $c_1(TX) \in \mathrm{Hom}(\pi_2(X), \Z)$. Note that the symplectic form $\omega$ is monotone if $\omega(\alpha) = c_1(TX)(\alpha)$ for all $\alpha \in \pi_2(X)$.

Recall that a submanifold $L$ of $X$ is \emph{Lagrangian} if $\dim L = (\dim X) / 2$ and $\omega|_L \equiv 0$. 
Let $\mathbb{D} = \{ z \in \C \mid |z| \le 1 \}$ be the unit disk. To each continuous map $\varphi \colon \mathbb{D} \to X$ with the Lagrangian boundary condition $\varphi(\partial \mathbb{D}) \subset L$, one can assign an integer $\mu_L([\varphi])$, called the {\em Maslov index}. It can be thought as a Chern number of the disk $\varphi$ in the sense that 
if the image of a continuous map  $\varphi \colon S^2 \rightarrow X$ intersects $L$ so that $\varphi(S^2)$ decomposes into two disks $\varphi^+ \colon \mathbb{D} \rightarrow X$ and $\varphi^- \colon \mathbb{D} \rightarrow X$, then 
the following holds:
\[
	\mu_L([\varphi^+]) + \mu_L([\varphi^-]) = 2 \cdot c_1(TX)([\varphi(S^2)]).
\]
The Maslov index is well-defined up to homotopy and we may think of it as a homomorphism $\mu_L \colon \pi_2(X, L) \rightarrow \Z \subset \R$. Because of the Lagrangian boundary condition, we have a well-defined symplectic area homomorphism $\omega \colon \pi_2(X,L) \rightarrow \R$ defined by $\omega([\varphi]) \coloneqq \int_{\mathbb{D}} \varphi^*\omega$. 

\begin{definition}\label{def_monotoneLag}
A Lagrangian submanifold $L$ of $(X, \omega)$ is called \emph{monotone} if there exists $\delta > 0$ such that 
\begin{equation}\label{equ_propostional}
	\mu_L (\beta) = \delta \cdot \omega (\beta) \quad \mbox{ for all $\beta \in \pi_2 (X, L)$.}
\end{equation}
\end{definition}

Assume that $(X, \omega)$ is not symplectically aspherical, that is, there exists $\alpha \in \pi_2(X)$ such that $\omega(\alpha) \neq 0$. It is known that a monotone Lagrangian can exist only when $\omega$ is monotone. Moreover, if $c_1(TX) = \nu \cdot [\omega]$ and $X$ admits a monotone Lagrangian submanifold $L$ satisfying~\eqref{equ_propostional}, then $\delta = 2 \nu$, see \cite[Remark 2.3]{Ohmonotone}. Thus, if $\omega$ is normalized as in \eqref{equ_integralmonotonesym}, we then have 
\[
	\mu_L (\beta) = 2 \cdot \omega (\beta) \quad \mbox{ for all $\beta \in \pi_2 (X, L)$}.
\]

\begin{example}
	Let $X$ be the complex projective line $\CP^1 \simeq S^2$ with the Fubini--Study form $\omega_{\mathrm{FS}}$. Then every closed Lagrangian submanifold $L$ of $(X, \omega_{\mathrm{FS}})$ is diffeomorphic to a circle. As $L$ divides $X$ into two pieces $\mathbb{D}^+$ and $\mathbb{D}^-$ of disks, we have two holomorphic disks $\varphi^+ \colon \mathbb{D} \rightarrow \mathbb{D}^+$ and $\varphi^- \colon \mathbb{D} \rightarrow \mathbb{D}^-$, each of which has Maslov index two. Therefore $L$ is monotone if and only if the symplectic areas of $\mathbb{D}^+$ and $\mathbb{D}^-$ are equal. 
\end{example}

In \cite{ChoKimMONO}, the first and the third named authors developed a method of computing the Maslov index of a \emph{gradient holomorphic disk}, which will be briefly recalled. Let $(X,\omega)$ be a symplectic manifold with a Hamiltonian $S^1$-action and an $S^1$-invariant $\omega$-compatible almost complex structure $J$ on $X$. Let $p \in X$ and $C$ be the $S^1$-orbit containing $p$. If $C$ flows along the gradient vector field of a moment map with respect to $\omega$ and $J$, then the trajectory of $C$ defines an embedded $J$-holomorphic half-cylinder $Y$ diffeomorphic to $S^1 \times \R_{\geq 0}$. In the case where $C$ converges to some point, a fixed point of the action indeed, then it gives rise to a $J$-holomorphic disk
\[
	\varphi \colon \mathbb{D} \rightarrow X \quad \mbox{satisfying  $\varphi(\mathbb{D} \setminus \{0\}) = Y$}.
\]
Such a disk $\varphi$ is called a {\em gradient holomorphic disk}. Let us recall the following.

\begin{theorem}[Theorem A in \cite{ChoKimMONO}]\label{theorem_gradient_disc_formula}
	Let $(X,\omega)$ be a symplectic manifold with a Hamiltonian $S^1$-action. Let $H \colon X \rightarrow \R$ be a moment map of the $S^1$-action. Suppose that $\varphi \colon \mathbb{D} \rightarrow X$ is a gradient holomorphic disk such that 
		\begin{enumerate}
			\item $\varphi(0)$ is a fixed point of the $S^1$-action, 
			\item $L$ is a Lagrangian submanifold of $(X,\omega)$ lying on some level set $H^{-1}(c)$, and
			\item $\varphi \colon \mathbb{D} \to X$ satisfies the Lagrangian boundary condition, that is, $\varphi(\partial \mathbb{D}) \subset L$. 
		\end{enumerate}
	Then the Maslov index of $[\varphi]$ is equal to $-2n_0$, where $n_0$ is the sum of negative weights of the tangential $S^1$-representation at $\varphi(0)$. 
\end{theorem}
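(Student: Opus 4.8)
\textit{Proof sketch.} The plan is to compute $\mu_L([\varphi])$ from a complex trivialization of $\varphi^{*}TX$ over the disk. With the normalization that the bundle pair $(\underline{\mathbb C},\,\{e^{\mathrm i k\theta}\mathbb R\}_{\theta})$ over $(\mathbb D,\partial\mathbb D)$ has Maslov index $2k$, and using that $\mu_L$ is additive over $\omega$-orthogonal splittings of $(\varphi^{*}TX,\varphi^{*}TL)$ that are defined over $(\mathbb D,\partial\mathbb D)$, the index $\mu_L([\varphi])$ equals the Maslov number of the loop $\theta\mapsto\varphi^{*}T_{\varphi(e^{\mathrm i\theta})}L$ read off in any such trivialization. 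The key geometric input is that $\varphi$ is a \emph{gradient} disk: its half-cylinder $Y$ is swept out by the $+\nabla H$-trajectory of the orbit $C$, which converges to $p:=\varphi(0)$, so $Y$ lies in the stable manifold $W^{\mathrm s}(p)$ of $p$. By the Morse theory of the moment map on a K\"ahler manifold, $W^{\mathrm s}(p)$ is a complex, hence symplectic, submanifold with $T_pW^{\mathrm s}(p)=\bigoplus_{m_j<0}V_j$, where $T_pX=\bigoplus_j V_j$ is the weight decomposition and $m_j$ the weight on $V_j$. Thus $\varphi$ factors as $\iota\circ\psi$ with $\psi\colon\mathbb D\to W^{\mathrm s}(p)$, and pulling back the $\omega$-orthogonal splitting $TX|_{W^{\mathrm s}(p)}=TW^{\mathrm s}(p)\oplus N$ yields $\varphi^{*}TX=\psi^{*}TW^{\mathrm s}(p)\oplus\psi^{*}N$ over $\mathbb D$, with fibres at $0$ equal to $\bigoplus_{m_j<0}V_j$ and $\bigoplus_{m_j\ge 0}V_j$.

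For the normal summand $\psi^{*}N$: it is trivial, since its base $\psi(\mathbb D)=Y\cup\{p\}$ is contractible, and because $C$ lies entirely inside $W^{\mathrm s}(p)$ the component of $\varphi^{*}TL$ in the $\psi^{*}N$-directions along $C$ is null-homotopic in this trivial bundle; hence this summand contributes $0$. For the tangential summand $\psi^{*}TW^{\mathrm s}(p)$, whose fibre at $0$ carries only the negative weights, I would use the $S^1$-action directly: over $\mathbb D\setminus\{0\}$ the action (rotation angularly, gradient flow radially) splits $\psi^{*}TW^{\mathrm s}(p)$ into equivariant line bundles, one for each $m_j<0$, each extending over $\mathbb D$ with fibre $V_j$ at $0$. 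Comparing an equivariant frame with an honest one, the equivariant frame in the $V_j$-direction extends to a meromorphic frame with a zero of order $-m_j$ at $0$, i.e.\ it contributes relative first Chern number $-m_j$; the orbit direction is spanned by the invariant vector field $\xi_X$, which is equivariant, and the remaining tangential directions of $\varphi^{*}TL$ along $C$ are equivariant as well once $L$ is $S^1$-invariant near $C$ (which holds since $L$ is a fibre of the integrable system, hence invariant under the subtorus generating the $S^1$). Consequently the boundary datum is constant in the equivariant frame, so the tangential Maslov contribution is $2\sum_{m_j<0}(-m_j)$. Adding the two contributions gives $\mu_L([\varphi])=-2\sum_{m_j<0}m_j=-2n_0$.

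The step I expect to be the main obstacle is controlling the boundary condition: showing that the normal ($m_j\ge 0$) directions genuinely contribute nothing, that $\varphi^{*}TL$ respects the splitting $\psi^{*}TW^{\mathrm s}(p)\oplus\psi^{*}N$ along $C$ so that additivity applies, and that in the equivariant frame the loop $\varphi^{*}TL$ is unwound in the tangential directions. This should use, beyond the stable-manifold geometry, the constraint that $L$ lies in the regular level set $H^{-1}(c)$ — equivalently, via $\mathrm dH=-\iota_{\xi_X}\omega$, that $T_xL\subseteq\xi_X^{\perp\omega}$ along $C$ — together with the $S^1$-invariance of $L$ near $C$. A secondary technical point is that $\varphi$ need not be an immersion at $0$ when the orbit $C$ has nontrivial stabilizer: then the weight of $T_pY$ is the vanishing order of $\varphi$ at $0$ times the weight of the $S^1$-action on $T_0\mathbb D$, and one checks the Chern-number bookkeeping is unchanged, the winding in the $V_j$-direction still being $-m_j$. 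Finally I would pin down all signs against the model case of a gradient disk in $\mathbb{C}P^1$ onto a hemisphere, where the formula reads $\mu=-2(-1)=2$.
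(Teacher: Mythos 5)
The paper does not prove this statement at all; it is quoted verbatim from Cho--Kim \cite{ChoKimMONO} (their Theorem A), so there is no in-paper argument to compare against and your proposal has to stand on its own. Judged that way, it has a genuine gap, and it sits exactly where you yourself flag the ``main obstacle'': the claim that the normal summand (the directions of non-negative weight at $\varphi(0)$) contributes nothing to the Maslov index. Your justification --- that $C$ lies in $W^{\mathrm s}(p)$, so the $N$-component of $\varphi^*TL$ along $\partial\mathbb{D}$ is null-homotopic --- is a non sequitur: where the base circle sits says nothing about how the Lagrangian planes wind inside the normal bundle. In fact, under the hypotheses as literally stated the normal contribution need not vanish. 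Take $X=\C^2$ with the action $t\cdot(z_1,z_2)=(e^{-it}z_1,e^{it}z_2)$, $H=\tfrac12(-|z_1|^2+|z_2|^2)$, standard $J$; the gradient disk is $\varphi(w)=(w,0)$ with $\varphi(0)$ the origin (weights $-1,+1$), and $L=\{(e^{-it}\sqrt{1+s^2},\,e^{it}s)\mid t\in\R,\ s\in\R\}\subset H^{-1}(-\tfrac12)$ is an embedded Lagrangian containing the boundary orbit. Along the boundary, $T_{\varphi(e^{i\theta})}L=ie^{i\theta}\R\oplus e^{-i\theta}\R$ in the constant trivialization, so the tangential ($z_1$) direction contributes $+2$ (as your count predicts) but the normal ($z_2$) direction contributes $-2$, giving total Maslov index $0$, not $-2n_0=2$. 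So the step you hoped to ``control'' cannot be closed from hypotheses (1)--(3) alone: the positive-weight directions genuinely enter, and whatever extra hypotheses the original theorem of \cite{ChoKimMONO} carries (in the applications here the relevant fixed components have only non-positive weights --- semifree, codimension-two, reached as maxima along the disk) are precisely what your missing step would have to use.

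Two further soft spots. First, your splitting $\varphi^*TX=\psi^*TW^{\mathrm s}(p)\oplus\psi^*N$ uses that $W^{\mathrm s}(p)$ is a complex/symplectic submanifold along the whole disk; that is a K\"ahler (Bialynicki--Birula) fact, whereas the setting only provides an $S^1$-invariant compatible almost complex structure, under which the gradient flow need not preserve $J$ and $TW^{\mathrm s}$ need not be symplectic away from $p$. Second, you justify $S^1$-invariance of $L$ near $C$ by appealing to $L$ being an integrable-system fibre, which is not a hypothesis; the correct (and sufficient) argument is that $L\subset H^{-1}(c)$ forces $X_H\in T_xL$ for all $x\in L$ (take $\omega$-orthogonals of $T_xL\subset\ker dH_x$), so $L$ is locally flow-invariant along $C$. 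That observation also points to the cleaner route that avoids your splitting problem altogether: by invariance, the boundary loop of Lagrangian tangent planes is $\Lambda_\theta=dg_{-\theta}(\Lambda_0)$, the image of one fixed Lagrangian under a loop of unitary maps, so its Maslov index is twice the winding of the complex determinant of that loop, computable from the weights at $\varphi(0)$ once one compares an equivariant frame of $\varphi^*TX$ with a frame extending over the disk --- and this computation makes visible exactly which weight hypotheses the stated formula requires.
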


Gradient holomorphic disks are useful to test whether a given torus invariant Lagrangian submanifold (Lagrangian toric fibers, for instance)  is monotone or not. Here is an example.

\begin{example}[Fano toric varieties]\label{example_toricFano}
	Let $X$ be an $m$-dimensional smooth Fano toric variety with a moment map $\Phi \colon X \rightarrow \Delta \subset \frak{t}^* \cong \R^m$ where $\Delta$ is a smooth reflexive polytope that contains a unique interior lattice point $\mathbf{u}_0 \in \Delta$. By the reflexivity of $\Delta$, the variety $X$ carries a K\"{a}hler form $\omega$ invariant under the torus action and satisfying $c_1(TX) = [\omega]$. Let $F_1, F_2, \dots, F_\kappa$ be the facets of $\Delta$. For an interior point $\mathbf{u}$ of $\Delta$, Cho and Oh proved that a Lagrangian fiber $L_{\mathbf{u}} \coloneqq \Phi^{-1}(\mathbf{u})$ is monotone if and only if $\mathbf{u} = \mathbf{u}_0$ in \cite{ChoOh}. They proved that for each
	facet $F_\iota$ and a generic point $p \in L_\mathbf{u}$, there exists a unique holomorphic disk $\varphi_\iota^\prime \colon \mathbb{D} \rightarrow X$ of Maslov index two passing through $p$ such that $\varphi_\iota^\prime(\partial \mathbb{D}) \subset L_{\mathbf{u}}$ and the area of $\varphi^\prime_\iota(\mathbb{D})$ is precisely the affine distance from 
	$\mathbf{u}$ to $F_\iota$. 	Therefore, $L_{\mathbf{u}}$ is monotone if and only if the affine distance from $\mathbf{u}$ to all facets are equal, that is, $\mathbf{u} = \mathbf{u}_0$. 
	
	In fact, $\varphi^\prime_\iota$ can be described as a gradient holomorphic disk obtained as follows. We first take the point $p \in L_{\mathbf{u}}$ in the previous paragraph and 
	denote by $S^1$ the circle subgroup of $T$ whose Lie algebra is the one-dimensional subspace of 
	$\frak{t}^* \cong \R^n$ perpendicular to the facet $F_\iota$. We take the standard complex structure, which is $S^1$-invariant. Let $C$ be the $S^1$-orbit containing $p$. 
	Then 
	the orbit $C$ flows along the gradient vector field of a moment map for the $S^1$-action and it converges to some 
	fixed point whose moment map image is in the relative interior of $F_\iota$. This trajectory gives a gradient holomorphic disk which we denote by $\varphi_\iota \colon \mathbb{D} \rightarrow X$. As the fixed component of the $S^1$-action
	 corresponding to $F_\iota$ is of codimension two, the $S^1$-action near $\pi^{-1}(F_\iota)$ is semifree and hence $\varphi_\iota$
	 is of Maslov index two. By the classification of holomorphic disks of Maslov index two in \cite{ChoOh}, we conclude that $\varphi^\prime_\iota  = \varphi_\iota$.  
\end{example}

\subsection{$\Q$-Gorenstein Fano and normal toric degenerations}

Let $X$ be a smooth projective variety of complex dimension $m$. Let $v$ be a valuation on the function field $\C(X)$ with one-dimensional leaves (see \eqref{equ_one_dimensional_leaves}), $\mcal{L}$ a very ample line bundle on $X$, and $h$ a non-zero reference section of $\mcal{L}$. From the choice $(\mcal{L}, v, h)$, we produce a triple; a semigroup $S(\mcal{L},v,h)$, a Newton--Okounkov body $\Delta(\mcal{L}, v, h)$, and a toric degeneration $\pi \colon \frak{X} \to \C$ as in Section \ref{subSec: NObody}. To produce a \emph{monotone} Lagrangian torus from the toric degeneration $\pi$ of $X$, we require the toric degeneration $\pi$ to be \emph{normal} and the Newton--Okounkov body $\Delta(\mcal{L}, v, h)$ to be \emph{$\Q$-Gorenstein Fano}, which will be defined in this section.

To begin with, we review some notions of polyhedral geometry. Let $N$ be a lattice of rank $m$ and $M \coloneqq \mathrm{Hom}_\Z (N, \Z)$ the dual lattice of $N$. Let us fix a basis $\seed$ for the free abelian group $N$. Using the fixed basis $\seed$, $N$ can be identified with $\mathbb{Z}^m$. The dual lattice $M$ can be identified with $\mathbb{Z}^m$ via the dual basis of $\seed$. We also have the identifications $N_\R \coloneqq N \otimes \R \simeq \R^m$ and $M_\R \coloneqq M \otimes \R \simeq \R^m$. We take the Cartesian coordinate system $\mathbf{v} \coloneqq (v_1, v_2, \cdots, v_m)$ (resp. $\mathbf{u} \coloneqq (u_1, u_2, \cdots, u_m)$) for $N_\R \simeq \R^m$ (resp. $M_\R \simeq \R^m$). Let $\langle \cdot, \cdot \rangle \colon M_\R \times N_\R \to \R$ be the canonical pairing. For a non-zero vector $\mathbf{v} \in N_\R$ and a real number $\alpha \in \R$,
\begin{itemize} 
\item the \emph{hyperplane} $H_{\mathbf{v}, \alpha}$ is defined by $H_{\mathbf{v}, \alpha} \coloneqq \{ \mathbf{u} \in M_\R \mid \langle \mathbf{u}, \mathbf{v} \rangle + \alpha = 0 \}$ and
\item the \emph{(closed) half-space} $H^+_{\mathbf{v}, \alpha}$ is defined by $H^+_{\mathbf{v}, \alpha} \coloneqq \{ \mathbf{u} \in M_\R \mid \langle \mathbf{u}, \mathbf{v} \rangle + \alpha \geq 0 \}.$  
\end{itemize}

Suppose that $\Delta \subset M_\R$ is a full-dimensional polytope that contains the origin in its interior, that is, $\mathbf{0} \in \mathrm{Int}(\Delta)$. Then the polytope $\Delta$ in $M_\R$ can be uniquely expressed as an intersection of half-spaces
\begin{equation}\label{equ_polytopeshalfsp}
\Delta = \bigcap_{\iota=1}^{\kappa}  H^+_{\mathbf{v}_\iota, 1}
\end{equation}
satisfying the following conditions;
\begin{enumerate}
\item each vector $\mathbf{v}_\iota \in N_\R$, 
\item each hyperplane $H_{\mathbf{v}_\iota, \alpha}$ contains a facet of $\Delta$, and
\item $\{ \mathbf{v}_1, \mathbf{v}_2, \cdots, \mathbf{v}_\kappa \}$ is pairwise distinct.  
\end{enumerate}
In the presentation of $\Delta$ in~\eqref{equ_polytopeshalfsp}, note that each vector $\mathbf{v}_\iota$ is an \emph{inward} normal vector to a facet of $\Delta$ and the number of facets is equal to $\kappa$. 

Let $C$ be a subset of $M_\R$. For a non-zero vector $\mathbf{v} \in N_\R$ and a real number $\alpha \in \R$, the half-space $H^+_{\mathbf{v}, \alpha}$ is called a \emph{supporting half-space} of $C$ if $C \subset H^+_{\mathbf{v}, \alpha}$ and $C \cap H_{\mathbf{v}, \alpha} \neq \emptyset$. In this case, we call $H_{\mathbf{v}, \alpha}$ a \emph{supporting hyperplane} of $C$. Note that if $C$ is a polytope $\Delta$, then a hyperplane containing a lower dimensional stratum can be a supporting hyperplane. We emphasize that a supporting hyperplane need \emph{not} contain a facet of $\Delta$.  

We recall the ``polar dual" of a polytope $\Delta$.

\begin{definition} Let $\Delta$ be a full-dimensional polytope in $M_\R$ containing the origin in its interior. The \emph{polar dual of $\Delta$} is defined by 
\[
\Delta^\circ \coloneqq \left\{ \mathbf{v} \in N_\R \mid \langle \mathbf{u}, \mathbf{v} \rangle  + 1 \geq 0 \mbox{ for all $\mathbf{u} \in \Delta$} \right\}.
\]
\end{definition}

Indeed, $\Delta^\circ$ is also a polytope containing the origin in its interior. Moreover, it satisfies $(\Delta^\circ)^\circ = \Delta$, see \cite[Theorem 6.2]{Brn} for the proof. The following proposition provides an alternative way of presenting the polar dual of $\Delta$.

\begin{proposition}\label{proposition_polardualandnewton}
Suppose that $\Delta \subset M_\R \simeq \R^m$ is a full-dimensional polytope containing the origin $\mathbf{0}$ in the interior of $\Delta$. Then the polar dual $\Delta^\circ$ is equal to the convex hull of $\{ \mathbf{v}_\iota \mid \iota = 1,2, \cdots, \kappa \}$ in $N_\R$ where the vectors $\mathbf{v}_\iota$ are from the unique presentation~\eqref{equ_polytopeshalfsp} of $\Delta$.
\end{proposition}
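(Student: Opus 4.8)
The plan is to prove the two inclusions $\Delta^\circ \subseteq \mathrm{conv}\{\mathbf{v}_1,\dots,\mathbf{v}_\kappa\}$ and $\mathrm{conv}\{\mathbf{v}_1,\dots,\mathbf{v}_\kappa\} \subseteq \Delta^\circ$ separately. The easy direction is the second one: for each $j$ and every $\mathbf{u}\in\Delta$ we have $\langle \mathbf{u},\mathbf{v}_j\rangle + 1 \geq 0$ by the presentation~\eqref{equ_polytopeshalfsp}, since $\Delta\subseteq H^+_{\mathbf{v}_j,1}$; hence each $\mathbf{v}_j\in\Delta^\circ$ by definition, and since $\Delta^\circ$ is convex (it is cut out by the linear inequalities $\langle \mathbf{u},\cdot\rangle+1\geq0$, $\mathbf{u}\in\Delta$, so it is an intersection of half-spaces), the convex hull of the $\mathbf{v}_j$'s also lies in $\Delta^\circ$.

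For the reverse inclusion I would argue by contradiction (or equivalently by the separating hyperplane theorem). Suppose $\mathbf{w}\in\Delta^\circ$ but $\mathbf{w}\notin P \coloneqq \mathrm{conv}\{\mathbf{v}_1,\dots,\mathbf{v}_\kappa\}$. Since $P$ is a compact convex set, there is a linear functional separating $\mathbf{w}$ from $P$: concretely there exist $\mathbf{u}_\ast\in M_\R$ and $c\in\R$ with $\langle \mathbf{u}_\ast,\mathbf{v}_j\rangle < c < \langle \mathbf{u}_\ast,\mathbf{w}\rangle$ for all $j$. The point is then to promote $\mathbf{u}_\ast$ to a point of $\Delta$ that violates the defining inequality of $\Delta^\circ$ for $\mathbf{w}$. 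To do this, note that because $\mathbf{0}\in\mathrm{Int}(\Delta)$ we may rescale: the ray $\{s\mathbf{u}_\ast : s\geq0\}$ meets the boundary of $\Delta$ at some point $\mathbf{u}_0 = s_0\mathbf{u}_\ast$ with $s_0>0$, and $\mathbf{u}_0$ lies on some facet, hence in some hyperplane $H_{\mathbf{v}_{j_0},1}$, so $\langle \mathbf{u}_0,\mathbf{v}_{j_0}\rangle = -1$. Now I would use the separation inequality scaled by $s_0$: from $\langle \mathbf{u}_\ast,\mathbf{w}\rangle > \langle \mathbf{u}_\ast,\mathbf{v}_{j_0}\rangle$ one gets $\langle \mathbf{u}_0,\mathbf{w}\rangle > \langle \mathbf{u}_0,\mathbf{v}_{j_0}\rangle = -1$ — but this is the \emph{wrong} direction, so one must instead choose the separating functional with the correct sign and be careful about which facet the ray hits. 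The clean way: since $\mathbf{w}\in\Delta^\circ=(\Delta^\circ)^{\circ\circ}$ and we want to show $\mathbf{w}\in P$, it suffices to show $P^\circ \subseteq (\Delta^\circ)^\circ = \Delta$, i.e. that every $\mathbf{u}$ with $\langle \mathbf{u},\mathbf{v}_j\rangle+1\geq 0$ for all $j$ lies in $\Delta$; but that is \emph{exactly} the presentation~\eqref{equ_polytopeshalfsp} of $\Delta$, namely $\Delta = \bigcap_j H^+_{\mathbf{v}_j,1} = \{\mathbf{u} : \langle \mathbf{u},\mathbf{v}_j\rangle+1\geq0 \ \forall j\} = P^\circ$. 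Hence $P^\circ = \Delta$, and taking polar duals (using $(\cdot)^{\circ\circ}=\mathrm{id}$ on full-dimensional polytopes containing the origin in their interior, and the fact that $P$ has this property since the $\mathbf{v}_j$ span $N_\R$ and $\mathbf{0}\in\mathrm{Int}(P)$) gives $P = \Delta^\circ$.

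So the real structure of the proof is short: unwind the definition of $\Delta^\circ$ as the set of $\mathbf{v}$ with $\langle\mathbf{u},\mathbf{v}\rangle+1\ge0$ for all $\mathbf{u}\in\Delta$, recognize via~\eqref{equ_polytopeshalfsp} that this is the polar dual of $P=\mathrm{conv}\{\mathbf{v}_j\}$, and invoke the biduality $(\cdot)^{\circ\circ}=\mathrm{id}$ cited from \cite[Theorem 6.2]{Brn}. The only genuine points requiring care, and where I expect the main (though still minor) obstacle, are: first, verifying that $P$ itself contains the origin in its interior and is full-dimensional, so that biduality applies to it — this follows because the inward normals $\mathbf{v}_j$ of a polytope with $\mathbf{0}$ in its interior positively span $N_\R$; and second, justifying that $\Delta = \{\mathbf{u}: \langle\mathbf{u},\mathbf{v}_j\rangle+1\ge0\ \forall j\}$ is literally the polar dual of $P$ rather than merely containing it, which amounts to checking $\mathrm{conv}\{\mathbf{v}_j\}^\circ = \bigcap_j H^+_{\mathbf{v}_j,1}$ — a standard fact that the polar dual of a convex hull of finitely many points is the intersection of the corresponding half-spaces (each vertex of $P$ gives a facet inequality of $P^\circ$, and conversely). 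Both are routine facts of polytope theory, so I would cite \cite{Brn} for them and keep the write-up to a few lines.
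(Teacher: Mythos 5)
Your final ``clean way'' is essentially the paper's own proof: the easy inclusion $\mathrm{conv}\{\mathbf{v}_j\}\subset\Delta^\circ$ by convexity, then the observation that the polar of $P=\mathrm{conv}\{\mathbf{v}_j\}$ is $\bigcap_j H^+_{\mathbf{v}_j,1}=\Delta$ (the paper only records the inclusion $P^\circ\subset\Delta$) followed by the bipolar theorem $((P)^\circ)^\circ=P$ from \cite{Brn}, so the argument is correct and matches the paper; the abandoned separating-hyperplane sketch is unnecessary, and your remaining care point (that $\mathbf{0}\in P$, via the inward normals positively spanning $N_\R$ by boundedness of $\Delta$) is the same routine fact the paper also uses without elaboration.
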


\begin{proof}
Let $\Delta^\prime$ be the convex hull of $\{ \mathbf{v}_\iota \mid \iota = 1,2, \cdots, \kappa \}$. Since $\langle \mathbf{u}, \mathbf{v}_\iota \rangle \geq -1$ for all $\mathbf{u} \in \Delta$, the polar dual $\Delta^\circ$ contains all $\mathbf{v}_\iota$'s. Since $\Delta^\circ$ is convex, we have $\Delta^\prime \subset \Delta^\circ$. On the other hand, if $\mathbf{u} \in (\Delta^\prime)^\circ$, then $\langle \mathbf{u}, \mathbf{v} \rangle + 1 \geq 0$ for all $\mathbf{v} \in \Delta^\prime$. In particular, $\langle \mathbf{u}, \mathbf{v}_\iota \rangle + 1 \geq 0$ for all $\iota = 1, 2, \cdots, \kappa$. Hence  $ (\Delta^\prime)^\circ \subset \bigcap_{\iota=1}^{\kappa}  H^+_{\mathbf{v}_\iota, 1} =\Delta $
 so that we have
$$
\Delta^\circ \subset ((\Delta^\prime)^\circ)^\circ.
$$
Since $\Delta^\prime$ is a closed and convex set containing $\mathbf{0}$ in its interior, $\Delta^\circ \subset \Delta^\prime$ follows from $((\Delta^\prime)^\circ)^\circ = \Delta^\prime$.
\end{proof}

As we are interested in Newton--Okounkov bodies constructed from Theorem~\ref{theorem_Andersontoric}, we are only concerned with rational polytopes. From now on, every polytope $\Delta$ is assumed to be \emph{rational}, which means that all vertices of $\Delta$ have rational coordinates.

\begin{definition}\label{def_qgorensteinpoly}
A full-dimensional rational polytope $\Delta \subset M_\R \simeq \R^m$ is called \emph{$\mathbb{Q}$-Gorenstein Fano} if there exists a vector $\mathbf{u}_0 \in M_\R$ and a number $\nu \in \mathbb{N}$ such that $\nu \cdot (\Delta - {\bf u}_0)^\circ$ is a \emph{Fano} polytope, a lattice polytope each of which vertex is a \emph{primitive lattice} vector, see \cite{KasprzykNill}. Equivalently, the translated polytope $\Delta - \mathbf{u}_0$ has a presentation of the form~\eqref{equ_polytopeshalfsp} satisfying that the $\nu$-multiple of each inward facet normal vector $\mathbf{v}_\iota$ is primitive. To keep track of this number $\nu$, we sometimes call the polytope $\Delta$ a \emph{$\mathbb{Q}$-Gorenstein Fano polytope of size $\nu$}.

In this circumstance, the point $\mathbf{u}_0$ which maps to the origin $\mathbf{0}$ under the translation $\mathbf{u} \mapsto \mathbf{u} - \mathbf{u}_0$ is called the \emph{center} of $\Delta$. A $\mathbb{Q}$-Gorenstein Fano polytope of size $1$ is said to be a \emph{normalized} $\mathbb{Q}$-Gorenstein Fano polytope.
\end{definition}

For example, the convex hull $\Delta_1 $ of $\{(\pm 2, \pm 2) \}$ in $\R^2$ is a $\Q$-Gorenstein Fano polytope of size $2$, whereas the convex hull $\Delta_2$ of $\{(\pm 1, \pm 2) \}$ in $\R^2$ is not $\Q$-Gorenstein Fano, see Figure~\ref{exqgorpoly}. 
 \begin{figure}[H]
		\scalebox{0.8}{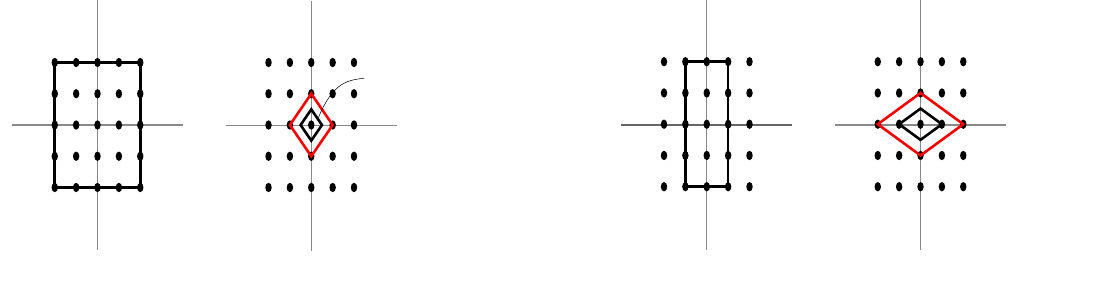}
		\caption{$\Q$-Gorenstein Fano polytope, non $\Q$-Gorenstein Fano polytope}\label{exqgorpoly}
\end{figure}

\begin{definition}\label{def_combinatorialnewtonpoly}
For a $\Q$-Gorenstein Fano polytope $\Delta$, the Fano polytope $\nu \cdot (\Delta - {\bf u}_0)^\circ \subset N_\R$ in Definition~\ref{def_qgorensteinpoly} is called the \emph{combinatorial dual} of $\Delta$ and denoted by $\Delta^\vee$. In other words, $\Delta^\vee$ is the convex hull of all the primitive generators of $1$-cones of the normal fan of $\Delta$.
\end{definition}

\begin{remark}\label{remark_QGoren} The terminology ``$\mathbb{Q}$-Gorenstein Fano" originated from the following geometric fact. The toric variety associated with the normal fan of a $\mathbb{Q}$-Gorenstein Fano polytope is $\mathbb{Q}$-Gorenstein Fano as an algebraic variety, see \cite[Corollary 2.2.19, Theorem 4.2.8, and Proposition 6.1.10]{CLS11} for instance. Note that the $\mathbb{Q}$-Gorenstein Fano condition is more general than the ``reflexive" condition. Namely, every reflexive polytope is normalized $\mathbb{Q}$-Gorenstein Fano. In particular, a $\mathbb{Q}$-Gorenstein Fano polytope need \emph{not} be a lattice polytope.
\end{remark}

\begin{remark}\label{lemma_QGorensteinpoly}
When $\Delta$ is a normalized $\mathbb{Q}$-Gorenstein Fano polytope with center $\mathbf{u}_0$, we can compute the polar dual $(\Delta - \mathbf{u}_0)^\circ$ using the facet normal vectors of $\Delta$ by Proposition \ref{proposition_polardualandnewton} as follows. Let $\{ F_1, F_2, \cdots, F_\kappa \}$ be the set of facets of $\Delta$ and $\mathbf{v}_{F_\iota}$ the primitive inward normal vector to the facet $F_\iota$. Then $\Delta^\vee = (\Delta - \mathbf{u}_0)^\circ$ is expressed as 
\[
    (\Delta - \mathbf{u}_0)^\circ = \mbox{the convex hull of $\left\{ \mathbf{v}_{F_\iota} \mid \iota = 1, 2, \cdots, \kappa  \right\}$}.
\]

If $\Delta$ is a $\mathbb{Q}$-Gorenstein Fano polytope of size $\nu$ with center $\mathbf{u}_0$, by Proposition~\ref{proposition_polardualandnewton}, the combinatorial dual $\Delta^\vee$ of $\Delta$ is equal to
$$
\mbox{the convex hull of $\left\{ \nu \cdot \mathbf{v}_{\iota} \mid \iota = 1, 2, \cdots, \kappa  \right\}$} = 
\mbox{the convex hull of $\left\{ \mathbf{v}_{F_\iota} \mid \iota = 1, 2, \cdots, \kappa  \right\}$}.
$$
\end{remark}

Let $X$ be a smooth projective variety polarized by a very ample line bundle $\mcal{L}$ with $N \coloneqq \dim_\C H^0(X,\mcal{L}) - 1$. Via the Kodaira embedding $\varphi_\mcal{L} \colon X \to \CP^N$, the variety $X$ can be regarded as a subvariety of the projective space $\CP^N$. Recall that we have chosen a valuation $v \colon \C(X) \backslash \{0\} \rightarrow \C$ with one-dimensional leaves and a nonzero section $h \in H^0(X,\mcal{L})$. We then have a semigroup $S(\mcal{L},v,h)$ associated with the triple $(\mcal{L},v,h)$.

\begin{assumption}\label{ass_semigpfg}
Throughout this section, the associated semigroup $S(\mcal{L},v,h)$ is always assumed to be finitely generated. 
\end{assumption}

By Theorem~\ref{theorem_Andersontoric}, there exists a toric degeneration $\pi \colon \frak{X} \rightarrow \C$ of $X$ associated with the Newton--Okounkov polytope $\Delta(\mathcal{L}, v, h)$ such that the diagram~\eqref{equ_toricdeg} commutes. Let us choose the K\"{a}hler form $\omega$ on $X$ induced from the Fubini--Study form on the ambient projective space, that is, $\omega = \varphi_{\mcal{L}}^*  \omega_\mathrm{FS}$. Such a form $\omega$ is said to be \emph{inherited from} the very ample line bundle $\mcal{L}$. When considering a Newton--Okounkov polytope of $\mcal{L}$, we equip $X$ with the K\"{a}hler form inherited from ${\mcal{L}}$.

\begin{definition}\label{def_qgorensteinnormaltoricdeg}
    Let $X$ be a smooth projective variety with a toric degeneration $\pi \colon \frak{X} \rightarrow \C$ arising from a Newton--Okounkov polytope $\Delta(\mcal{L},v,h)$.
    \begin{itemize}
        \item We say that $\pi$ is \emph{$\Q$-Gorenstein Fano} if the Newton--Okounkov polytope $\Delta(\mcal{L},v,h)$ is $\Q$-Gorenstein Fano in the sense of Definition~\ref{def_qgorensteinpoly}. (Then the central fiber is $\mathbb{Q}$-Gorenstein Fano as an algebraic variety, see Remark~\ref{remark_QGoren}) 
        \item We say that $\pi$ is {\em normal} if the central fiber of $\pi$ is {normal} as an algebraic variety. Note that the degeneration $\pi$ is normal if $S(\mcal{L},v,h)$ is saturated, see \cite[Theorem 1.3.5, Exercise 2.1.5]{CLS11}.
    \end{itemize}
\end{definition}

We now construct a monotone Lagrangian torus of a smooth Fano variety equipped with a K\"{a}hler form induced from a very ample line bundle $\mcal{L}$, which is defined as a positive power of the anticanonical bundle of $X$.

\begin{proposition}[Proposition~\ref{propA}]\label{proposition_toricdegenerationsmonotone}
 Let $X$ be a smooth Fano variety whose anticanonical line bundle $K^{-1}_X$ is ample. Take a very ample line bundle $\mathcal{L} = (K^{-1}_X)^{\otimes \nu}$ for some $\nu \in \mathbb{N}$ and equip $X$ with the K\"{a}hler form inherited from $\mathcal{L}$.
Suppose that the Newton--Okounkov polytope $\Delta = \Delta(\mathcal{L}, v, h)$ of $\mathcal{L}$ with a choice of $(v, h)$ in Section~\ref{subSec: NObody} is a $\Q$-Gorenstein Fano polytope and the central fiber of the associated toric degeneration is a normal toric variety. Let $\Phi \colon X \to \Delta$ be a completely integrable system constructed from Theorem~\ref{theorem_HaradaKaveh}. Then this $\Q$-Gorenstein Fano polytope $\Delta$ is of size $\nu$ and the fiber $L \coloneqq \Phi^{-1}(\mathbf{u}_0)$ of $\Phi$ located at the center $\mathbf{u}_0$ of $\Delta$ is a monotone Lagrangian torus.
\end{proposition}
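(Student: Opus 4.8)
The plan is to reduce the monotonicity of $L = \Phi^{-1}(\mathbf{u}_0)$ to the well-understood Fano toric case (Example~\ref{example_toricFano}) by transporting holomorphic disk data across the degeneration. First I would use the normality hypothesis together with Remark~\ref{remark_interior_facet}: since the central fiber $X_0$ is normal, the Lagrangian fibration $\Phi \colon X \to \Delta$ agrees with the toric moment map $\Phi_0 \colon X_0 \to \Delta$ over the complement $\Delta \setminus \Delta_2$ of the codimension-two faces, via the symplectomorphism $\phi$. In particular, the fiber $L = \Phi^{-1}(\mathbf{u}_0)$ over the center is identified symplectically with the toric fiber $L_{\mathbf{u}_0} = \Phi_0^{-1}(\mathbf{u}_0)$ inside the smooth locus $U_0$, because $\mathbf{u}_0$ is an interior point of $\Delta$ (it is the center of a $\Q$-Gorenstein Fano polytope) and hence lies far from $\Delta_2$.

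Next I would produce, for each facet $F_j$ of $\Delta$, a holomorphic (in fact gradient holomorphic) disk of Maslov index two bounded by $L$ whose symplectic area equals the affine distance from $\mathbf{u}_0$ to $F_j$. On the toric side this is exactly the Cho--Oh picture recalled in Example~\ref{example_toricFano}: near $\Phi_0^{-1}(F_j)$ one takes the circle subgroup $S^1 \subset T$ whose Lie algebra is normal to $F_j$ with the standard $S^1$-invariant complex structure, flows a generic $S^1$-orbit in $L_{\mathbf{u}_0}$ along the gradient of the $S^1$-moment map to a fixed point over the relative interior of $F_j$, and reads off Maslov index two from Theorem~\ref{theorem_gradient_disc_formula} (the sum of negative weights is $1$ because the corresponding fixed locus has codimension two and the action is semifree there). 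These disks live in $U_0 \cong U$, so under $\phi$ they pull back to genuine holomorphic disks in $X$ with boundary on $L$, same Maslov index, same area. The crucial point is that the area of the $j$-th disk is the affine distance $\mathrm{dist}(\mathbf{u}_0, F_j)$, and that these disk classes, together with the obvious relations among them, span $\pi_2(X,L) \otimes \Q$ in the sense needed to test monotonicity — this follows because $X$ is Fano and the toric model already has $H_2(X_0, L_{\mathbf{u}_0})$ generated by the facet disk classes.

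Now I would invoke the $\Q$-Gorenstein Fano condition to force all these affine distances to be equal. By Definition~\ref{def_qgorensteinpoly}, after translating $\mathbf{u}_0$ to the origin the polytope $\Delta - \mathbf{u}_0$ has a presentation $\bigcap_j H^+_{\mathbf{v}_j, 1}$, i.e. every facet $F_j$ lies on the hyperplane $\langle \mathbf{u}, \mathbf{v}_j\rangle = -1$; equivalently the affine distance from the center to every facet is the same constant (normalized to $1$ in the $\Q$-Gorenstein presentation, up to the global factor $\lambda$ which is facet-independent). Hence all the disks $\varphi_j$ bounded by $L$ have equal area $A$, and equal Maslov index $2$, so $\mu_L(\beta) = \tfrac{2}{A}\,\omega(\beta)$ holds on the subgroup of $\pi_2(X,L)$ generated by the $[\varphi_j]$; since $\mathcal{L}$ is a positive power of the anticanonical bundle the ambient K\"ahler form is monotone, and the relation extends to all of $\pi_2(X,L)$ by the standard fact (cf.\ \cite[Remark 2.3]{Ohmonotone}) that $\delta = 2\lambda$ is then forced globally. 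Therefore $L$ is monotone.

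The main obstacle I anticipate is the second step: rigorously transferring the Maslov-two holomorphic disks from the singular central fiber $X_0$ to $X$ and checking that their classes generate enough of $\pi_2(X,L)$ to pin down monotonicity — one must be careful that $\phi$ is only a symplectomorphism on the smooth locus, and that the degeneration does not create or destroy disk classes relevant to the monotonicity relation. I expect this is handled exactly as in \cite{ChoKimMONO} via the gradient holomorphic disk formula (Theorem~\ref{theorem_gradient_disc_formula}) applied to the residual torus action on $X$ coming from the degeneration, so that the disks are constructed directly in $X$ rather than pulled back, sidestepping the singularity issue; the combinatorial input (equal affine distances) is then the easy part, being immediate from Definition~\ref{def_qgorensteinpoly}.
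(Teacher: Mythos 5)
Your construction of the facet disks is exactly the paper's: use normality so that $\Phi_0^{-1}(\mathrm{int}(F))$ lies in the smooth locus, take the Cho--Oh gradient holomorphic disks of Maslov index two in $X_0$, transport them through the Harada--Kaveh symplectomorphism $\phi$, and use the $\Q$-Gorenstein Fano presentation $\Delta-\mathbf{u}_0=\bigcap_j H^+_{\mathbf{v}_j,1}$ to see that all these disks have the same area. (A small simplification you missed: one does not need the disks to remain holomorphic in $X$; only their classes in $\pi_2(X,L)$ with their Maslov indices and areas matter, and both are preserved by the symplectomorphism on $U$, so the "transfer of holomorphicity" worry in your last paragraph is moot.)

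The genuine gap is in your final step, the extension from the subgroup generated by the facet-disk classes to all of $\pi_2(X,L)$. Your two justifications do not work. First, the claim that the disk classes span $\pi_2(X,L)\otimes\Q$ "because the toric model already has $H_2(X_0,L_{\mathbf{u}_0})$ generated by the facet disk classes" does not transfer across the degeneration: $X$ and the (singular) central fiber $X_0$ are not homeomorphic, $\phi$ is a symplectomorphism only over the smooth locus, and spherical classes of $X$ need not lie in the span of the transported disk classes. Second, citing \cite[Remark 2.3]{Ohmonotone} to say $\delta=2\lambda$ is "forced globally" is circular: that remark computes the monotonicity constant of a Lagrangian that is \emph{already known} to be monotone; it cannot upgrade proportionality on a subgroup to proportionality on $\pi_2(X,L)$. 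The paper closes this gap differently: since a smooth Fano variety is simply connected, the long exact sequence gives a splitting $\pi_2(X,L)\simeq \pi_2(X)\oplus\pi_1(L)$; on the spherical summand, $[\omega]=c_1(\mcal{L})=\lambda\, c_1(TX)$ gives $\lambda\,\mu_L(\alpha)=2\,\omega(\alpha)$ directly (no disks needed); and on the $\pi_1(L)$ summand it suffices to choose $m$ facets whose primitive normals $\mathbf{v}_{j}$ form a $\Q$-basis of $N_\Q$, so that the boundaries $\partial[\varphi_j]=\mathbf{v}_j$ span $\pi_1(L)\otimes\Q$, and the equal-area, Maslov-two disks give $\lambda\,\mu_L([\varphi_j])=2\,\omega([\varphi_j])$. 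Since $\mu_L$ and $\omega$ are homomorphisms, these two pieces together yield monotonicity on all of $\pi_2(X,L)$. You have all the ingredients in hand, but the splitting via simple connectivity and the separate treatment of the spherical part are the missing mechanism.
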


\begin{proof}
Observe that the K\"{a}hler form $\omega$ induced from a projective embedding into $\p(H^0(X,\mcal{L})^*)$ is monotone. More precisely, if the very ample line bundle $\mathcal{L}$ is a positive power of the anticanonical bundle $K^{-1}_X$ of $X$, i.e., $\mathcal{L} = (K^{-1}_X)^{\otimes \nu}$ for some $\nu \in \mathbb{N}$, then we have 
\begin{equation}\label{c1omegaeq}
    [\omega] = c_1 (\mathcal{L}) = \nu \cdot c_1(TX).
\end{equation}
Since every smooth Fano variety is simply connected (see \cite{Kob61} for instance), by the long exact sequence of the pair $(X, L)$, we have
\begin{equation}\label{equ_homotopygroup1}
\pi_2(X, L) \simeq \pi_2(X) \oplus \pi_1(L).
\end{equation}
Regarding $\pi_2(X)$ as $H_2(X)$ via the Hurewicz map, \eqref{c1omegaeq} yields
\begin{equation}\label{equ_1spherical}
\nu \cdot \mu_L (\alpha) = 2 \cdot \omega (\alpha) \quad \mbox{ for every spherical class $\alpha \in \pi_2(X)$.}
\end{equation}

Suppose that $\Delta$ is a $\Q$-Gorenstein Fano polytope of size $\nu^\prime$ for some $\nu^\prime \in \mathbb{N}$. We claim that $\nu = \nu^\prime$. Let $\mathbf{u}_0$ be the center of $\Delta$ and consider the Lagrangian torus fiber $L_0 \coloneqq \Phi_0^{-1}(\mathbf{u}_0)$ in the toric variety $X_0$. Since ${X}_{0}$ is normal, the singular locus of $X_0$ has complex codimension $\geq 2$. In particular, the smooth locus $U_0$ of $X_0$ contains the inverse image $\Phi_0^{-1}(\mathrm{int}(F))$ for each facet $F$ of $\Delta$. Let ${\bf v}_F \in N$ be the primitive integral vector inward normal to $F$ and $S^1_F$ the circle subgroup generated by ${\bf v}_F$. As in Example~\ref{example_toricFano}, there is a gradient holomorphic disk of Maslov index two
\[
	u_F \colon \mathbb{D} \rightarrow X
\]
such that $u_F(\partial \mathbb{D}) \subset L_0$ and $u_F(\mathbb{D}) \subset U_0$. 

Let $U_1 \coloneqq \phi^{-1} (U_0) \subset X_1 = X$ be the inverse image of the smooth locus $U_0$ of $X_0$ via the map $\phi$ in Theorem \ref{theorem_HaradaKaveh}. The map $\phi$ restricted to $U_1$ is a symplectomorphism from $U_1$ to $U_0$. Moreover, $L$ maps to $L_0$ via $\phi$. For each facet $F_\iota$ with $\iota =1,2,\cdots, \kappa$, 
we denote by $u_{F_\iota} \colon \mathbb{D} \rightarrow X_0$ a gradient holomorphic disk of Maslov index two bounded by $L_0$ and let $\varphi_\iota \coloneqq u_{F_\iota} \circ \phi$ be the pull-back of $u_{F_\iota}$ along $\phi \colon U_1 \rightarrow U_0$. Since $\phi$ is a symplectomorphism, $\varphi_\iota$ is of Maslov index two and its symplectic area is the same as that of $u_\iota$. Recall that the area of $u_\iota$ is equal to $\nu^\prime$, the affine distance between the center and the facet $F_\iota$, see \cite[Theorem 8.1]{ChoOh} for the detail. Therefore, we have 
\begin{equation}\label{equ_1disk}
	\nu^\prime \cdot \mu_{L}([\varphi_\iota]) = 2 \cdot \omega([\varphi_\iota]).
\end{equation}

Since $\Delta \subset M_\R$ is full dimensional, the set of ${\bf v}_F$'s positively span $N_\R$ and so there exist $\iota_1, \iota_2, \cdots, \iota_s \in \mathbb{N}$ with $1 \leq \iota_1 < \iota_2 < \cdots < \iota_s \leq \kappa$ and $a_{\iota_1}, a_{\iota_2}, \cdots, a_{\iota_s} \in \mathbb{N}$ such that 
$$
\partial \left( a_{\iota_1} [\varphi_{\iota_1}] + a_{\iota_2} [\varphi_{\iota_2}] + \cdots +
a_{\iota_s} [\varphi_{\iota_s}] \right) = 0 \quad \mbox{ in $\pi_1(L)$.}
$$
Then $\alpha \coloneqq a_{\iota_1} [\varphi_{\iota_1}] + a_{\iota_2} [\varphi_{\iota_2}] + \cdots +
a_{\iota_s} [\varphi_{\iota_s}]$ is a nontrivial class in $\pi_2(X)$ by the the long exact sequence of pair $(X,L)$. From \eqref{equ_1spherical} and~\eqref{equ_1disk}, it follows that $\nu = \nu^\prime$. 

We may assume that the first $m$ normal vectors $\{\mathbf{v}_{1}, \mathbf{v}_{2}, \cdots, \mathbf{v}_{m}\} \subset N$ in the expression~\eqref{equ_polytopeshalfsp} are linearly independent and they form a basis of $N_\Q \simeq \Q^m$ after renumbering the indices if necessary. 
Note that $\{[\varphi_\iota] \mid \iota=1, 2, \cdots, m\}$ is a $\Q$-basis of $\Q\otimes_\Z \pi_1(L)$ and $\nu \cdot \mu_{L}([\varphi_\iota]) = 2 \cdot \omega([\varphi_\iota])
$ for each $\iota = 1, 2, \cdots, m$ by~\eqref{equ_1disk}. 
Combining~\eqref{equ_homotopygroup1} and ~\eqref{equ_1spherical}, together with the fact that $\mu_{L}$ and $\omega$ are homomorphisms on $\pi_2(X,L)$, we may conclude that
\[
	\nu \cdot \mu_L (\beta) = 2 \cdot \omega (\beta)
\]
for all $\beta \in \pi_2(X,L)$. Hence, $L$ is monotone. 
\end{proof}

\begin{remark}\label{remark_GM22}
In \cite{GalkinMikhalkin}, Galkin--Mikhalkin proved an analogous statement. They showed the monotonicity of a Lagrangian torus obtained from symplectic parallel transport of the toric fiber at the center $\mathbf{u}_0$ to a generic fiber.
\end{remark}

\subsection{Newton polytopes from disk counting}

The goal of this subsection is to introduce two dual polytopes obtained by counting disks bounded by a monotone Lagrangian torus. One has been widely used to distinguish monotone Lagrangian tori in the previous literature and the other is its slight modification thereof.

To each monotone Lagrangian torus $L$ in a $2m$-dimensional monotone symplectic manifold $(X,\omega)$, one can assign a \emph{disk potential}
\[
	W_L \in \C[z_1^{\pm}, \dots, z_m^{\pm}],
\] 
a Laurent polynomial which encodes counting invariants (a.k.a open Gromov-Witten invariants) of $L$. We briefly review the construction of the disk potential $W_L$ of $L$.

Choose an $\omega$-compatible almost complex structure $J$. For each homotopy class $\beta \in \pi_2(X, L)$ of Maslov index two, 
we denote by $\mcal{M}_1(L, J; \beta)$ the moduli space of $J$-holomorphic disks with one boundary marked point $(\varphi,z_0)$ in the class $\beta$ (modulo the automorphisms), that is, 
\[
\mcal{M}_1(L, J; \beta) = \left\{ (\varphi, z_0) \mid \varphi \colon (\mathbb{D}, \partial \mathbb{D}) \to (X, L), z_0 \in \partial \mathbb{D}, J \circ d \varphi = d \varphi \circ j  \mbox{ and } [\varphi] = \beta \right\} / \sim
\] 
where $j$ is the standard complex structure on $\mathbb{D}$.
The expected dimension of $\mcal{M}_1(L, J; \beta)$ is $\dim_\R L + \# \{\mbox{marked points}\} + \mu_L(\beta) - \dim_\R \mathrm{PSL}(2, \R)$.
In this monotone case, by choosing a generic almost complex structure $J$, the moduli space becomes transversal when  $\mu_L(\beta) = 2$. Moreover, the monotonicity condition ensures that the moduli space $\mcal{M}_1(L, J; \beta)$ with $\mu_L(\beta) = 2$ is a compact smooth manifold \emph{without} boundary. Taking an orientation and relative spin structure on the Lagrangian $L$, the degree of the evaluation map 
\[
	\mathrm{ev} \colon \mcal{M}_1(L, J; \beta) \to L, \quad (\varphi, z_0) \mapsto \varphi (z_0)
\]
is defined and denoted by $n(L, J; \beta)$. In other words, the number $n(L, J; \beta)$ is the (signed) count of $J$-holomorphic disks passing through a generic point of $L$. 

In general, for a general Lagrangian submanifold $L$ (with a fixed choice of orientation and relative spin structure), the number $n(L, J; \beta)$ \emph{does} depend on the choice of $J$. When $L$ is monotone, one can construct an oriented cobordism between two moduli spaces (with different choices of a generic almost complex structure) and hence the number $n(L, J; \beta)$ becomes a deformation invariant as stated below.

\begin{proposition}[\cite{EliashbergPolterovich}] \label{lemma_invarianceofcounting}
Suppose that $L$ is a monotone Lagrangian submanifold of $(X,\omega)$. For any class $\beta \in \pi_2(X,L)$ of Maslov index two and generic compatible almost complex structures $J$ and $J^\prime$, the counting invariants $n(L,J; \beta)$ and $n(L,J^\prime; \beta)$ are equal. 
\end{proposition}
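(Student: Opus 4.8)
The plan is to establish invariance of the count $n(\beta; L, J)$ by a standard cobordism argument, following Eliashberg--Polterovich \cite{EliashbergPolterovich} and the monotone Floer-theoretic framework of Oh \cite{Ohmonotone}. First I would fix the orientation and relative spin structure on $L$ once and for all, so that the evaluation map $\mathrm{ev}\colon \mcal{M}_1(L, J; \beta) \to L$ is coherently oriented for every generic $J$; then the degree $n(\beta; L, J)$ is a well-defined integer for each generic $J$. Given two generic compatible almost complex structures $J_0$ and $J_1$, I would choose a smooth path $(J_t)_{t \in [0,1]}$ of $\omega$-compatible almost complex structures connecting them, generic in the sense that the parametrized moduli space
\[
	\mcal{M}_1(L, \{J_t\}; \beta) = \bigcup_{t \in [0,1]} \{t\} \times \mcal{M}_1(L, J_t; \beta)
\]
is a smooth manifold with boundary, of dimension $\dim_\R L + 1$, whose boundary is $\mcal{M}_1(L, J_0; \beta) \sqcup \mcal{M}_1(L, J_1; \beta)$ (with appropriate signs). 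This gives an oriented cobordism between the two fibered moduli spaces, and the evaluation maps assemble into $\mathrm{ev}\colon \mcal{M}_1(L, \{J_t\}; \beta) \to L$; since the degree of the restriction of a map to the boundary of a compact oriented cobordism is zero, one concludes $n(\beta; L, J_0) = n(\beta; L, J_1)$.

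The key steps, in order, are: (1) recall that since $\mu(\beta) = 2$ and $L$ is monotone, generic $J$ makes $\mcal{M}_1(L, J; \beta)$ a compact smooth manifold without boundary of dimension $\dim_\R L$ (this is exactly the input already quoted in the paragraph preceding the statement, going back to \cite{Ohmonotone}); (2) set up the parametrized moduli space over a generic path $(J_t)$ and show it is a smooth compact cobordism --- compactness is the crucial point and uses monotonicity to rule out bubbling; (3) identify its boundary with the disjoint union of the two endpoint moduli spaces; (4) apply the elementary fact that the degree of the evaluation map is a cobordism invariant.

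The main obstacle is step (2), specifically compactness of the parametrized moduli space $\mcal{M}_1(L, \{J_t\}; \beta)$. A priori a sequence of $J_{t_n}$-holomorphic disks in class $\beta$ could degenerate by sphere bubbling or disk bubbling in the limit. Monotonicity is precisely what prevents this: any sphere bubble would carry positive Chern number, hence Maslov contribution at least $2$ (since for monotone $L$ the minimal Maslov number is at least $2$, and spheres contribute even positive Maslov), leaving a principal disk component of Maslov index $\le 0$; similarly a disk bubble splitting would distribute the total Maslov index $2$ among components each of Maslov index $\ge 2$ (using that non-constant disks have positive Maslov index by monotonicity), which is impossible unless one component is constant, and constant disk bubbles do not affect the limit. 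Hence the limit is again a single non-constant $J_{t}$-holomorphic disk in class $\beta$, so the moduli space is compact. I would also note that the argument is insensitive to the choice of path $(J_t)$: any two such paths are themselves connected by a generic two-parameter family, giving a cobordism of cobordisms, so $n(\beta; L, J)$ is genuinely independent of all choices except the fixed orientation and relative spin structure. This completes the proof.
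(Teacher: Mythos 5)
Your proposal is correct and follows essentially the same route the paper takes: the paper does not prove this statement in detail but cites Eliashberg--Polterovich and indicates exactly this argument in the preceding paragraph ("one can construct an oriented cobordism between two moduli spaces" for different generic $J$), which your parametrized-moduli-space cobordism, with monotonicity ruling out disk and sphere bubbling and cobordism invariance of the degree of $\mathrm{ev}$, fleshes out in the standard way.
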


From now on, for simplicity, $n(L,J; \beta)$ is denoted by $n_\beta$. To define a disk potential of $L$, we make a choice on a tuple $\mcal{B} = (\vartheta_1, \vartheta_2, \cdots,\vartheta_m)$ of oriented loops that form a $\Z$-basis of $\pi_1(L)$. The choice leads to the identification $\pi_1(L)$ with $N \coloneqq  \Z^m$. Namely, the basis elements in $\mcal{B}$ then correspond to the standard lattice vectors in $N$. Let $\frak{L}$ be a trivial complex line bundle over $L$ and consider a flat $\C^*$-connection $\nabla$ on $\frak{L}$. We take the monomial $z_i \coloneqq \mathrm{hol}_\nabla (\vartheta_i ) \in (\C^*)^m$ for $i = 1, 2, \cdots, m$. The set of those monomials forms a coordinate system for the moduli space of flat $\C^*$-connections on $\frak{L}$ (modulo gauge equivalences).

For a lattice vector $\mathbf{v} \in N$, we set $\mathbf{z}^{\mathbf{v}} = \mathbf{z}^{(v_1, \cdots, v_m)}$ to be the monomial $z_1^{v_1} \cdots z_m^{v_m}$. The \emph{disk potential} of $L$ in \cite{ChoOh, FOOOToric1} is defined by
\begin{equation}\label{equ_diskpotentialfun}
W_{L,\mcal{B}} \colon (\C^*)^m \to \C, \quad  W_{L,\mcal{B}} (\mathbf{z}) = \sum_{\beta \in \pi_2(X,L)} n_\beta \cdot \mathbf{z}^{\partial \beta}
\end{equation}
where the sum is taken over all homotopy classes of Maslov index two. 
We define the first combinatorial object associated to the disk potential $W_{L, \mcal{B}}$. We often suppress $\mcal{B}$ in the notation $W_{L, \mcal{B}}$, that is, $W_{L} \coloneqq W_{L, \mcal{B}}$.

\begin{definition}
We denote by $\Delta^{\mathrm{disk}}_{L, \mcal{B}}$ the Newton polytope of the disk potential  $W_{L, \mcal{B}}$ of a monotone Lagrangian torus $L$, that is,
$$
\Delta^{\mathrm{disk}}_{L, \mcal{B}} \coloneqq \mbox{the convex hull of $\left\{ \mathbf{v} \mid \sum_{\beta; \partial \beta = \mathbf{v}} n_\beta \neq 0 \right\}$ in $N_\R$.} 
$$
\end{definition}

\begin{remark}
By the Gromov's compactness theorem, the number of $J$-holomorphic disks of Maslov index two is finite. Hence, the convex hull $\Delta^{\mathrm{disk}}_{L, \mcal{B}}$ is compact (and hence a polytope). 
\end{remark}

\begin{remark}
In general, it is hard to find an explicit expression of $W_{L}$. It is known that the disk potential can be immediately written from the facets of $\Delta$ if the singular loci of $X_0$ are of particular type, see \cite{NishinouNoharaUeda, BGM2022}.
\end{remark}

Suppose that $\mathcal{B}$ and $\mathcal{B}^\prime$ are two $\Z$-bases for $\pi_1(L) \simeq N$. Then there exists an invertible matrix $A \in \mathrm{GL}(m, \Z)$ such that $\mathcal{B}$ maps to $\mathcal{B}^\prime$ under the linear transformation $\mathbf{v} \mapsto A \mathbf{v}$. Two disk potentials $W_{L, \mathcal{B}}$ and $W_{L, \mathcal{B}^\prime}$ are simply related by the monomial coordinate change defined by $\mathbf{z}^{\mathbf{v}} \mapsto \mathbf{z}^{A \mathbf{v}}$. In particular, the Newton polytopes $\Delta^{\mathrm{disk}}_{L, \mcal{B}}$ and $\Delta^{\mathrm{disk}}_{L, \mcal{B}^\prime}$ are unimodularly equivalent.

\begin{definition}
Two polytopes $\Delta$ and $\Delta^\prime$ are called \emph{unimodularly equivalent} if  $\Delta$ maps to $\Delta^\prime$ under a unimodular equivalence, that is, an affine transformation $\mathbf{v} \mapsto A \mathbf{v} + \mathbf{v}_0$ for some $A \in \mathrm{GL}(m, \Z)$ and $\mathbf{v}_0 \in \R^m$.
\end{definition}

From now on, two Newton polytopes are regarded as the same if they are unimodularly equivalent. We simply suppress $\mathcal{B}$ in the notation $\Delta^{\mathrm{disk}}_{L, \mcal{B}}$, that is, $\Delta^{\mathrm{disk}}_{L} \coloneqq \Delta^{\mathrm{disk}}_{L, \mcal{B}}$ for simplicity. 

Suppose that $\phi \colon X \to X$ is a symplectomorphism such that $\phi(L_1) = L_2$. We take a $\Z$-basis $\phi_* \mathcal{B} = (\phi \circ \vartheta_1, \phi \circ \vartheta_2, \cdots, \phi \circ \vartheta_m)$ consisting of oriented loops for $\pi_1(L_2)$ and set $z_i^\prime \coloneqq \mathrm{hol}_\nabla (\phi \circ \vartheta_i )$. By Proposition~\ref{lemma_invarianceofcounting}, the two counting invariants $n(L_1, J_1; \beta_1)$ and $n(L_2, \phi_* J_1; \phi_* \beta_1)$ are equal and hence $W_{L_1, \mathcal{B}}$ agrees with $W_{L_2, \phi_* \mathcal{B}}$ under the transformation $z_i \mapsto z_i^\prime$ for $i = 1, 2, \cdots, m$. We then have the following corollary.

\begin{corollary}\label{corollary_newtonpolytopeuni}
Consider two monotone Lagrangian tori $L_1$ and $L_2$ in a symplectic manifold $(X, \omega)$. If there is a symplectomorphism $\phi \colon X \to X$ such that $\phi (L_1) = L_2$, then the Newton polytopes $\Delta^{\mathrm{disk}}_{L_1}$ and $\Delta^{\mathrm{disk}}_{L_2}$ of disk potentials are unimodularly equivalent. 
\end{corollary}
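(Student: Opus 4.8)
The plan is to show that the symplectomorphism $\phi$ transports the entire Floer-theoretic data defining the disk potential from $L_1$ to $L_2$, so that after equipping $\pi_1(L_2)$ with the pushed-forward basis $\phi_*\mcal{B}$ the two disk potentials coincide \emph{as Laurent polynomials}; the unimodular equivalence of the Newton polytopes is then merely the observation, already recorded before the corollary, that any two $\Z$-bases of $\pi_1(L_2)$ differ by an element of $\mathrm{GL}(m,\Z)$.

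First I would fix a generic $\omega$-compatible almost complex structure $J_1$ for which every moduli space $\mcal{M}_1(L_1,J_1;\beta)$ with $\mu_{L_1}(\beta)=2$ is a compact manifold of the expected dimension. Since $\phi^*\omega=\omega$, the pushed-forward structure $\phi_*J_1 \coloneqq d\phi\circ J_1\circ d\phi^{-1}$ is again $\omega$-compatible, and $(\varphi,z_0)\mapsto(\phi\circ\varphi,z_0)$ defines a diffeomorphism $\mcal{M}_1(L_1,J_1;\beta)\xrightarrow{\sim}\mcal{M}_1(L_2,\phi_*J_1;\phi_*\beta)$ intertwining the evaluation maps through $\phi|_{L_1}\colon L_1\to L_2$. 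Because $\phi$ carries the bundle pair of a disk with boundary on $L_1$ isomorphically onto the corresponding pair for $L_2$, it preserves the Maslov index, so after transporting the orientation and relative spin structure of $L_1$ to $L_2$ via $\phi$ this identification is degree-preserving and $n(\beta;L_1,J_1)=n(\phi_*\beta;L_2,\phi_*J_1)$. Applying Proposition~\ref{lemma_invarianceofcounting} to the monotone torus $L_2$, the right-hand side equals $n_{\phi_*\beta}$ computed with any generic compatible almost complex structure, whence $n_\beta=n_{\phi_*\beta}$ for every class $\beta$ of Maslov index two. Since $\partial(\phi_*\beta)$ is the image of $\partial\beta$ under the isomorphism $\pi_1(L_1)\simeq\pi_1(L_2)$ induced by $\phi$, and since the monomials $z_j'\coloneqq\mathrm{hol}_\nabla(\phi\circ\vartheta_j)$ realize this isomorphism on the level of group algebras, summing over $\beta$ yields $W_{L_1,\mcal{B}}(\mathbf{z})=W_{L_2,\phi_*\mcal{B}}(\mathbf{z})$, and hence $\Delta^{\mathrm{disk}}_{L_1,\mcal{B}}=\Delta^{\mathrm{disk}}_{L_2,\phi_*\mcal{B}}$ on the nose. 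Passing to any fixed $\Z$-basis on $\pi_1(L_2)$, which differs from $\phi_*\mcal{B}$ by a matrix in $\mathrm{GL}(m,\Z)$, changes this polytope only by a unimodular equivalence, which is the assertion.

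The one point requiring care is the behaviour of signs: a priori the orientation and relative spin structure entering the definition of $n_\beta$ are extra data, and $\phi$ need not preserve a previously fixed such choice on $L_2$. I would circumvent this by \emph{defining} the orientation and relative spin structure on $L_2$ to be the $\phi$-pushforward of those on $L_1$, which makes the degree matching in the second paragraph automatic. One then checks that $\Delta^{\mathrm{disk}}_{L_2}$ is insensitive to this choice: changing the relative spin structure multiplies each $n_\beta$ by a sign depending only on $\partial\beta\bmod 2$, hence constant along each fibre of $\beta\mapsto\partial\beta$, so the set $\{\,\mathbf{v}\mid\sum_{\partial\beta=\mathbf{v}}n_\beta\neq 0\,\}$, and thus its convex hull, is unchanged. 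This is the only genuinely delicate step; the remainder is bookkeeping.
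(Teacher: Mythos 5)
Your proposal is correct and follows essentially the same route as the paper: the paper's proof consists precisely of pushing the moduli spaces and counting invariants forward along $\phi$ (using Proposition~\ref{lemma_invarianceofcounting}), concluding $W_{L_1,\mcal{B}} = W_{L_2,\phi_*\mcal{B}}$ under $z_j \mapsto z_j^\prime$, and then invoking the already-noted fact that a change of $\Z$-basis of $\pi_1(L_2)$ gives a unimodular equivalence of Newton polytopes. Your additional discussion of transporting the orientation and relative spin structure, and checking that the Newton polytope is insensitive to that choice, is a careful point that the paper glosses over but does not change the argument.
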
 

\begin{remark}
This corollary has been employed to distinguish the monotone Lagrangian tori, see \cite{Aurouxinfinitely, Via16, Via17} for instance.
\end{remark}

We introduce the Newton polytope $\Delta^{\mathrm{ref}}_L$ of the ``refined" version of the disk potential $W_L$ in the sense of effective disk classes. For motivation, we now discuss some drawbacks of this disk potential $W_L$ in~\eqref{equ_diskpotentialfun} to distinguish monotone Lagrangian tori. Originally, the disk potential function $W_L$ in \cite{FOOObook} was defined on the Maurer--Cartan space of $L$. To make the expression malleable, it is usually written in the form of a Laurent polynomial or series by restricting the Maurer--Cartan space to $H^1(L)$ (provided $H^1(L)$ is a subset of the Maurer--Cartan space) and taking a basis $\mcal{B} = \{ \vartheta_1, \vartheta_2, \cdots,\vartheta_m \}$ for $\pi_1(L) = \mathrm{Hom}(H^1(L);\mathbb{Z})$. However, throughout this process, we lose some information. Suppose that there are two distinct classes $\beta$ and $\beta^\prime$ that can be realized as a holomorphic disk satisfying 
\begin{enumerate}
\item $\partial(\beta) = \partial(\beta^\prime)$ and
\item $\mu_L (\beta) = \mu_L (\beta^\prime) = 2$.
\end{enumerate}
If their counting invariants are canceled, that is, $n_\beta + n_{\beta^\prime} = 0$, then the disk potential $W_{L}$ cannot capture the existence of such disks. To capture this piece of information on the existence, we shall look at a modified disk potential which is better suited to distinguish the constructed monotone Lagrangian tori in our situation.

\begin{definition}
Identify $N \simeq \Z^m$ with the fundamental group $\pi_1 (L)$.
For a lattice vector $\mathbf{v} \in N$, we define
$$
c_{\mathbf{v}} \coloneqq \begin{cases}
1 &\mbox{ if there exists $\beta \in \pi_2(X, L)$ such that $\partial \beta = \mathbf{v}$ and $n_\beta \neq 0$, } \\
0 &\mbox{ otherwise.}
\end{cases}
$$
The \emph{refined disk potential} of $L$ is defined by
\begin{equation}\label{equ_refdiskpotentialfun}
W^{\mathrm{ref}}_L (\mathbf{z}) \coloneqq \sum_{\mathbf{v} \in N}  c_{\mathbf{v}} \cdot z^{\mathbf{v}}.
\end{equation}
\end{definition}

\begin{definition}\label{def_newtonrefineddisk}
We denote by $\Delta^{\mathrm{ref}}_L$ the Newton polytope of the refined disk potential  $W_L^{\mathrm{ref}}$ of a monotone Lagrangian torus $L$, that is, 
$$
\Delta^{\mathrm{ref}}_L \coloneqq \mbox{the convex hull of $\{ \mathbf{v} \mid \exists \beta \mbox{ such that } \mathbf{v} = \partial \beta \mbox{ and } n_\beta \neq 0 \}$ in $N_\R$.} 
$$
\end{definition}

We also have the invariance of the Newton polytope of a refined disk potential by Proposition~\ref{lemma_invarianceofcounting}.

\begin{corollary}\label{cor_polytoperef}
Suppose that there are two monotone Lagrangian tori $L_1$ and $L_2$ in a symplectic manifold $(X, \omega)$.
If there is a symplectomorphism $\phi \colon X \to X$ such that $\phi (L_1) = L_2$, then the Newton polytopes $\Delta^{\mathrm{ref}}_{L_1}$ and $\Delta^{\mathrm{ref}}_{L_2}$ of refined disk potentials are unimodularly equivalent. \end{corollary}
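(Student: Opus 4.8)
The plan is to mirror the proof of Corollary~\ref{corollary_newtonpolytopeuni} almost verbatim, since the refined disk potential is built from exactly the same counting data $n_\beta$, only repackaged so that no cancellations occur. First I would fix a $\Z$-basis $\mcal{B} = (\vartheta_1,\dots,\vartheta_m)$ of $\pi_1(L_1)$ and a generic $\omega$-compatible almost complex structure $J_1$, so that $\Delta^{\mathrm{ref}}_{L_1}$ is the convex hull of those $\mathbf{v} \in N$ for which some class $\beta \in \pi_2(X,L_1)$ with $\mu(\beta)=2$ and $\partial\beta = \mathbf{v}$ has $n(\beta;L_1,J_1)\neq 0$.

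Next I would transport everything along $\phi$. Since $\phi$ is a symplectomorphism with $\phi(L_1)=L_2$, it induces an isomorphism $\phi_* \colon \pi_2(X,L_1) \to \pi_2(X,L_2)$ preserving the Maslov index, and $\phi_*\mcal{B} = (\phi\circ\vartheta_1,\dots,\phi\circ\vartheta_m)$ is a $\Z$-basis of $\pi_1(L_2)$; on $\pi_1$ the map $\phi_*$ is the restriction of the linear map $N \to N$ sending the $j$-th basis vector of $\mcal{B}$ to the $j$-th basis vector of $\phi_*\mcal{B}$, hence is given by some $A \in \mathrm{GL}(m,\Z)$, and $\partial(\phi_*\beta) = A\,\partial\beta$. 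Pushing forward the almost complex structure, $J_2 \coloneqq \phi_* J_1$ is generic and compatible with $\omega$, and $\phi$ restricts to a diffeomorphism of moduli spaces $\mcal{M}_1(L_1,J_1;\beta) \xrightarrow{\sim} \mcal{M}_1(L_2,J_2;\phi_*\beta)$ commuting with the evaluation maps up to $\phi|_{L_1}$, so $n(\phi_*\beta;L_2,J_2) = \pm\, n(\beta;L_1,J_1)$ with a consistent sign once compatible orientations and relative spin structures are chosen; in any case $n(\phi_*\beta;L_2,J_2)\neq 0$ iff $n(\beta;L_1,J_1)\neq 0$. Therefore the coefficient $c_{\mathbf{v}}$ computed for $(L_1,J_1,\mcal{B})$ equals the coefficient $c_{A\mathbf{v}}$ computed for $(L_2,J_2,\phi_*\mcal{B})$, i.e.\ $W^{\mathrm{ref}}_{L_1}$ maps to $W^{\mathrm{ref}}_{L_2}$ under $\mathbf{z}^{\mathbf{v}} \mapsto \mathbf{z}^{A\mathbf{v}}$, so $\Delta^{\mathrm{ref}}_{L_2} = A\cdot\Delta^{\mathrm{ref}}_{L_1}$.

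Finally I would invoke Proposition~\ref{lemma_invarianceofcounting} to remove the dependence on the auxiliary almost complex structures: for any generic $J$ on $X$ (used to define $\Delta^{\mathrm{ref}}_{L_2}$ intrinsically) one has $n(\gamma;L_2,J) = n(\gamma;L_2,J_2)$ for every Maslov-two class $\gamma$, so the set of $\mathbf{v}$ with nonvanishing $c_{\mathbf{v}}$ is independent of that choice, and likewise for $L_1$; hence $\Delta^{\mathrm{ref}}_{L_1}$ and $\Delta^{\mathrm{ref}}_{L_2}$ are well-defined up to unimodular equivalence and the relation $\Delta^{\mathrm{ref}}_{L_2} = A\cdot \Delta^{\mathrm{ref}}_{L_1}$ exhibits them as unimodularly equivalent. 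This completes the argument.

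I do not expect any genuine obstacle: the only subtlety is the usual bookkeeping of orientations and relative spin structures when comparing $n(\beta;L_1,J_1)$ with $n(\phi_*\beta;L_2,\phi_*J_1)$, but since the definition of $c_{\mathbf{v}}$ only records whether the invariant is nonzero, even a sign discrepancy is harmless, so the proof is strictly easier than that of Corollary~\ref{corollary_newtonpolytopeuni}. In fact one can phrase it as a one-line deduction: passing to the refined potential is a functor of the raw data $\{(\partial\beta, n_\beta)\}$, and Corollary~\ref{corollary_newtonpolytopeuni} already established that $\phi$ identifies this raw data up to the unimodular change $A$.
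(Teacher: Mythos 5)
Your argument is correct and follows essentially the same route as the paper: the paper deduces Corollary~\ref{cor_polytoperef} directly from Proposition~\ref{lemma_invarianceofcounting} via the same transport-along-$\phi$ discussion (pushed-forward basis $\phi_*\mcal{B}$, pushed-forward almost complex structure, identification of the counting data up to a $\mathrm{GL}(m,\Z)$ change of basis) that it used for Corollary~\ref{corollary_newtonpolytopeuni}. Your extra remark that only the nonvanishing of $n_\beta$ matters, so any sign ambiguity is harmless, is a fair and accurate observation but not a deviation from the paper's proof.
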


In the remaining part of this section, we explore the relationship between $\Delta^\vee$ in Definition~\ref{def_combinatorialnewtonpoly} and $\Delta^{\mathrm{ref}}_{L, \mathcal{B}}$ in Definition~\ref{def_newtonrefineddisk}. Recall that the Lagrangian torus $L$ is constructed from the Newton--Okounkov polytope $\Delta \subset M_\R \simeq \R^m$. The set $\mathcal{B}_0$ is taken as the oriented loops  $\{ \vartheta_1, \vartheta_2, \cdots,\vartheta_m \}$ in $L$ respectively corresponding to the dual basis elements of the standard basis for $M$ via $N \simeq \pi_1 (L)$. From now on, we set $\Delta^{\mathrm{ref}}_{L} \coloneqq \Delta^{\mathrm{ref}}_{L, \mathcal{B}_0}$ for notational simplicity.

\begin{proposition}\label{proposition_deltaveedleta}
The combinatorial dual $\Delta^\vee$ of a $\mathbb{Q}$-Gorenstein Fano polytope $\Delta$ is contained in the Newton polytope of the refined disk potential $W_L^{\mathrm{ref}}$, that is, $\Delta^\vee \subset \Delta^{\mathrm{ref}}_{L}$.
\end{proposition}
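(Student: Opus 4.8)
The plan is to show that each primitive inward facet normal vector $\mathbf{v}_{F_j}$ of $\Delta$ lies in $\Delta^{\mathrm{ref}}_L$; since $\Delta^\vee$ is by Remark~\ref{lemma_QGorensteinpoly} the convex hull of these vectors and $\Delta^{\mathrm{ref}}_L$ is convex, this gives the inclusion. So I would fix a facet $F$ of $\Delta$ with primitive inward normal $\mathbf{v}_F \in N$ and produce a homotopy class $\beta_F \in \pi_2(X,L)$ with $\mu_L(\beta_F) = 2$, $\partial \beta_F = \mathbf{v}_F$, and $n_{\beta_F} \neq 0$, where $L = \Phi^{-1}(\mathbf{u}_0)$ is the monotone fiber at the center. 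The natural candidate is the gradient holomorphic disk $u_F \colon \mathbb{D} \to X_0$ attached to the facet $F$ (constructed via the circle subgroup $S^1_F$ generated by $\mathbf{v}_F$, exactly as in Example~\ref{example_toricFano} and the proof of Proposition~\ref{proposition_toricdegenerationsmonotone}), pulled back through the symplectomorphism $\phi$ of Theorem~\ref{theorem_HaradaKaveh} to a disk $\varphi_F = u_F \circ \phi$ bounded by $L$. Because the toric degeneration is normal, $u_F(\mathbb{D})$ lies in the smooth locus $U_0$, so $\varphi_F$ is a genuine holomorphic disk in $X$ of Maslov index two.

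The two things that need to be pinned down are (i) the boundary class $\partial \varphi_F = \mathbf{v}_F$ under the chosen identification $\pi_1(L) \simeq N$, and (ii) non-vanishing of the count in that boundary class. For (i): the loops $\vartheta_1,\dots,\vartheta_m$ generating $\pi_1(L)$ correspond to the dual basis of the standard basis of $M$; the boundary of the $S^1_F$-orbit disk $u_F$ is, by construction of a gradient disk, exactly the orbit circle of the subgroup generated by $\mathbf{v}_F$, so $\partial [u_F] = \mathbf{v}_F \in N = \pi_1(L_0)$, and $\phi$ carries this to $\mathbf{v}_F \in \pi_1(L)$. (More precisely: in the model toric picture of Example~\ref{example_toricFano}, the facet disk $\varphi_i$ has boundary equal to the primitive normal $\mathbf{v}_{F_i}$, and the Harada–Kaveh symplectomorphism respects the torus identifications on $U_0$.) For (ii): in the toric variety $X_0$, the count of Maslov-two disks in the class whose boundary is $\mathbf{v}_F$, evaluated at a generic point of $L_0$, is exactly $1$ — this is the Cho–Oh classification of Maslov-index-two disks on a toric Fano (Example~\ref{example_toricFano}), since the $S^1_F$-action near $\Phi_0^{-1}(\mathrm{int}(F))$ is semifree and $F$ contributes a single such disk. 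Transporting by the symplectomorphism $\phi$ and invoking Proposition~\ref{lemma_invarianceofcounting} (invariance of the count under change of generic $J$), we get $n_{\beta_F} = 1 \neq 0$, hence $c_{\mathbf{v}_F} = 1$ and $\mathbf{v}_F \in \Delta^{\mathrm{ref}}_L$.

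Having established $\mathbf{v}_{F_j} \in \Delta^{\mathrm{ref}}_L$ for every facet $F_j$ of $\Delta$, convexity of the Newton polytope $\Delta^{\mathrm{ref}}_L$ gives
\[
\Delta^\vee = \operatorname{Convex\ hull}\{\mathbf{v}_{F_j} \mid j = 1,\dots,\kappa\} \subset \Delta^{\mathrm{ref}}_L,
\]
which is the claim. The main obstacle I anticipate is item (ii): one needs to know that passing from the central fiber $X_0$ to the nearby fiber $X = X_1$ (via $\phi$, which is only a symplectomorphism on the open dense $U_0$, not a biholomorphism) does not create or destroy Maslov-two disks in the relevant boundary class, or at least that the signed count in the class $\mathbf{v}_F$ stays nonzero. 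This is where normality of the degeneration is essential — it guarantees $u_F(\mathbb{D}) \subset U_0$ so the pulled-back disk is honestly holomorphic for the transported almost complex structure $\phi_* J_0$ — combined with Proposition~\ref{lemma_invarianceofcounting} to get a well-defined invariant independent of $J$. One should be slightly careful that $\phi_* J_0$ extends to a global tame almost complex structure on $X$ and that no extra boundary degenerations appear along $X \setminus U_1$; the monotonicity of $L$ (Proposition~\ref{proposition_toricdegenerationsmonotone}) rules out bubbling in the Maslov-two moduli space, so the count is well-defined, and the above local computation shows it is nonzero in the class $\mathbf{v}_F$.
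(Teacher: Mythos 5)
Your overall strategy is the same as the paper's: realize each primitive facet normal $\mathbf{v}_{F}$ as the boundary of a basic Maslov-two class with nonzero count (the paper's Lemma~\ref{lemma_disksuperpotentialfacet}), then conclude by convexity via Remark~\ref{lemma_QGorensteinpoly}. Your treatment of the transfer from $X_0$ to $X$ (extend the pull-back of the toric complex structure over a small interpolation region so that $\phi^{-1}\circ u_F$ is honestly $J_1$-holomorphic, then invoke invariance of the count for monotone Lagrangians) is also essentially what the paper does.

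The genuine gap is in your step (ii). You compute the count on the central fiber by quoting the Cho--Oh classification of Maslov-index-two disks ``on a toric Fano'' as in Example~\ref{example_toricFano}, but that example and classification are for \emph{smooth} toric Fano manifolds. Here $X_0$ is only a normal, $\Q$-Gorenstein Fano toric variety: its fan need not be smooth, nor even simplicial, so Cho--Oh does not apply, and it is not automatic that the basic class contains a unique holomorphic disk through a generic point of $L_0$, that the moduli space is regular, or that the evaluation map has degree one with the correct orientation. The semifreeness of the $S^1_F$-action near $\Phi_0^{-1}(\mathrm{int}(F))$ gives you the Maslov index, not the classification. The paper fills exactly this hole: it passes to a simplicialization $\Pi\colon \widetilde{X}_0 \to X_0$, applies the Cho--Poddar classification of holomorphic disks in simplicial toric varieties/orbifolds to get $n_{\widetilde{\beta}}=1$ upstairs, and then shows that $\Pi_*$ is an orientation-preserving diffeomorphism between the Maslov-two moduli spaces in the basic class (using strict transforms, and regularity of the basic class to exclude sphere bubbling), so that the degree-one statement descends to $X_0$ and then transports to $X$ via $\phi$. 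Without this (or some substitute argument handling the singular central fiber), your claim $n_{\beta_F}=1$ is unjustified.
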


\begin{remark}
Recall that $\Delta^{\mathrm{disk}}_{L, \mcal{B}_0}$ denotes the Newton polytope of the disk potential $W_{L, \mcal{B}_0}$ of $L$. The authors do not know whether $\Delta^\vee \subset \Delta^{\mathrm{disk}}_{L, \mcal{B}_0}$ holds or not in a general setting. Galkin--Mikhalkin in \cite{GalkinMikhalkin} discovered an algebro-geometric condition on $\Delta^\vee = \Delta^{\mathrm{disk}}_{L, \mcal{B}_0}$. To apply their criterion, one needs to verify that both the toric variety $({X}_0,\mathrm{Sing}({X}_0)))$ and the total space $(\mathfrak{X}, \mathrm{Sing}(\mathfrak{X}) = \mathrm{Sing}({X}_0))$ are $\mathbb{Q}\Gamma$-spaces, see the definition therein. In our criterion, we do \emph{not} need to check that $(\mathfrak{X}, \mathrm{Sing}(\mathfrak{X}))$ is a $\mathbb{Q}\Gamma$-space because we introduce the refined disk potential and make use of the map $\phi$ in Theorem~\ref{theorem_HaradaKaveh}.
\end{remark}

We denoted by $F_\iota$ the facet of $\Delta$ contained in the hyperplane $H_{\mathbf{v}_\iota, 1}$ in~\eqref{equ_polytopeshalfsp}. Let $\beta_\iota \in \pi_2(X,L)$ be a homotopy class represented by a gradient holomorphic disk intersecting the inverse image of $F_\iota$. Such a class is called \emph{basic}. To orient the moduli space $\mcal{M}(L; \beta_\iota)$, we take the spin structure and the orientation of $L$ given by the torus action.

\begin{lemma}\label{lemma_disksuperpotentialfacet}
For each $\iota = 1, 2, \cdots, \kappa$, the counting invariant of $\beta_\iota$ is one, that is, $n_{\beta_\iota} = 1$. In particular, the refined disk potential of $L$ is of the form
$$
W_L^{\mathrm{ref}} (\mathbf{z}) = \sum_{\iota=1}^\kappa \mathbf{z}^{\mathbf{v}_\iota} + \tilde{W}_L^{\mathrm{ref}} (\mathbf{z})
$$
where $\tilde{W}_L^{\mathrm{ref}} (\mathbf{z})$ is a subtraction-free Laurent polynomial.
\end{lemma}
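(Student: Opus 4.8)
The plan is to reduce the statement to the local toric model near the facet $F_j$, where the counting invariant can be computed explicitly, and then to transport the computation back to $X$ via the symplectomorphism $\phi$ of Theorem~\ref{theorem_HaradaKaveh}. First I would recall from the proof of Proposition~\ref{proposition_toricdegenerationsmonotone} the setup: since the toric degeneration $\pi$ is normal, the smooth locus $U_0$ of the central fiber $X_0$ contains $\Phi_0^{-1}(\mathrm{int}(F_j))$ for every facet $F_j$, and $U_1 := \phi^{-1}(U_0)$ is symplectomorphic to $U_0$ with $L = \Phi^{-1}(\mathbf{u}_0)$ mapping to the toric fiber $L_0 := \Phi_0^{-1}(\mathbf{u}_0)$. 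The basic class $\beta_j$ is, by definition, represented by a gradient holomorphic disk $u_{F_j}\colon \mathbb{D}\to X_0$ of Maslov index two whose interior lies in $U_0$ and which meets $\Phi_0^{-1}(\mathrm{int}(F_j))$; pulling back along $\phi$ gives a Maslov-index-two disk $\varphi_j = u_{F_j}\circ\phi$ in $X$ bounded by $L$ with $\partial\varphi_j = \mathbf{v}_j$ (after the identification $N\simeq\pi_1(L)$ dictated by the chosen basis $\mathcal{B}_0$).

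The key step is to show $n_{\beta_j} = 1$. Because the almost complex structure may be chosen generically and the moduli space $\mathcal{M}_1(L,J;\beta_j)$ is then a compact smooth manifold of dimension $m$ without boundary (monotonicity, $\mu(\beta_j)=2$), it suffices to compute the degree of $\mathrm{ev}$ for one convenient $J$ by Proposition~\ref{lemma_invarianceofcounting}. I would take $J$ to be the restriction of the $S^1$-invariant integrable complex structure used in Example~\ref{example_toricFano}, where $S^1 = S^1_{F_j}$ is the circle subgroup generated by the primitive inward normal $\mathbf{v}_{F_j}$. The fixed-point component of $S^1_{F_j}$ meeting $\Phi_0^{-1}(\mathrm{int}(F_j))$ has codimension two, so the action is semifree near it; hence by the classification of Maslov-index-two holomorphic disks in \cite{ChoOh} the only such disks in the class $\beta_j$ are the gradient holomorphic disks $u_{F_j}$ through points of $L_0$, there is exactly one through each generic point, and the sign is $+1$ with respect to the torus orientation and the standard relative spin structure. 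Transporting via the symplectomorphism $\phi$ (which takes $J$-holomorphic disks to $\phi_*J$-holomorphic disks and preserves counts by the invariance statement and Corollary~\ref{cor_polytoperef}'s underlying mechanism), we get $n_{\beta_j} = 1$ on $X$.

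Granting $n_{\beta_j}=1$ for each $j=1,\dots,\kappa$, the coefficient $c_{\mathbf{v}_j}$ in the refined disk potential equals $1$: indeed there exists a class, namely $\beta_j$, with $\partial\beta_j = \mathbf{v}_j$ and $n_{\beta_j}=1\neq 0$. Therefore $W_L^{\mathrm{ref}}(\mathbf{z}) = \sum_{j=1}^{\kappa}\mathbf{z}^{\mathbf{v}_j} + \tilde{W}_L^{\mathrm{ref}}(\mathbf{z})$, where $\tilde{W}_L^{\mathrm{ref}}$ collects the monomials $\mathbf{z}^{\mathbf{v}}$ with $c_{\mathbf{v}}=1$ and $\mathbf{v}\notin\{\mathbf{v}_1,\dots,\mathbf{v}_\kappa\}$; since every $c_{\mathbf{v}}\in\{0,1\}$, this remainder is manifestly subtraction-free. (This also makes $\Delta^\vee = \mathrm{conv}\{\mathbf{v}_j\}\subset\Delta^{\mathrm{ref}}_L$ of Proposition~\ref{proposition_deltaveedleta} immediate, which is presumably why the lemma is placed here.)

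The main obstacle is the identification $\partial\beta_j = \mathbf{v}_j$ and, more delicately, the sign bookkeeping: one must check that the boundary of the gradient holomorphic disk $u_{F_j}$, read in the basis $\mathcal{B}_0$ of loops dual to the standard basis of $M$, is exactly the primitive inward facet normal $\mathbf{v}_j$ appearing in the presentation~\eqref{equ_polytopeshalfsp} (in the $\mathbb{Q}$-Gorenstein, possibly non-normalized, case this is $\mathbf{v}_j$, not $\mathbf{v}_{F_j}$, so one should be careful with Remark~\ref{lemma_QGorensteinpoly}), and that the orientation induced by the torus action together with the chosen relative spin structure yields the sign $+1$ rather than $-1$. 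Establishing the sign rigorously requires invoking the orientation conventions of \cite{FOOObook} for semifree $S^1$-actions as in \cite{ChoOh}; once the semifreeness near $\Phi_0^{-1}(\mathrm{int}(F_j))$ is in hand (which follows from codimension two plus smoothness of $U_0$, i.e. from normality of the degeneration), this is a standard but not entirely routine verification.
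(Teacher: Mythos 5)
Your overall strategy (compute the count in the central toric fiber and transport it back through $\phi$, then read off the form of $W^{\mathrm{ref}}_L$ from $c_{\mathbf{v}_j}=1$) is the same as the paper's, and the last paragraph of your argument, deducing the displayed formula from $n_{\beta_j}=1$ and $c_{\mathbf v}\in\{0,1\}$, is fine. But the key step has a genuine gap: you invoke ``the classification of Maslov-index-two holomorphic disks in \cite{ChoOh}'' on the central fiber $X_0$, and that classification is proved for \emph{smooth} compact toric (Fano) manifolds. Here $X_0$ is only a normal projective toric variety; it is in general singular and need not even be simplicial, so neither the Cho--Oh classification nor its transversality/compactness package applies directly, and restricting to the smooth locus $U_0$ does not save the argument because $U_0$ is non-compact and you have not ruled out that disks in the class $\beta_j$ (or their limits) leave $U_0$ or interact with the singular set. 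This is exactly the point where the paper does extra work: it passes to a simplicialization $\Pi\colon\widetilde{X}_0\to X_0$, replaces $L_0$ by the toric fiber $\widetilde{L}_0=\Pi^{-1}(L_0)$ and $\beta_j$ by the strict transform class $\widetilde\beta_j$, applies the Cho--Poddar classification \cite{ChoPoddar} of holomorphic (orbi-)disks in toric orbifolds to get $n_{\widetilde\beta_j}=1$, and then shows that $\Pi_*$ is an orientation-preserving diffeomorphism $\mcal{M}_1(\widetilde L_0,\widetilde J_0;\widetilde\beta_j)\to\mcal{M}_1(L_0,J_0;\phi_*\beta_j)$ (using that the basic Maslov-index-two class is regular, so no sphere bubbling, and uniqueness of disks with prescribed boundary). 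Without this detour, or some substitute argument controlling disks in the singular $X_0$, your claim ``there is exactly one such disk through each generic point, with sign $+1$'' is unsupported.

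A secondary, smaller issue is the transport step: $\phi$ is a symplectomorphism only over $U_0$, so ``$\phi_*J$'' is not defined on all of $X$. The paper instead chooses an almost complex structure $J_1$ on $X=X_1$ extending the pull-back of the toric complex structure near $\phi^{-1}(U_0)$, with the interpolation region small enough that $\phi^{-1}\circ u_{F_j}$ is $J_1$-holomorphic, and then compares the two moduli spaces via $\phi_*$ together with the evaluation maps. You should make this choice explicit rather than appealing to pushing forward $J$ by a global symplectomorphism; your sign and boundary-class bookkeeping concerns in the final paragraph are legitimate but are resolved in the same way as in the paper once the orbifold classification and the orientation conventions induced by the torus action are in place.
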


\begin{proof}
Let $\beta$ be a basic class of $(X, L)$ corresponding to a facet $F$. To compute the counting invariant $n_\beta$, we exploit the toric degeneration $\mathfrak{X}$ of $X$. Set $L_\tau \coloneqq \phi_{1,\tau}(L)$. By using the toric structure on ${X}_0$, we generate a gradient holomorphic disk $\varphi \colon (\mathbb{D}, \partial \mathbb{D}) \to ({X}_0, L_0)$ that intersects with the inverse image of the relative interior of $F$. The image of $\varphi$ is contained in the smooth locus ${U}_0 \coloneqq {X}_0 \backslash \mathrm{Sing}({X}_0)$. 

Choose a compatible almost complex structure $J_1$ on ${X}_1$ as an extension of the pull-back of the toric complex structure on an open subset of $\phi^{-1} ({U}_0)$ via $\phi$. By taking the interpolation part sufficiently small enough for the extension, we may assume the composition $\phi^{-1} \circ \varphi$ becomes a $J_1$-holomorphic disk. In particular, $\phi_{*} (\beta) = [\varphi]$. 

Let $\widetilde{{X}}_0$ be a simplicialization of the central toric variety ${X}_0$. It comes with a toric morphism $\Pi \colon \widetilde{{X}}_0 \to {{X}_0}$. The inverse image $\widetilde{L}_0 \coloneqq \Pi^{-1} (L_0)$ is a Lagrangian toric fiber of $\widetilde{X}_0$ because the algebraic torus of $\widetilde{{X}}_0$ maps into the algebraic torus of ${X}_0$. We denote by $\widetilde{\beta}$ the class represented by a strict transformation $\widetilde{\varphi}$ of $\varphi$. On the other hand, since the toric morphism $\Pi$ is holomorphic, the map $\Pi$ induces 
$$
\Pi_* \colon \mcal{M}_1(\widetilde{L}_0, \widetilde{J}_0; \widetilde{\beta}) \to \mcal{M}_1( {L}_0, J_0; \phi_{*} (\beta)) \quad \varphi^\prime \mapsto \Pi \circ \varphi^\prime
$$
where $J_0$ and $\widetilde{J}_0$ are the standard complex structures.

We claim that the map $\Pi_*$ is an orientation preserving diffeomorphism. To see this, we first note that $\varphi$ is equal to $\Pi \circ \widetilde{\varphi}$ in $\mcal{M}_1({L}_0, J_0; \phi_{*} (\beta))$ by the construction of strict transformations.  Also, $\varphi^\prime$ is equal to the strict transformation of $\Pi \circ \varphi^\prime$ in $\mcal{M}_1( \widetilde{L}_0, \widetilde{J}_0; \widetilde{\beta})$. This is because the class $\widetilde{\beta} = [\varphi^\prime]$ is a regular and basic class of Maslov index two and hence spheres cannot bubble off. The classification of holomorphic disks in \cite[Theorem 6.2]{ChoPoddar} yields that two holomorphic disks in this class $\widetilde{\beta}$ having the same boundary are equal. (More precisely, this is the case where $c_{ij} = 0$ and only one $d_j = 1$ therein.) Thus, the map $\Pi_*$ is a diffeomorphism. By choosing a spin structure and orientation of $L_0$, $L_1$, and $\widetilde{L}_0$ induced from the torus action, we obtain the following commutative diagram 
$$
\xymatrix{
\mcal{M}_1 (L_1, J_1; \beta) \ar[r]^{\phi_{*} \,\,\,\,\,\,\,\,\, } \ar[d]_{\mathrm{ev}} & \mcal{M}_1 (L_0, J_0;  \phi_{*} (\beta)) \ar[d]^{\mathrm{ev}}   & \mcal{M}_1 (\widetilde{L}_0, \widetilde{J}_0;  \widetilde{\beta}) \ar[l]_{\,\,\,\,\,\,\,\,\,\,\, \Pi_* } \ar[d]_{\mathrm{ev}}\\
L_1 \ar[r]_{ \phi}  & L_0  & \widetilde{L}_0 \ar[l]^\Pi }
$$
such that every arrow is an orientation preserving diffeomorphism. By \cite[Proposition 9.3]{ChoPoddar}, the counting invariant is one, that is, $n_{\widetilde{\beta}} = 1$. We conclude that $n_\beta = 1$.
\end{proof}

\begin{proof}[Proof of Proposition~\ref{proposition_deltaveedleta}]
By Lemma~\ref{lemma_disksuperpotentialfacet}, the conclusion follows.
\end{proof}

\section{Infinitely many distinct monotone Lagrangian tori via cluster mutations}\label{sec_clustermutandinf}

Let $X$ be a smooth Fano variety of complex dimension $m$ polarized by a very ample line bundle $\mcal{L}$, a positive power of the anticanonical bundle of $X$. Thanks to Proposition~\ref{proposition_toricdegenerationsmonotone}, $X$ possesses a monotone Lagrangian torus if 
$X$ admits the normal toric degeneration associated with a $\mathbb{Q}$-Gorenstein Fano Newton--Okounkov polytope. Suppose that $X$ has an \emph{infinite} family of such $\mathbb{Q}$-Gorenstein Fano and normal toric degenerations. Then we produce an infinite family of monotone Lagrangian tori of $X$. The goal of this section is to devise a criterion for the family to contain infinitely many \emph{distinct} monotone Lagrangian tori when the family of Newton--Okounkov polytopes has a \emph{tropical cluster structure}, see Definition~\ref{definition_tropicalclusterstr} for the precise meaning.

\subsection{Cluster varieties and cluster algebras}\label{sec_clustercaralg}

We begin by reviewing some notions on skew-symmetrizable cluster algebras and varieties, following Fock--Goncharov \cite{FockGoncharov} and Gross--Hacking--Keel \cite{GrossHackingKeelBirclu}. 

We fix the following data $\Gamma$ (called a \emph{fixed data}) consisting of 
\begin{itemize}
\item a lattice $N$ of finite rank with a skew-symmetric bilinear form
\[
	\{ \cdot \,\, , \, \cdot \} \colon N \times N \to \Q,
\]
\item an \emph{unfrozen} sublattice $N_\mathrm{uf} \subset N$, a saturated sublattice of $N$,
\item an index set $J$ such that $|J| = \mathrm{rank}\, N$, 
\item a subset $J_\mathrm{uf}$ of $J$ such that $| J_\mathrm{uf} | = \mathrm{rank}\, N_\mathrm{uf}$,
\item a sublattice $N^\circ \subset N$ of finite index such that 
$$
\{ N_\mathrm{uf}, N^\circ \} \subset \Z \mbox{ and } \{ N, N_\mathrm{uf} \cap N^\circ \} \subset \Z,
$$ 
\item positive integers $d_j$ for $j \in J$ of which greatest common divisor is one.
\end{itemize}
For the fixed data $\Gamma$, a \emph{seed} $\seed$ is a labeled collection of elements of $N$
\begin{equation}\label{equ_seedei}
\seed \coloneqq ( \mathbf{e}_j \mid j \in J )
\end{equation}
such that $ \{ \mathbf{e}_j \mid j \in J \}$ is $\Z$-basis for $N$, $ \{ \mathbf{e}_j \mid j \in J_\mathrm{uf} \}$ is a $\Z$-basis of $N_\mathrm{uf}$, and $\{ d_j \mathbf{e}_j \mid j \in J \}$ is a $\Z$-basis for $N^\circ$. 

We set $\varepsilon_{r,s} \coloneqq \{ \mathbf{e}_r, \mathbf{e}_s \} \, d_s$. Note that $\varepsilon_{r,s} \in \Z$ if at least one of $r$ and $s$ is in $J_\mathrm{uf}$. We then obtain a (not necessarily skew-symmetric) matrix $\widehat{\varepsilon} = ( \varepsilon_{r,s} )_{r,s \in J} \in M_{J \times J}(\mathbb{Q})$. This matrix $\widehat{\varepsilon}$ is called an \emph{exchange matrix} of $\Gamma$. Note that 
\begin{equation}\label{equ_exchangematri}
\varepsilon_{r,s} d_r + \varepsilon_{s,r} d_s = 0.
\end{equation}

Let $M \coloneqq \mathrm{Hom}_\Z (N, \Z)$ be the dual lattice of $N$. We denote by $\{\mathbf{e}_j^* \mid j \in J \}$ the dual basis of $\{ \mathbf{e}_j \mid j \in J \}$ for $M$. Let $M^\circ \coloneqq \mathrm{Hom}_\Z (N^\circ, \Z)$ be the dual lattice of $N^\circ$. We also have a $\Z$-basis $\{ \mathbf{f}_j \coloneqq d_j^{-1} \mathbf{e}_j^* \mid {j \in J} \}$ for $M^\circ$. To each seed $\seed$, we associate the dual pair of an $\mcal{A}$-torus $\mcal{A}_\seed$ and an $\mcal{X}$-torus $\mcal{A}^\vee_\seed$ defined by
\begin{equation}\label{equ_axtorus}
\mcal{A}_\seed \coloneqq T_{N^\circ} \coloneqq \mathrm{Spec}(\C [M^\circ]) \mbox{ and } \mcal{A}^\vee_\seed \coloneqq T_{M} \coloneqq \mathrm{Spec}(\C [N]).
\end{equation} 

For a given fixed data $\Gamma$ and a chosen seed $\seed_0$, we can produce seeds and exchange matrices by mutating the seed $\seed_0$ into all possible directions in $J_\mathrm{uf}$ inductively. The chosen seed $\seed_0$ at the beginning is called an \emph{initial seed}. For each index $k \in J_\mathrm{uf}$, the \emph{mutation in the $k$-direction} of various data is defined below. 

\begin{definition}
Let $[a]_+ \coloneqq \mathrm{max}\{a, 0\}$ and $\mathrm{sgn} (a) \coloneqq$ the sign of $a$.
	For an index $k \in J_{\mathrm{uf}}$, the \emph{mutation of a seed} $\seed$ \emph{in the} $k^{\mathrm{th}}$-\emph{direction} is a seed 
	$\mu_k (\seed)  \coloneqq \seed^\prime$ defined by 
\begin{equation}\label{equ_mutationmuk}
		\seed^\prime \coloneqq ( \mathbf{e}_j^\prime \mid j \in J ), \quad 
		\mathbf{e}^\prime_j \coloneqq
			\begin{cases}
				- \mathbf{e}_j &\mbox{if $j = k$,}\\
					\mathbf{e}_j + [ \varepsilon_{j, k} ]_+ \, \mathbf{e}_k &\mbox{if $j \neq k$.}
			\end{cases}
\end{equation}
Here, $(\varepsilon_{j, k})$ is the exchange matrix associated with the seed $\seed$. The basis element $\mathbf{e}^\prime_j$ is also denoted by $\mu_k (\mathbf{e}_j)$.
\end{definition}

Then the set $\{ \mathbf{e}_j^\prime \mid j \in J \}$ is still a $\Z$-basis for $N$. The subset $\{ \mathbf{e}_j^\prime \mid j \in J_\mathrm{uf} \}$ is a $\Z$-basis for $N_\mathrm{uf}$ and $\{ d_j \mathbf{e}_j^\prime \mid j \in J\}$ also forms a $\Z$-basis for $N^\circ$. Note that the basis $\{ \mathbf{f}^\prime_j \mid {j \in J} \}$ is obtained from $\{ \mathbf{f}_j \mid {j \in J}  \}$ via the formula
\begin{equation}\label{equ_mutationmukf}
		\mathbf{f}^\prime_j \coloneqq
			\begin{cases}
				- \mathbf{f}_k + \sum_i [ \varepsilon_{k, i} ]_+ \, \mathbf{f}_i &\mbox{if $j = k$,}\\
					\mathbf{f}_j  &\mbox{if $j \neq k$}
			\end{cases}
\end{equation}
because $\varepsilon_{r,s} d_r + \varepsilon_{s,r} d_s = 0$. Also, under the mutation $\mu_k$ in~\eqref{equ_mutationmuk}, the exchange matrix is also mutated via the formula $\mu_k (\widehat{\varepsilon}) \coloneqq \widehat{\varepsilon}^\prime$ given by
	\begin{equation}\label{equ_mutationexchangematri}
		\widehat{\varepsilon}^\prime \coloneqq (\varepsilon_{r,s}^\prime)_{r, s \in J}, \quad 
		\varepsilon_{r,s}^\prime \coloneqq \{ \mathbf{e}_r^\prime, \mathbf{e}_s^\prime \} d_s = 
			\begin{cases}
				- \varepsilon_{r,s} &\mbox{if $r = k$ or $s = k$,}\\
				\varepsilon_{r,s} + \mathrm{sgn} (\varepsilon_{k,s}) [ \varepsilon_{r,k} \varepsilon_{k,s}]_+ &\mbox{otherwise.}
			\end{cases}
	\end{equation}	

As in~\eqref{equ_axtorus}, each seed assigns an $\mcal{A}$-torus and an $\mcal{X}$-torus. We define a birational transformation between $\mcal{A}$-tori (resp. between $\mcal{X}$-tori). For each seed $\seed = (  \mathbf{e}_j \mid {j \in J} )$, we associate the coordinate functions $( X_{j} \coloneqq z^{\mathbf{e}_j} \mid j \in J )$ on the $\mcal{X}$-torus $\mcal{A}^\vee_\seed$. Dually, we associate the coordinate functions $( A_{j} \coloneqq z^{\mathbf{f}_j} \mid j \in J )$ on the $\mcal{A}$-torus $\mcal{A}_\seed$. For $\seed^\prime = \mu_k (\seed)$, we denote by $A^\prime_{j}$ the coordinate function $z^{\mathbf{f}_j^{\prime}}$ on $\mcal{A}_{\seed^\prime}$ corresponding to $\mathbf{f}^\prime_j$ and have
\begin{equation}\label{equ_clusteramut}
	\mu_k \colon \mcal{A}_\seed \dashrightarrow \mcal{A}_{\seed^\prime}, \quad 
	\mu^{*}_k (A^\prime_j) = 
		\begin{cases}
				A_k^{-1} \left( \prod_{i \in J} A_i^{[\varepsilon_{k,i}]_+}  + \prod_{i \in J} A_i^{[-\varepsilon_{k,i}]_+} \right) &\mbox{if $j = k$,} \\
				A_j &\mbox{if $j \neq k$.}
		\end{cases}	
\end{equation}
For $\seed^\prime = \mu_k (\seed)$, we denote by $X^\prime_{j}$ the coordinate function $z^{\mathbf{e}_j^{\prime}}$ on $\mcal{A}^\vee_{\seed^\prime}$ and have
\begin{equation}\label{equ_clusterxmut}
	\mu_k \colon \mcal{A}^\vee_\seed \dashrightarrow \mcal{A}^\vee_{\seed^\prime}, \quad 
	\mu^{*}_k (X^\prime_j) = 
		\begin{cases}
				X_j^{-1} &\mbox{if $j = k$,} \\
				X_j \left(  1  + X_k^{-  \mathrm{sgn}(\varepsilon_{j,k})} \right)^{- \varepsilon_{j,k}} &\mbox{if $j \neq k$.}
		\end{cases}
\end{equation}

To parametrize the seeds, let $\mathbb{T}$ be the oriented rooted tree with $|J_\mathrm{uf}|$ outgoing edges from each vertex labeled by the elements of $J_\mathrm{uf}$. The root $t_0$ corresponds to the initial seed $\seed_0$. If two vertices $t$ and $t^\prime$ are connected by a directed edge from $t$ to $t^\prime$, then we denote by 
\begin{equation}\label{equ_mukttprime}
\mu_k(t) = t^\prime. 
\end{equation}
Also, the seed and the exchange matrix corresponding to $t$ are denoted by ${\seed}_t$ and $\widehat{\varepsilon}_t$, respectively. By gluing the $\mcal{A}$-tori via the birational transformations~\eqref{equ_clusteramut}, we obtain a scheme
\begin{equation}\label{eq_acluster}
\mcal{A}_{\Gamma, \seed_0} 
= \bigcup_{t \in \mathbb{T}} \mcal{A}_{\seed_t},
\end{equation}
which is called an $\mcal{A}$-\emph{cluster variety} of the chosen data $(\Gamma, \mathbf{s}_0)$. Dually, an $\mcal{X}$-\emph{cluster variety} of the data $(\Gamma, \mathbf{s}_0)$ is defined by gluing the $\mcal{X}$-tori via the birational transformations~\eqref{equ_clusterxmut}
\begin{equation}\label{eq_xcluster}
\mcal{A}^\vee_{\Gamma, \seed_0} 
= \bigcup_{t \in \mathbb{T}} \mcal{A}^\vee_{\seed_t}.
\end{equation}
The $\mcal{X}$-cluster variety $\mcal{A}_{\Gamma, \seed_0}^\vee$ is also called the \emph{(Fock--Goncharov) dual} of $\mcal{A}_{\Gamma, \seed_0}$.

We also deal with cluster algebras and upper cluster algebras later on and we briefly recall them. From the fixed data $\Gamma$ and an initial seed $\seed_0$, consider the function field $\C(\mcal{A}_{\seed_0})$ of the $\mcal{A}$-torus $\mcal{A}_{\seed_0}$. Suppose that $\seed_{t}$ is obtained by applying a finite sequence of mutations in~\eqref{equ_mutationmuk} to $\seed$, that is, $\seed_{t} = \mu_{{t}_0, t} (\seed_{t_0})$ for some
$\mu_{{t}_0, t} \coloneqq \mu_{k_\ell} \circ \cdots \circ \mu_{k_2} \circ \mu_{k_1}$. For $j \in J$, we denote by ${A}_{j,\seed_t}$ the coordinate function corresponding to $\mathbf{f}_{j, \seed_t}$ on the torus $\mcal{A}_{\seed_t}$ associated with $t$. By the Laurent phenomenon in \cite{FZ02}, every coordinate function $A_{j,\seed_t}$ for $\mcal{A}_{\seed_t}$ can be expressed as a Laurent polynomial of the variables $\{ A_{j,t_0} \mid j \in J\}$ by pulling $A_{j,t}$ via the sequence of mutations corresponding to $\mu_{{t}_0, t}$ for $\mcal{A}$-cluster charts in~\eqref{equ_clusteramut}. By abuse of notation, we denote by $\mu_{{t}_0, t}$ the corresponding sequence of mutations in~\eqref{equ_clusteramut} and denote the Laurent polynomial  
\begin{equation}\label{equ_defatjvar}
A_{j,t} \coloneqq \mu_{{t}, t_0}^* (A_{j, \seed_t}) \in \C(\mcal{A}_{\seed_0}).
\end{equation}
We define upper cluster algebras and ordinary cluster algebras associated with $(\Gamma, \seed_0)$.

\begin{definition}\label{def_ucaca}
An \emph{upper cluster algebra} $\mathrm{up}(\mcal{A})$ is defined by 
$$
\mathrm{up}(\mcal{A}) = \bigcap_{t \in \mathbb{T}} \C [A^{\pm 1}_{j,t} \mid j \in J] \subset \C(\mcal{A}_{\seed_0}).
$$ 
An \emph{(ordinary) cluster algebra} $\mathrm{ord}(\mcal{A})$ is defined by the $\C$-subalgebra of $\C(\mcal{A}_{\seed_0})$ generated by
$$
\left\{A_{j,t} \mid t \in \mathbb{T}, j \in J_\mathrm{uf} \right\} \cup \left\{A_{j,t}^{\pm 1} \mid t \in \mathbb{T}, j \in J \backslash J_\mathrm{uf} \right\}.
$$
Each element of the tuple $(A_{j,t})_{j \in J}$ is called a \emph{cluster variable}.
\end{definition}

Indeed, the upper/ordinary cluster algebras in Definition~\ref{def_ucaca} can be defined from smaller pieces of information than a fixed data $\Gamma$ and a seed $\seed_0$ in~\eqref{equ_seedei}. For the later usage of cluster algebras in Chapter~\ref{chap_clusterflag} and~\ref{Chap_exchlarge}, we recall Fomin--Zelevinsky's way of constructing cluster algebras in \cite{FZ02}. Let $\C (z_j \mid j \in J)$ be the field of rational functions. A \emph{Fomin--Zelevinsky's seed} $\seed$ consists of 
\begin{itemize}
\item a $J$-tuple $(A_j)_{j \in J}$ of elements that is a free generating set of $\C (z_j \mid j \in J)$
\item a matrix $\varepsilon = (\varepsilon_{r,s})_{r \in J_{\mathrm{uf}}, s \in J} \in {M}_{J_{\mathrm{uf}} \times J} (\Z)$ such that~\eqref{equ_exchangematri} holds for all $r,s \in J_{\mathrm{uf}}$. 
\end{itemize}
The matrix $\varepsilon$ is called an \emph{(extended) exchange matrix}. When constructing the cluster algebras from $\Gamma$ and $\seed_0$, we take the extended exchange matrix $\varepsilon$ consisting of entries of $\widehat{\varepsilon}$. As a result, this extended exchange matrix $\varepsilon$ is a submatrix of the previously defined exchange matrix $\widehat{\varepsilon} \in M_{J \times J}(\mathbb{Q})$. For this reason, $\varepsilon$ is also called an exchange matrix, and the $(r,s)$-entry of both $\varepsilon$ and $\widehat{\varepsilon}$ will be denoted by $\varepsilon_{r,s}$ in this case.

Out of a seed $( (A_j)_{j \in J}, \varepsilon )$, we can produce Fomin--Zelevinsky's seeds by mutating the seed $\seed_0$ into all possible directions in $J_\mathrm{uf}$ inductively. The mutation formulas for Fomin--Zelevinsky's seeds are given in~\eqref{equ_clusteramut} and~\eqref{equ_mutationexchangematri}. We then can produce $\mathrm{up}(\mcal{A})$ and $\mathrm{ord}(\mcal{A})$ in Definition~\ref{def_ucaca}. In this case, the mutations are involutive and hence it suffices to employ an unoriented $|J_{\mathrm{uf}}|$-regular tree whose edges are labeled by $J_{\mathrm{uf}}$ to parametrize the Fomin--Zelevinsky's seeds. By abuse of notation, we also denote by $\mathbb{T}$ this tree and call it the \emph{exchange graph}.

\subsection{Tropicalized cluster mutations and $\Q$-Gorenstein Fano polytopes}\label{sec_tropGnormal}

Let $X$ be a smooth projective variety polarized by a very ample line bundle $\mcal{L}$. Let us take a reference section $h \in \scr{L} \coloneqq H^0(X, \mcal{L})$ as in~\eqref{equ_SR}. Suppose that there exist a fixed data $\Gamma$ and an initial seed $\seed_{0} = \seed_{t_0}$. We inductively produce the set $\{ \seed_t \mid t \in \mathbb{T} \}$ of seeds generated by $\seed_{t_0}$ as in Section~\ref{sec_clustercaralg} and parametrized by the vertices of an oriented rooted tree $\mathbb{T}$ with $|J_\mathrm{uf}|$ outgoing edges labeled by the elements of $J_\mathrm{uf}$ from each vertex. 

Suppose that we have a family of valuations with one-dimensional leaves and parametrized by the vertices of $\mathbb{T}$, say $\{ v_t \mid t \in \mathbb{T} \}$. Assume that the semigroup $S_t \coloneqq S(\mcal{L}, v_t, h)$ constructed from $v_t$ is finitely generated. Let  $\Delta_t \coloneqq \Delta(\mcal{L}, v_t, h)$ be the Newton--Okounkov polytope of $X$ corresponding to $S_t$ for each $t \in \mathbb{T}$ in~\eqref{equ_Deltav}. Using the seed $\seed_t$, each Newton--Okounkov polytope $\Delta_t$ (resp. semigroup $S_t$) in the family can be regarded as a subset of $M_\R = \R \langle \mathbf{f}_j \mid j \in J \rangle = \left\{ \sum_j u_j \mathbf{f}_j \mid u_j \in \R \mbox{ for all $j \in J$} \right\} \simeq \R^{J}$ (resp. $\R_{\geq 0} \times M_\R  \simeq \R_{\geq 0}  \times \R^{J})$.

\begin{remark}
If we consider the family of valuations constructed from a cluster algebra in the GHHK setting, with a choice of the opposite dominance order as in \cite[Theorem 4.3]{FO20}, then the semigroup obtained from each valuation is always finitely generated by \cite[Lemma 8.29 and Theorem 8.30]{GrossHackingKeelKontsevich}. This case can serve as an example in Section~\ref{sec_tropGnormal}. 
\end{remark}

We now describe relation between semigroups $S_t$ and $S_{t^\prime}$ and between Newton--Okounkov polytopes $\Delta_t$ and $\Delta_{t^\prime}$.

\begin{definition}\label{definition_tropicalclusterstr}
Consider a family $\{ S_t \subset \R_{\geq 0} \times M_\R \mid t \in \mathbb{T} \}$ of semigroups  parametrized by $\mathbb{T}$ and the family $\{ \Delta_t \subset M_\R \mid t \in \mathbb{T} \}$ of associated Newton--Okounkov bodies, which are also parametrized by $\mathbb{T}$. 

\begin{enumerate}
\item For an unfrozen index $k \in J_\mathrm{uf}$ and a pair $(t, t^\prime)$ of vertices with $\mu_k (t) = t^\prime$ in  $\mathbb{T}$, a \emph{tropicalized cluster mutation in the $k^{\mathrm{th}}$-direction}
$$
\mu^{{T}}_k \colon M_\R \simeq_{\seed_t} \R^{J} \to M_\R \simeq_{\seed_t^\prime} \R^{J}
$$
is defined to be a piecewise-linear transformation defined by 
\begin{equation}\label{equ_tropical1}
\mathbf{u} \coloneqq (u_j) \mapsto \mathbf{u}^\prime \coloneqq (u^\prime_j), \quad 
u_j^\prime = 
\begin{cases}
- u_j &\mbox{ if $j = k$} \\
u_j + \left[ \varepsilon_{k,j} \right]_+ u_k  &\mbox{ if $j \neq k$ and $u_k \geq 0$} \\
u_j + \left[ -\varepsilon_{k,j} \right]_+ u_k &\mbox{ if $j \neq k$ and $u_k \leq 0$}
\end{cases}
\end{equation}
where ${\varepsilon}_t = (\varepsilon_{i,j})_{(i,j)\in J_{uf} \times J}$ is the exchange matrix associated with $t$. 
\item We also define 
\begin{equation}\label{equ_hattropical1}
\widehat{\mu}^{{T}}_k \colon \R_{\geq 0} \times M_\R \simeq_{\seed_t} \R_{\geq 0} \times \R^{J} \to \R_{\geq 0} \times M_\R \simeq_{\seed_{t^\prime}} \R_{\geq 0} \times \R^{J}
\end{equation}
by $\widehat{\mu}^{{T}}_k (r, \mathbf{u}) \coloneqq (r, \mu^{{T}}_k (\mathbf{u}))$. This map $\widehat{\mu}^{{T}}_k$ is also called a \emph{tropicalized cluster mutation in the $k^{\mathrm{th}}$-direction}.

\item The family of semigroups parametrized by $\mathbb{T}$ is said to have a \emph{tropical cluster structure} if for every pair $(t, t^\prime)$ of vertices with $t^\prime = \mu_k (t)$, the associated semigroups $S_t$ and $S_{t^\prime}$ are related by the tropicalized cluster mutation $\widehat{\mu}^T_k$ in the $k^{\mathrm{th}}$-direction, that is, $S_{t^\prime} = \widehat{\mu}^T_k (S_t)$.

\item The family of Newton--Okounkov bodies of $X$ parametrized by $\mathbb{T}$ is said to have a \emph{tropical cluster structure} if for every pair $(t, t^\prime)$ of vertices with $t^\prime = \mu_k (t)$, the associated Newton--Okounkov bodies $\Delta_t$ and $\Delta_{t^\prime}$ are related by the tropicalized cluster mutation $\mu^T_k$ in the $k^{\mathrm{th}}$-direction, that is, $\Delta_{t^\prime} = \mu^T_k (\Delta_t)$.
\end{enumerate}
\end{definition}

\begin{remark}
We outline a scenario where a smooth projective variety $X$ has a tropical cluster structure. The reader is referred to \cite[Section 4]{FO20} for a detailed explanation. Consider a cluster ensemble $(\mcal{A}_{\Gamma, \seed_0}, \mcal{A}_{\Gamma, \seed_0}^\vee)$ defined in~\eqref{eq_acluster} and~\eqref{eq_xcluster}. Assume that $X$ is a compactification of the $\mathcal{A}$-cluster variety $\mcal{A}_{\Gamma, \seed_0}$ and let $X^\vee$ be a (partial compactification) of its dual $\mathcal{X}$-cluster variety $\mcal{A}_{\Gamma, \seed_0}^\vee$. Suppose that the compactification $X$ is a smooth Fano projective variety and a Landau--Ginzburg mirror of $X$ is given by a regular function $W$ on the $\mathcal{X}$-cluster variety $X^\vee$. In \cite{GrossHackingKeelKontsevich}, they constructed a theta basis for a certain class of cluster algebras from a scattering diagram. When a seed $\seed$ is fixed, the $g$-vectors of this theta basis are parametrized by the tropical integer points of the Newton--Okounkov polytope, which is constructed from a valuation from the exchange matrix associated with $\seed$. In this case, every set of $g$-vectors from a seed and the set of $g$-vectors from the initial seed are related by a finite sequence of tropicalized ($\mcal{X}$-)cluster mutations and so are the corresponding Newton--Okounkov polytopes. Also, it is expected that the Newton--Okounkov polytopes are given by the tropicalization of the LG mirror $(X^\vee, W)$ restricted to a cluster chart. 
\end{remark}

Suppose that the family $\{S_t \mid t \in \mathbb{T} \}$ of {finitely generated} semigroups has a tropical cluster structure and hence the family $\{\Delta_t \mid t \in \mathbb{T} \}$ of corresponding Newton--Okounkov bodies of $X$ also has a tropical cluster structure. By Theorem~\ref{theorem_Andersontoric}, each $\Delta_t$ is a {rational polytope}. The following lemmas discuss that if the toric degeneration at the initial seed is $\mathbb{Q}$-Gorenstein Fano and normal, then so are all the other toric degenerations.

\begin{lemma}\label{lemma_saturatedind}
Suppose that the semigroup $S_{t_0}$ at the initial seed is saturated. Then, for each $t \in \mathbb{T}$, the semigroup $S_t$ is also saturated.
\end{lemma}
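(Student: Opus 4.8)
The plan is to induct along the tree $\mathbb{T}$. Every vertex is reached from the root $t_0$ by a finite chain of mutations, so it suffices to prove a single step: if $S_t$ is saturated and $t'=\mu_k(t)$, then $S_{t'}$ is saturated. By the hypothesis that $\{S_t\}$ carries a tropical cluster structure (Definition~\ref{definition_tropicalclusterstr}) we have $S_{t'}=\widehat{\mu}^T_k(S_t)$; writing $S=S_t$, $S'=S_{t'}$ and $\Psi=\widehat{\mu}^T_k$, the lemma reduces to showing that $\Psi$ sends the saturated finitely generated semigroup $S\subset\Z_{\geq 0}\times\Z^J$ to a saturated one.

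First I would record the structure of $\Psi$ from~\eqref{equ_tropical1}--\eqref{equ_hattropical1}: it is a piecewise-linear, positively homogeneous bijection of $\R_{\geq 0}\times\R^J$. The wall $W=\{u_k=0\}$ cuts the space into the closed half-spaces $H^{\pm}=\{\pm u_k\geq 0\}$; on each of $H^+$, $H^-$ the map $\Psi$ restricts to a linear automorphism $\Psi^{\pm}$ which, fixing the grading coordinate, is given by an explicitly unimodular matrix (its only nontrivial column is the $k$-th, with $-1$ on the diagonal and $[\pm\varepsilon_{k,j}]_+$ off it), and $\Psi^+$ and $\Psi^-$ agree on $W$. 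So $\Psi$ and $\Psi^{-1}$ are mutually inverse piecewise-unimodular maps restricting to bijections of the lattice $\Z_{\geq 0}\times\Z^J$, and $\Psi$ swaps $H^+$ and $H^-$ since $u_k\mapsto-u_k$.

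The saturation check is then short. Use that a finitely generated semigroup is saturated iff it equals the intersection of its rational cone with the group it generates. Given $m\in\mathrm{gp}(S')$ and $n\in\Z_{>0}$ with $nm\in S'$, I want $m\in S'$. Since $nm\in S'=\Psi(S)$ lies in $S'\cap H^{\epsilon}$ for some sign $\epsilon$, and $m=\tfrac{1}{n}(nm)$ lies in the same half-space $H^{\epsilon}$ (a cone), the lattice point $m_0:=\Psi^{-1}(m)$ satisfies $nm_0=\Psi^{-1}(nm)\in\Psi^{-1}(S')=S$ because $\Psi^{-1}$ is linear on $H^{\epsilon}$. If in addition $m_0\in\mathrm{gp}(S)$, saturation of $S$ forces $m_0\in S$, hence $m=\Psi(m_0)\in S'$.

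The point I expect to require the most care is exactly $m_0\in\mathrm{gp}(S)$, since a piecewise-linear $\Psi$ need not carry $\mathrm{gp}(S)$ onto $\mathrm{gp}(S')$. I would handle it by writing $\mathrm{gp}(S')=\Psi^+(\mathrm{gp}(S\cap H^+))+\Psi^-(\mathrm{gp}(S\cap H^-))$ and observing that any composite $(\Psi^{\epsilon})^{-1}\circ\Psi^{\epsilon'}$ is again a grading-preserving element of $\mathrm{GL}_{J+1}(\Z)$ fixing $W$ pointwise — concretely $(\Psi^-)^{-1}\Psi^+$ is the unimodular transvection $\mathbf{e}_k^*\mapsto\mathbf{e}_k^*+\sum_{j\neq k}\varepsilon_{k,j}\mathbf{e}_j^*$, which is integral because $k\in J_\mathrm{uf}$. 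Feeding $m$ through $\Psi^{-1}$ then expresses $m_0$ as an integral combination of elements of $\mathrm{gp}(S\cap H^{\pm})$ and their images under such transvections; the tidiest way to conclude $m_0\in\mathrm{gp}(S)$ is to note that $\mathrm{gp}(S_{t_0})$ is the full lattice $\Z_{\geq 0}\times\Z^J$ (so the condition is vacuous), a property that propagates from $t_0$ since the full lattice is preserved by every tropicalized mutation. Alternatively, reading ``saturated'' throughout as ``saturated in $\Z_{\geq 0}\times\Z^J$'' — the condition matching normality of the central fibre — makes this last issue vanish and leaves only the argument of the first three paragraphs.
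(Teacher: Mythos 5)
The proposal is correct and its core argument — that $\widehat{\mu}_k^T$ is a positively homogeneous bijection of $\Z_{\geq 0}\times\Z^J$ carrying $S_t$ onto $S_{t'}$, so from $n\cdot m\in S_{t'}$ one pulls back to get $n\cdot\widehat{\mu}_k^{T,-1}(m)\in S_t$, invokes saturation of $S_t$, and pushes forward — is exactly the paper's argument. Your worry in the last paragraph about $\mathrm{gp}(S')$ is moot: the paper uses precisely your closing ``alternative reading,'' checking the saturation condition against all lattice points of $\mathbb{N}\times\Z^J$, so the $\mathrm{gp}$ issue never arises and the transvection bookkeeping can be dropped.
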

\begin{proof}
By induction, it suffices to prove that if $\mu_k (t) = t^\prime$ for $k \in J_\mathrm{uf}$ and $S_t$ is saturated, then so is $S_{t^\prime}$. We need to check  that if $n \cdot (r, \mathbf{u}^\prime) \in S_{t^\prime}$ for $(r, \mathbf{u}^\prime) \in \mathbb{N} \times \Z^{J}$ and $n \in \mathbb{N}$, then $(r, \mathbf{u}^\prime) \in S_{t^\prime}$. Since $\mu_{k}^T$ is a bijection on the lattice $\Z^{J}$ and $\widehat{\mu}_k^T$ is a bijection from $S_t$ to $S_{t^\prime}$, there exists $(r, \mathbf{u}) \in \mathbb{N} \times \Z^{J}$ such that $\widehat{\mu}_k^T (r, \mathbf{u}) = (r, \mathbf{u}^\prime)$ and $n \cdot (r, \mathbf{u}) \in S_{t}$. Since $S_t$ is saturated, $(r, \mathbf{u}) \in S_t$ and hence $(r, \mathbf{u}^\prime) \in S_{t^\prime}$.
\end{proof}

\begin{lemma}\label{lemma_QGorenstein}
Suppose that the Newton--Okounkov polytope $\Delta_{t_0}$ at the initial seed is $\mathbb{Q}$-Gorenstein Fano with the center $\mathbf{0}$. Then, for every $t \in \mathbb{T}$, the corresponding Newton--Okounkov polytope $\Delta_t$ is also $\mathbb{Q}$-Gorenstein Fano with the center $\mathbf{0}$. More precisely, if $\nu \cdot \Delta_{t_0}^\circ$ is a Fano polytope for some $\nu \in \mathbb{N}$, then so is $\nu \cdot \Delta_{t}^\circ$ for all $t \in \mathbb{T}$.
\end{lemma}

\begin{proof}
By induction, it suffices to prove that if $\mu_k (t) = t^\prime$ for some $k \in J_\mathrm{uf}$ and $\Delta_t$ is $\mathbb{Q}$-Gorenstein Fano with the center at the origin $\mathbf{0}$, then so is $\Delta_{t^\prime}$. Since $\Delta_t$ is $\Q$-Gorenstein Fano, there exists a natural number $\nu \in \mathbb{N}$ such that $\nu \cdot \Delta_{t}^\circ$ is Fano. Set $c = \nu^{-1}$. The polytope $\Delta_t$ can expressed as
$$
\Delta_t = \bigcap_{\iota=1}^{\kappa}  H^+_{c \mathbf{v}_\iota, 1}
$$
satisfying the conditions for~\eqref{equ_polytopeshalfsp} where $\mathbf{v}_1, \mathbf{v}_2, \cdots, \mathbf{v}_\kappa$ are \emph{primitive} lattice vectors.
Since $\Delta_{t^\prime}$ also contains the origin in the interior, $\Delta_{t^\prime}$ can be expressed as
\begin{equation}\label{equ_deltatprime}
\Delta_{t^\prime} = \bigcap_{\iota=1}^{\kappa^\prime}  H^+_{c\mathbf{v}^\prime_\iota, 1} \quad \mbox{where $\mathbf{v}_\iota^\prime \in N_\R$ for $\iota=1, 2, \cdots,  \kappa^\prime$}
\end{equation}
satisfying the conditions for~\eqref{equ_polytopeshalfsp}.

We claim that every $\mathbf{v}_\iota^\prime$ in~\eqref{equ_deltatprime} is a primitive lattice vector. From the explicit expression of the tropicalized cluster mutation $\mu_{k}^T$ in~\eqref{equ_tropical1}, depending on the sign of $u_k$, the image $\mu_{k}^T(\mathbf{u})$ is determined by two linear transformations. Let us denote $\mu_{k}^T(\mathbf{u})$ by $A \mathbf{u}$ or $B \mathbf{u}$ for some $A, B \in \mathrm{GL}(J, \Z)$. Then the half space $H^+_{c\mathbf{v}_\iota, 1}$ corresponds to $H^+_{c(A^T)^{-1}\mathbf{v}_\iota, 1}$, $H^+_{c(B^T)^{-1}\mathbf{v}_\iota, 1}$, or both. Consequently,
$$
\{ \mathbf{v}^\prime_1, \mathbf{v}^\prime_2, \cdots, \mathbf{v}^\prime_{\kappa^\prime} \} \subset \left\{(A^T)^{-1}  \mathbf{v}_1, \cdots, (A^T)^{-1} \mathbf{v}_{\kappa} \right\} \cup \left\{ (B^T)^{-1} \mathbf{v}_1, \cdots, (B^T)^{-1} \mathbf{v}_{\kappa} \right\}. 
$$
If $A \in \mathrm{GL}(J, \Z)$ and $\mathbf{v}$ is a primitive lattice vector, then $A \mathbf{v}$ is also primitive. Thus, every vector $\mathbf{v}^\prime_\iota$ is a primitive lattice vector and $\Delta_{t^\prime}$ is also $\mathbb{Q}$-Gorenstein Fano with the center $\mathbf{0}$. 
\end{proof}

In general, the origin $\mathbf{0} \in M_\R$ may \emph{not} be the center of a $\Q$-Gorenstein Fano Newton--Okounkov polytope $\Delta$ according to our definition of $\Q$-Gorenstein Fano polytope in Definition~\ref{def_qgorensteinpoly}. To apply Lemma~\ref{lemma_QGorenstein}, we need to translate $\Delta$ to position the center at the origin $\mathbf{0}$. In general, however, the translations and the tropicalized cluster mutations may \emph{not} commute. We explore a condition that ensures commutativity.

Recall that for each $t \in \mathbb{T}$, a point $\mathbf{u}$ of $M_\R$ can be regarded as a point in $\R^{J} \simeq_{\seed_{t}} M_\R$. Let $(t, t^\prime)$ be a pair of vertices with $\mu_k (t) = t^\prime$ in $\mathbb{T}$ for an index $k \in J_\mathrm{uf}$. Fix a point $\mathbf{u}_{0} \in M_\R$. For each $t \in \mathbb{T}$, let $\mathbf{u}_{t,0}$ be the image of $\mathbf{u}_0$ under the identification given by $\seed_t \colon$
\begin{equation}\label{equ_defut0u}
M_\R \simeq_{\seed_{t}} \R^{J} \quad (\mathbf{u}_{0} \mapsto \mathbf{u}_{t,0}).
\end{equation}
We denoted by $\tau_t \colon \R^{J} \to \R^{J}$ the translation defined by $\mathbf{u} \mapsto \mathbf{u} - \mathbf{u}_{t,0}$.

\begin{lemma}\label{lemma_transtropicalcomm}
Let $(t, t^\prime)$ be a pair of vertices with $\mu_k (t) = t^\prime$ in $\mathbb{T}$ for an index $k \in J_\mathrm{uf}$. Fix a point $\mathbf{u}_0 \in M_\R$. Let $\mathbf{u}_{t,0}$ be the image of the point $\mathbf{u}_0$ under~\eqref{equ_defut0u}. Then the following are equivalent.
\begin{enumerate}
\item The point $\mathbf{u}_{t,0}$ is fixed under the tropicalized cluster mutation $\mu^T_k$, that is,
\begin{equation}\label{equ_fixeduntrop}
\mu^T_k (\mathbf{u}_{t,0}) = \mathbf{u}_{t^\prime,0}.
\end{equation}
\item The translations and the tropicalized cluster mutation are commutative, that is,
$$
\tau_{t^\prime} \circ \mu_k^T = \mu_k^T \circ \tau_t \mbox{ on $\R^{J}$.}
$$
\item The $k$-th component of $\mathbf{u}_{t,0}$ is equal to zero, that is, $u_{t,0,k} = 0$ where $\mathbf{u}_{t,0} = (u_{t,0,j})_{j \in J}$. 
\end{enumerate}
\end{lemma}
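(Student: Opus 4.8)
The plan is to prove the three equivalences by a direct computation using the explicit formula for the tropicalized cluster mutation $\mu^T_k$ in~\eqref{equ_tropical1}. The key observation is that $\mu^T_k$ is piecewise-linear, agreeing with a linear map $A$ (no constant term) on the half-space $\{u_k \geq 0\}$ and with another linear map $B$ (again with no constant term) on $\{u_k \leq 0\}$; in particular, $\mu^T_k(\mathbf{0}) = \mathbf{0}$, so $\mu^T_k$ fixes the origin regardless. I would organize the argument as a cycle $(3) \Rightarrow (1) \Rightarrow (2) \Rightarrow (3)$, or alternatively prove $(3) \Leftrightarrow (1)$ and $(3) \Leftrightarrow (2)$ separately; the cyclic route seems cleanest.

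First, for $(3) \Rightarrow (1)$: write $\mathbf{u}_{t,0} = (u_{t,0,j})_{j \in J}$ and suppose $u_{t,0,k} = 0$. Plugging into~\eqref{equ_tropical1}, the $k$-th coordinate of $\mu^T_k(\mathbf{u}_{t,0})$ is $-u_{t,0,k} = 0$, and for $j \neq k$ the correction term $[\varepsilon_{k,j}]_+ u_k$ (or $[-\varepsilon_{k,j}]_+u_k$) vanishes since $u_k = 0$, so $\mu^T_k(\mathbf{u}_{t,0}) = \mathbf{u}_{t,0}$ as a tuple. One must then check that this tuple, read in the chart $\seed_{t'}$, is exactly $\mathbf{u}_{t',0}$: but $\mathbf{u}_{t,0}$ and $\mathbf{u}_{t',0}$ are by definition the two coordinate representations of the \emph{same} point $\mathbf{u}_0 \in M_\R$ under the identifications~\eqref{equ_defut0u}, and the change-of-chart map $\R^J \simeq_{\seed_t} M_\R \simeq_{\seed_{t'}} \R^J$ is precisely $\mu^T_k$ by the compatibility of the valuations with the tropical cluster structure; hence $\mu^T_k(\mathbf{u}_{t,0}) = \mathbf{u}_{t',0}$ always holds once we know $\mu^T_k$ fixes the tuple, giving~\eqref{equ_fixeduntrop}. (Strictly, $(1)$ is the statement that these two a priori different things coincide; the content is that the piecewise-linear formula evaluated at $\mathbf{u}_{t,0}$ returns $\mathbf{u}_{t,0}$ itself.)

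For $(1) \Rightarrow (2)$: on each of the two closed half-spaces $\{u_k \geq 0\}$ and $\{u_k \leq 0\}$, $\mu^T_k$ is linear, say equal to $A$ and $B$ respectively. A composition $\tau_{t'} \circ \mu^T_k$ versus $\mu^T_k \circ \tau_t$: since $\tau_t$ translates by $-\mathbf{u}_{t,0}$ and $\mu^T_k$ restricted to the relevant region is linear, we get for $\mathbf{u}$ in (say) $\{u_k \geq 0\}$ that $\mu^T_k(\tau_t(\mathbf{u})) = A(\mathbf{u} - \mathbf{u}_{t,0}) = A\mathbf{u} - A\mathbf{u}_{t,0}$, whereas $\tau_{t'}(\mu^T_k(\mathbf{u})) = A\mathbf{u} - \mathbf{u}_{t',0}$; these agree iff $A\mathbf{u}_{t,0} = \mathbf{u}_{t',0}$, i.e. iff $\mu^T_k(\mathbf{u}_{t,0}) = \mathbf{u}_{t',0}$, which is~$(1)$. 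One subtlety: $\tau_t$ may move a point from one half-space to the other, so I would note that $(1)$ also forces $u_{t,0,k} = 0$ (the $k$-th coordinate of a fixed point of $\mu^T_k$ must satisfy $u_k = -u_k$), hence $\tau_t$ preserves each half-space $\{u_k \geq 0\}$, $\{u_k \leq 0\}$, and the same linear piece is used before and after translation; this also shows $A$ and $B$ agree on the common hyperplane so the gluing is consistent. Finally $(2) \Rightarrow (3)$: apply the commuting identity at $\mathbf{u} = \mathbf{0}$. We have $\tau_{t'}(\mu^T_k(\mathbf{0})) = \tau_{t'}(\mathbf{0}) = -\mathbf{u}_{t',0}$, while $\mu^T_k(\tau_t(\mathbf{0})) = \mu^T_k(-\mathbf{u}_{t,0})$; reading off the $k$-th coordinate of the equality $-\mathbf{u}_{t',0} = \mu^T_k(-\mathbf{u}_{t,0})$ and using that the $k$-th coordinate of $\mu^T_k(\mathbf{w})$ is $-w_k$ gives $-u_{t',0,k} = u_{t,0,k}$; combined with the intrinsic relation $u_{t',0,k} = -u_{t,0,k}$ coming from $\mathbf{u}_{t',0} = \mu^T_k(\mathbf{u}_{t,0})$ when one additionally tests at $\mathbf{u}_0$ — more simply, one can just directly test $(2)$ at a point with $u_k > 0$ and a point with $u_k < 0$ to force $\mathbf{u}_{t,0}$ to lie on $\{u_k = 0\}$ — we conclude $u_{t,0,k} = 0$.

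The main obstacle, and the only place requiring care, is the bookkeeping between the two chart identifications $\seed_t$ and $\seed_{t'}$: one must be precise that $\mathbf{u}_{t,0}$ and $\mathbf{u}_{t',0}$ are \emph{not} independent data but are related through $\mu^T_k$ being the transition map, so that statement $(1)$ is genuinely a constraint only when $u_{t,0,k} \neq 0$ (where the two half-space formulas disagree with the identity). Everything else is a mechanical substitution into~\eqref{equ_tropical1}; I would keep the computations terse and lean on the fact that $\mu^T_k$ is piecewise-\emph{linear through the origin}, which is what makes the equivalence with the vanishing of a single coordinate work.
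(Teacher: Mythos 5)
Your proof is correct and takes essentially the same route as the paper: the identical cycle of implications $(1)\Rightarrow(2)\Rightarrow(3)\Rightarrow(1)$, carried out by direct substitution into~\eqref{equ_tropical1} and by evaluating the commutation identity at suitable points, with your attention to which linear piece of $\mu_k^T$ applies after translating, and your test of $(2)$ at points on both sides of $\{u_k=0\}$, being if anything slightly cleaner than the paper's evaluation at $\mathbf{0}$ alone. The only caveat is your aside that the chart transition $\R^{J}\simeq_{\seed_t}M_\R\simeq_{\seed_{t'}}\R^{J}$ ``is precisely $\mu_k^T$'': taken literally this would make $(1)$ a tautology and destroy its equivalence with $(3)$; what your computations actually use (and all they need) is that this transition is linear and fixes every vector whose $k$-th coordinate vanishes, so that $\mathbf{u}_{t',0}$ coincides with $\mathbf{u}_{t,0}$ exactly in the situation of $(3)$.
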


\begin{proof}
The statement $(1) \Rightarrow (2)$ follows from
$$
(\mu_k^T \circ \tau_t)(\mathbf{u}) = \mu_k^T (\mathbf{u} - \mathbf{u}_{t,0}) =  \mu_k^T (\mathbf{u}) - \mathbf{u}_{t^\prime,0} = (\tau_{t^\prime} \circ \mu_k^T) (\mathbf{u}).
$$
For $(2) \Rightarrow (3)$, we observe
\begin{equation}\label{equ_commuan}
- \mathbf{u}_{t^\prime,0} = \tau_{t^\prime}(\mathbf{0}) = (\tau_{t^\prime} \circ \mu_k^T)(\mathbf{0}) = (\mu_k^T \circ \tau_t)(\mathbf{0}) = \mu_k^T (- \mathbf{u}_{t,0}) = - \mu_k^T (\mathbf{u}_{t,0}).
\end{equation}
Restricting to the $k$-th component of~\eqref{equ_commuan}, we obtain 
$$
u_{t^\prime, 0, k} = \mu_k^T (\mathbf{u}_{t,0})_k = -u_{t^\prime, 0, k},
$$
It in turn implies that $u_{t, 0, k} = 0$ by the (piecewise)-linearity of $\mu_k^T$. Finally, suppose that $(3)$ holds. Then~\eqref{equ_fixeduntrop} is obtained by the expression~\eqref{equ_tropical1} for $\mu_k^T$.
\end{proof}

\begin{corollary}\label{cor_equivalenceoftranscomm}
Let $\mathbf{u}_{0}\in M_\R$ and $\mathbf{u}_{t,0}$ be the image of $\mathbf{u}_{0}$ under~\eqref{equ_defut0u}. Let $M_{\mathrm{fr}, \R}$ be the $\R$-vector space generated by $\{\mathbf{e}^{*}_j \mid j \in J \backslash J_{\mathrm{uf}} \}$. The following are equivalent. 
\begin{enumerate}
\item For each index $k \in J_\mathrm{uf}$, the point $\mathbf{u}_{t,0}$ is fixed under the tropicalized cluster mutation $\mu^T_k$, that is, $\mu^T_k (\mathbf{u}_{t,0}) = \mathbf{u}_{t^\prime,0}$ for $\mu_k (t) = t^\prime$.
\item For each index $k \in J_\mathrm{uf}$, the translations and the tropicalized cluster mutations are commutative, that is, $\tau_{t^\prime} \circ \mu_k^T = \mu_k^T \circ \tau_t$ on $\R^{J}$ for $\mu_k (t) = t^\prime$. 
\item The point $\mathbf{u}_{t,0}$ is contained in the space $M_{\mathrm{fr}, \R}$.
\end{enumerate}
\end{corollary}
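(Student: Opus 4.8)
\noindent\textit{Proof strategy.} The corollary is the ``all-directions'' version of Lemma~\ref{lemma_transtropicalcomm}, which already handles a single mutation direction, so the plan is to quantify that lemma over every unfrozen index and then to recognize the resulting coordinate condition as the intrinsic condition $\mathbf{u}_{t,0}\in M_{\mathrm{fr},\R}$.

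First I would fix the vertex $t$ and observe that, by construction, conditions $(1)$ and $(2)$ of the corollary are precisely the conjunctions over $k\in J_\mathrm{uf}$ of conditions $(1)$ and $(2)$ of Lemma~\ref{lemma_transtropicalcomm} applied to the pair $(t,t')$ with $t'=\mu_k(t)$. Since $(1)\Leftrightarrow(2)$ holds in Lemma~\ref{lemma_transtropicalcomm} for each such $k$, the two conjunctions are equivalent, giving $(1)\Leftrightarrow(2)$ of the corollary at once. For the remaining equivalence I would invoke $(1)\Leftrightarrow(3)$ of Lemma~\ref{lemma_transtropicalcomm}: condition $(1)$ of the corollary is equivalent to the assertion that, writing $\mathbf{u}_{t,0}=\sum_{j\in J}u_{t,0,j}\,\mathbf{e}^{*}_{j,\seed_t}$ in the identification $M_\R\simeq_{\seed_t}\R^{J}$, one has $u_{t,0,k}=0$ for every $k\in J_\mathrm{uf}$.

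The last step is to see that this coordinate condition is exactly $\mathbf{u}_{t,0}\in M_{\mathrm{fr},\R}$. Since $u_{t,0,k}=\langle \mathbf{u}_0,\mathbf{e}_{k,\seed_t}\rangle$ and $\{\mathbf{e}_{k,\seed_t}\mid k\in J_\mathrm{uf}\}$ is a $\Z$-basis of the sublattice $N_\mathrm{uf}$ (which belongs to the fixed data $\Gamma$ and is therefore independent of the seed), the vanishing of all these coordinates says that $\mathbf{u}_0$ annihilates $N_\mathrm{uf}$, i.e.\ $\mathbf{u}_{t,0}\in\Span_{\R}\{\mathbf{e}^{*}_{j,\seed_t}\mid j\in J\setminus J_\mathrm{uf}\}$. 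To identify this span with $M_{\mathrm{fr},\R}$ I would use the mutation rule~\eqref{equ_mutationmukf}: for $j\neq k$ one has $\mathbf{f}'_j=\mathbf{f}_j$, hence $\mathbf{e}^{*}_j=d_j\mathbf{f}_j$ is unchanged under every $\mu_k$ ($k\in J_\mathrm{uf}$) for each frozen index $j$, so this span does not depend on $t$ and equals the space $M_{\mathrm{fr},\R}$ of the statement.

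The only point needing care — and the natural place to slip — is the bookkeeping of the two distinct identifications $M_\R\simeq_{\seed_t}\R^{J}$ and $M_\R\simeq_{\seed_{t'}}\R^{J}$ appearing in Lemma~\ref{lemma_transtropicalcomm}, together with the verification that the ``frozen subspace'' one lands in is literally $M_{\mathrm{fr},\R}$ rather than a seed-dependent analogue; the invariance $\mathbf{f}'_j=\mathbf{f}_j$ for frozen $j$ is exactly what resolves this, and no computation beyond Lemma~\ref{lemma_transtropicalcomm} is required.
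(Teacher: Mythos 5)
Your proposal is correct and matches the paper's (implicit) argument: the corollary is stated without a separate proof precisely because it follows by applying Lemma~\ref{lemma_transtropicalcomm} in every unfrozen direction $k$ and noting that the simultaneous vanishing $u_{t,0,k}=0$ for all $k\in J_\mathrm{uf}$ is the same as $\mathbf{u}_{t,0}\in M_{\mathrm{fr},\R}$. Your extra remark that the frozen dual basis vectors are unchanged under mutation (via~\eqref{equ_mutationmukf}), so that the frozen span is seed-independent, is a correct and welcome clarification of a point the paper leaves tacit.
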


Let $\mathbf{u}_{t_0,0}$ be the center of the $\Q$-Gorenstein Fano Newton--Okounkov polytope $\Delta_{t_0}$ at the initial seed. We define a point $\mathbf{u}_{t,0}$ of $\Delta_{t}$ by the image of $\mathbf{u}_{t_0,0}$ under the composition of the identifications \begin{equation}\label{equ_howthecenter}
    \R^{J} \simeq_{\seed_{t_0}} M_\R \simeq_{\seed_{t}} \R^{J} \quad  (\mathbf{u}_{t_0,0} \mapsto \mathbf{u}_{0} \mapsto \mathbf{u}_{t,0}).
\end{equation}

\begin{lemma}\label{lemma_Qgorensteinind}
For $t \in \mathbb{T}$, let $\Delta_t$ be a $\Q$-Gorenstein Fano polytope with the center $\mathbf{u}_{t,0}$. Suppose that for a pair $(t, t^\prime)$ of vertices with $\mu_k (t) = t^\prime$ in $\mathbb{T}$ and an index $k \in J_\mathrm{uf}$, \eqref{equ_fixeduntrop} holds. Then $\mu_k^T(\Delta_t) = \Delta_{t^\prime}$ is also a $\Q$-Gorenstein Fano polytope with the center $\mathbf{u}_{t^\prime,0}$.
\end{lemma}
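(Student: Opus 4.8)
The plan is to reduce to the case already handled in Lemma~\ref{lemma_QGorenstein}, where the center is the origin, by conjugating the one-step statement with the translations $\tau_t$ and $\tau_{t'}$ and invoking the commutativity supplied by the hypothesis~\eqref{equ_fixeduntrop}.

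First I would record some elementary bookkeeping. The polytope $\Delta_{t'}=\mu_k^T(\Delta_t)$ is a full-dimensional rational polytope (it is a Newton--Okounkov polytope of $\mcal{L}$, hence rational by Theorem~\ref{theorem_Andersontoric}), and since $\mu_k^T$ is a homeomorphism of $\R^J$ carrying the interior point $\mathbf{u}_{t,0}$ of $\Delta_t$ to $\mathbf{u}_{t',0}$ by~\eqref{equ_fixeduntrop}, the point $\mathbf{u}_{t',0}$ lies in the interior of $\Delta_{t'}$. Consequently $\tau_{t'}(\Delta_{t'})=\Delta_{t'}-\mathbf{u}_{t',0}$ is a full-dimensional rational polytope containing $\mathbf{0}$ in its interior, and it suffices to prove that $\tau_{t'}(\Delta_{t'})$ is $\Q$-Gorenstein Fano with center $\mathbf{0}$ (with the same scaling constant $\lambda$ as in Definition~\ref{def_qgorensteinpoly}).

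Next I would use the hypothesis together with Lemma~\ref{lemma_transtropicalcomm}: condition~\eqref{equ_fixeduntrop} is equivalent to the commutation relation $\tau_{t'}\circ\mu_k^T=\mu_k^T\circ\tau_t$ on $\R^J$. Applying this to $\Delta_t$ gives
\[
\tau_{t'}(\Delta_{t'})=\tau_{t'}(\mu_k^T(\Delta_t))=\mu_k^T(\tau_t(\Delta_t)).
\]
Now $\tau_t(\Delta_t)=\Delta_t-\mathbf{u}_{t,0}$ is $\Q$-Gorenstein Fano with center $\mathbf{0}$, because $\mathbf{u}_{t,0}$ is by assumption the center of the $\Q$-Gorenstein Fano polytope $\Delta_t$. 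The argument in the inductive step of the proof of Lemma~\ref{lemma_QGorenstein} applies verbatim to this polytope: that argument uses only that $\mu_k^T$ is glued from two maps $A,B\in\mathrm{GL}(J,\Z)$ (see~\eqref{equ_tropical1}), that such matrices send primitive lattice vectors to primitive lattice vectors, and that a half-space $H^+_{c\mathbf{v}_j,1}$ is transported to $H^+_{c(A^{T})^{-1}\mathbf{v}_j,1}$ or $H^+_{c(B^{T})^{-1}\mathbf{v}_j,1}$; it never uses that the polytope arises from a valuation. (Here the facet description of $\mu_k^T(\tau_t(\Delta_t))$ is legitimate because, by the displayed identity, this set equals $\tau_{t'}(\Delta_{t'})$, which we already know to be a full-dimensional polytope with $\mathbf{0}$ in its interior.) Hence $\mu_k^T(\tau_t(\Delta_t))$ is $\Q$-Gorenstein Fano with center $\mathbf{0}$ and scaling constant $\lambda$, and by the displayed equality so is $\tau_{t'}(\Delta_{t'})$; equivalently $\Delta_{t'}=\mu_k^T(\Delta_t)$ is $\Q$-Gorenstein Fano with center $\mathbf{u}_{t',0}$, as claimed.

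I do not expect a genuine obstacle: the whole proof is a formal manipulation with the translations $\tau_t,\tau_{t'}$ and the piecewise-linear homeomorphism $\mu_k^T$, together with Lemmas~\ref{lemma_QGorenstein} and~\ref{lemma_transtropicalcomm}. The only point demanding a little care is that Lemma~\ref{lemma_QGorenstein} is stated for the fixed family of Newton--Okounkov polytopes, whereas I want to feed it the translated polytope $\tau_t(\Delta_t)$; if one prefers a cleaner citation, I would first isolate the one-step content of the proof of Lemma~\ref{lemma_QGorenstein} as a standalone lemma ``if $P$ is a full-dimensional rational polytope with $\mathbf{0}\in\mathrm{Int}(P)$ that is $\Q$-Gorenstein Fano with center $\mathbf{0}$, then so is $\mu_k^T(P)$'' and apply it here. (One may also phrase the equivalence via Corollary~\ref{cor_equivalenceoftranscomm}, but only the pairwise statement of Lemma~\ref{lemma_transtropicalcomm} is needed.)
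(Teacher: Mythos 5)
Your proposal is correct and follows essentially the same route as the paper: translate $\Delta_t$ by $\tau_t$ to a $\Q$-Gorenstein Fano polytope centered at $\mathbf{0}$, use the commutativity $\tau_{t'}\circ\mu_k^T=\mu_k^T\circ\tau_t$ from Lemma~\ref{lemma_transtropicalcomm} (equivalent to~\eqref{equ_fixeduntrop}), and invoke the one-step mechanism of Lemma~\ref{lemma_QGorenstein} to conclude. Your extra remark that the proof of Lemma~\ref{lemma_QGorenstein} should be read as a standalone statement about any full-dimensional rational polytope with $\mathbf{0}$ in its interior (rather than only the Newton--Okounkov polytopes in the family) is a careful reading of exactly what the paper's citation implicitly uses.
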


\begin{proof}
Consider the translated polytope $\Delta_t - \mathbf{u}_{t,0} = \tau_{t}(\Delta_t)$, which is a $\Q$-Gorenstein Fano polytope with the center $\mathbf{0}$. By Lemmas~\ref{lemma_QGorenstein} and~\ref{lemma_transtropicalcomm}, $(\tau_{t^\prime} \circ \mu_k ) (\Delta_t) = (\mu_k \circ \tau_{t}) (\Delta_t)$ is also $\Q$-Gorenstein Fano with the center $\mathbf{0}$. Therefore, $\Delta_{t^\prime}$ is a $\Q$-Gorenstein Fano polytope with the center $\mathbf{u}_{t^\prime,0}$.
\end{proof}

In summary, by combining Lemmas~\ref{lemma_saturatedind} and~\ref{lemma_Qgorensteinind}, we deduce the following proposition.

\begin{proposition}[Proposition~\ref{propB}]\label{proposition_Qgorennormalind}
Suppose that the family $\{S_t \mid t \in \mathbb{T} \}$ of \emph{finitely generated} semigroups has a tropical cluster structure and hence the family $\{\Delta_t \mid t \in \mathbb{T} \}$ of corresponding Newton--Okounkov bodies of $X$ also has a tropical cluster structure. If 
\begin{enumerate}
\item the semigroup $S_{t_0}$ is saturated, 
\item the Newton--Okounkov polytope $\Delta_{t_0}$ is a $\mathbb{Q}$-Gorenstein Fano polytope of size $\nu$ for some $\nu \in \mathbb{N}$ with the center $\mathbf{u}_0$, and
\item the center $\mathbf{u}_{0}$ is fixed under the tropicalized cluster mutation of each direction in the sense of~\eqref{equ_fixeduntrop},
\end{enumerate}
then each polytope $\Delta_t$ in the family is also a $\mathbb{Q}$-Gorenstein Fano polytope of size $\nu$ with the center $\mathbf{u}_{t,0}$ and the central fiber of the toric degeneration associated with $\Delta_t$ is a normal toric variety.
\end{proposition}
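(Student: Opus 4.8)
The plan is to obtain the proposition as a synthesis of the lemmas already established in this subsection, by running an induction along the tree $\mathbb{T}$. Since $\mathbb{T}$ is rooted at $t_0$ and connected, it suffices to treat a single edge $\mu_k(t)=t'$ with $k\in J_\mathrm{uf}$ in the inductive step, and the two conclusions---normality of the toric degeneration, and the $\Q$-Gorenstein Fano property with the prescribed center---can be handled independently.

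For the normality assertion I would first note that each $S_t$ is finitely generated by hypothesis, so Theorem~\ref{theorem_Andersontoric} does furnish a genuine toric degeneration of $X$ whose central fiber normalizes to the projective toric variety of $\Delta_t$. Condition~(1) says $S_{t_0}$ is saturated, and Lemma~\ref{lemma_saturatedind} propagates saturation along every edge of $\mathbb{T}$, so $S_t$ is saturated for all $t\in\mathbb{T}$. By the equivalence recorded in Definition~\ref{def_qgorensteinnormaltoricdeg} (``$S(\mcal{L},v,h)$ saturated'' $\Longleftrightarrow$ ``central fiber normal''), the toric degeneration attached to each $\Delta_t$ is then normal, as claimed.

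For the $\Q$-Gorenstein Fano statement I would first make sure that the ``center fixed'' condition~\eqref{equ_fixeduntrop} is available across every oriented edge of $\mathbb{T}$, not just those incident to $t_0$. By Corollary~\ref{cor_equivalenceoftranscomm}, condition~(3) is equivalent to $\mathbf{u}_0\in M_{\mathrm{fr},\R}$; and membership in $M_{\mathrm{fr},\R}$ is the condition $\langle\mathbf{u}_0,\mathbf{e}_j\rangle=0$ for all $j\in J_\mathrm{uf}$, i.e.\ that $\mathbf{u}_0$ annihilates the $\R$-span of $\{\mathbf{e}_j\mid j\in J_\mathrm{uf}\}$, which equals $N_\mathrm{uf}\otimes\R$ and is independent of the seed, since for every seed $\seed_t$ the set $\{\mathbf{e}_j\mid j\in J_\mathrm{uf}\}$ is a $\Z$-basis of the fixed unfrozen lattice $N_\mathrm{uf}$. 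Hence $\mathbf{u}_{t,0}\in M_{\mathrm{fr},\R}$ for all $t$, and Corollary~\ref{cor_equivalenceoftranscomm} applied at each vertex shows that~\eqref{equ_fixeduntrop} holds along every edge. Now I would induct on the distance from $t_0$: the base case is exactly condition~(2), namely that $\Delta_{t_0}$ is $\Q$-Gorenstein Fano with center $\mathbf{u}_{t_0,0}=\mathbf{u}_0$; for the inductive step, if $\mu_k(t)=t'$ and $\Delta_t$ is $\Q$-Gorenstein Fano with center $\mathbf{u}_{t,0}$, then the tropical cluster structure gives $\Delta_{t'}=\mu^T_k(\Delta_t)$, and since~\eqref{equ_fixeduntrop} holds at this edge, Lemma~\ref{lemma_Qgorensteinind} yields that $\Delta_{t'}$ is $\Q$-Gorenstein Fano with center $\mathbf{u}_{t',0}$. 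This closes the induction and finishes the proof.

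Granting the lemmas of this subsection, the proposition is essentially a bookkeeping synthesis, and I expect no serious obstacle. The one point that deserves care is precisely the propagation of the ``center fixed'' hypothesis: rather than verifying~\eqref{equ_fixeduntrop} afresh at each vertex, one should observe, via Corollary~\ref{cor_equivalenceoftranscomm}, that the condition is equivalent to membership of $\mathbf{u}_0$ in the \emph{seed-independent} subspace $M_{\mathrm{fr},\R}$, which collapses it to a single statement about $\mathbf{u}_0$ and makes the induction go through cleanly.
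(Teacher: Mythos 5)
Your proposal is correct and follows essentially the same route as the paper, which simply combines Lemma~\ref{lemma_saturatedind} (saturation propagates, giving normality via Definition~\ref{def_qgorensteinnormaltoricdeg}) with Lemma~\ref{lemma_Qgorensteinind} (the $\Q$-Gorenstein Fano property with fixed center propagates edge by edge). Your explicit observation that condition~(3) transports to every vertex because, by Corollary~\ref{cor_equivalenceoftranscomm}, it amounts to $\mathbf{u}_0$ lying in the seed-independent subspace $M_{\mathrm{fr},\R}$ is exactly the role that corollary plays in the paper's (terser) argument.
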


\subsection{Infinitely many tori from tropically related Newton--Okounkov polytopes}

Let $X$ be a smooth Fano variety of complex dimension $m$ equipped with the K\"{a}hler form of a very ample line bundle $\mcal{L}$, a positive power of the anticanonical bundle of $X$. Suppose that we have a family $\{S_t \mid t \in \mathbb{T}\}$ of finitely generated semigroups, each of which gives rise to a $\mathbb{Q}$-Gorenstein Fano Newton--Okounkov polytope $\Delta_t$ and a normal toric degeneration of a smooth projective variety $X$. If the family has a tropical cluster structure, then we have the family of monotone Lagrangian tori of $X$ by Proposition~\ref{proposition_toricdegenerationsmonotone}. The following theorem provides a criterion for the existence of infinitely many \emph{distinct} monotone Lagrangian tori.
  
\begin{theorem}[Theorem~\ref{theoremC}]\label{theorem_maininthebody}
 Let $X$ be a smooth Fano variety whose anticanonical line bundle $K^{-1}_X$ is ample. Take a very ample line bundle $\mathcal{L} = (K^{-1}_X)^{\otimes \nu}$ for some $\nu \in \mathbb{N}$ and equip $X$ with the K\"{a}hler form inherited from $\mcal{L}$. Assume that $X$ admits the family of Newton--Okounkov polytopes of $\mcal{L}$ arising from a family of finitely generated semigroups having a tropical cluster structure. We denote by $\{ \Delta_t \mid t \in \mathbb{T} \}$ the family of Newton--Okounkov polytopes for some oriented rooted regular tree $\mathbb{T}$ with the initial seed $t_0$.
 Suppose that the following initial conditions hold. 
 \begin{itemize}
     \item The semigroup that generates $\Delta_{t_0}$ at the initial seed is saturated.
     \item The Newton--Okounkov polytope $\Delta_{t_0}$ at the initial seed is $\mathbb{Q}$-Gorenstein Fano. 
     \item The polytope $\Delta_{t_0}$ contains the origin $\mathbf{0}$ and its center $\mathbf{u}_0$ is fixed under the tropicalized cluster mutation of each direction.
\end{itemize}
By Proposition~\ref{proposition_toricdegenerationsmonotone}, for each $t \in \mathbb{T}$, we have a monotone Lagrangian torus, say $L_{t}$. 
If there exists a sequence $\left( t_\ell \right)_{\ell \in \mathbb{N}}$ of seeds and a sequence $(r_\ell, s_\ell)_{\ell \in \mathbb{N}}$ of indices in $J_\mathrm{uf} \times J$ such that
\begin{enumerate}
\item the sequence $( \varepsilon_{r_\ell, s_\ell} )_{\ell \in \mathbb{N}}$ of the $(r_\ell, s_\ell)$-entry in the extended exchange matrix $\varepsilon_{t_\ell}$ associated with $t_\ell$ diverges to $- \infty$ as $\ell \to \infty$,
\item the Newton--Okounkov polytope $\Delta_{t_\ell}$ is contained in the half-space $H^+_{\mathbf{e}_{s_\ell}, 0}$ where $\seed_{t_\ell} = ( \mathbf{e}_j \mid j \in J )$, and 
\item the image of the polytope $\Delta_{t_\ell}$ under the tropicalized cluster mutation $\mu^T_{r_\ell}$ in the $r_\ell$-direction is contained in the half-space $H^+_{\mathbf{e}^\prime_{s_\ell}, 0}$ where $\seed_{\mu_{r_\ell} (t_\ell)} = ( \mathbf{e}^\prime_j \mid j \in J )$,
\end{enumerate}
then the family $\left\{ L_{t_\ell} \mid \ell \in \mathbb{N} \right\}$ contains infinitely many monotone Lagrangian tori, no two of which are related by any symplectomorphism.
 \end{theorem}
  
Here are some remarks on the conditions of Theorem~\ref{theorem_maininthebody}.

\begin{remark}
We emphasize that the corresponding hyperplane $H_{\mathbf{e}_s, 0}$ (resp. $H_{\mathbf{e}^\prime_s, 0}$) need \emph{not} contain a facet of $\Delta_{t_\ell}$ (resp. $\mu^T_{r_\ell} (\Delta_{t_\ell})$) in the condition $(2)$ (resp. $(3)$). It would be harder to apply Theorem~\ref{theorem_maininthebody} if one has to verify that the hyperplane contains a facet. Indeed, the conditions $(2)$ and $(3)$ can be checked by information from the lattice points contained in the semigroup \emph{without} figuring out facets of $\Delta_{t_\ell}$ and $\mu^T_{r_\ell} (\Delta_{t_\ell})$. 

Note that if a polytope $\Delta^\prime$ is obtained from another polytope $\Delta$ by applying the tropicalized cluster mutation $\mu^T_r$ in the $r^{\mathrm{th}}$-direction and $\Delta$ contains the origin $\mathbf{0} \in M$, then $\Delta^\prime$ also contains the origin $\mathbf{0} \in M$. Hence, if the polytope $\Delta_{t_0}$ at the initial seed contains the origin $\mathbf{0}$, then so do the others. 
\end{remark} 

\begin{remark}
To discuss the case where such half-spaces in $(2)$ and $(3)$ exist, we consider the scattering diagram for the cluster variety $\mcal{A}_{\Gamma, \seed_0}$ in \cite{ GrossHackingKeelKontsevich}. The cluster variety $\mcal{A}_{\Gamma, \seed_0}$ is contained in the log Calabi--Yau manifold $X \backslash D$ where $D$ is a normal crossing anticanonical divisor. To each irreducible component of $D$, there is the corresponding initial ray getting into a scattering diagram. While theta function $\theta_{Q, m_0} = \sum_\gamma \mathrm{mono}(\gamma)$
 of \cite[Definition 3.3]{GrossHackingKeelKontsevich} counts a prior several broken lines $\gamma$, one has in fact a single broken line $\gamma$ and hence $\theta_{Q, m_0}$ is a monomial whenever $m_0$ corresponds to an irreducible component of $D$. The tropicalization of the sum of theta functions produces the Newton--Okounkov body (often called a \emph{superpotential polytope}). The above theta function should give rise to the half-spaces in $(2)$ and $(3)$. 
 \end{remark}

A key part of the proof of Theorem~\ref{theorem_maininthebody} is to extract pieces of data from the iterative process~\eqref{equ_tropical1} that enable us to distinguish Lagrangian tori \emph{without} probing \emph{individual} toric degeneration and Newton--Okounkov polytope. We begin by collecting some lemmas.

\begin{lemma}[Theorem 6.4 in \cite{Brn}]\label{proposition_polardual}
Suppose that a polytope $\Delta$ contains the origin $\mathbf{0}$ in its interior. Let $\Delta^\circ$ be the polar dual of $\Delta$. Then the following are equivalent.
\begin{enumerate}
\item The hyperplane $H_{\mathbf{v}, 1}$ is a supporting hyperplane of the polytope $\Delta$.
\item The vector $\mathbf{v}$ is contained in the boundary of ${\Delta}^\circ$, that is, $\mathbf{v} \in \Delta^\circ \backslash \mathrm{Int}({\Delta}^\circ)$.
\end{enumerate}
\end{lemma}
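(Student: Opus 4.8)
The statement is the standard polar duality fact (here quoted from \cite{Brn}), and the plan is to prove both implications directly by unwinding the definition of $\Delta^\circ$, using only that $\Delta$ is compact and contains $\mathbf{0}$ in its interior. Recall that in this situation $\Delta^\circ$ is itself a polytope containing the origin in its interior, a fact already noted above.

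For $(1)\Rightarrow(2)$: if $H_{\mathbf{v},1}$ is a supporting hyperplane of $\Delta$, then by definition $\Delta\subset H^+_{\mathbf{v},1}$, i.e.\ $\langle\mathbf{u},\mathbf{v}\rangle+1\geq 0$ for every $\mathbf{u}\in\Delta$, which is precisely the condition $\mathbf{v}\in\Delta^\circ$. To rule out $\mathbf{v}\in\mathrm{Int}(\Delta^\circ)$, I would use that a supporting hyperplane meets $\Delta$: choose $\mathbf{u}^{\ast}\in\Delta\cap H_{\mathbf{v},1}$, so $\langle\mathbf{u}^{\ast},\mathbf{v}\rangle=-1$, and in particular $\mathbf{u}^{\ast}\neq\mathbf{0}$. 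Picking a direction $\mathbf{w}$ with $\langle\mathbf{u}^{\ast},\mathbf{w}\rangle<0$, the points $\mathbf{v}+\epsilon\mathbf{w}$ fail the inequality $\langle\mathbf{u}^{\ast},\cdot\,\rangle+1\geq 0$ for every $\epsilon>0$, hence lie outside $\Delta^\circ$; so no neighborhood of $\mathbf{v}$ is contained in $\Delta^\circ$, and $\mathbf{v}\in\Delta^\circ\backslash\mathrm{Int}(\Delta^\circ)$.

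For $(2)\Rightarrow(1)$: if $\mathbf{v}\in\Delta^\circ\backslash\mathrm{Int}(\Delta^\circ)$, then $\mathbf{v}\in\Delta^\circ$ already gives $\Delta\subset H^+_{\mathbf{v},1}$, and it remains to produce a point of $\Delta\cap H_{\mathbf{v},1}$, i.e.\ to show $c\coloneqq\min_{\mathbf{u}\in\Delta}\langle\mathbf{u},\mathbf{v}\rangle=-1$. The minimum is attained by compactness of $\Delta$, one has $c\geq -1$ since $\mathbf{v}\in\Delta^\circ$, and $\mathbf{v}\neq\mathbf{0}$ because $\mathbf{0}\in\mathrm{Int}(\Delta^\circ)$. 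Assume for contradiction that $c>-1$. Since $\Delta$ has finitely many vertices $\mathbf{u}_1,\dots,\mathbf{u}_K$, all of norm at most some $R>0$, and $\langle\mathbf{u}_i,\mathbf{v}\rangle\geq c>-1$ for every $i$, any $\mathbf{w}$ with $\|\mathbf{w}-\mathbf{v}\|<(c+1)/R$ still satisfies $\langle\mathbf{u}_i,\mathbf{w}\rangle>c-(c+1)=-1$ for all $i$, hence $\mathbf{w}\in\Delta^\circ$; this exhibits a neighborhood of $\mathbf{v}$ inside $\Delta^\circ$, contradicting $\mathbf{v}\notin\mathrm{Int}(\Delta^\circ)$. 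Therefore $c=-1$, a minimizer lies in $\Delta\cap H_{\mathbf{v},1}$, and $H_{\mathbf{v},1}$ is a supporting hyperplane of $\Delta$.

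The argument is entirely elementary; the only mild care is needed in $(2)\Rightarrow(1)$, where the openness of $\Delta^\circ$ near a point with $c>-1$ must be justified via the finiteness of the vertex set, and in disposing of the degenerate possibility $\mathbf{v}=\mathbf{0}$ (which the contradiction step already handles, since $\mathbf{0}\in\mathrm{Int}(\Delta^\circ)$). Alternatively one could quote the reflexivity $(\Delta^\circ)^\circ=\Delta$ together with the standard inclusion-reversing face correspondence between $\Delta$ and $\Delta^\circ$, but the direct computation above is self-contained and suffices for our purposes.
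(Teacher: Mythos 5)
Your proof is correct. Note that the paper does not prove this lemma at all: it is quoted as Theorem 6.4 of \cite{Brn}, so there is no internal argument to compare against; your contribution is a self-contained elementary verification. Both directions check out: in $(1)\Rightarrow(2)$ the inclusion $\Delta\subset H^+_{\mathbf{v},1}$ gives $\mathbf{v}\in\Delta^\circ$, and perturbing $\mathbf{v}$ by $\epsilon\mathbf{w}$ with $\langle\mathbf{u}^*,\mathbf{w}\rangle<0$ at a contact point $\mathbf{u}^*$ (which is nonzero since $\langle\mathbf{u}^*,\mathbf{v}\rangle=-1$) correctly exhibits points arbitrarily close to $\mathbf{v}$ outside $\Delta^\circ$; in $(2)\Rightarrow(1)$ the compactness argument showing $c=\min_{\mathbf{u}\in\Delta}\langle\mathbf{u},\mathbf{v}\rangle=-1$ is sound, with the key step --- that $c>-1$ would force $\mathbf{v}\in\mathrm{Int}(\Delta^\circ)$ --- justified by the Lipschitz estimate over the finitely many vertices (the passage from ``inequality at all vertices'' to ``inequality on all of $\Delta$'' is immediate by convexity, and is worth one explicit clause). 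You also correctly dispose of $\mathbf{v}=\mathbf{0}$ via $\mathbf{0}\in\mathrm{Int}(\Delta^\circ)$, which matters because the paper's definition of a hyperplane requires a nonzero normal vector; this uses that $\Delta$ is bounded, a fact recorded in the paper just after the definition of the polar dual. Your alternative suggestion, deducing the statement from $(\Delta^\circ)^\circ=\Delta$ and the face correspondence, is closer to how the result is usually packaged in \cite{Brn}, but the direct computation you give is entirely adequate.
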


\begin{lemma}\label{lemma_keyinfitnie}
Suppose that a polytope $\Delta^\prime$ is the image of a polytope $\Delta$ under the tropicalized cluster mutation $\mu_r^T$ in~\eqref{equ_tropical1}, that is, $\Delta^\prime = \mu_r^T (\Delta)$. Assume that the polytope $\Delta$ contains the origin $\mathbf{0}$. If $\Delta$ is contained in the half-space $H^+_{\mathbf{e}_s, 0}$, $\Delta^\prime$ is contained in the half-space $H^+_{\mathbf{e}^\prime_s, 0}$, and $\varepsilon_{r,s} \leq 0$, then the half-space $H^+_{\mathbf{e}_s - \varepsilon_{r,s} \mathbf{e}_r, 0}$ is a supporting half-space of $\Delta$. 
\end{lemma}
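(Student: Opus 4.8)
The plan is to read off the two defining properties of a supporting half-space directly from the explicit piecewise-linear formula~\eqref{equ_tropical1} for $\mu_r^T$, using nothing more than the sign hypothesis $\varepsilon_{r,s}\le 0$ and the fact that $\mathbf{0}\in\Delta$.

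First I would dispose of the degenerate case $s=r$: there $\mathbf{e}_s-\varepsilon_{r,s}\mathbf{e}_r=\mathbf{e}_r$ because $\varepsilon_{r,r}=\{\mathbf{e}_r,\mathbf{e}_r\}\,d_r=0$, and the assertion reduces to the hypothesis $\Delta\subset H^+_{\mathbf{e}_s,0}$ together with $\mathbf{0}\in\Delta\cap H_{\mathbf{e}_r,0}$ (in fact full-dimensionality of $\Delta$ together with the two half-space hypotheses already forces $s\neq r$). So assume $s\neq r$, and fix $\mathbf{u}=(u_j)_{j\in J}\in\Delta\subset M_\R\simeq_{\seed_t}\R^{J}$ with image $\mathbf{u}'=(u'_j)=\mu_r^T(\mathbf{u})\in\Delta'$. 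Since $\varepsilon_{r,s}\le 0$ we have $[\varepsilon_{r,s}]_+=0$ and $[-\varepsilon_{r,s}]_+=-\varepsilon_{r,s}$, so~\eqref{equ_tropical1} collapses, on the $s$-th coordinate, to
\[
u'_s=\begin{cases} u_s & \text{if } u_r\ge 0,\\ u_s-\varepsilon_{r,s}u_r & \text{if } u_r\le 0;\end{cases}
\]
the two branches agree on the wall $u_r=0$, so this case split is harmless.

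The key step is then to prove the containment $\Delta\subset H^+_{\mathbf{e}_s-\varepsilon_{r,s}\mathbf{e}_r,\,0}$, i.e.\ $u_s-\varepsilon_{r,s}u_r\ge 0$ for every $\mathbf{u}\in\Delta$, again by a dichotomy on the sign of $u_r$. If $u_r\le 0$, then $u_s-\varepsilon_{r,s}u_r=u'_s\ge 0$ because $\Delta'\subset H^+_{\mathbf{e}'_s,0}=\{u'_s\ge 0\}$. If $u_r\ge 0$, then $u_s-\varepsilon_{r,s}u_r=u_s+(-\varepsilon_{r,s})u_r$ is a sum of two non-negative terms: $u_s\ge 0$ since $\Delta\subset H^+_{\mathbf{e}_s,0}$, and $(-\varepsilon_{r,s})u_r\ge 0$ since $-\varepsilon_{r,s}\ge 0$ and $u_r\ge 0$. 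For the second half of the supporting-half-space condition I would simply invoke $\mathbf{0}\in\Delta$: as $\langle\mathbf{0},\mathbf{e}_s-\varepsilon_{r,s}\mathbf{e}_r\rangle=0$, the origin lies in $\Delta\cap H_{\mathbf{e}_s-\varepsilon_{r,s}\mathbf{e}_r,0}$, which is therefore non-empty, and $\mathbf{e}_s-\varepsilon_{r,s}\mathbf{e}_r\neq\mathbf{0}$ because $s\neq r$, so it genuinely cuts out a half-space. Combining the two observations gives that $H^+_{\mathbf{e}_s-\varepsilon_{r,s}\mathbf{e}_r,0}$ is a supporting half-space of $\Delta$.

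I do not expect any genuine obstacle here: the content of the lemma is exactly that a hypothesis valid on the chamber $\{u_r\le 0\}$ (coming from $\Delta'$) and a hypothesis valid everywhere (coming from $\Delta$) patch, across the wall where the two linear branches of $\mu_r^T$ differ, into a single linear inequality holding on all of $\Delta$. The only mild points to keep straight are the bookkeeping at the wall $u_r=0$ and the convention (as in~\eqref{equ_tropical1}) that $\varepsilon_{r,s}$ denotes the entry of the exchange matrix $\varepsilon_t$ attached to the \emph{source} vertex $t$.
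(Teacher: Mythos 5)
Your proof is correct and follows essentially the same route as the paper: the same dichotomy on the sign of $u_r$, using the containment $\Delta'\subset H^+_{\mathbf{e}'_s,0}$ on the chamber $u_r\le 0$ and the sign hypothesis $\varepsilon_{r,s}\le 0$ together with $\Delta\subset H^+_{\mathbf{e}_s,0}$ on the chamber $u_r\ge 0$, and finally the origin to witness $\Delta\cap H_{\mathbf{e}_s-\varepsilon_{r,s}\mathbf{e}_r,0}\neq\emptyset$. The extra remarks on the case $s=r$ and the wall $u_r=0$ are harmless additions not needed in the paper's argument.
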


\begin{proof}
To show that $H^+_{\mathbf{e}_s- \varepsilon_{r,s}\mathbf{e}_r, 0}$ is a supporting half-space of $\Delta$, consider the mutated polytope $\Delta^\prime$. Since $\Delta^\prime$ is contained in $H^+_{\mathbf{e}^\prime_s, 0}$, we have
$$
(\mu_r^T(\mathbf{u}))_s \geq 0 \quad \mbox{for all $\mathbf{u} \in \Delta$}.
$$
By the relation~\eqref{equ_tropical1}, each point $\mathbf{u}$ in the Newton--Okounkov body $\Delta$ satisfies
$$
(\mu_r^T(\mathbf{u}))_s = 
\begin{cases}
u_s  & \mbox{if $u_r\ge 0$} \\
u_s - \varepsilon_{r,s}u_r & \mbox{if $u_r\le 0$}.
\end{cases}
$$
In both cases, we claim that $u_s- \varepsilon_{r,s}u_r \geq 0$.
\begin{enumerate}
    \item If $u_r \leq 0$, then $u_s- \varepsilon_{r,s}u_r = (\mu_r^T(\mathbf{u}))_s \geq 0$.
    \item If $u_r \geq 0$, then $u_s- \varepsilon_{r,s}u_r \geq u_s =(\mu_r^T(\mathbf{u}))_s \geq 0$ because $\varepsilon_{r,s} \leq 0$.
\end{enumerate}
Moreover, the origin $\mathbf{0}$ is contained in the intersection $H_{\mathbf{e}_s-\varepsilon_{r,s}\mathbf{e}_r, 0} \cap \Delta$ and hence $H^+_{\mathbf{e}_s - \varepsilon_{r,s} \mathbf{e}_r, 0}$ is a supporting half-space of $\Delta$.
\end{proof}

We are ready to prove Theorem~\ref{theorem_maininthebody}. 
Before presenting its proof, we address an issue that one should be cautious about. 
Suppose that $\Delta$ is a normalized $\Q$-Gorenstein Fano polytope with the center $\mathbf{u}_0$. Let $H^+_{\mathbf{v}, a}$ be a supporting half-space of the polytope $\Delta$ where $\mathbf{v}$ is a \emph{primitive} lattice vector. Let $\tau \colon \R^J \to \R^J$ be the translation $\mathbf{u} \mapsto \mathbf{u} - \mathbf{u}_0$. If the hyperplane $H_{\mathbf{v}, a}$ contains a \emph{facet}, then $a = 1$ and $\tau(\Delta ) = \Delta - \mathbf{u}_0$ also has a supporting half-space $H^+_{\mathbf{v}, 1}$ by Remark~\ref{lemma_QGorensteinpoly}. However, if $H_{\mathbf{v}, a}$ does \emph{not} contain a facet, depending on $\mathbf{u}_0$ and $a$, $H^+_{\mathbf{v}, 1}$ may \emph{not} be a supporting half-space because
$$
\tau(H^+_{\mathbf{v}, a}) = H^+_{\frac{\mathbf{v}}{\langle \mathbf{u}_0, \mathbf{v} \rangle + a}, 1}.
$$

More generally, suppose that $\Delta$ is a $\Q$-Gorenstein Fano polytope of size $\nu$ with the center $\mathbf{u}_0$. Then $\nu^{-1} \cdot (\Delta - \mathbf{u}_0)$ is normalized. If $\Delta$ has a supporting half-space $H^+_{\mathbf{v}, a}$, then the corresponding half-space supporting $\nu^{-1} \cdot (\Delta - \mathbf{u}_0)$ is 
\begin{equation}\label{equ_translatedformula}
    H^+_{\frac{\nu \mathbf{v}}{\langle \mathbf{u}_0, \mathbf{v} \rangle + a} , 1}.
\end{equation}
To obtain an easy-to-use criterion, we do \emph{not} require the half-spaces to contain a facet of $\Delta$. Extra care is necessary for this reason.
With this issue in mind, we prove Theorem~\ref{theorem_maininthebody}.

\begin{proof}[Proof of Theorem~\ref{theorem_maininthebody}]
Since $|J|$ is finite, there exist indices $r, s \in J$ such that $s = s_\ell$ and $r = r_\ell$ for infinitely many indices $\ell$ in the sequence $\{(r_\ell, s_\ell)\}_{\ell \in \mathbb{N}}$. We denote by $\varepsilon^\ell_{r,s}$ the $(r,s)$-entry of the exchange matrix $\varepsilon_{t_\ell}$. By taking a subsequence if necessary, we may assume that $s_\ell = s$, $r_\ell = r$ and $\varepsilon^\ell_{r,s} < 0$ for all $\ell \in \mathbb{N}$. For notational simplicity, set $\Delta_\ell \coloneqq \Delta_{t_\ell}$ and $\mathbf{u}_{\ell, 0} \coloneqq \mathbf{u}_{t_\ell, 0}$ in~\eqref{equ_howthecenter}. Let $\Delta_{\ell}^\vee = \nu \cdot (\Delta_\ell - \mathbf{u}_{\ell,0})^\circ$ be the combinatorial dual polytope of $\Delta_{\ell}$ where $\nu \in \mathbb{N}$ is determined by $\mathcal{L} = (K^{-1}_X)^{\otimes \nu}$. 

Since the center $\mathbf{u}_{0}$ is fixed by the tropicalized cluster mutation of each direction, $\mathbf{u}_{\ell,0}$ is the same element in $M_\R$ by the choice of $\mathbf{u}_{\ell,0}$ in~\eqref{equ_howthecenter} and hence we have $\langle \mathbf{u}_{0,0}, \mathbf{v} \rangle = \langle \mathbf{u}_{\ell,0} , \mathbf{v} \rangle$
for all $\ell \in \mathbb{N}$. According to Corollary~\ref{cor_equivalenceoftranscomm}, $\mathbf{u}_{\ell,0}$ can be expressed as
$$
\mathbf{u}_{\ell,0} = \sum_{j \in J \backslash J_\mathrm{uf}} a_j \mathbf{e}^*_j.
$$ 
For both $\mathbf{v} = \mathbf{e}_s$ and $\mathbf{e}_s - \varepsilon^\ell_{r,s} \mathbf{e}_{r}$, we have
$$
\langle \mathbf{u}_{\ell,0},\mathbf{v} \rangle = \langle \mathbf{u}_{0,0},\mathbf{v} \rangle = a_s
\in \Q \quad \mbox{for all $\ell$.}
$$

By Lemma~\ref{lemma_keyinfitnie}, we have the following supporting half-spaces of $\Delta_\ell$
$$    
H^+_{\mathbf{v}, 0} \mbox{ for $\mathbf{v} = \mathbf{e}_s$ and $\mathbf{e}_s - \varepsilon^\ell_{r,s} \, \mathbf{e}_{r}$}.
$$
By~\eqref{equ_translatedformula}, we have the following supporting half-spaces of $\nu^{-1} \cdot (\Delta_\ell - \mathbf{u}_{\ell,0})$
\begin{equation}\label{equ_esesrlsl}
    H^+_{\frac{\nu\mathbf{v}}{a_s + 0}, 1} \mbox{ for $\mathbf{v} = \mathbf{e}_s$ and $\mathbf{e}_s - \varepsilon^\ell_{r,s} \, \mathbf{e}_{r}$}.
\end{equation}
Let $\frac{\nu}{a_s} = \frac{p}{q}$ for $p \in \Z, q \in \N$ with $\mathrm{gcd}(p,q) = 1$. We emphasize that $\nu$ and $a_s$ are independent of $\ell$ and hence so is $q$. Consider the super-lattice $\frac{1}{q} N \supset N$. Because of Lemma~\ref{proposition_polardual}, the polytope $\Delta_{\ell}^\vee$ contains the line segment joining the points $\frac{\nu}{a_s}\mathbf{e}_{s}$ and $\frac{\nu}{{a_s}}(\mathbf{e}_{s} - \varepsilon^\ell_{r,s} \mathbf{e}_{r})$. Note that this line segment contains  $(1 - \varepsilon^\ell_{r,s})$ lattice points of $\frac{1}{q} N$. From this observation, $\Delta_{\ell}^\vee \subset N_\R$ has at least $(1 - \varepsilon^\ell_{r,s})$ lattice points of $\frac{1}{q} N$. Since the condition $- \varepsilon^\ell_{r,s} \to \infty$ as $\ell \to \infty$, the number of lattice points of $\Delta^\vee_{\ell}$ in $\frac{1}{q} N$ diverges to infinity. 

Let $L_{\ell} \coloneqq L_{t_\ell}$ be a monotone Lagrangian torus constructed from $\Delta_{\ell}$ in Proposition~\ref{proposition_toricdegenerationsmonotone}. We claim that the family $\{ L_{\ell} \mid \ell \in \mathbb{N} \}$ contains infinitely many \emph{distinct}  monotone Lagrangian tori. Because of Proposition~\ref{proposition_deltaveedleta}, we have
$$
\Delta_{\ell}^\vee \subset \Delta^{\mathrm{ref}}_{L_\ell}
$$
and hence the number of lattice points of $\Delta^{\mathrm{ref}}_{L_\ell}$ in $\frac{1}{q} N$ also diverges to infinity as $\ell \to \infty$. Recall that the number of lattice points of a polytope in $\frac{1}{q}N$ is invariant under unimodular equivalence. It implies that the family $\{  \Delta^{\mathrm{ref}}_{L_\ell} \mid \ell \in \mathbb{N}\}$ contains infinitely many polytopes, no two of which are related by any unimodular equivalence. Thus, Corollary~\ref{cor_polytoperef} concludes that there are infinitely many distinct monotone Lagrangian tori in $\{ L_\ell \mid \ell \in \mathbb{N} \}$ as desired.
\end{proof}

\section{Cluster polytopes for flag manifolds}\label{chap_clusterflag}

In the remaining sections, we shall apply Theorem~\ref{theorem_maininthebody} to a flag manifold to show that it carries infinitely many Lagrangian tori, no two of which are related by any symplectomorphism.
 
We set up some notations which will be used in the remaining sections. Let $G$ be a simply connected and semisimple algebraic group over $\C$ and $\g$ the corresponding Lie algebra. Let $\cmA = (a_{i,j})_{i,j\in I}$ be the Cartan matrix of $\g$. Let $B$ be a Borel subgroup of $G$, $H$ a maximal torus in $B$, $B^-$ the opposite Borel subgroup, and $U^-$ the unipotent radical of $B^-$. We then have the complex flag manifold $G/B$. 
 We denote by $W \coloneqq N(H)/H$ the \emph{Weyl group} of $\g$, where $N(H)$ is the normalizer of $H$ in $G$, and by $s_i$ the $i^{\mathrm{th}}$ simple reflection of $W$. Let $\mathfrak{h}$ be the Lie algebra of $H$ and $\mathfrak{h}^* \coloneqq \mathrm{Hom}_\C(\mathfrak{h}, \C)$ the dual space of $\mathfrak{h}$. 
We denote by $\langle- , -  \rangle$  the natural pairing between $\mathfrak{h}$ and $\mathfrak{h}^*$. We often write $ \lambda(x) \coloneqq \langle x, \lambda \rangle$ for $x \in \mathfrak{h}$ and $ \lambda \in \mathfrak{h}^* $.

Let $\{ \alpha_i \mid i\in I\}  \subset \mathfrak{h}^* $ be the set of \emph{simple roots} and $\{ h_i \mid i\in I \}\subset \mathfrak{h}$  the set of \emph{simple coroots}. Note that  $\langle h_i, \alpha_j \rangle = a_{i,j} $ for any $i,j \in I$. For $i\in I$, we define the $i^{\mathrm{th}}$ fundamental weight $\varpi_i \in  \mathfrak{h}^*$ by $\langle h_j, \varpi_i  \rangle = \delta_{j, i}$ for any $j\in I$. The \emph{weight lattice} $P$ is defined as $P \coloneqq \bigoplus_{i\in I} \Z \varpi_i$ and set $ P^+ \coloneqq \{ \lambda \in P \mid \lambda (h_i) \ge 0 \text{ for any } i\in I \}$. An element of $P^+$ is called a \emph{dominant integral weight}.

For a dominant integral weight $\lambda \in P^+$, let $V(\lambda)$ be the irreducible highest weight $G$-module over $\C$ and let $v_\lambda$ a highest weight vector of $V(\lambda)$.
For each $w\in W$, the \emph{extremal weight vector} $v_{w\lambda}$ is defined as $v_{w\lambda} \coloneqq \overline{w} v_\lambda$, where $\overline{w}$ is a lift of $w$ in $N(H)$.
For $w, u \in W$ and $g\in G$, define $\Delta_{w\lambda, u\lambda}(g) \coloneqq ( v_{w\lambda}, g v_{u \lambda}  )_\lambda$, where $(\ ,\ )_\lambda$ is the non-degenerate symmetric $\C$-bilinear form on $V(\lambda)$ with certain invariant properties. Note that $\Delta_{w\lambda, u\lambda} \in \C[G]$, which is called a \emph{generalized minor} (see \cite[Section 2.2 and Proposition 5.1]{FO20} for details). 

Let $U(\g)$ be the \emph{universal enveloping algebra} of the Lie algebra $\g$ and let $U_q(\g)$ be the $q$-deformation of $U(\g)$ over $\C(q)$, which is called the \emph{quantum group} associated with $\cmA$ (see \cite{LusztigBook}). Here $q$ is an indeterminate.
The algebra $U(\g)$ can be viewed as the specialization of $U_q(\g)$ at $q=1$. 
For $i\in I$, we denote by $f_i$ and $e_i$ the \emph{Chevalley generators} of $U_q(\g)$ with weight $ -\alpha_i$ and $\alpha_i$ respectively. Let $U_q^-(\g)$ be the \emph{negative half} of $U_q(\g)$, which is a subalgebra of $U_q(\g)$ generated by $f_i$ for all $i\in I$. 
We denote by $B(\infty)$ the \emph{infinite crystal}  of the negative half $U_q^-(\g)$ and by $\Gup(\infty)$ the \emph{dual canonical basis} (or \emph{upper global basis}) of the dual of $U_q^-(\g)$ (see \cite{KashiwaraBook, LusztigBook} and references therein). Since $B(\infty)$ is regarded as the specialization of $\Gup(\infty)$  at $q=0$, one can write 
$$
\Gup(\infty) = \{ \Gup(b) \mid b\in B(\infty) \}.
$$

The discussion of this section will be centered around the following objects. For each element $w \in W$,
\begin{itemize}
\item the \emph{Schubert variety} $X_w$ associated to $w$ by the Zariski closure of $B w B / B$ in $G/B$,
\item the \emph{unipotent cell} $U_w^-$ associated to $w$ by $U^- \cap B w B$ in $G$.
\end{itemize}

The goal of this section is to present the main result of this paper, which claims that there are infinitely many distinct monotone Lagrangian tori in every flag manifold $G/B$ of arbitrary type except a few low-dimensional cases. To construct such a family of Lagrangian tori, we employ a family of Newton--Okounkov polytopes of $G/B$ constructed by cluster algebra. We recall the cluster structure on the coordinate ring of a unipotent cell and the relationship between the dual canonical basis of the quantum group and the Newton--Okounkov polytopes. The connection leads to the proof of some conditions to apply the distinguishing criterion (Theorem~\ref{theorem_maininthebody}) to the family.

\subsection{Infinitely many Lagrangian tori in flag manifolds}

In the early stages, the construction problem for a toric degeneration of an algebraic variety was tackled by the theory of Gr\"{o}bner basis or SAGBI basis. By constructing such a basis, a toric degeneration of a flag variety $SL_n/B$ of type $A$ (and Schubert varieties therein) was constructed by Gonciulea--Lakshmibai \cite{GonciuleaLakshmibai} and Kogan--Miller \cite{KoganMiller} for instance. Indeed, the toric variety at the central fiber in this toric degeneration corresponds to the toric variety associated with the Gelfand--Zeitlin polytope. 

In \cite{Caldero}, Caldero brought a new approach to the degeneration problem on Schubert varieties based on the string parametrizations of the dual canonical basis or the upper global basis. The string parametrizations of the dual canonical basis in the irreducible representation of $G$ with the highest weight $\lambda$ were introduced by Littelmann \cite{Littelmann} and Berenstein--Zelevinsky \cite{BerensteinZelevinsky}. A \emph{string polytope} is defined by the convex hull of the string parametrizations in the irreducible representation of $G$. Caldero constructed a family of toric degenerations of a flag variety (and its Schubert varieties) of arbitrary type corresponding to a string polytope. Indeed, string polytopes are important examples of Newton--Okounkov bodies developed by \cite{Okounkov, LazarsfeldMustata, KiumarsKhovanskii}. Later on, Kaveh \cite{Kaveh} proved how string polytopes can be viewed as a Newton--Okounkov polytope by finding a suitable valuation on the functions field of the flag manifold, see Fujita~\cite{FujitaNObody} for a generalization to Schubert varieties.

To construct a string polytope, we need to choose two data. One is a dominant integral weight $\lambda$, a non-negative linear combination of fundamental weights, and the other is a reduced expression $\underline{w}$ of a Weyl group element $w \in W$. From the geometric perspective, the choice of $\lambda$ determines an ambient partial flag variety and its adorned K\"{a}hler form $\omega_\lambda$. Next, the choice of a Weyl group element $w$ determines a Schubert variety $X_w$. In particular, $X_{w_0}$ is the ambient flag manifold if $w_0$ is the longest element of $W$. Finally, the choice of a reduced expression $\underline{w}$ of the element $w$ gives rise to a valuation on $\C(X)$ arising from the sequence of (resolutions of) Schubert subvarieties of $X_w$ given by the truncations of $\underline{w}$. In sum, a string polytope associated with the choice $(\lambda, \underline{w})$ can be used to understand a toric degeneration of $X_w$. One important point is that we obtain possibly different Newton--Okounkov bodies if we make a different choice of a reduced expression $\underline{w}$ leaving the other choices $\lambda$ and $w$ fixed. In representation terminology, the reduced expression selects the order of types of the Kashiwara operators on the crystal graph so that they give rise to different string parametrizations (and they produce different convex hulls.)

Yet, there are \emph{only} finitely many reduced expressions of the longest element of $W$. Thus, we can have only finitely many distinct monotone Lagrangian tori at best. We need to find other sources for toric degenerations to produce infinitely distinct objects.

Gross--Hacking--Keel--Kontsevich \cite{GrossHackingKeelKontsevich} laid down a general framework for constructing toric degenerations via cluster algebra. This framework can be very useful to construct meaningful Lagrangian tori of a smooth projective variety thanks to the work of Harada--Kaveh \cite{HaradaKaveh}. In \cite{FO20}, Fujita--Oya showed that toric degenerations of a Schubert variety $X_w$ in a flag manifold arising from the cluster structure on the unipotent cell $U^-_w$ in Berenstein--Fomin--Zelevinsky \cite{BFZ05} are typical examples for the GHKK construction. In particular, they demonstrated how the toric degenerations can be realized as a Newton--Okounkov body on $X_w$, relying on Anderson's construction \cite{Anderson}.

An upper cluster algebra structure on the unipotent coordinate ring $\C[U_w^-]$ was discovered by Berenstein--Fomin--Zelevinsky \cite{BFZ05}. It yields that $U_w^-$ is birational to the $\mcal{A}$-cluster variety. By using the $\mcal{A}$-cluster structure, for each seed $\seed$, Fujita--Oya \cite{FO20} constructed a valuation $v_\seed$ on the function field of $\mcal{A}$. Since it is birational to the Schubert variety $X_w$, the valuation on $\C(\mcal{A})$ defines the valuation on $\C(X_w)$. Therefore each seed gives rise to a Newton--Okounkov body of the Schubert variety $X_w$. 

Moreover, the constructed Newton--Okounkov bodies can be described via the Fock--Goncharov dual $\mcal{A}^\vee$.
By the work of Qin \cite{Qin20}  it was shown that the dual canonical basis of $\C[U^-_w]$ is \emph{pointed}  and the \emph{extended $g$-vector} agrees with the valuation $v_\seed$ of a seed $\seed$. Through the relation, we can extract data of lattice points of Newton--Okounkov bodies via properties of extended $g$-vectors. In fact, for a special choice of seeds, the corresponding Newton--Okounkov polytope is unimodularly equivalent to a string polytope, and hence the family of polytopes can be thought of as a generalization of string polytopes.

With this background in mind, we choose an anticanonical regular dominant weight $\lambda = 2 \rho$ where $\rho$ is the sum of fundamental weights and the longest element $w = w_0$ of the Weyl group $W$. Hence the Schubert variety $X_w$ becomes a flag manifold equipped with a monotone K\"{a}hler form $\omega_{2 \rho}$. The toric degeneration arising from each Newton--Okounkov body can be shown to be $\mathbb{Q}$-Gorenstein Fano and normal so that for each seed $\seed$, we obtain a monotone Lagrangian torus $L_\seed$ by Proposition~\ref{proposition_toricdegenerationsmonotone} and~\ref{proposition_Qgorennormalind}. By applying Theorem~\ref{theorem_maininthebody}, we shall verify that the family contains infinitely many distinct monotone Lagrangian tori, as stated below.

\begin{theorem}[Theorem~\ref{theorem_main2}]\label{theorem_2ndmain}
Suppose that $G$ is a simply connected and semisimple complex algebraic group not of type $A_1, A_2, A_3, A_4$, and $B_2 = C_2$, that is, 
$$
G \neq \mathrm{SL}_2(\C), \mathrm{SL}_3(\C), \mathrm{SL}_4(\C), \mathrm{SL}_5(\C), \mathrm{Spin}_5(\C) = \mathrm{Sp}_4(\C).
$$
Let $\rho$ be the sum of fundamental weights and consider the flag manifold $X \coloneqq G/B$ equipped with a monotone K\"{a}hler form $\omega_{2 \rho}$. Let $\{L_\seed\}$ be the family of monotone Lagrangian tori of the flag manifold $X$ constructed by the toric degenerations from the upper cluster structure on the coordinate ring of the unipotent cell $U^-_{w_0}$. Then the family $\{L_\seed\}$ contains infinitely many monotone Lagrangian tori, no two of which are related by any symplectomorphism.
\end{theorem}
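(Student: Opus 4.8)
The plan is to deduce Theorem~\ref{theorem_2ndmain} from Theorem~\ref{theorem_maininthebody} applied to the family of \emph{cluster polytopes} of $X = G/B$ coming from the upper cluster algebra structure on the coordinate ring $\C[U^-_{w_0}]$ of Berenstein--Fomin--Zelevinsky \cite{BFZ05} together with the Fujita--Oya valuations \cite{FO20}. First I would set up the family: for each seed $\seed$ of the $\mcal{A}$-cluster structure on $U^-_{w_0}$, which is birational to $X_{w_0} = G/B$, Fujita--Oya produce a valuation $v_\seed$ with one-dimensional leaves on $\C(G/B)$, and with $\mcal{L} = \mcal{L}_{2\rho}$ the anticanonical very ample bundle and a fixed reference section $h$ this gives a Newton--Okounkov polytope $\Delta_\seed = \Delta(\mcal{L}_{2\rho}, v_\seed, h)$. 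By Qin's theorem \cite{Qin20} the dual canonical basis of $\C[U^-_{w_0}]$ (specialised at $q=1$) is pointed with $g$-vector equal to $v_\seed$, so the lattice points of $\Delta_\seed$ are exactly the extended $g$-vectors of that basis; since the parametrizations for two seeds differ by a finite composition of tropicalized cluster mutations and each $\Delta_\seed$ is the convex hull of its tropical integer points, the family $\{\Delta_t \mid t \in \mathbb{T}\}$ has a tropical cluster structure in the sense of Definition~\ref{definition_tropicalclusterstr}. All the relevant semigroups are finitely generated, as for any valuation coming from a cluster algebra.

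Second, I would verify the hypotheses of Proposition~\ref{proposition_Qgorennormalind} for the initial seed $\seed_0$ chosen so that $\Delta_{\seed_0}$ is unimodularly equivalent to a standard string polytope $\Delta_{\underline{w_0}}(2\rho)$: (i) the string polytope is a normal lattice polytope, so $S_{\seed_0}$ is saturated; (ii) from Littelmann's explicit inequalities \cite{Littelmann} for the standard string polytope one checks, using that $2\rho$ is the anticanonical weight, that $\Delta_{\underline{w_0}}(2\rho)$ is $\Q$-Gorenstein Fano --- a direct polyhedral computation with the facet normals; (iii) the center $\mathbf{u}_0$ lies in the span $M_{\mathrm{fr},\R}$ of the frozen dual-basis vectors, so by Corollary~\ref{cor_equivalenceoftranscomm} it is fixed by every tropicalized cluster mutation. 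Granting this, Proposition~\ref{proposition_Qgorennormalind} makes every $\Delta_t$ in the family $\Q$-Gorenstein Fano with center $\mathbf{u}_{t,0}$ and every associated toric degeneration normal, and Proposition~\ref{proposition_toricdegenerationsmonotone} then yields a monotone Lagrangian torus $L_t \subset (G/B, \omega_{2\rho})$ for each $t$. Moreover $\mathbf{0} \in \Delta_{\seed_0}$ (it is the $g$-vector of the unit, equivalently $v_{\seed_0}(1)=0$), so the origin hypothesis of Theorem~\ref{theorem_maininthebody} holds.

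Third, conditions (2) and (3) of Theorem~\ref{theorem_maininthebody} I would get from the sign behaviour of extended $g$-vectors for $G/B$: their frozen components are always non-negative. Hence for every frozen index $s$, $\Delta_t$ is contained in the half-space $H^+_{\mathbf{e}_s,0}$ (being the convex hull of such $g$-vectors), and for every unfrozen $r$ the mutated polytope $\mu^T_r(\Delta_t) = \Delta_{\mu_r(t)}$ is again a cluster polytope with the same non-negativity property, hence contained in $H^+_{\mathbf{e}^\prime_s,0}$. So conditions (2) and (3) hold for every choice of a frozen $s_\ell$ and an unfrozen $r_\ell$.

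The remaining, and hardest, step is condition (1): producing seeds $t_\ell$ and indices $(r_\ell,s_\ell)$ with $r_\ell$ unfrozen and $s_\ell$ frozen such that $\varepsilon_{r_\ell,s_\ell}(t_\ell) \to -\infty$, i.e.\ the mutation class of the initial exchange matrix contains entries joining an unfrozen to a frozen index that are arbitrarily negative. I would invoke the Felikson--Tumarkin classification \cite{FT21} of mutation-finite skew-symmetrizable matrices with frozen part: if a skew-symmetrizable matrix with a single frozen index $s$ has mutation-finite unfrozen part but is itself mutation-infinite, then its mutation class contains matrices with $\varepsilon_{r_\ell,s}\to -\infty$ along some unfrozen indices. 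Since matrix mutation commutes with restriction to a subset of indices, it suffices to find, inside the Berenstein--Fomin--Zelevinsky exchange matrix attached to a reduced word of $w_0$, a principal submatrix of this kind bordered by one frozen index; using the compatibility of Dynkin-subdiagram embeddings with seeds coming from reduced words, and the fact that every simple type outside the excluded list $A_1,A_2,A_3,A_4,B_2{=}C_2$ contains one of $A_5,B_3,C_3,D_4,G_2$ as a subdiagram, the problem reduces to those five types, for each of which one exhibits an explicit finite mutation sequence realising the divergence. With (1), (2), (3) verified and the hypotheses of Proposition~\ref{proposition_Qgorennormalind} in place, Theorem~\ref{theorem_maininthebody} gives that $\{L_{t_\ell}\}$ contains infinitely many monotone Lagrangian tori in $(G/B,\omega_{2\rho})$, no two related by any symplectomorphism. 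The main obstacle is precisely this last step --- the reduction to the five base cases and the case-by-case construction of unboundedly negative exchange-matrix entries touching a frozen variable; the $\Q$-Gorenstein Fano check for the standard string polytope at $2\rho$ is the other, more routine, ingredient.
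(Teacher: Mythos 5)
Your proposal follows essentially the same route as the paper's proof: the family of cluster polytopes from the Berenstein--Fomin--Zelevinsky/Fujita--Oya structure with its tropical cluster structure, the $\Q$-Gorenstein Fano and normality check anchored at the standard string polytope for $2\rho$ (which the paper carries out via Littelmann's inequalities together with Steinert's reflexivity theorem and the FO20 saturation of the semigroups), nonnegativity of the frozen components of extended $g$-vectors for conditions (2)--(3), and the Felikson--Tumarkin-based reduction to the five types $A_5, B_3, C_3, D_4, G_2$ for condition (1). The only deviations are cosmetic glosses (e.g.\ in the paper the divergence of exchange-matrix entries follows from a pigeonhole argument once mutation-infiniteness is certified by the FT double-arrow criterion, rather than from an explicit mutation sequence realizing the divergence), so this is the same proof in outline.
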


\begin{remark}
If $G = A_1$, then the flag manifold $G/B$ is the projective space $\CP^1 \simeq S^2$. In this case, every simply closed curve dividing the sphere into two pieces having the same area is Hamiltonian isotopic to a great circle. In other words, the sphere has a unique monotone Lagrangian circle. It would be interesting to see whether $G/B$ carries infinitely many distinct monotone Lagrangian tori or not when $G$ is of type $A_2, A_3, A_4$, and $B_2 = C_2$.
\end{remark}

The proof of Theorem~\ref{theorem_2ndmain} will be given in Section~\ref{Chap_exchlarge} and 
here is the outline of the proof of Theorem~\ref{theorem_2ndmain}. 

\begin{enumerate}
\item We briefly recall a construction of Newton--Okounkov polytopes of a flag manifold $G/B$ via the upper cluster structure on the coordinate ring of the unipotent cell $U^-_{w_0}$. A constructed Newton--Okounkov polytope is called a \emph{cluster polytope}. We discuss the tropical cluster structure on the family of cluster polytopes of flag manifolds, see Section~\ref{subsec_clusterdual}.
\item As an initial step, we first prove that the cluster polytope $\Delta_{t_0}$ at the initial seed is $\Q$-Gorenstein Fano, see Proposition~\ref{proposition_clusterqgornor}. Inductively, Lemma~\ref{lemma_QGorenstein} shows that every cluster polytope is also $\Q$-Gorenstein Fano. As a consequence of  Proposition~\ref{proposition_Qgorennormalind}, we have a family of infinitely many monotone Lagrangian tori in $G/B$, see Corollary~\ref{cor_familyofmonotone}.
\item  By exploiting the correspondence between the tropical integer points of a cluster polytope and the dual canonical (or upper global) basis for the negative half $U_q^-(\g)$ of the quantum group $U_q(\g)$, we shall check the conditions $(2)$ and $(3)$ for the criterion in Theorem~\ref{theorem_maininthebody}, see Section~\ref{ssec_nonnegativity}.
\item 
Section~\ref{Chap_exchlarge} confirms the remaining condition $(1)$ for Theorem~\ref{theorem_maininthebody}. We shall find a sequence of exchange matrices in the same mutation class with an arbitrarily large entry between an unfrozen variable and frozen variables. Consequently, Theorem~\ref{theorem_maininthebody} shows that the constructed family has infinitely many distinct monotone Lagrangian tori in $G/B$. 
\end{enumerate}

\subsection{Cluster polytopes and the dual canonical basis}\label{subsec_clusterdual}

In this subsection, we review the cluster algebra structure on the coordinate ring $\C[U_w^-]$ of the unipotent cell $U_w^-$ in \cite{BFZ05, Williams13, GLS11} to construct a family of Newton--Okounkov bodies of the Schubert variety $X_w$. We also recall some results on these Newton--Okounkov bodies in Fujita--Oya \cite{FO20}, which will be key ingredients for the proof of Theorem~\ref{theorem_2ndmain}.

Let $G$ be a simply connected and semisimple algebraic group over $\C$. For a reduced expression $w = s_{i_1} s_{i_2} \cdots s_{i_m} $ of $ w \in W$, we set $\supp(w) \coloneqq \{   i_1, i_2, \cdots, i_m \} \subset I$ where $I$ is the set of indices for the simple roots. Note that $\supp(w)$ does \emph{not} depend on the choice of a reduced expression of $w$. Assume that $\supp(w) = I$ for simplicity. For a dominant integral weight $\lambda$ and $u,v \in W$, we denote by $\Delta_{u\lambda, v\lambda}$ the {generalized minor} associated with $ u,v$ and $\lambda$. 
We set 
$$
D_{u \lambda, v \lambda} \coloneqq \Delta_{u \lambda, v \lambda}|_{U^-_w},
$$  
which is called a \emph{unipotent minor}. 

For a reduced expression $\rxw = s_{i_1} s_{i_2}\cdots s_{i_m}$ of $w$ and $1 \le k \le m$, we set 
\begin{align*}
w_{\le k} &\coloneqq s_{i_1} s_{i_2}\cdots s_{i_k}, 	\\
k^+ &\coloneqq \min (  \{ m+1 \} \cup \{  k+1 \le j \le m \mid i_j = i_k \}  ), \\
k^- &\coloneqq \max (  \{ 0 \} \cup \{  1 \le j \le k-1 \mid i_j = i_k \}  ).
\end{align*}
Let $J \coloneqq \{ 1,2, \ldots, m \}$,  $J_\fr \coloneqq \{j\in J \mid j^+ = m+1 \}$ and $J_\uf \coloneqq J \setminus J_\fr$.
We set 
$$
D_j \coloneqq D_{w_{\le j} \varpi_{i_j}, \varpi_{i_j}} \qquad \text{ for $1 \le j \le m$}
$$
and define the extended exchange matrix $\varepsilon_0 = (\varepsilon_{r,s})_{r \in J_{\rm uf}, s \in J}$ by 
\begin{equation} \label{eq:GLSseed}
\varepsilon_{r,s} = 
\begin{cases}
-1 & \text{ if } r = s^+, \\
-a_{i_s, i_r} & \text{ if } s < r < s^+ < r^+, \\
1 & \text{ if } r^+ = s, \\
a_{i_s, i_r} & \text{ if } r < s < r^+ < s^+, \\
0 & \text{ otherwise }
\end{cases}
\end{equation}
where $\cmA = (a_{i,j})_{i,j\in I}$ is the Cartan matrix of $G$. Note that the submatrix $(\varepsilon_{r,s})_{r \in J_{\rm uf}, s \in J_{\rm uf}}$ of $\varepsilon_0$ is skew-symmetrizable. It is skew-symmetric if and only if $\cmA$ is symmetric. 

It turns out that the coordinate ring $\C[U_w^-]$ has a cluster algebra structure. The set $D_{\rxw} \coloneqq \{ D_j \mid j =1, 2, \cdots, m \}$ together with the extended exchange matrix $\vep_0$ forms a (Fomin--Zelevinsky's) seed. Then $\C[U_w^-]$ is isomorphic to the upper cluster algebra generated by the initial seed $\seed_{0} \coloneqq ( D_{\rxw}, \vep_0)$, see Definition~\ref{def_ucaca}. Let $\mathbb{T}$ be the exchange graph associated with the cluster algebra $\C[U_w^-]$. Let $(A_{j,t})_{j \in J}$ be the cluster variables associated with $t \in \mathbb{T}$, defined in~\eqref{equ_defatjvar}. Note that $A_{j,t_0} = D_j$. 

By utilizing the above cluster algebra structure, for a fixed $t \in \mathbb{T}$, we define a valuation $v_t$ on the function field of $U^-_w$ that is isomorphic to the function field of $X_w$. Let $ \seed_t = \left((A_{j,t})_{j \in J}, \vep \right)$ be the seed of $\C[ U^-_w]$ associated with $t$. For $\bfa, \bfb \in \Z^{J}$, we write 
$$
\bfa \preceq_\vep \bfb \quad \Longleftrightarrow \quad \bfa = \bfb + \textbf{v} \vep  \text{ for some $\textbf{v} \in \Z_{\ge0}^{J_\uf}$.}
$$
The order $ \preceq_\vep$ on $\Z^J$ is called the \emph{dominance order} with respect to $\vep$ in \cite{Qin17}. We consider the Laurent polynomial ring $\mathcal{F} \coloneqq \C[ A_{j,t}^{\pm 1} \mid j \in J ]$. By identifying a Laurent monomial $ \prod_{j \in J} A_{j,t}^{a_j}$ with $\bfa = (a_1, \ldots, a_m) \in \Z^{J}$, 
we obtain the induced order $ \preceq_\vep$ on the set of Laurent monomials in $\mathcal{F}$. We denote by $ v_{\seed_t}$ the highest term valuation on $\mathcal{F}$ with respect to a total order $<_t$ refined from $\preceq_\vep$. We sometimes write $v_t$ for $v_{\seed_t}$ if no confusion arises.

For a certain class of elements in $\mathcal{F}$, the valuation can be calculated by the extended $g$-vector, which we are about to recall. Following \cite{FominZelevinsky4, FockGoncharov}, we set
$$
X_{i,t} \coloneqq \prod_{j \in J} A_{j,t}^{\varepsilon_{i,j}}.
$$
An element $f \in \mathcal{F}$ is said to be \emph{weakly pointed} at $(g_j)_{j \in J} \in \Z^J$ if $f$ can be expressed as
\begin{equation}\label{equ_extgvector}
f = \left( \prod_{j \in J} A^{g_j}_{j,t} \right) \left( \sum_{\mathbf{a} = (a_{j}) \in \mathbb{Z}^{J_\mathrm{uf}}_{\geq 0} } c_\mathbf{a} \prod_{j \in J_\mathrm{uf}} X_{j,t}^{a_j} \right)
\end{equation}
for some nonzero $c_\mathbf{a} \in \C$ with $c_\mathbf{0} \neq 0$. In this case, $g_{\seed_t}(f) \coloneqq (g_j)_{j \in J}$ is called the \emph{extended g-vector} of $f$. If $c_\mathbf{0} = 1$ in addition, then the element $f$ is called \emph{pointed}. We sometimes write $g_t$ for $g_{\seed_t}$ if no confusion arises. By \cite[Corollary 3.10]{FO20}, for every weakly pointed element $f \in \mathcal{F}$, we have
\begin{equation}\label{equ_val=g}
v_t(f) = g_t(f).    
\end{equation}

\begin{remark}
Note that the extended $g$-vector $g_t$ corresponds to $\mathbf{g}_t^{\rm L}$ defined in \cite{KK19} under the categorification using quiver Hecke algebras (see \cite[Remark C.4]{FO20}). 
\end{remark}

For a dominant integral weight $\lambda \in P^+$, we define a line bundle $\Lb_\lambda \coloneqq (G \times \C)/B$ over the flag manifold $G/B$, where $B$ acts on $G\times \C$ from the right as 
$ (g,c) \cdot b \coloneqq (gb, \lambda(b) c)$ for $g\in G, c\in \C$, and $b\in B$. Restricting to $X_w$, we obtain a line bundle on $X_w$ which is also denoted by $\Lb_\lambda$. If $\lambda$ is regular in addition, that is, $\langle h_i, \lambda \rangle >0$ for every $i\in I$, then the line bundle $\mcal{L}_\lambda$ is very ample. We fix a lowest weight vector $\tau_\lambda$ in $H^0(G/B, \Lb_\lambda)$ and restrict it to $X_w$. Following Section~\ref{subSec: NObody} and using the valuation $v_t$ on the function field $\mathcal{F} \simeq \C(X_w)$, one can produce the semigroup $S(X_w, \mcal{L}_\lambda,v_t,\tau_\lambda)$ defined in \eqref{equ_SR} and the Newton--Okounkov body $\Delta(X_w, \mcal{L}_\lambda,v_t,\tau_\lambda)$ defined by \eqref{equ_Deltav}. Let $C_{\seed_t}(w)$ be the smallest real closed cone containing $v_t ( \C[ U^- \cap X_w] \setminus \{ 0 \} )$ in $\R^{J}$, which is called the \emph{cluster cone} of $X_w$ associated with the seed $\seed_t$ of $t \in \mathbb{T}$. 

\begin{theorem}[Theorem 6.8, Corollary 6.9, Corollary 6.10 in \cite{FO20}]\label{Thm: FO1}
For each $t$ in $\mathbb{T}$, the following hold.
\begin{enumerate}
\item If a dominant integral weight $\lambda$ is regular, then $ \mcal{L}_\lambda$ is very ample.
\item $S(X_w,\mcal{L}_\lambda,v_t,\tau_\lambda)$ is finitely generated and saturated (and hence $\Delta(X_w,\mcal{L}_\lambda,v_t,\tau_\lambda)$ is a rational polytope and there exists a normal toric degeneration of $X_w$ corresponding to $\Delta(X_w,\mcal{L}_\lambda,v_t,\tau_\lambda)$ by Theorem~\ref{theorem_Andersontoric}.)
\item $C_{\seed_t}(w) \cap \Z^J = v_t ( \C[ U^- \cap X_w] \setminus \{ 0 \} ) $.
\item $ C_{\seed_t}(w) = \bigcup_{\lambda \in P^+} \Delta(X_w,\mcal{L}_\lambda,v_t,\tau_\lambda)$.
\end{enumerate}	
\end{theorem}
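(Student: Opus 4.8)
The plan is to reduce all four statements to the interaction between the valuations $v_t$ and the (specialized) dual canonical basis of $\C[U^-\cap X_w]$, taking as a black box the identity $v_t(f)=g_t(f)$ of \eqref{equ_val=g}, valid for weakly pointed $f$. Statement (1) is classical: $\Lb_\lambda=\bigotimes_{i\in I}\Lb_{\varpi_i}^{\otimes\langle h_i,\lambda\rangle}$, and for regular $\lambda$ every exponent is positive, so $\Lb_\lambda$ is the restriction to $G/B$ of $\mathcal{O}(1)$ under the composite of the product of the Plücker embeddings $G/P_i\hookrightarrow\p^{N_i}$ with a Segre embedding; since the morphisms $G/B\to G/P_i$ for $i\in I$ jointly separate points and tangent vectors, $\Lb_\lambda$ is very ample, and it stays very ample after restriction to the closed subvariety $X_w$.

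For statements (2)--(4) the structural heart is a ``pointedness with respect to every seed'' property. Let $\Bup_w$ be the basis of $\C[U^-\cap X_w]$ induced from the upper global basis of $U_q^-(\g)$ specialized at $q=1$, which carries the cluster structure of $\C[U^-_w]$. The claim is that for each $t\in\mathbb{T}$ every $b\in\Bup_w$ is pointed in the sense of \eqref{equ_extgvector} with respect to the seed $\seed_t$; this is a consequence of the triangularity of the dual canonical basis of Qin \cite{Qin17,Qin20} (compare Kashiwara--Kim \cite{KK19}) together with the compatibility of $g$-vectors with cluster mutation. By \eqref{equ_val=g} it follows that $v_t(b)=g_t(b)$ for every $b\in\Bup_w$. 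The next step is to show that $b\mapsto g_t(b)$ is a bijection from $\Bup_w$ onto $\Z^J\cap C_{\seed_t}(w)$ and that $C_{\seed_t}(w)$ is rational polyhedral: for the initial seed $\seed_{t_0}$ this is precisely the parametrization of the dual canonical basis by the lattice points of the string cone of Littelmann \cite{Littelmann} and Berenstein--Zelevinsky \cite{BerensteinZelevinsky}, which is known to be rational polyhedral, and for a general $t$ one transports this statement along the piecewise-linear transition map relating the $g$-vectors at $\seed_{t_0}$ and $\seed_t$, which restricts to a bijection of $\Z^J$ and carries $C_{\seed_{t_0}}(w)$ onto $C_{\seed_t}(w)$. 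Finally, since any nonzero $\varphi\in\C[U^-\cap X_w]$ is a finite $\C$-linear combination of elements of $\Bup_w$, the $g$-vectors $g_t(b)$ are pairwise distinct, and $v_t$ has one-dimensional leaves, $v_t(\varphi)$ coincides with $v_t(b)=g_t(b)$ for the $<_t$-leading basis element $b$ occurring in $\varphi$; hence $v_t(\C[U^-\cap X_w]\setminus\{0\})=\{g_t(b):b\in\Bup_w\}=\Z^J\cap C_{\seed_t}(w)$, which is statement (3).

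Statement (4) then follows by a pole-clearing argument: the lowest weight section $\tau_\lambda$ restricts to a nowhere-vanishing regular function on the open dense affine subset $U^-\cap X_w$ and vanishes on its complement in $X_w$, so every $\varphi\in\C[U^-\cap X_w]$ is of the form $f/\tau_\lambda$ for some $f\in H^0(X_w,\Lb_\lambda)$ once $\lambda\in P^+$ is chosen sufficiently dominant; tracking the resulting weight grading and taking the union over $\lambda$ identifies $\bigcup_{\lambda\in P^+}\Delta(X_w,\Lb_\lambda,v_t,\tau_\lambda)$ with $C_{\seed_t}(w)$. For statement (2), the cone over $\Delta(X_w,\Lb_\lambda,v_t,\tau_\lambda)$ in $\R_{\ge0}\times\R^J$ is obtained from the rational polyhedral cone $C_{\seed_t}(w)$ by recording in addition the linear $\lambda$-degree coordinate, hence is itself rational polyhedral; and the semigroup $S(X_w,\Lb_\lambda,v_t,\tau_\lambda)$ of \eqref{equ_SR} is exactly its full set of lattice points, because the elements of $\Bup_w$ realize every such point with no gaps. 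This yields finite generation (Gordan's lemma) and saturation simultaneously, and Theorem~\ref{theorem_Andersontoric} then provides the rational polytope and the associated normal toric degeneration of $X_w$.

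The principal obstacle is the structural claim invoked in the second paragraph: that the specialized upper global basis is simultaneously pointed with respect to \emph{every} seed and that its $g$-vectors exhaust the lattice points of a rational polyhedral cone at every seed. At the initial seed this is Littelmann's string-cone theorem, but propagating it to all seeds genuinely requires the machinery of common triangular bases and the mutation-compatibility of $g$-vectors of \cite{Qin17,Qin20,KK19}; a further delicate point is that the transition maps between seeds are only piecewise linear, so one must verify that the cones $C_{\seed_t}(w)$ transform correctly under them. By comparison, the remaining bookkeeping --- identifying the $\lambda$-slices of $C_{\seed_t}(w)$ with the polytopes $\Delta(X_w,\Lb_\lambda,v_t,\tau_\lambda)$ and clearing poles for statement (4) --- is routine.
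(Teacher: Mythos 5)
First, a point of comparison: the paper does not prove this statement at all. It is an imported result, quoted verbatim from Fujita--Oya (Theorem 6.8, Corollaries 6.9 and 6.10 of \cite{FO20}), and the paper's ``proof'' is the citation. So your attempt to reconstruct an argument is being measured against the external proof in \cite{FO20} rather than against anything in this paper. Your sketch does identify the correct skeleton of that proof: very ampleness of $\mcal{L}_\lambda$ for regular $\lambda$ is classical; the dual canonical basis $\Bup$ specialized at $q=1$ is (weakly) pointed for every seed, so that $v_t(b)=g_t(b)$ by \eqref{equ_val=g}; at the initial seed the $g$-vectors are the lattice points of the string cone (Littelmann/Berenstein--Zelevinsky, via the unimodular equivalence with the string parametrization); and statement (4) plus the identification of the $\lambda$-graded pieces follows by the standard pole-clearing along $\tau_\lambda$.

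The genuine gap is in the step you use to pass from the initial seed to an arbitrary seed $t$. You argue that the piecewise-linear transition map is a bijection of $\Z^J$ carrying $C_{\seed_{t_0}}(w)$ onto $C_{\seed_t}(w)$, and conclude that the $g$-vectors at $\seed_t$ exhaust $\Z^J\cap C_{\seed_t}(w)$. Mutation-compatibility of $g$-vectors only gives you that the \emph{set of values} $\{g_t(b)\mid b\in\Bup\}$ is the piecewise-linear image of the initial value set; it does not give convexity or saturation of that image. A piecewise-linear image of a rational polyhedral cone need not be convex, and $C_{\seed_t}(w)$ is by definition the closed convex cone generated by the image, so a priori it can contain lattice points that are not $g$-vectors. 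Statements (2) and (3) --- saturation of $S(X_w,\mcal{L}_\lambda,v_t,\tau_\lambda)$ and the equality $C_{\seed_t}(w)\cap\Z^J=v_t(\C[U^-\cap X_w]\setminus\{0\})$ --- are exactly the assertion that this does not happen, and this is the core content of \cite{FO20}, established there with the GHKK theta-basis/scattering machinery together with Qin's common triangular basis results, not by transport along the tropicalized mutation alone. You flag this as the ``principal obstacle,'' which is honest, but as written the proposal asserts the conclusion at general seeds without supplying the argument, so it is an outline deferring the decisive step to the literature rather than a proof. (Since the paper itself treats the theorem as a black box from \cite{FO20}, that deferral is acceptable for the paper's purposes, but it should be stated as a citation rather than dressed as a transport argument that, taken literally, would fail.)
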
	

\begin{definition} \label{Def: cluster polytope}
We simply call $\Delta(X_w,\mcal{L}_\lambda,v_t,\tau_\lambda)$ a \emph{cluster polytope}. If no confusion arises,  we simply write $\Delta_{\seed_t}(w, \lambda)$ for $\Delta(X_w,\mcal{L}_\lambda,v_t,\tau_\lambda)$. 
\end{definition}

To describe the relation between the Newton--Okounkov polytopes from two different choices of seeds, we consider a ``nice" basis on $\C[U^-_w]$. More precisely, the unipotent coordinate ring $\C[U^-_w]$ admits a $\C$-basis $\mcal{B}_w$ satisfying the following properties.
\begin{enumerate}
\item Every basis element in $\mcal{B}_w$ is weakly pointed for all $t \in \mathbb{T}$.
\item For each $t \in \mathbb{T}$, the map $\mcal{B}_w \to \Z^{J}$ given by $b \mapsto g_t(b)$ is injective.
\item If $t^\prime = \mu_k (t)$ for some $k \in J_{\mathrm{uf}}$ and $b \in \mcal{B}_w$, then the extended $g$-vector $g_t(b) = (g_j)_{j \in J}$ at $t$ and the extended $g$-vector $g_{t^\prime}(b) = (g^\prime_j)_{j \in J}$ at $t^\prime$ are related by the tropicalized cluster mutation $\mu_k^T$ in~\eqref{equ_tropical1}.
\item For each dominant integral weight $\lambda$, there exists a subset $\mcal{B}_w (\lambda)$ of $\mcal{B}_w$ such that $\mcal{B}_w (\lambda)$ is a $\C$-basis for the space $\{ \sigma / \tau_\lambda \mid  \sigma \in H^0(X_w, \mcal{L}_\lambda) \}$.
\end{enumerate}
Indeed, the dual canonical basis/upper global basis on the negative half ${U}^-_q(\frak{g})$ of the quantized enveloping algebra induces such a basis of $\C[U_w^-]$ via the process of specialization and localization. The induced basis on $\C[U_w^-]$ carries nice properties inherited from the dual canonical basis, the above properties $(1)-(4)$, see  Lusztig \cite{Lus90, Lus91}, Kashiwara \cite{Kas90, Kas91, Kas93}, and see also \cite{KK19, Qin20}, \cite[Appendix C]{FO20}. 

By $(1)$, every element $f$ of the dual canonical basis is weakly pointed. Because of~\eqref{equ_val=g} and $(4)$, the valuation $v_t$ of the basis element is equal to its extended $g$-vector $g_t$. From the relation $(3)$ on the extended $g$-vectors from two different choices $t$ and $t^\prime$, it follows the relation between two sets of integral points realized by $v_t$ and $v_{t^\prime}$. It in turn yields that the semigroups and cluster polytopes are related by a sequence of tropicalized cluster mutations in~\eqref{equ_hattropical1} and ~\eqref{equ_tropical1}, respectively. Here are more precise statements.

\begin{theorem}[Corollary 5.7 and 5.8 in \cite{FO20}]\label{theorem_tropicalizedclus}
If $t' = \mu_k(t)$ for some $k \in J_{\mathrm{uf}}$, then 
\begin{enumerate}
    \item the associated semigroups $S(X_w,\mcal{L}_\lambda,v_t,\tau_\lambda)$ and $S(X_w,\mcal{L}_\lambda,v_{t^\prime},\tau_\lambda)$ are related by the tropicalized cluster mutation $\widehat{\mu}_k^T$ in the $k^{\mathrm{th}}$-direction, that is, 
	\[ S(X_w,\mcal{L}_\lambda,v_{t^\prime},\tau_\lambda) = \widehat{\mu}^{{T}}_k (S(X_w,\mcal{L}_\lambda,v_t,\tau_\lambda)).
	\]
 Therefore, the family of semigroups has a tropical cluster structure.
    \item the associated cluster polytopes $\Delta_{\seed_t}(w, \lambda)$ and $\Delta_{\seed_{t^\prime}}(w, \lambda)$ are related by the tropicalized cluster mutation $\mu_k^T$ in the $k^{\mathrm{th}}$-direction, that is, 
	\[
		\Delta_{\seed_{t'}}(w, \lambda) = \mu^{{T}}_k (\Delta_{\seed_{t}}(w, \lambda)).
	\]
 Therefore, the family of cluster polytopes has a tropical cluster structure.
\end{enumerate}
\end{theorem}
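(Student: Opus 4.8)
The plan is to obtain both assertions from the four properties of the basis $\mcal{B}_w$ listed above, together with the identity \eqref{equ_val=g} (that $v_t(f)=g_t(f)$ for weakly pointed $f$) and the facts recorded in Theorem~\ref{Thm: FO1}. The semigroup statement $(1)$ is the substantive one; the polytope statement $(2)$ will follow from it by a density/continuity argument.

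For $(1)$, I would fix a regular dominant weight $\lambda$ and an integer $n\ge 1$. Since $\tau_\lambda^{\otimes n}$ is, up to a scalar, the lowest weight vector $\tau_{n\lambda}$ of $\mcal{L}_\lambda^{\otimes n}=\mcal{L}_{n\lambda}$, the level-$n$ slice $\{v_t(f/\tau_\lambda^{\,n})\mid 0\ne f\in H^0(X_w,\mcal{L}_{n\lambda})\}$ of $S(X_w,\mcal{L}_\lambda,v_t,\tau_\lambda)$ is the set of valuations of the elements $\sigma/\tau_{n\lambda}$, and by property $(4)$ this space has $\mcal{B}_w(n\lambda)\subseteq\mcal{B}_w$ as a $\C$-basis. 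By property $(1)$ every $b\in\mcal{B}_w$ is weakly pointed at $t$, so $v_t(b)=g_t(b)$ by \eqref{equ_val=g}, and by property $(2)$ the vectors $g_t(b)$ are pairwise distinct. Using only the valuation axioms and the chosen total order $<_t$, the valuation of a nonzero $\C$-linear combination of finitely many $b$'s equals the $<_t$-minimum of the (distinct) valuations that occur; hence the level-$n$ slice of $S(X_w,\mcal{L}_\lambda,v_t,\tau_\lambda)$ is exactly $\{g_t(b)\mid b\in\mcal{B}_w(n\lambda)\}$ (each value is some $g_t(b)$, and $\sigma/\tau_{n\lambda}=b$ realizes $g_t(b)$). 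The same identification holds at $t'$, and since $\mcal{B}_w(n\lambda)$ is independent of the seed while property $(3)$ gives $g_{t'}(b)=\mu^T_k(g_t(b))$, the level-$n$ slice at $t'$ is the $\mu^T_k$-image of the level-$n$ slice at $t$ for every $n$. As $\widehat\mu^T_k$ acts by $(r,\mathbf u)\mapsto(r,\mu^T_k(\mathbf u))$, this is precisely $S(X_w,\mcal{L}_\lambda,v_{t'},\tau_\lambda)=\widehat\mu^T_k\bigl(S(X_w,\mcal{L}_\lambda,v_t,\tau_\lambda)\bigr)$, i.e. the tropical cluster structure of the family of semigroups in the sense of Definition~\ref{definition_tropicalclusterstr}$(3)$.

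For $(2)$, I would invoke Theorem~\ref{Thm: FO1}$(2)$: each $S_t:=S(X_w,\mcal{L}_\lambda,v_t,\tau_\lambda)$ is finitely generated and saturated and (by full-dimensionality of the Newton--Okounkov body) spans a finite-index sublattice, so $\Delta_{\seed_t}(w,\lambda)$ is a rational polytope and the projected slices $D_t:=\{\mathbf u/n\mid n\ge1,\ (n,\mathbf u)\in S_t\}$ are dense in it. The map $\mu^T_k$ of \eqref{equ_tropical1} is continuous and positively homogeneous of degree one, so part $(1)$ gives $D_{t'}=\mu^T_k(D_t)$. Since $\Delta_{\seed_t}(w,\lambda)$ is compact and $\mu^T_k$ continuous, the set $\mu^T_k(\Delta_{\seed_t}(w,\lambda))$ is compact, contains $D_{t'}$, and is contained in $\overline{\mu^T_k(D_t)}=\overline{D_{t'}}$; hence it equals $\overline{D_{t'}}=\Delta_{\seed_{t'}}(w,\lambda)$, using density at $t'$. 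Thus $\Delta_{\seed_{t'}}(w,\lambda)=\mu^T_k(\Delta_{\seed_t}(w,\lambda))$, which is the tropical cluster structure of the family of cluster polytopes, Definition~\ref{definition_tropicalclusterstr}$(4)$.

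The hard part, and the only non-formal step, is the reduction in $(1)$ of the valuation of an arbitrary section to the $<_t$-minimum of the $g$-vectors of its dual-canonical-basis components: this rests on the pairwise distinctness of the $v_t(b)$ (properties $(1)$--$(2)$ together with \eqref{equ_val=g}) and, upstream, on the compatibility \eqref{equ_val=g} of $v_t$ with the extended $g$-vector for weakly pointed elements, which is where the dominance order, the one-dimensional leaves of $v_t$, and the fine structure of $\mcal{B}_w$ genuinely enter. Once the level-$n$ slices are identified $t$-independently with $\mcal{B}_w(n\lambda)$ --- the only $t$-dependence being the labelling $g_t$, transformed by $\mu^T_k$ via property $(3)$ --- both statements follow, the polytope one by pure density and continuity.
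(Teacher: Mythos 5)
Your argument is correct and is essentially the paper's own route: the paper attributes the statement to Fujita--Oya and sketches exactly this reasoning --- every dual-canonical-basis element is weakly pointed so $v_t(b)=g_t(b)$, the level-$n$ values of the semigroup are the extended $g$-vectors of $\mcal{B}_w(n\lambda)$, and property $(3)$ transports these by $\mu_k^T$, giving the semigroup statement and then the polytope statement. Your filled-in details (valuation of a sum equals the $<_t$-minimum of the pairwise distinct $g$-vectors, and the density/continuity step using finite generation and saturation from Theorem~\ref{Thm: FO1}) are sound and consistent with \eqref{eq:integral_points}.
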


\subsection{Cluster polytopes are $\mathbb{Q}$-Gorenstein Fano.}

The cluster polytopes can be thought of as a generalization of string polytopes in the following sense. Fix a dominant integral weight $\lambda$. Let $\rxw = s_{i_1} s_{i_2} \cdots s_{i_m} $ and $\seed_{t_0} \coloneqq ( D_{\rxw}, \vep_0)$ be the corresponding seed. On one hand, the seed gives rise to a cluster polytope $\Delta_{\seed_{t_0}}(w, \lambda)$ in Definition~\ref{Def: cluster polytope}. On the other hand, setting $ \bfi \coloneqq (i_1, i_2, \ldots, i_m) \in I^m$, and there is a \emph{string polytope} $\Delta_\bfi (\lambda)$ associated with $\bfi$, see \cite{BZ97, Littelmann} for the precise definition. Then they are unimodularly equivalent. 

\begin{theorem}[Corollary 6.7 in \cite{FO20}]\label{theorem_stringploytopes}
Let $\lambda$ be a dominant integral weight. Then the Newton--Okounkov body $\Delta_{\seed_{t_0}}(w, \lambda)$ is unimodularly equivalent to  the string polytope $\Delta_\bfi (\lambda)$.
\end{theorem}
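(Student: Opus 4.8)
The plan is to realise the string polytope $\Delta_\bfi(\lambda)$ as a Newton--Okounkov body of $(X_w,\mcal{L}_\lambda)$ for a second valuation $v_\bfi$, and then to compare $v_\bfi$ with the cluster valuation $v_{t_0}$ directly, rather than comparing the polytopes face by face. The basic mechanism is the following functoriality: if two valuations with one--dimensional leaves on $\C(X_w)\setminus\{0\}$ satisfy $v'=A\circ v$ for some $A\in\mathrm{GL}(m,\Z)$, then $S(\mcal{L}_\lambda,v',\tau_\lambda)=(\mathrm{id}\times A)\bigl(S(\mcal{L}_\lambda,v,\tau_\lambda)\bigr)$ by~\eqref{equ_SR}, whence $\Delta(\mcal{L}_\lambda,v',\tau_\lambda)=A\bigl(\Delta(\mcal{L}_\lambda,v,\tau_\lambda)\bigr)$ by~\eqref{equ_Deltav}. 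So it suffices to produce such an $A$ relating $v_\bfi$ and $v_{t_0}$, at least on the subspaces $\{\sigma/\tau_\lambda^{k}\mid \sigma\in H^0(X_w,\mcal{L}_\lambda^{\otimes k})\}$, which is all that the semigroup sees.

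First I would recall, following Kaveh~\cite{Kaveh} and its extension to Schubert varieties by Fujita~\cite{FujitaNObody}, that $\Delta_\bfi(\lambda)$ is itself a Newton--Okounkov body: there is a valuation $v_\bfi$ with one--dimensional leaves, coming from the flag of (Bott--Samelson resolutions of) the Schubert subvarieties $X_{w_{\le 1}}\subset X_{w_{\le 2}}\subset\cdots\subset X_{w_{\le m}}=X_w$ indexed by the truncations of $\rxw$, such that $\Delta(X_w,\mcal{L}_\lambda,v_\bfi,\tau_\lambda)$ is unimodularly equivalent to $\Delta_\bfi(\lambda)$, and such that $v_\bfi(b)$ equals the $\bfi$--string parametrisation $\Psi_\bfi(b)$ of $b$ for every element $b$ of the dual canonical basis $\mcal{B}_w(\lambda)$, up to the fixed sign/order conventions entering the definition of the string polytope (see \cite{Littelmann,BerensteinZelevinsky}). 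On the cluster side, property $(1)$ of $\mcal{B}_w$ together with~\eqref{equ_val=g} gives $v_{t_0}(b)=g_{t_0}(b)$, the extended $g$--vector with respect to the Gei\ss{}--Leclerc--Schr\"{o}er seed $\seed_{t_0}=(D_\rxw,\varepsilon_0)$, for every $b\in\mcal{B}_w$; recall also from Theorem~\ref{Thm: FO1} that $\Delta_{\seed_{t_0}}(w,\lambda)$ is then a rational polytope. Since both $g_{t_0}$ and $\Psi_\bfi$ are injective on $\mcal{B}_w(k\lambda)$ for all $k$ (property $(2)$ of $\mcal{B}_w$, and Littelmann's parametrisation), the level--$k$ pieces of the two semigroups are exactly $\{g_{t_0}(b)\mid b\in\mcal{B}_w(k\lambda)\}$ and $\{\Psi_\bfi(b)\mid b\in\mcal{B}_w(k\lambda)\}$ respectively.

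The heart of the matter --- and the step I expect to be the main obstacle --- is to exhibit a single $A\in\mathrm{GL}(m,\Z)$, depending only on the reduced word $\rxw$ and not on $b$ or $\lambda$, with $g_{t_0}(b)=A\,\Psi_\bfi(b)$ for all $b\in\mcal{B}_w$. The point is that both sides are genuinely \emph{linear} parametrisations of $\mcal{B}_w$, not the piecewise--linear Lusztig transition maps, so it is enough to identify the two linear maps, which can be done on a spanning family. Concretely, $\Psi_\bfi(b)$ records the exponents of the Kashiwara operators read along $\rxw$, whereas $g_{t_0}(b)$ is read off from the leading unipotent--minor monomial $\prod_j D_j^{g_j}$ in the expansion~\eqref{equ_extgvector} of $b$; the passage between ``crystal/string coordinates'' and ``$D_j$--exponent coordinates'' is governed by the combinatorics of the minors $D_j=D_{w_{\le j}\varpi_{i_j},\varpi_{i_j}}$, and the verification that this passage is the fixed integral matrix $A$ with $A^{-1}$ integral is exactly the computation carried out in \cite{FO20} (and can be cross--checked against the change--of--parametrisation formulas of \cite{BerensteinZelevinsky,BZ97}). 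Once such an $A$ is available, it sends the level--$k$ semigroup of $v_\bfi$ onto that of $v_{t_0}$ for every $k$, so by the functoriality recalled in the first paragraph $\Delta_{\seed_{t_0}}(w,\lambda)=A\bigl(\Delta(X_w,\mcal{L}_\lambda,v_\bfi,\tau_\lambda)\bigr)$ is unimodularly equivalent to $\Delta_\bfi(\lambda)$, which is the assertion.
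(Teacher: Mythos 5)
This statement is not proved in the paper at all: it is imported verbatim from Fujita--Oya (Corollaries 6.6 and 6.28 of \cite{FO20}), so there is no internal argument to compare your proposal against. What I can say is that your outline follows essentially the same strategy as the cited source: realize $\Delta_\bfi(\lambda)$ as a Newton--Okounkov body for a valuation $v_\bfi$ built from the flag of Schubert subvarieties (\cite{Kaveh, FujitaNObody}), use the dual canonical basis to identify the level-$k$ value sets of $v_\bfi$ and of the cluster valuation $v_{t_0}$ with the string parametrizations and the extended $g$-vectors respectively, and then transport the semigroup by a unimodular map. The functoriality step you invoke (if the two valuations differ by a fixed $A\in\mathrm{GL}(m,\Z)$ on the relevant graded pieces, then the semigroups and hence the bodies differ by $A$) is sound, granted that the one-dimensional-leaves property plus injectivity of the parametrizations lets you read off each level of the semigroup from the basis.

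The genuine gap is exactly where you flag it, and it cannot be waved through: the claim that $g_{t_0}(b)$ and the string datum $\Psi_\bfi(b)$ are related by a single $\Z$-invertible \emph{linear} (or at worst affine, with a $\lambda$-controlled translation, which is all that unimodular equivalence requires) map, uniform in $b$ and $\lambda$, is the entire content of the theorem. In general, transition maps between parametrizations of the dual canonical basis --- between string data for different reduced words, or between extended $g$-vectors for different seeds, cf.\ Theorem~\ref{theorem_tropicalizedclus} --- are only \emph{piecewise}-linear, so linearity for this particular pair is a special feature of the Gei\ss--Leclerc--Schr\"oer seed $\seed_{t_0}=(D_{\rxw},\varepsilon_0)$ that must be established by an explicit computation with the unipotent minors $D_j$ (this is what \cite{FO20}, building on \cite{FO17, KK19}, actually does). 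Writing that this verification ``is exactly the computation carried out in \cite{FO20}'' makes the argument circular as a blind proof of the very corollary of \cite{FO20} in question; to stand on its own, your proposal would have to produce the matrix $A$ and prove the identity $g_{t_0}(b)=A\,\Psi_\bfi(b)$ (or its affine variant) on all of $\mcal{B}_w$, not merely assert that it can be checked against \cite{BerensteinZelevinsky, BZ97}.
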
	

\begin{remark}
The string polytope of type $A$ can be described by the combinatorics of wiring diagrams in \cite{GleizerPostnikov}. One can associate a quiver to each wiring diagram and the braid $3$-move is compatible with a quiver mutation. Indeed, the cluster polytopes arise from the quiver mutations of an initial quiver.
\end{remark}

Throughout this subsection, we assume that $w$ is the longest element $w_0$ and hence $X_{w_0} = G/B$. The main goal is to prove that every cluster polytope $\Delta_{\seed_{t}}(w_0, \lambda)$ is a $\mathbb{Q}$-Gorenstein Fano polytope of size $1$ if we take the line bundle $\mathcal{L}_\lambda$ with the anticanonical weight $\lambda = 2 \rho$ where $\rho$ is the sum of fundamental weights.  We set $\Delta_{\seed_t}(\lambda) \coloneqq \Delta_{\seed_t}(w_0,\lambda)$ for a seed $\seed_t$ ($t \in \mathbb{T}$). As the base step for the induction, we prove that $\Delta_{\bfi} (2 \rho)$ is $\Q$-Gorenstein Fano for a particular reduced expression  $\bfi$ of $w_0$, and hence so is the corresponding $\Delta_{\seed_{t_0}}(\lambda)$ because the $\Q$-Gorenstein Fano condition is preserved under any unimodular equivalence. 

Recall that in  \cite[Corollary 2, Section 5,6,7,8, and 9]{Littelmann} a special reduced expression of $w_0$, which will be denoted by $\bfi_{\rm std}$, is chosen for each $G$, and the string polytope $\Delta_{\bfi_{\rm std}} (\lambda)$ is explicitly calculated.

\begin{lemma}\label{lemma_stringpolytoperef}
The string polytope $\Delta_{\bfi_{\rm std}} (2\rho)$ is a normalized $\mathbb{Q}$-Gorenstein Fano polytope.
\end{lemma}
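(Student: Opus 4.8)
The plan is to write down an explicit inequality description of the standard string polytope $\Delta_{\bfi_{\rm std}}(2\rho)$ due to Littelmann, read off the facet normal vectors, and check that after translating the unique interior lattice point to the origin, each primitive inward facet normal vector lies on the boundary of the polar dual in such a way that the polar dual is a lattice polytope with primitive vertices (i.e.\ a Fano polytope), with $\lambda=1$. Concretely, I would proceed as follows.

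First, recall Littelmann's string cone inequalities for the standard (e.g.\ nice, or ``good'') reduced word $\bfi_{\rm std}$ of $w_0$: the string polytope $\Delta_{\bfi}(\lambda)$ is cut out inside the string cone by the single ``highest weight'' family of inequalities involving $\langle \lambda, \cdot\rangle$. For the standard word the string cone has a known, combinatorially explicit facet description (a union of ``elementary'' inequalities of the form $x_k \ge 0$ or differences $x_k - x_{k^+} \ge 0$ type, depending on the Cartan datum), and the $\lambda$-dependent inequalities are affine with right-hand side a nonnegative-integer combination of the entries $\langle h_i, \lambda\rangle$. Plugging in $\lambda = 2\rho$, so that $\langle h_i,\lambda\rangle = 2$ for all $i$, I would verify that each bounding hyperplane, after subtracting the unique interior lattice point $\mathbf{u}_0$ (which is the string datum of the ``middle'' of the crystal, explicitly computable from $\bfi_{\rm std}$), takes the normalized form $\langle \mathbf{u},\mathbf{v}_j\rangle + 1 \ge 0$ with $\mathbf{v}_j$ a \emph{primitive integer} vector. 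The homogeneous (cone) inequalities already have primitive integral normals by Littelmann's description; the only real content is checking that the inhomogeneous $2\rho$-inequalities normalize to right-hand side exactly $1$ after the translation, which is where the specific value $\lambda = 2\rho$ (as opposed to a general regular $\lambda$) is essential: $2\rho$ is exactly the anticanonical weight, so the polarization is a power one above the threshold and the affine distances all become equal to $1$.

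Then, by Proposition~\ref{proposition_polardualandnewton} and Remark~\ref{lemma_QGorensteinpoly}, $(\Delta_{\bfi_{\rm std}}(2\rho) - \mathbf{u}_0)^\circ$ is the convex hull of these primitive normal vectors $\{\mathbf{v}_j\}$, which is a Fano polytope with $\lambda = 1$; hence by Definition~\ref{def_qgorensteinpoly} the string polytope is normalized $\mathbb{Q}$-Gorenstein Fano. A clean way to organize the normalization check is to use the geometric interpretation: $\Delta_{\bfi_{\rm std}}(2\rho)$ is the Newton--Okounkov body of $(G/B, \mathcal{L}_{2\rho}) = (G/B, K_{G/B}^{-1})$, and the central fiber of the associated string toric degeneration is a (possibly singular but normal, by Theorem~\ref{Thm: FO1}) Gorenstein Fano toric variety with anticanonical polarization; for a Gorenstein Fano toric variety the anticanonical moment polytope is reflexive, i.e.\ normalized $\mathbb{Q}$-Gorenstein Fano with lattice point $\mathbf{u}_0$ interior and all facet normals primitive at affine distance $1$ — but one still must invoke the explicit Littelmann description (or a direct computation that the barycenter-type point $\mathbf{u}_0$ is a lattice point and interior) to know the degeneration is Gorenstein and not merely $\mathbb{Q}$-Gorenstein.

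The main obstacle I expect is the bookkeeping in the string-cone facet description: confirming that for the standard reduced word the inequality presentation Littelmann gives is \emph{irredundant} (so that the listed normals are exactly the facet normals, not a larger redundant set), that these normals are primitive, and — the crux — that the interior lattice point $\mathbf{u}_0$ is integral and the translated $2\rho$-inequalities rescale to right-hand side $1$. This is a type-by-type (or uniform, if one is careful with the Cartan matrix entries) computation with Littelmann's formulas; I would isolate it as the technical heart and either cite the explicit inequalities from Littelmann's paper for each classical type and $G_2$, or give a uniform argument via reflexivity of the anticanonical polytope of the Gorenstein toric central fiber, using Theorem~\ref{Thm: FO1}(2) to supply normality/saturatedness and the known fact that $G/B$ with $\lambda=2\rho$ is anticanonically polarized so that the degenerate toric variety is Gorenstein Fano.
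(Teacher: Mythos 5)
There is a genuine gap: your argument never supplies the one nontrivial input that makes the normalization work, namely the reflexivity-type theorem of Steinert \cite{Steinert}, which is exactly what the paper's proof uses. The paper's proof runs as follows: Littelmann's presentation writes $\Delta_{\bfi_{\rm std}}(2\rho)$ as $\bigl(\bigcap_p H^+_{\mathbf{z}_p,0}\bigr)\cap\bigl(\bigcap_q H^+_{\mathbf{w}_q,2}\bigr)$ with each $\mathbf{z}_p$ normalizable to a primitive vector and each $\mathbf{w}_q$ automatically primitive (it has a $-1$ entry); Steinert's theorem then guarantees a \emph{unique interior lattice point} $\mathbf{a}$ and that $(\Delta_{\bfi_{\rm std}}(2\rho)-\mathbf{a})^\circ$ is a \emph{lattice} polytope, so for every facet-defining hyperplane the rescaled normal $\mathbf{z}_p/\langle\mathbf{z}_p,\mathbf{a}\rangle$ (resp.\ $\mathbf{w}_q/(\langle\mathbf{w}_q,\mathbf{a}\rangle+2)$) is integral, and primitivity of $\mathbf{z}_p$, $\mathbf{w}_q$ forces the denominators to be $\pm1$, giving primitive facet normals at distance $1$. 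Without Steinert's result (or an equivalent statement that the central toric fiber is Gorenstein Fano) you have no way to conclude that these pairings equal $1$.

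Two specific points in your write-up reflect this gap. First, you claim that for the homogeneous string-cone inequalities ``the only real content is checking that the inhomogeneous $2\rho$-inequalities normalize to right-hand side exactly $1$''; this is not so. After translating by the interior point $\mathbf{u}_0$, a cone inequality $\langle\mathbf{z}_p,\mathbf{u}\rangle\ge 0$ becomes $\langle\mathbf{z}_p,\mathbf{u}\rangle+\langle\mathbf{z}_p,\mathbf{u}_0\rangle\ge 0$, and for the polytope to be \emph{normalized} $\Q$-Gorenstein Fano you need $\langle\mathbf{z}_p,\mathbf{u}_0\rangle=1$ for every facet-defining $\mathbf{z}_p$; primitivity of $\mathbf{z}_p$ alone gives nothing here, and establishing this equality is precisely the content of reflexivity, not bookkeeping. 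Second, your proposed ``clean'' route via Gorenstein-ness of the central fiber of the toric degeneration is, as you yourself note, circular in its current form: the Gorenstein property of that (typically singular) toric variety is essentially equivalent to the statement of Lemma~\ref{lemma_stringpolytoperef}, so deferring it to ``a direct computation with Littelmann's formulas'' leaves the technical heart unproved. The fix is simply to invoke \cite{Steinert} (together with the primitivity of Littelmann's normals and the uniqueness of the interior lattice point), as the paper does.
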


\begin{proof}
In \cite{Littelmann}, Littelmann provided two sets of lattice vectors $\{{\bf{z}}_p\}_{1\le p\le m}, \{{\bf{w}}_q\}_{1\le q\le \ell}$ in $N$ such that
$$\Delta_{\bfi_{\rm std}} (2\rho) = \left(\bigcap_{p=1}^{m} H^+_{\mathbf{z}_p, 0} \right) \cap
\left(
\bigcap_{q=1}^{\ell} H^+_{\mathbf{w}_q, 2} \right).$$
Here the vectors $\mathbf{z}_p$ are the coefficients of the inequalities defining the string cone associated with $\bfi_{\rm std}$, which can be read off from the inequalities in \cite[Corollary 2, Section 5,6,7,8, and 9]{Littelmann}, and the vectors $\mathbf{w}_q$ are the coefficients of the additional inequalities in \cite[Definition on page 149]{Littelmann} to define the string polytope associated with the highest weight $2\rho$.  One may assume that each vector $\mathbf{z}_p$ is primitive, since $H^+_{\mathbf{z}_p, 0}= H^+_{a\mathbf{z}_p, 0}$ for any $a\in \R_{>0}$. Each vector $\mathbf{w}_q$ contains a component $-1$ (see \cite[Definition on page 149]{Littelmann}) so that $\mathbf{w}_q$ is primitive.

By applying the main theorem of Steinert in~\cite[Theorem on page 860]{Steinert} to $\Delta_{\bfi_{\rm std}} (2\rho)$, we conclude that it has a unique interior lattice point  $\mathbf a$ and
the polar
$(\Delta_{\bfi_{\rm std}} (2\rho) - \mathbf{a})^\circ$
is a lattice polytope.
Since $$\Delta_{\bfi_{\rm std}} (2\rho) - \mathbf{a} = \left(\bigcap_{p=1}^{m} H^+_{\frac{\mathbf{z}_p}{\langle  \mathbf{z}_p ,\mathbf{a}\rangle}, 1} \right) \cap
\left(
\bigcap_{q=1}^{\ell} H^+_{\frac{\mathbf{w}_q}{\langle  \mathbf{w}_q, \mathbf{a} \rangle +2}, 1} \right),$$
if $H_{\mathbf{z}_p, 0}$ (resp. $H_{\mathbf{w}_q, 2}$) contains a facet of $\Delta_{\bfi_{\rm std}} (2\rho) $, then 
$\frac{\mathbf{z}_p}{\langle  \mathbf{z}_p ,\mathbf{a}\rangle}$ (resp.  $\frac{\mathbf{w}_q}{\langle  \mathbf{w}_q, \mathbf{a} \rangle +2}$) 
is a lattice vector.
Since $\mathbf{z}_p$ (respectively $\mathbf{w}_q$) is primitive, we have ${\langle  \mathbf{z}_p ,\mathbf{a}\rangle}=\pm 1$
(respectively, ${\langle  \mathbf{w}_q ,\mathbf{a}\rangle}+2=\pm 1$)  and $\frac{\mathbf{z}_p}{\langle  \mathbf{z}_p ,\mathbf{a}\rangle}$ (respectively,  $\frac{\mathbf{w}_q}{\langle  \mathbf{w}_q, \mathbf{a} \rangle +2}$) 
is a primitive vector, as desired.
\end{proof}

The following lemma describes the center point of the cluster polytopes. As mentioned in \cite[Remark 4.22]{FH21}, \cite[Corollary 4.19]{FH21} holds not only for simply laced cases but also for non-simply laced cases, which in turn implies \cite[Proposition 4.20]{FH21} for non-simply laced cases.

\begin{lemma}[Theorem on page 860 in \cite{Steinert}, Proposition 4.20 in \cite{FH21}] Let
\begin{equation}\label{equ_centerfrozen}
\mathbf{u}_{0} \coloneqq \sum_{j\in J \backslash J_\mathrm{uf}} \bfe^*_j.
\end{equation}
For each $t \in \mathbb{T}$, let $\mathbf{u}_{t,0}$ be the point in~\eqref{equ_defut0u}.
Then the cluster polytope $\Delta_{\seed_t}(2\rho)$ satisfies 
\begin{enumerate}
\item the point $\mathbf{u}_{t,0}$ in~\eqref{equ_centerfrozen} is the unique interior point of $\Delta_{\seed_t}(2\rho)$,
\item the polar dual of $\Delta_{\seed_t}(2\rho) - \mathbf{u}_{t,0}$ is a lattice polytope, and
\item $\mathbf{u}_{t,0}$ is fixed under the tropicalized cluster mutation of each direction.
\end{enumerate}
\end{lemma}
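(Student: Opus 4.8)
The plan is to prove the three assertions in the order (3), then (2), then (1), propagating from the initial seed $t_0$ along the tree $\mathbb{T}$. Assertion (3) is essentially immediate: by Corollary~\ref{cor_equivalenceoftranscomm}, $\mathbf{u}_{t,0}$ is fixed under every tropicalized mutation $\mu_k^T$ precisely when $\mathbf{u}_{t,0}\in M_{\mathrm{fr},\R}$, and by its definition~\eqref{equ_centerfrozen} the point $\mathbf{u}_0=\sum_{j\in J\setminus J_\mathrm{uf}}\mathbf{e}_j^*$ lies in $M_{\mathrm{fr},\R}$. (One should first record that the frozen dual basis vectors $\mathbf{e}_j^*$, $j\in J\setminus J_\mathrm{uf}$, are literally unchanged under any unfrozen mutation --- this is the first case of~\eqref{equ_mutationmukf} together with $\mathbf{f}_j=d_j^{-1}\mathbf{e}_j^*$ --- so that both $M_{\mathrm{fr},\R}$ and the point $\mathbf{u}_{t,0}=\sum_{j\in J\setminus J_\mathrm{uf}}\mathbf{e}_j^*$ are independent of $t$.)

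For the base case at $t_0$, since changing the reduced expression of $w_0$ replaces the initial seed by a mutation-equivalent one, I may assume $\seed_{t_0}$ is the seed attached to $\bfi_{\rm std}$. Then Theorem~\ref{theorem_stringploytopes} identifies $\Delta_{\seed_{t_0}}(2\rho)$, up to unimodular equivalence, with the string polytope $\Delta_{\bfi_{\rm std}}(2\rho)$, which by (the proof of) Lemma~\ref{lemma_stringpolytoperef} is a \emph{reflexive} lattice polytope: a lattice polytope with a unique interior lattice point $\mathbf{a}$ whose polar dual $(\Delta_{\bfi_{\rm std}}(2\rho)-\mathbf{a})^\circ$ is again a lattice polytope. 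All of these properties, together with ``normalized $\Q$-Gorenstein Fano'', are preserved under unimodular equivalence, so $\Delta_{\seed_{t_0}}(2\rho)$ is a reflexive, normalized $\Q$-Gorenstein Fano lattice polytope whose center is its unique interior lattice point. The one genuine point of the base case is to identify this center with $\mathbf{u}_{t_0,0}=\sum_{j\in J\setminus J_\mathrm{uf}}\mathbf{e}_j^*$, and this is where $\lambda=2\rho$ enters. Using $v_{t_0}=g_{t_0}$ on weakly pointed elements and property~(4) of the basis $\mcal{B}_{w_0}$, the lattice points of $\Delta_{\seed_{t_0}}(2\rho)$ are the $g_{t_0}$-vectors of $\mcal{B}_{w_0}(2\rho)$; the vector $\sum_{j\in J\setminus J_\mathrm{uf}}\mathbf{e}_j^*$ is the $g_{t_0}$-vector of the cluster monomial $\prod_{j\in J\setminus J_\mathrm{uf}}D_j=\prod_{i\in I}D_{w_0\varpi_i,\varpi_i}$ (this uses the explicit shape of the frozen variables, $j^{+}=m+1$); and one checks --- specifically for the weight $2\rho$ --- that the corresponding basis element lies in $\{\sigma/\tau_{2\rho}\mid\sigma\in H^0(X_{w_0},\mcal{L}_{2\rho})\}$ with $g_{t_0}$-vector in the interior, which by reflexivity forces it to be \emph{the} interior lattice point.

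For the propagation, put $\widetilde{\Delta}_t\coloneqq\Delta_{\seed_t}(2\rho)-\mathbf{u}_{t,0}$. By Theorem~\ref{theorem_tropicalizedclus}(2), by (3), and by Lemma~\ref{lemma_transtropicalcomm}, the family $\{\widetilde{\Delta}_t\}_{t\in\mathbb{T}}$ again has a tropical cluster structure: $\widetilde{\Delta}_{t'}=\mu_k^T(\widetilde{\Delta}_t)$ whenever $t'=\mu_k(t)$. Since $\widetilde{\Delta}_{t_0}$ is normalized $\Q$-Gorenstein Fano with center $\mathbf{0}$ by the base case, Lemma~\ref{lemma_QGorenstein} --- whose statement and proof preserve the adjective ``normalized'' --- shows that $\widetilde{\Delta}_t$ is normalized $\Q$-Gorenstein Fano with center $\mathbf{0}$ for every $t$. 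Unraveling Definition~\ref{def_qgorensteinpoly}, this says exactly that $(\Delta_{\seed_t}(2\rho)-\mathbf{u}_{t,0})^\circ=\widetilde{\Delta}_t^\circ$ is a Fano, in particular a lattice, polytope, which is assertion~(2) (equivalently, $\mu_k^T(\Delta_{\seed_t}(2\rho))=\Delta_{\seed_{t'}}(2\rho)$ is $\Q$-Gorenstein Fano with center $\mathbf{u}_{t',0}$, by Lemma~\ref{lemma_Qgorensteinind}). For assertion~(1), I also need that $\Delta_{\seed_t}(2\rho)$ is itself a \emph{lattice} polytope for every $t$; this does \emph{not} follow from $\Q$-Gorenstein Fanoness or from the tropical cluster structure alone (a tropicalized mutation can destroy integrality of vertices), but it follows from Theorem~\ref{Thm: FO1}: the toric degeneration attached to $\Delta_{\seed_t}(2\rho)$ is normal, so its central fiber is a normal projective toric variety polarized by (the restriction of) the very ample bundle $\mcal{L}_{2\rho}$, and the polytope of such a variety is a lattice polytope. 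Granting this, and since $\mathbf{u}_{t,0}\in M$, the translate $\widetilde{\Delta}_t$ is a lattice polytope containing $\mathbf{0}$ in its interior and with lattice polar dual; that is, $\widetilde{\Delta}_t$ is \emph{reflexive}, so it has a unique interior lattice point $\mathbf{0}$ (see \cite{KasprzykNill}), whence $\mathbf{u}_{t,0}$ is the unique interior lattice point of $\Delta_{\seed_t}(2\rho)$ --- assertion~(1).

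I expect the main obstacle to be the base case, and inside it the identification of the unique interior lattice point of the initial cluster polytope in $g$-vector coordinates. Littelmann's inequalities and Steinert's criterion produce that interior lattice point abstractly but say nothing about its $g_{t_0}$-coordinates; bridging the gap requires the parametrization of the lattice points of $\Delta_{\seed_{t_0}}(2\rho)$ by the dual canonical basis $\mcal{B}_{w_0}$ (Theorem~\ref{Thm: FO1} and properties (1)--(4)) together with the $\lambda=2\rho$-specific computation showing that $\prod_{i\in I}D_{w_0\varpi_i,\varpi_i}$ is the interior basis element --- this is the content of Corollary~4.18 in \cite{FH21}. A secondary technical point is the lattice-polytope property of $\Delta_{\seed_t}(2\rho)$ at non-initial $t$, which has to be extracted from the normality of the toric degeneration rather than from the mutation relations themselves.
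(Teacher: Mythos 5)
First, a point of comparison: the paper does not prove this lemma at all --- it is quoted directly from \cite{FH21} (Corollary 4.18 and Proposition 4.20), so any self-contained argument is by construction a different route from the paper's. Judged on its own terms, your outline has the right skeleton --- part (3) via Corollary~\ref{cor_equivalenceoftranscomm} is correct and complete, and propagating (2) from the initial seed by Lemma~\ref{lemma_QGorenstein}/Lemma~\ref{lemma_Qgorensteinind} together with Theorem~\ref{theorem_tropicalizedclus} is sound --- but the crux of the statement is left unproven. The base case requires showing that the unique interior lattice point of $\Delta_{\seed_{t_0}}(2\rho)$, whose existence you get from Lemma~\ref{lemma_stringpolytoperef} (Steinert) and Theorem~\ref{theorem_stringploytopes}, has $g$-vector coordinates exactly $\sum_{j\in J\setminus J_\mathrm{uf}}\bfe^*_j$. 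You correctly observe that this vector is the extended $g$-vector of the frozen cluster monomial $\prod_{i\in I}D_{w_0\varpi_i,\varpi_i}$, but you never verify that this element corresponds to a section of $\mcal{L}_{2\rho}$ (i.e.\ lies in $\mcal{B}_{w_0}(2\rho)$), nor that its $g$-vector is an \emph{interior} point of the polytope; the phrase ``one checks --- specifically for the weight $2\rho$'' together with your own admission that this is the content of \cite[Cor.\ 4.18]{FH21} means the essential $2\rho$-specific computation is assumed rather than proven. Without it, neither (1) nor (2) is established even at $t_0$, and there is nothing to propagate.

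Second, the auxiliary step you use for (1) at general $t$ --- ``the toric degeneration attached to $\Delta_{\seed_t}(2\rho)$ is normal, so the polytope is a lattice polytope'' --- does not hold as stated. Saturation of the semigroup makes the central fiber a normal toric variety, but the induced polarization $\mathcal{O}(1)$ on $\mathrm{Proj}$ of the semigroup algebra need not be Cartier, and the level-one slice of a finitely generated saturated cone is in general only a \emph{rational} polytope (the saturated semigroup generated by $(1,0)$ and $(2,1)$ has Okounkov body $[0,\tfrac12]$). Integrality of the cluster polytopes for $\lambda=2\rho$ is again part of what \cite{FH21} proves, not a consequence of Theorem~\ref{Thm: FO1}. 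Fortunately this detour is unnecessary: $\mu_k^T$ is a piecewise-linear homeomorphism of $\R^J$ whose linear pieces lie in $\mathrm{GL}(J,\Z)$, so it carries interior lattice points of $\Delta_{\seed_t}(2\rho)$ bijectively onto interior lattice points of $\Delta_{\seed_{t'}}(2\rho)$ while fixing $\mathbf{u}_{t,0}$; hence uniqueness of the interior lattice point, once known at $t_0$, propagates immediately along $\mathbb{T}$ without any lattice-polytope claim. With that replacement the inductive part of your argument is fine; the genuine gap is the base-case identification, which is precisely the cited result.
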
 

The following proposition further claims that the polar dual of $\Delta_{\seed_t}(2\rho) - \mathbf{u}_{t,0}$ is the convex hull of primitive lattice vectors. It in turn yields that $\Delta_{\seed_t}(2\rho)$ is a normalized $\mathbb{Q}$-Gorenstein Fano polytope. 

\begin{proposition}\label{proposition_clusterqgornor}
For each $t \in \mathbb{T}$, the cluster polytope $\Delta_{\seed_t}(2\rho)$ is a normalized $\mathbb{Q}$-Gorenstein Fano polytope with the center $\mathbf{u}_{t,0}$ and the associated toric degeneration is normal.
\end{proposition}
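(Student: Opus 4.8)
The strategy is to verify the statement at one distinguished seed and then transport it around the tree $\mathbb{T}$ via tropicalized cluster mutations, using that the whole family of cluster polytopes has a tropical cluster structure (Theorem~\ref{theorem_tropicalizedclus}).

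\emph{Base case.} Choose $t_0$ so that $\seed_{t_0}$ is the seed attached to the reduced expression $\rxw = \bfi_{\rm std}$. By Theorem~\ref{theorem_stringploytopes}, $\Delta_{\seed_{t_0}}(2\rho)$ is unimodularly equivalent to the string polytope $\Delta_{\bfi_{\rm std}}(2\rho)$, which by Lemma~\ref{lemma_stringpolytoperef} is a normalized $\mathbb{Q}$-Gorenstein Fano polytope. Since every $A \in \mathrm{GL}(m,\Z)$ maps primitive lattice vectors to primitive lattice vectors and lattice polytopes to lattice polytopes, a unimodular equivalence preserves both the $\mathbb{Q}$-Gorenstein Fano condition and the value $\lambda = 1$ of its scaling constant; hence $\Delta_{\seed_{t_0}}(2\rho)$ is normalized $\mathbb{Q}$-Gorenstein Fano. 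Its center is the image of the unique interior lattice point of $\Delta_{\bfi_{\rm std}}(2\rho)$ furnished by Steinert's theorem in the proof of Lemma~\ref{lemma_stringpolytoperef}, which by the lemma of \cite{FH21} quoted just above (Corollary 4.18 and Proposition 4.20) coincides with $\mathbf{u}_{t_0,0}$.

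\emph{Propagation.} That every $\Delta_{\seed_t}(2\rho)$ is $\mathbb{Q}$-Gorenstein Fano with center $\mathbf{u}_{t,0}$ and that the associated toric degeneration is normal now follows from Proposition~\ref{proposition_Qgorennormalind}, whose three hypotheses hold here: $S(X_{w_0},\mcal{L}_{2\rho},v_{t_0},\tau_{2\rho})$ is saturated by Theorem~\ref{Thm: FO1}, $\Delta_{\seed_{t_0}}(2\rho)$ is $\mathbb{Q}$-Gorenstein Fano with center $\mathbf{u}_0$ by the base case, and $\mathbf{u}_0$ is fixed under the tropicalized cluster mutation in each direction by the lemma of \cite{FH21}. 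To upgrade ``$\mathbb{Q}$-Gorenstein Fano'' to ``normalized'', I would re-run the induction behind Proposition~\ref{proposition_Qgorennormalind} while keeping track of the scaling constant. Because the center $\mathbf{u}_{t,0}$ is fixed, the translations $\tau_t \colon \mathbf{u} \mapsto \mathbf{u} - \mathbf{u}_{t,0}$ commute with all tropicalized cluster mutations (Lemma~\ref{lemma_transtropicalcomm}), so the centered polytopes $\Delta_{\seed_t}(2\rho) - \mathbf{u}_{t,0}$ are again mutation-related and are centered at $\mathbf{0}$; applying the ``resp.\ normalized'' clause of Lemma~\ref{lemma_QGorenstein} along any path in $\mathbb{T}$ from $t_0$ to $t$ then shows $\Delta_{\seed_t}(2\rho) - \mathbf{u}_{t,0}$ is normalized $\mathbb{Q}$-Gorenstein Fano with center $\mathbf{0}$, i.e.\ $\Delta_{\seed_t}(2\rho)$ is normalized $\mathbb{Q}$-Gorenstein Fano with center $\mathbf{u}_{t,0}$. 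Equivalently, by Remark~\ref{lemma_QGorensteinpoly}, its combinatorial dual $(\Delta_{\seed_t}(2\rho) - \mathbf{u}_{t,0})^\circ$ is the convex hull of primitive lattice vectors, as announced before the statement.

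The main obstacle is precisely this last upgrade. Lemma~\ref{lemma_QGorenstein} (and Lemma~\ref{lemma_Qgorensteinind}) keeps the scaling constant $c$ of the presentation~\eqref{equ_polytopeshalfsp} unchanged under a mutation only after the center has been translated to the origin, so one must first establish — which is exactly the content of the lemma of \cite{FH21} cited above — that $\mathbf{u}_{t,0}$ is fixed by every tropicalized mutation, and only then translate and mutate. Granting this, $c = 1$ at $t_0$ (inherited from the string polytope) forces $c = 1$ at every $t \in \mathbb{T}$, which is the meaning of ``normalized''; the rest is a routine bookkeeping of the results already established.
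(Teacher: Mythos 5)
Your proof is correct and takes essentially the same route as the paper: base case from Theorem~\ref{theorem_stringploytopes} together with Lemma~\ref{lemma_stringpolytoperef}, then propagation via the normalized clause of Lemma~\ref{lemma_QGorenstein} using Corollary~\ref{cor_equivalenceoftranscomm} and the \cite{FH21} center-preservation. The only cosmetic differences are that the paper obtains normality of the toric degeneration directly from Theorem~\ref{Thm: FO1}~(2) at every $t$ rather than by propagating saturation from $t_0$, and it goes straight to the normalized version of Lemma~\ref{lemma_QGorenstein} instead of first invoking Proposition~\ref{proposition_Qgorennormalind} and then re-running the induction to ``upgrade''; your detour is harmless but redundant.
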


\begin{proof}
For each seed $\seed_t$ ($t \in \mathbb{T}$), let $S_{t} \coloneqq S(X_{w_0},\mcal{L}_{2 \rho},v_t,\tau_{2\rho})$ be the semigroup associated to $v_t$. By Theorem~\ref{Thm: FO1}, each semigroup $S_t$ is finitely generated and saturated. Hence, the toric degeneration associated with the corresponding Newton--Okounkov polytope $\Delta_{\seed_t}(2 \rho)$ is normal. By Theorem \ref{theorem_stringploytopes}, $\Delta_{\seed_{t_0}}(2\rho)$ is a normalized $\mathbb{Q}$-Gorenstein Fano polytope with the center $\mathbf{u}_{t_0,0}$. By Lemma~\ref{lemma_QGorenstein} and Corollary~\ref{cor_equivalenceoftranscomm}, so are all the other cluster polytopes.
\end{proof}

\begin{corollary}\label{cor_familyofmonotone}
For each $t \in \mathbb{T}$, consider a completely integrable system $\Phi_t \colon G/B \to \Delta_{\seed_t}(2\rho)$ constructed from Theorem~\ref{theorem_HaradaKaveh}. Then the fiber $\Phi_t^{-1}(\mathbf{u}_{t,0})$ at the center is a monotone Lagrangian torus.
\end{corollary}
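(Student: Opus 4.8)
The plan is to assemble Corollary~\ref{cor_familyofmonotone} directly from the two main structural results already in hand: Proposition~\ref{proposition_toricdegenerationsmonotone} (that a $\mathbb{Q}$-Gorenstein Fano, normal toric degeneration of a smooth Fano variety with a monotone K\"{a}hler form yields a monotone Lagrangian torus fiber over the center) and Proposition~\ref{proposition_clusterqgornor} (that each cluster polytope $\Delta_{\seed_t}(2\rho)$ is normalized $\mathbb{Q}$-Gorenstein Fano with center $\mathbf{u}_{t,0}$ and the associated toric degeneration is normal). So the argument is essentially a verification-and-apply, not a new construction.

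First I would fix $t \in \mathbb{T}$ and record the geometric setup: $X = G/B$ is a smooth Fano variety, and the choice $\lambda = 2\rho$ makes $\mathcal{L}_{2\rho}$ a very ample line bundle which is (a positive power of, in fact equal to) the anticanonical bundle $K_X^{-1}$, since $2\rho$ is the anticanonical weight. Hence the induced K\"{a}hler form $\omega_{2\rho}$ is monotone in the sense of~\eqref{c1omegaeq}. This is exactly the hypothesis on $X$ required by Proposition~\ref{proposition_toricdegenerationsmonotone}. Next I would invoke Theorem~\ref{Thm: FO1} to see that the valuation $v_t$ gives a finitely generated, saturated semigroup $S(X_{w_0},\mathcal{L}_{2\rho},v_t,\tau_{2\rho})$, so the cluster polytope $\Delta_{\seed_t}(2\rho)$ is a genuine rational polytope and Anderson's Theorem~\ref{theorem_Andersontoric} produces a normal toric degeneration of $G/B$; then Theorem~\ref{theorem_HaradaKaveh} provides the Lagrangian torus fibration $\Phi_t \colon G/B \to \Delta_{\seed_t}(2\rho)$.

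Then I would feed these facts into Proposition~\ref{proposition_toricdegenerationsmonotone}: we have a smooth Fano $X = G/B$ with monotone K\"{a}hler form $\omega_{2\rho}$ from the very ample $\mathcal{L}_{2\rho}$, a Newton--Okounkov polytope $\Delta = \Delta_{\seed_t}(2\rho)$ which is $\mathbb{Q}$-Gorenstein Fano (Proposition~\ref{proposition_clusterqgornor}), and an associated toric degeneration that is normal (Theorem~\ref{Thm: FO1} via \cite{Anderson}). Proposition~\ref{proposition_toricdegenerationsmonotone} then asserts precisely that the fiber $\Phi_t^{-1}(\mathbf{u}_0)$ over the center $\mathbf{u}_0$ of $\Delta$ is a monotone Lagrangian torus. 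Since Proposition~\ref{proposition_clusterqgornor} identifies the center of $\Delta_{\seed_t}(2\rho)$ with the point $\mathbf{u}_{t,0}$ of~\eqref{equ_defut0u}, this is exactly the claim $\Phi_t^{-1}(\mathbf{u}_{t,0})$ is a monotone Lagrangian torus, as desired. Since $t$ was arbitrary, this holds for all $t \in \mathbb{T}$, which gives the family $\{L_{\seed_t}\}$.

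The only point requiring any care — and the closest thing to an obstacle — is matching up conventions for the ``center'' between Definition~\ref{def_qgorensteinpoly}/Proposition~\ref{proposition_toricdegenerationsmonotone} (where the center is the distinguished translation point $\mathbf{u}_0$) and Proposition~\ref{proposition_clusterqgornor} (where it is named $\mathbf{u}_{t,0}$, the image under the seed identification~\eqref{equ_defut0u} of the fixed point $\mathbf{u}_0 = \sum_{j\in J\setminus J_\mathrm{uf}} \mathbf{e}_j^*$). Since Proposition~\ref{proposition_clusterqgornor} already asserts $\mathbf{u}_{t,0}$ is the center of $\Delta_{\seed_t}(2\rho)$ in the sense of Definition~\ref{def_qgorensteinpoly}, there is nothing further to check; the identification is by construction. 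Thus the proof is a short deduction.

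\begin{proof}[Proof of Corollary~\ref{cor_familyofmonotone}]
Fix $t \in \mathbb{T}$. Since $2\rho$ is the anticanonical regular dominant weight, the very ample line bundle $\mathcal{L}_{2\rho}$ is a positive power of the anticanonical bundle of the smooth Fano variety $X = G/B$, so the induced K\"{a}hler form $\omega_{2\rho}$ is monotone. By Theorem~\ref{Thm: FO1}, the semigroup $S(X_{w_0},\mathcal{L}_{2\rho},v_t,\tau_{2\rho})$ is finitely generated and saturated, so by Theorem~\ref{theorem_Andersontoric} the cluster polytope $\Delta_{\seed_t}(2\rho)$ is a rational polytope and there is an associated toric degeneration of $G/B$, which is normal. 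Moreover, by Proposition~\ref{proposition_clusterqgornor}, $\Delta_{\seed_t}(2\rho)$ is a normalized $\mathbb{Q}$-Gorenstein Fano polytope with center $\mathbf{u}_{t,0}$. Hence the hypotheses of Proposition~\ref{proposition_toricdegenerationsmonotone} are satisfied for $X = G/B$ with $\mathcal{L} = \mathcal{L}_{2\rho}$ and $\Delta = \Delta_{\seed_t}(2\rho)$, and we conclude that the fiber of $\Phi_t$ over the center $\mathbf{u}_{t,0}$ of $\Delta_{\seed_t}(2\rho)$, namely $\Phi_t^{-1}(\mathbf{u}_{t,0})$, is a monotone Lagrangian torus. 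Since $t \in \mathbb{T}$ was arbitrary, the claim follows for all $t$.
\end{proof}
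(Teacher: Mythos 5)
Your proposal is correct and follows essentially the same route as the paper: note that $\mathcal{L}_{2\rho}$ is very ample and anticanonical so the inherited K\"{a}hler form is monotone, then apply Proposition~\ref{proposition_clusterqgornor} (together with the normality/saturation from Theorem~\ref{Thm: FO1}) and Proposition~\ref{proposition_toricdegenerationsmonotone}. The extra detail you include about Theorem~\ref{theorem_Andersontoric} and the identification of the center $\mathbf{u}_{t,0}$ is consistent with, and already subsumed by, the paper's cited results.
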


\begin{proof}
The line bundle $\mcal{L}_{2\rho}$ is very ample and anticanonical and hence the K\"{a}hler form inherited from $\mcal{L}_{2\rho}$ is monotone. The statement follows from Proposition~\ref{proposition_clusterqgornor} and Theorem~\ref{proposition_toricdegenerationsmonotone}.
\end{proof}

\subsection{Nonnegativity of  the exponents of the frozen variables}\label{ssec_nonnegativity}

The goal of this subsection is to verify the conditions $(2)$ and $(3)$ in Theorem~\ref{theorem_maininthebody} in the case of cluster polytopes. Throughout this subsection, we assume that $w$ is the longest element $w_0$ and hence $X_{w_0} = G/B$. Set $\Delta_{\seed_t}(\lambda) \coloneqq \Delta_{\seed_t}(w_0,\lambda) $ 
and $C_{\seed_t} \coloneqq C_{\seed_t}(w_0) $ for a seed $\seed_t$ ($t \in \mathbb{T}$). 

Let $B(\infty)$ be the infinite crystal of the negative half $U_q^-(\g)$ of the quantum group $U_q(\g)$. We consider the dual canonical basis (or upper global basis) 
$$
\Gup(\infty) = \{ \Gup(b) \mid b\in B(\infty) \},
$$
where $\Gup(b)$ is the element corresponding to $b$ in $B(\infty)$. Then the specialization $\Gupq(\infty) \coloneqq \{ \Gupq(b) \mid b\in B(\infty) \}$ of $\Gup(\infty)$ at $q=1$ forms a basis of the coordinate ring $\C[U^-]$. 
Since the dual canonical basis $ \Gup(\infty)$ is pointed (\cite{Qin20}), \eqref{equ_val=g} yields
$$
 v_{\seed_t} (b) = g_{\seed_t} (b) \qquad \text{ for any $b \in \Gupq(\infty)$.}
$$
Thanks to Theorem \ref{Thm: FO1}, we have 
\begin{align} \label{Eq: CtZ}
C_{\seed_t} \cap \Z^J = \{ g_{\seed_t} ( b ) \mid b \in \Gupq(\infty) \}.
\end{align}

The unipotent coordinate ring $\C[U^-]$ is a polynomial ring,  and it is equipped with a cluster algebra structure on which the frozen variables are \emph{not} invertible, see  \cite[Theorem 3.3]{GLS11} and \cite{GY17, GY20}. Indeed, for each reduced expression $\rxw_0 = s_{i_1} s_{i_2}\cdots s_{i_\ell}$, there exists a seed  of $\C[U^-]$ consisting of the cluster variables 
\begin{equation} \label{eq:seedU-}
    \left\{\Delta_{w_{\le k} \varpi_{i_k}, \varpi_{i_k}}\vert_{U^{-}} \mid 1\le k\le \ell \right\}
\end{equation}
with the same set of indices of frozen variables and the same exchange matrix \eqref{eq:GLSseed} given in the previous section. 

On the other hand, we have
$$U^-_{w_0} = U^- \cap \mathbf O_{w_0},$$
where $$\mathbf O_{w_0} = \{ g\in U^- \mid \Delta_{w_0 \varpi_{i}, \varpi_{i}}(g)\neq 0  \quad \text{for} \ i\in I \},
$$
see \cite{BZ97}, \cite[Propositions 8.4 and 8.5]{GLS11} and see also \cite[Proposition 2.20]{KO21}. Hence, the coordinate ring $\C[U^{-}_{w_0}]$ is the localization of $\C[U^{-}]$ at $\{\Delta_{w_0\varpi_i, \varpi_i} \vert_{U^-} \mid i \in I\}$, which is the set of frozen variables of the cluster algebra $\C[U^{-}]$. It follows that the cluster $\{A_j \mid j \in J\}$ of a seed $\seed_t$ of $\C[U^-_{w_0}]$ is also a cluster of $\C[U^-]$. Hence any element in $\C[U^-] \subset \C[U^-_{w_0}]$ can be written as a Laurent polynomial in the cluster variables in $\{A_j \mid j \in J\}$, in which the exponents of the frozen variables in any monomials are nonnegative by the Laurent phenomenon (\cite{FZ02}). Since $\Gupq(\infty) \subset \C[U^-] $, we deduce the following proposition. 

\begin{proposition}\label{proposition_frozen ge 0}
For every $b \in \Gupq(\infty)$ and for each frozen index $k \in J_\fr$, the $k^{\mathrm{th}}$-component of the extended $g$-vector of $b$ is nonnegative, that is, 
\begin{align} \label{Eq: frozen ge 0}
(g_{\seed_t}(b))_k \ge 0. 
\end{align}
\end{proposition}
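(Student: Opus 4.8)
The statement is essentially a structural fact about where the dual canonical basis lives, and the key point is already isolated in the paragraph preceding the proposition: the frozen variables of the cluster algebra structure on $\C[U^-]$ are \emph{not} invertible, whereas those of $\C[U^-_{w_0}]$ are obtained by localizing at precisely these frozen variables. So the plan is to trace the following chain of inclusions and identifications, making each one precise: $\Gupq(\infty) \subset \C[U^-] \subset \C[U^-_{w_0}]$, together with the observation that a cluster $\{A_j \mid j \in J\}$ of a seed $\seed_t$ of $\C[U^-_{w_0}]$ is simultaneously a cluster of $\C[U^-]$ (since the two algebras share the same exchange matrix \eqref{eq:GLSseed} and the same frozen indices, and $\C[U^-_{w_0}]$ is the localization of $\C[U^-]$ at the frozen variables).

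\textbf{Key steps, in order.} First I would fix $t \in \mathbb{T}$ and a seed $\seed_t = ((A_j)_{j\in J}, \varepsilon)$ of $\C[U^-_{w_0}]$, and invoke the cited results of Geiß--Leclerc--Schröer and Goodearl--Yakimov (\cite[Theorem 3.3]{GLS11}, \cite{GY17, GY20}) to upgrade $\seed_t$ to a seed of $\C[U^-]$ with the same cluster $\{A_j \mid j\in J\}$, the same partition $J = J_{\uf} \sqcup J_\fr$, and the same exchange matrix; here the frozen variables are genuinely non-invertible generators of the polynomial ring $\C[U^-]$. Second, take $b \in \Gupq(\infty)$; since $\Gupq(\infty) \subset \C[U^-]$ and $\C[U^-]$ is the (ordinary, not upper) cluster algebra, the Laurent phenomenon of Fomin--Zelevinsky \cite{FZ02} expresses $b$ as a Laurent polynomial in $\{A_j \mid j\in J\}$ in which the exponents of the frozen variables $A_k$, $k\in J_\fr$, are all \emph{nonnegative} (this is the refined form of the Laurent phenomenon for cluster algebras in which frozen variables are not inverted). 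Third, recall from \eqref{equ_extgvector} that the extended $g$-vector $g_{\seed_t}(b) = (g_j)_{j\in J}$ records the exponents of the $A_{j,t}$ in the lexicographically leading (dominance-minimal) Laurent monomial of $b$; for the frozen index $k$, the $X_{i,t}$-factors $\prod_j A_{j,t}^{\varepsilon_{i,j}}$ run only over $i \in J_\uf$, and the mutation/dominance structure does not allow the frozen exponent to be pushed below its minimum over all monomials, which is $\geq 0$ by the previous step. Hence $(g_{\seed_t}(b))_k \geq 0$, which is \eqref{Eq: frozen ge 0}.

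\textbf{Main obstacle.} The routine parts are the citations; the one place that needs genuine care is the third step — making sure that the frozen component of the extended $g$-vector really is bounded below by the minimal frozen exponent appearing in the Laurent expansion of $b$. Concretely, one must check that in the pointed expression \eqref{equ_extgvector} the correction factor $\sum_{\mathbf a} c_{\mathbf a}\prod_{j\in J_\uf} X_{j,t}^{a_j}$ only \emph{increases} the frozen exponents relative to the distinguished monomial $\prod_j A_{j,t}^{g_j}$: indeed each $X_{j,t}$ with $j \in J_\uf$ contributes $\varepsilon_{i,j}$ to the $i$-th exponent, and for $i = k \in J_\fr$ one needs $\varepsilon_{j,k}$ to have a sign compatible with "the leading monomial minimizes the frozen exponent." This is exactly where the non-invertibility of the frozen variables enters: since every monomial of $b$ has $A_k$-exponent $\geq 0$ and the leading term is the dominance-minimal one, its $A_k$-exponent $g_k$ is in particular $\geq 0$. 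So the argument closes once one observes that $g_k$ is simply the $A_k$-exponent of one particular monomial of $b\in\C[U^-]$, and every such exponent is nonnegative by the refined Laurent phenomenon. I would phrase the final proof to emphasize this one-line observation rather than belabor the $g$-vector formalism.
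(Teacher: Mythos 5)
Your proposal is correct and follows essentially the same route as the paper: identify the cluster of $\seed_t$ as a cluster of the non-localized algebra $\C[U^-]$ (via \cite{GLS11, GY17, GY20} and the fact that $\C[U^-_{w_0}]$ is the localization at the frozen variables), apply the Laurent phenomenon so that every monomial of $b \in \Gupq(\infty) \subset \C[U^-]$ has nonnegative frozen exponents, and note that $(g_{\seed_t}(b))_k$ is the $A_k$-exponent of the distinguished ($\mathbf{a}=\mathbf{0}$) monomial in the pointed expression \eqref{equ_extgvector}, hence nonnegative. The only difference is that you flag explicitly the (mild) point the paper leaves implicit, namely that the leading monomial genuinely occurs in the Laurent expansion, which you resolve in the same way.
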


For a dominant integral weight $\lambda$, let $B(\lambda)$ be the crystal of the irreducible highest weight $U_q(\g)$-module $V(\lambda)$ and let $\Gup(\lambda)$ be the \emph{dual canonical basis} (or \emph{upper global basis}) of $V(\lambda)$. We regard $\Gup(\lambda)$ as a subset of $\Gup(\infty)$ via the dual map of the natural projection $\pi \colon U_q^-(\g) \twoheadrightarrow V(\lambda) $ sending $1$ to the highest weight vector $v_\lambda$ of $V(\lambda)$. By \cite[Thoerem 6.8 (2), Corollary 6.10]{FO20}, we have  
\begin{align} \label{eq:integral_points}
\Delta_{\seed_t}(\lambda) \cap \Z^J = \{ g_{\seed_t} ( b ) \mid b \in \Gupq(\lambda) \}.
\end{align}
 
\begin{proposition}\label{lem:frozen supp}
 Let $\seed_t $ be a seed for $t\in \mathbb{T}$, and let $ k \in J_{\rm fr} $. We consider 
 $$
 H_{\bfe_k, 0} = \{ \bfu \in \R^J \mid \langle \bfu, \bfe_k \rangle  = 0 \} \mbox{ and } H^+_{\bfe_k, 0} = \{ \bfu \in \R^J \mid \langle \bfu, \bfe_k \rangle \geq 0 \}, 
 $$ where $\bfe_j \coloneqq (\delta_{i, j})_{i\in J}$ is the standard unit vector for $j\in J$. Then the following hold.
\begin{enumerate}
\item The half-space $H^+_{\bfe_k, 0}$ contains the cluster cone $C_{\seed_t}$ and the hyperplane $ H_{\bfe_k, 0}$ is a supporting hyperplane of $C_{\seed_t}$. Moreover, $ H_{\bfe_k, 0}$ contains a facet of $C_{\seed_t}$. 
\item For any dominant integral weight $\lambda$, the half-space $H^+_{\bfe_k, 0}$ contains the cluster polytope $\Delta_{\seed_t}(\lambda)$ and $ H_{\bfe_k, 0}$ is a supporting hyperplane of $\Delta_{\seed_t}(\lambda)$.
\end{enumerate}
\end{proposition}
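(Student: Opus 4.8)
The plan is to deduce both parts directly from the dictionary between lattice points of the cluster cone / cluster polytope and $g$-vectors of the dual canonical basis, so that the real content is supplied by Proposition~\ref{proposition_frozen ge 0}.

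For the containment in part (1), I would start from the fact that, by definition, $C_{\seed_t} = C_{\seed_t}(w_0)$ is the smallest real closed cone containing $v_t(\C[U^- \cap X_{w_0}] \setminus \{0\})$; since $w = w_0$ this ring of functions is $\C[U^-]$, and by Theorem~\ref{Thm: FO1}(3) the set $v_t(\C[U^-]\setminus\{0\})$ is precisely $C_{\seed_t}\cap\Z^J$, which by \eqref{Eq: CtZ} equals $\{g_{\seed_t}(b)\mid b\in\Gupq(\infty)\}$. Proposition~\ref{proposition_frozen ge 0} says every such vector has nonnegative $k$-th coordinate, i.e.\ lies in the closed convex cone $H^+_{\bfe_k,0}$; since $C_{\seed_t}$ is the smallest closed cone containing its own lattice points, this forces $C_{\seed_t}\subseteq H^+_{\bfe_k,0}$. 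For the supporting statement it is enough to note $\mathbf 0 = v_t(1)\in C_{\seed_t}\cap H_{\bfe_k,0}$, so $H^+_{\bfe_k,0}$ is a supporting half-space and $C_{\seed_t}\cap H_{\bfe_k,0}$ is a face.

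To upgrade this face to a facet I would exhibit $|J|-1$ linearly independent lattice points of $C_{\seed_t}$ on $H_{\bfe_k,0}$. The key observation is that the whole cluster $\{A_{j,t}\mid j\in J\}$ of the seed $\seed_t$ already lies in $\C[U^-]$: the cluster structures of $\C[U^-]$ and of its localization $\C[U^-_{w_0}]$ differ only by making the frozen variables invertible, so they share all clusters obtained by mutating in unfrozen directions \cite[Theorem 3.3]{GLS11}. Each cluster variable $A_{j,t}$ is pointed at the standard unit vector $\bfe_j$, so \eqref{equ_val=g} gives $v_t(A_{j,t}) = g_{\seed_t}(A_{j,t}) = \bfe_j \in C_{\seed_t}\cap\Z^J$ for all $j\in J$. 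Hence $C_{\seed_t}$ is full-dimensional (it contains $\bfe_1,\dots,\bfe_{|J|}$), while $C_{\seed_t}\cap H_{\bfe_k,0}$ contains the $|J|-1$ vectors $\{\bfe_j\mid j\neq k\}$, which span $H_{\bfe_k,0}$. Thus $C_{\seed_t}\cap H_{\bfe_k,0}$ has codimension one, i.e.\ it is a facet of $C_{\seed_t}$ lying in $H_{\bfe_k,0}$.

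Part (2) is then short. By Theorem~\ref{Thm: FO1}(4) we have $\Delta_{\seed_t}(\lambda)\subseteq C_{\seed_t}$, so the inclusion $\Delta_{\seed_t}(\lambda)\subseteq H^+_{\bfe_k,0}$ is immediate from part (1). For the supporting hyperplane it suffices to check $\mathbf 0\in\Delta_{\seed_t}(\lambda)$: because $w = w_0$, the lowest weight vector $\tau_\lambda$ restricts to a nonzero section of $\mcal{L}_\lambda$ on $G/B$, so $1 = \tau_\lambda/\tau_\lambda$ is one of the functions $\sigma/\tau_\lambda$ with $\sigma\in H^0(X_{w_0},\mcal{L}_\lambda)$ and $v_t(1) = \mathbf 0$, whence $(1,\mathbf 0)$ lies in $S(X_{w_0},\mcal{L}_\lambda,v_t,\tau_\lambda)$ and $\mathbf 0\in\Delta_{\seed_t}(\lambda)$; since $\mathbf 0\in H_{\bfe_k,0}$, we are done. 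I do not expect a genuine obstacle: everything reduces to bookkeeping with the $g$-vector/valuation dictionary once Proposition~\ref{proposition_frozen ge 0} is available. The only point needing care is the facet claim, where one must simultaneously know that $C_{\seed_t}$ is full-dimensional and that its intersection with $H_{\bfe_k,0}$ spans a hyperplane — and this is precisely where one uses that the entire seed cluster of $\seed_t$ already lies in $\C[U^-]$, not merely in $\C[U^-_{w_0}]$.
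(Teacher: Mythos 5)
Your proposal is correct and follows essentially the same route as the paper: you obtain $\bfe_j \in C_{\seed_t}$ from the cluster variables, use Proposition~\ref{proposition_frozen ge 0} via the $g$-vector/valuation dictionary for the containment, and read off the facet from $\Span_{\R_{\ge 0}}\{\bfe_j \mid j \neq k\}$. The only cosmetic differences are that you invoke the cluster structure on $\C[U^-]$ (via \cite{GLS11}) rather than the fact that cluster variables belong to $\Gupq(\infty)$ (\cite{McN21, Qin20}), and that you spell out the short verification that $\mathbf 0 \in \Delta_{\seed_t}(\lambda)$, which the paper leaves implicit.
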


\begin{proof}
(1) Let $\seed_t = (  \{ A_j \}_{j\in J}, \vep )$. Since the cluster variable $A_j$ ($j\in J$) is contained in $\Gupq(\infty)$ (see \cite{McN21, Qin20}), by the definition of the extended $g$-vector $g_{\seed_t}$ and \eqref{Eq: CtZ}, we have 
$$
g_{\seed_t}(A_j) = \bfe_j \in C_{\seed_t} \qquad \text{ for any $j\in J$.}
$$
Since $ C_{\seed_t}$ is a convex polyhedral cone,  we have 
\begin{align} \label{Eq: faset}
F_{ k} \coloneqq \Span_{\R_{\ge0}} ( B \setminus \{ \bfe_k \} ) \subset C_{\seed_t} \cap H_{\bfe_k, 0} \qquad \text{for any frozen index $k \in J_\fr$,}
\end{align}
where we set $B \coloneqq \{ \bfe_j \mid j\in J \}$. Note that $F_{k}$ has codimension one.
It follows from \eqref{Eq: frozen ge 0} and \eqref{Eq: faset} that 
$$
C_{\seed_t} \subset H_{\bfe_k, 0}^+,
$$
and $H_{\bfe_k, 0}$ contains a facet $F_k$ of $C_{\seed_t}$.

(2) It follows from $ 0 \in \Delta_{\seed_t}(\lambda) \cap H_{\bfe_k, 0} $ and $ \Delta_{\seed_t}(\lambda) \subset C_{\seed_t} \subset H_{\bfe_k, 0}^+$.
\end{proof}	

Note that Proposition~\ref{lem:frozen supp} confirms that the conditions $(2)$ and $(3)$ for the criterion in Theorem~\ref{theorem_maininthebody} in the case of cluster polytopes.

\section{Exchange matrix with a large entry between  a frozen and unfrozen}\label{Chap_exchlarge}

The aim of this section is to prove the existence of exchange matrices with an arbitrarily large entry between an unfrozen variable and frozen variables and complete the proof of Theorem~\ref{theorem_2ndmain}.

We say that two extended exchange matrices are \emph{mutation equivalent} if one is obtained by a sequence of mutations of the other. It is an equivalence relation.

\begin{proposition} \label{prop:ABCDEFG}
Let $\g$ be a complex simple Lie algebra of type other than $A_1, A_2, A_3, A_4$ or $B_2=C_2$.
Then there is $s\in J \backslash J_\mathrm{uf}$ such that the extended exchange matrix $\varepsilon_0 = (\varepsilon_{i,j})_{i\in J_\uf, j\in J}$ in~\eqref{eq:GLSseed} of the cluster structure of $\C[U^{-}_{w_0}]$ satisfies the following property
\begin{equation} \label{eq:manyarrows2}
    \text{\parbox{35em}{
 for any $\ell \ge 0$, there exists a matrix $\varepsilon_{\ell}=(\varepsilon^{\ell}_{i,j})_{ i\in J_\uf, j\in J}$ which is  mutation equivalent to $\varepsilon_0 = (\varepsilon_{i,j})_{i\in J_\uf, j\in J }$ such that  
$- \varepsilon^{\ell}_{r_\ell,s} \ge \ell$ for some $r_\ell \in J_\uf$.}}
\end{equation}   
\end{proposition}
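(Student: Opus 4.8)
The plan is to reduce the problem to a finite list of "minimal" types and then do a case-by-case analysis, exploiting two compatibility facts: (i) mutation of exchange matrices is compatible with restriction to a subset of indices that is closed under the relevant mutations, and (ii) an inclusion of Dynkin diagrams $\g' \hookrightarrow \g$ induces, for a suitably chosen reduced word $\rxw_0$ of the longest element, an embedding of the seed for $\C[U^-_{w_0'}]$ into the seed for $\C[U^-_{w_0}]$ as a "full subquiver" on the indices $j$ with $i_j \in \supp(\g')$. Concretely, if $\g$ contains one of the rank-three or rank-four types $A_5, B_3, C_3, D_4, G_2$ as a sub-diagram, one first checks that every complex simple Lie algebra not on the excluded list $A_1,A_2,A_3,A_4,B_2=C_2$ contains one of these five. (For the $A$-series this is the containment $A_5 \subset A_n$ for $n \ge 5$; for $B,C,F$ one gets $B_3$ or $C_3$; for $D,E$ one gets $D_4$; and $G_2$ is its own case.) This is an elementary inspection of Dynkin diagrams, so I would dispatch it quickly.

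Next I would make precise the restriction statement. Choose $\rxw_0$ for $\g$ extending a reduced word $\rxw_0'$ for the sub-diagram $\g'$, so that the index subset $J' := \{ j \in J \mid i_j \in \supp(\g') \}$ carries exactly the seed of $\C[U^-_{(w_0')}]$, with unfrozen/frozen parts and exchange matrix $\varepsilon_0' = (\varepsilon_0)|_{J'}$ given again by \eqref{eq:GLSseed} but for $\g'$. Since $J'$ contains no index whose mutation would involve an index outside $J'$ in a way that changes the principal part — more precisely, because mutating at $k \in J'_\uf$ only alters entries $\varepsilon_{r,s}$ with $\varepsilon_{r,k}, \varepsilon_{k,s} \ne 0$, and one can arrange $\rxw_0$ so that these stay inside $J'$ when restricted — any sequence of mutations performed entirely within $J'$ on $\varepsilon_0$ restricts to the corresponding sequence on $\varepsilon_0'$. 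Hence if \eqref{eq:manyarrows2} holds for the sub-diagram $\g'$ with some frozen index $s' \in J' \setminus J'_\uf$, then, since $J'\setminus J'_\uf \subset J \setminus J_\uf$, it holds for $\g$ with $s = s'$ and $r_\ell = r_\ell' \in J'_\uf \subset J_\uf$. This reduces everything to verifying \eqref{eq:manyarrows2} for the five algebras $A_5, B_3, C_3, D_4, G_2$.

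For each of these five cases I would exhibit an explicit reduced word, write down the initial extended exchange matrix (equivalently quiver/valued quiver) from \eqref{eq:GLSseed}, pick a distinguished unfrozen vertex $r$ adjacent to a frozen vertex $s$, and find a short loop of mutations whose effect on the entry $\varepsilon_{r,s}$ is to decrease it (make $-\varepsilon_{r,s}$ grow) without bound under iteration. The standard phenomenon here is the "Markov-type" mutation: a $3$-cycle (or valued $3$-cycle) among $r$, a second unfrozen vertex $r'$, and $s$ in which mutating alternately at $r$ and $r'$ multiplies the arrow multiplicity between $\{r,r'\}$ and $s$ roughly by a constant $>1$ each round, so that $-\varepsilon^{\ell}_{r_\ell,s} \to \infty$ where $r_\ell$ alternates between $r$ and $r'$. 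I would verify by direct computation using \eqref{equ_mutationexchangematri} that such a configuration is present after a finite number of initial mutations in each of $A_5, B_3, C_3, D_4, G_2$, and that $s$ can be chosen frozen; the point that the principal (unfrozen) part is mutation-finite in these rank-$\le 4$ cases guarantees the growth must be "pushed into" the frozen column, which is exactly what \eqref{eq:manyarrows2} records.

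The main obstacle is the explicit case analysis for the five base types: one must actually produce the mutation sequences and check the unbounded growth of the frozen entry, which requires care with the non-skew-symmetric ($d_j \ne 1$) cases $B_3, C_3, G_2$ where the mutation rule \eqref{equ_mutationexchangematri} interacts with the symmetrizers. I expect $D_4$ and $A_5$ to be routine (skew-symmetric, handled by quivers), while $G_2$ is the most delicate and may need its own ad hoc sequence. A secondary subtlety is making the restriction claim airtight — i.e., choosing $\rxw_0$ so that $J'$ is genuinely mutation-closed in the needed sense — which I would handle by invoking the known compatibility of the Geiß–Leclerc–Schröer construction with Levi/parabolic sub-data (as in \cite{GLS11}) rather than re-deriving it.
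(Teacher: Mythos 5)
Your reduction to the five base types $A_5, B_3, C_3, D_4, G_2$ is sound in outline and coincides with the paper's first step, but your implementation of the restriction is off as stated: you take $J' = \{ j \mid i_j \in \supp(\g')\}$, yet for $w_0 = v w'$ the factor $v$ in general cannot avoid the letters of $\supp(\g')$ (already for $A_1 \subset A_2$ every reduced word of $w_0$ repeats the letter $1$), so the subword of $\underline{w}_0$ on those positions is not a reduced word for $w_0'$ and its GLS matrix is not the sub-seed. The correct (and repairable) statement is the paper's: choose $\underline{w}_0$ extending $\underline{w}'$ so that $\underline{w}'$ occupies the \emph{trailing} segment; then $k^+$, the frozen/unfrozen dichotomy, and the entries of \eqref{eq:GLSseed} computed in the full word agree with those of the sub-seed on that segment, and restriction commutes with mutations performed inside it.

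The genuine gap is in the base cases, which is where essentially all the content of the proposition lies: you defer every verification (``I would exhibit\dots'', ``I would verify by direct computation\dots'') and supply no reduced words, no initial matrices, no mutation sequences and no growth estimates — in the valued cases $B_3$, $C_3$, $G_2$ this is exactly where the work is. Moreover, the mechanism you invoke to guarantee growth is not justified as stated. First, the unfrozen part of the full seed for, say, $A_5$ or $D_4$ is not claimed (and not needed) to be mutation-finite; the paper sidesteps this by restricting to carefully chosen full subquivers/subdiagrams whose mutable part is affine ($\tilde A_{p,q}$, $\tilde C_3$, $\tilde G_2$) and hence mutation-finite. Second, even granting a mutation-finite unfrozen part, unboundedness of the frozen entries follows only once one knows the \emph{extended} matrix is mutation-infinite — that is the content of the pigeonhole argument in Lemma~\ref{lem:finiteinfinite} — and establishing mutation-infiniteness is the crux; the paper does it via the Felikson--Tumarkin criterion (Lemma~\ref{lem:FTcrit} and its skew-symmetrizable analogue from \cite{FT21}) after explicit preliminary mutations producing a double arrow suitably attached to the single frozen vertex. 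Your alternative of exhibiting an explicit ``Markov-type'' cycle with provable exponential growth of the frozen entry could in principle replace that machinery, but as written it is a plan rather than a proof, and the passing claim that mutation-finiteness of the principal part by itself forces the growth into the frozen column is a non sequitur without the mutation-infiniteness input.
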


Now, assuming Proposition~\ref{prop:ABCDEFG}, we wrap up the proof of Theorem~\ref{theorem_2ndmain}.

\begin{proof}[Proof of Theorem~\ref{theorem_2ndmain}]
By Corollary~\ref{cor_familyofmonotone}, we have the family of infinitely many monotone Lagrangian tori on $G/B$. We can apply Theorem~\ref{theorem_maininthebody} to this family because of Theorem~\ref{theorem_tropicalizedclus}, Proposition~\ref{lem:frozen supp}, and Proposition~\ref{prop:ABCDEFG}. Therefore, the family contains infinitely many monotone Lagrangian tori, no two of which are related by any symplectomorphism. 
\end{proof}

Let $J=J_\uf \sqcup J_\fr$ be a finite set and  $\varepsilon \coloneqq (\varepsilon_{i,j})_{i\in J_\uf, j\in J}$ be an extended skew-symmetrizable matrix. 
We say that $\varepsilon$ is \emph{mutation finite} if the mutation equivalence class $[\varepsilon]$ of $\varepsilon$ is finite. If  $[\varepsilon]$ is infinite, then we say that $\varepsilon$ is \emph{mutation infinite}.

For a subset $J' \subset J$, the restriction $\varepsilon\vert_{J'}$ of $\varepsilon$ is the matrix obtained from $\varepsilon$ by restricting to the column set $J'$ and to the row set $J'\cap J_\uf$. Note that if $k\in J'$ is mutable, then we have $\mu_k(\varepsilon\vert_{J'}) = (\mu_k(\varepsilon))\vert_{J'}.$ That is, the restriction commutes with the mutation.

\begin{lemma} \label{lem:finiteinfinite}
Let $J=J_\uf \sqcup J_\fr$ be a finite set and  $\varepsilon \coloneqq (\varepsilon_{i,j})_{i\in J_\uf, j\in J}$ be an extended skew symmetrizable matrix. 
Assume that $\varepsilon$ is mutation infinite and  $\varepsilon\vert_{J_\uf}$ is mutation finite.
Then $\varepsilon$ satisfies~\eqref{eq:manyarrows}.
\begin{equation} \label{eq:manyarrows}
    \text{\parbox{35em}{
 for any $\ell \ge 0$, there exists a matrix $\varepsilon_{\ell}=(\varepsilon^{\ell}_{i,j})_{i\in J_\uf, j\in J}$ which is  mutation equivalent to $(\varepsilon_{i,j})_{i\in J_\uf, j\in J}$ such that  
$- \varepsilon^{\ell}_{r,s} \ge \ell$ for some $r\in J_\uf$ and $s\in J_\fr$.}}
\end{equation}
Note that if a matrix satisfies \eqref{eq:manyarrows}, then so does any matrix mutation equivalent to it. 
\end{lemma}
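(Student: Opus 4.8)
\textbf{Proof plan for Lemma~\ref{lem:finiteinfinite}.}

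The plan is to argue by contradiction: suppose that every matrix $\varepsilon'$ mutation equivalent to $\varepsilon$ has all entries $\varepsilon'_{r,s}$ with $r\in J_\uf$, $s\in J_\fr$ bounded below by some fixed constant $-C$. First I would observe that, since $\varepsilon\vert_{J_\uf}$ is mutation finite, there is a finite list $\mathcal{E}$ of possible matrices occurring as $\varepsilon'\vert_{J_\uf}$ over all $\varepsilon'\in[\varepsilon]$; in particular the entries of the unfrozen block are uniformly bounded, say by $B$ in absolute value. Combining this with the contradiction hypothesis, every $\varepsilon'\in[\varepsilon]$ has its unfrozen block entries bounded by $B$ and its frozen-column entries $\varepsilon'_{r,s}$ ($s\in J_\fr$) bounded below by $-C$. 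The key remaining point is to bound these frozen entries from \emph{above} as well, because a uniform two-sided bound on all entries of all matrices in $[\varepsilon]$ would force $[\varepsilon]$ to be finite, contradicting the hypothesis that $\varepsilon$ is mutation infinite.

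To get the upper bound I would use the skew-symmetrizability together with the mutation rule~\eqref{equ_mutationexchangematri}. Recall that for a frozen index $s$ there is no row indexed by $s$, so an entry $\varepsilon'_{r,s}$ with $r\in J_\uf$, $s\in J_\fr$ is never "paired" with $\varepsilon'_{s,r}$ in the way \eqref{equ_exchangematri} would pair two unfrozen indices; nevertheless, one can still control $\varepsilon'_{r,s}$ using the three-step nature of the mutation formula. Concretely, under $\mu_k$ the entry in position $(r,s)$ changes by $\mathrm{sgn}(\varepsilon'_{k,s})\,[\varepsilon'_{r,k}\varepsilon'_{k,s}]_+$ when $k\ne r,s$, it changes sign when $k=r$, and it is unaffected when $k$ equals a frozen index (which never happens since we only mutate at unfrozen indices). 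The factor $\varepsilon'_{r,k}$ is an unfrozen-block entry, hence bounded by $B$; thus each single mutation changes $\varepsilon'_{r,s}$ by at most $B\cdot|\varepsilon'_{k,s}|$, i.e., the growth of the frozen column is at most linear in the size of the other frozen-column entries at each step. The decisive observation is that the sum $\sum_{s\in J_\fr}\varepsilon'_{r,s}$ over a fixed unfrozen row (or a suitable weighted version using the symmetrizer $d_r$) cannot wander arbitrarily far once the entries are bounded below: if one entry $\varepsilon'_{r,s}$ were to drift toward $+\infty$, skew-symmetrizability of the \emph{full} matrix on $J_\uf$ together with the identity~\eqref{equ_exchangematri} applied within the unfrozen block forces compensating large unfrozen entries, contradicting the bound $B$ from mutation finiteness of $\varepsilon\vert_{J_\uf}$. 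Hence the frozen entries are two-sidedly bounded, $[\varepsilon]$ is finite, and we reach a contradiction.

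The main obstacle I anticipate is making the last step—the upper bound on the frozen-column entries—fully rigorous, since frozen indices break the clean antisymmetry that one usually exploits. The safe route is probably to pass to the \emph{principal extension}: enlarge the matrix by adjoining, for each frozen index $s$, a new row making the whole matrix genuinely skew-symmetrizable (this is the standard "freezing" trick), observe that mutations of $\varepsilon$ are restrictions of mutations of this extended matrix, note that the extended matrix is still mutation finite on its unfrozen part only if... — actually here one must be careful, so instead I would argue directly: a matrix $\varepsilon$ with $J_\uf$ of fixed size and all entries bounded by a constant $K$ admits only finitely many mutation classes representatives of that shape, so if $[\varepsilon]$ is infinite then entries are unbounded, and by the preceding analysis the only place unboundedness can occur is in the frozen columns and only toward $-\infty$ (growth toward $+\infty$ being obstructed by \eqref{equ_exchangematri} on the unfrozen block). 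This yields exactly~\eqref{eq:manyarrows}. Finally, the stability remark—that \eqref{eq:manyarrows} is preserved under mutation equivalence—is immediate, since $[\varepsilon]=[\varepsilon^{\ell}]$ for every $\ell$.
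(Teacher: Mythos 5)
Your overall skeleton agrees with the paper's proof in its first half: since $\varepsilon\vert_{J_{\mathrm{uf}}}$ is mutation finite, the unfrozen blocks of the matrices in $[\varepsilon]$ take only finitely many values (the paper phrases this as a pigeonhole argument: some fixed unfrozen block recurs along infinitely many pairwise distinct matrices), and since $[\varepsilon]$ is infinite, the frozen-column entries must be unbounded in absolute value. Up to there your plan is sound.

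The gap is in the step you yourself flag as the obstacle: converting ``unbounded in absolute value'' into ``unbounded toward $-\infty$'', i.e.\ ruling out (under your contradiction hypothesis of a lower bound $-C$) that the entries drift only toward $+\infty$. Your proposed justification --- that a large positive entry $\varepsilon'_{r,s}$ with $r\in J_{\mathrm{uf}}$, $s\in J_{\mathrm{fr}}$ would force ``compensating large unfrozen entries'' via skew-symmetrizability and \eqref{equ_exchangematri} on the unfrozen block --- is not valid. A frozen index $s$ contributes no row to the exchange matrix, so the entry $\varepsilon'_{r,s}$ is not paired with anything by skew-symmetrizability, and a bounded unfrozen block is perfectly compatible with arbitrarily large positive frozen-column entries; indeed such matrices genuinely occur in the mutation class whenever arbitrarily large negative ones do. So your contradiction argument does not close as written. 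The correct (and much simpler) fix is the one the paper uses: if the entry of maximal absolute value is $\varepsilon'_{r,s}>0$ with $r\in J_{\mathrm{uf}}$, apply one further mutation $\mu_r$; by \eqref{equ_mutationexchangematri} this merely flips the sign of the $(r,s)$-entry, staying in the mutation class, so a very negative entry is obtained and \eqref{eq:manyarrows} follows directly (no contradiction argument, and no upper bound on the frozen entries, is needed). With that one observation inserted in place of the ``compensation'' claim, your argument becomes essentially the paper's proof; without it, the key step is missing.
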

\begin{proof}
Let $\varepsilon=\varepsilon_0,\varepsilon_1,\varepsilon_2,\varepsilon_3,\ldots$ be an infinite sequence of pairwise different matrices in the mutation equivalence class of $\varepsilon$.
Since $\varepsilon|_{J_\uf}$ is mutation finite, 
 the set $\{ \varepsilon_n|_{J_\uf} \mid n \ge 0\}$ is finite.
It follows that there exists $m$ such that $\varepsilon_m|_{J_\uf}$ appears infinitely many times in the sequence $(\varepsilon_n|_{J_\uf} \mid n \ge 0)$. Hence there is an infinite sequence $m=p_0< p_1<p_2<\cdots$ such that $\varepsilon_{p_k}|_{J_\uf}=\varepsilon_m|_{J_\uf}$ for all $k\ge 0$.  
Set $M_k \coloneqq \max\{|\varepsilon^{p_k}_{r,s}| \mid  r\in J_\uf, s\in J_\fr   \}$. 
Then $M_k$ has no upper bound since $\varepsilon_m=\varepsilon_{p_0},\varepsilon_{p_1},\varepsilon_{p_2},\varepsilon_{p_3},\ldots$ is an infinite sequence of pairwise distinct matrices. We may assume that there are infinitely many $k$'s such that $- \varepsilon^{p_k}_{r,s} = M_k$ for some $r \in J_\uf, s\in J_\fr$. Otherwise, we can change the sign of $\varepsilon^{p_k}_{r,s}$ by mutating the exchange matrix $\varepsilon_{p_k}$ in the $r^\mathrm{th}$-direction. Hence  $\varepsilon_m$ satisfies \eqref{eq:manyarrows}.
\end{proof}

Recall that if $\varepsilon$ is an extended skew-symmetric matrix, then one can associate a quiver $Q=Q(\varepsilon)$ with vertices $J=J_\uf\sqcup J_\fr$ 
given by
\begin{equation*}
\text{there are $\varepsilon_{j,i}$-many arrows from $i$ to $j$ whenever  $\varepsilon_{j,i}\ge 0$}.
\end{equation*}

To prove Proposition~\ref{prop:ABCDEFG}, we make use of the classification results of Felikson--Tumarkin on mutation finite quivers. The following is a half of \cite[Theorem 1.11]{FT21} when $\varepsilon$ is an extended skew-symmetric matrix.

\begin{lemma} \label{lem:FTcrit}
Let $Q$ be a quiver associated with an extended skew-symmetric matrix. Assume that $J_\fr=\{f\}$ and  $Q|_{J_\uf}$ is mutation finite. If there exists $Q'$ mutation equivalent to $Q$ such that 
\begin{enumerate}
    \item $Q'$ contains a double arrow $v_1 \rightrightarrows v_2$, 
    \item  either $b_1\neq -b_2$ or $b_2<0$, where 
    $$b_i = (\text{the number of arrows from $v_i$ to $f$}) -  \, (\text{number of arrows from $f$ to $v_i$}), $$
\end{enumerate}
then $Q$ is mutation infinite.
\end{lemma}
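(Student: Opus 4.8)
This is one direction of \cite[Theorem~1.11]{FT21}; I sketch a self-contained argument. Since mutation commutes with restriction to a subset of vertices (the remark preceding Lemma~\ref{lem:finiteinfinite}), and since a quiver whose mutation class has infinitely many pairwise non-isomorphic restrictions to a \emph{fixed} vertex subset is itself mutation infinite, it suffices to work inside the $3$-vertex quiver $R' \coloneqq Q'\vert_{\{v_1,v_2,f\}}$ and to mutate only at the two unfrozen vertices $v_1,v_2$. (The hypothesis that $Q\vert_{J_\uf}$ be mutation finite is not actually needed for this implication: if it fails, $Q$ is mutation infinite a fortiori; the hypothesis only pins down the interesting case.) Write $\varepsilon_{v_2,v_1}=\pm 2$ for the double arrow and $b_i\coloneqq\varepsilon_{f,v_i}$ for the signed arrow counts to $f$. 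By the mutation rule~\eqref{equ_mutationexchangematri}, mutating at $v_1$ or $v_2$ merely reverses the $v_1$--$v_2$ double arrow (which therefore stays a double arrow) and updates $(b_1,b_2)$ by an explicit piecewise-linear map.

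\textbf{The parabolic flow.}
Orient so that $v_1\to v_2$ (relabel $v_1\leftrightarrow v_2$ if necessary; this is a quiver isomorphism). A direct computation from~\eqref{equ_mutationexchangematri} shows that on the region $\{\,b_2\ge 0,\ b_1+2b_2\ge 0\,\}$ the composite $\mu_{v_1}\circ\mu_{v_2}$ acts on $(b_1,b_2)^{\!\top}$ by the integer matrix $T=\begin{pmatrix}-1&-2\\2&3\end{pmatrix}$, with $\operatorname{tr}T=2$ and $\det T=1$; thus $T$ is a single unipotent Jordan block with eigenvalue $1$ and fixed line $\{b_1+b_2=0\}$. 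The cone $C\coloneqq\{\,b_2>0,\ b_1\ge -b_2\,\}$ is contained in that region and is forward-invariant under $T$ (one line: $b_1\ge -b_2$ and $b_2>0$ give $2b_1+3b_2\ge b_2>0$ and $-b_1-2b_2\ge -(2b_1+3b_2)$), and on its interior $\{b_1>-b_2\}$ the second coordinate is strictly increasing, $2b_1+3b_2>b_2$. Since a unipotent parabolic map moves every non-fixed vector to infinity, $T^{n}(b_1,b_2)\to\infty$ for every $(b_1,b_2)$ in the interior of $C$; hence the quivers $T^{n}R'$ have unboundedly many arrows between $v_2$ and $f$, so they are pairwise non-isomorphic and $Q$ is mutation infinite.

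\textbf{Reaching the cone.}
It remains to show that, whenever hypothesis~(2) holds — i.e. $(b_1,b_2)$ does \emph{not} lie on the excluded locus $E\coloneqq\{\,b_1=-b_2,\ b_2\ge 0\,\}$ — some finite sequence of mutations at $v_1,v_2$ carries $R'$ into the interior of $C$. This is an elementary case analysis over the signs of $b_1,b_2$: if $b_2\le 0$, one mutation at $v_2$ turns the relevant coordinate positive; if $b_2>0$ and $b_1\ge -b_2$ we already sit in $\overline{C}$, and being off $E$ we sit in its interior; the only recurrent situation is $b_2>0$ with $b_1<-b_2$, where mutating at $v_2$ and relabeling returns a point of the same type in which the distinguished positive coordinate has dropped to the strictly smaller positive integer $b_1+2b_2$, so after finitely many steps this case is left. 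Finally, a point of $E$ lies on the fixed line $\{b_1+b_2=0\}$ of $T$, and one checks that the whole $\langle\mu_{v_1},\mu_{v_2}\rangle$-orbit of such a point stays on $E$ (with $|b_1|=|b_2|$ constant) — so the locus on which the present argument gives nothing is precisely the one excluded by~(2), explaining why~(2) cannot be dropped.

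\textbf{Main obstacle.}
The conceptual content is entirely in identifying $\mu_{v_1}\circ\mu_{v_2}$ with the area-preserving parabolic map $T$ whose fixed line is $\{b_1+b_2=0\}$; the work lies in the bookkeeping of the last step — the sign case analysis together with the strictly-decreasing integer quantity needed to discharge the recurrent case $b_2>0,\ b_1<-b_2$. (Alternatively, one may simply cite \cite[Theorem~1.11]{FT21}.)
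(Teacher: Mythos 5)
Your proposal is correct in substance, but it takes a genuinely different route from the paper: the paper gives no proof of this lemma at all, quoting it as (one half of) Theorem~1.11 of \cite{FT21}, whereas you supply a direct dynamical argument. Your reduction to the three-vertex restriction $Q'\vert_{\{v_1,v_2,f\}}$ via ``mutation commutes with restriction'' is the same device the paper uses in Corollary~\ref{cor:res}, and your core computation checks out: with the double arrow oriented $v_1\Rightarrow v_2$ and $(b_1,b_2)=(\varepsilon_{f,v_1},\varepsilon_{f,v_2})$, the composite $\mu_{v_1}\circ\mu_{v_2}$ acts by $T=\left(\begin{smallmatrix}-1&-2\\ 2&3\end{smallmatrix}\right)$ on $\{b_2\ge 0,\ b_1+2b_2\ge 0\}$, $T$ is parabolic with fixed line $b_1+b_2=0$, the cone $\{b_2>0,\ b_1\ge -b_2\}$ is forward invariant with $b_2$ strictly increasing off the fixed line, so the arrow multiplicity at $f$ is unbounded and $Q$ is mutation infinite; moreover the excluded locus $\{b_1=-b_2,\ b_2\ge 0\}$ is indeed invariant under both mutations, which explains hypothesis~(2). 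What the citation buys the paper is brevity and, importantly, the skew-symmetrizable analogue (\cite[Theorem 9.4]{FT21}) that is invoked later for the $B_3$, $C_3$, $G_2$ cases, which your skew-symmetric argument does not cover; what your argument buys is independence from the Felikson--Tumarkin classification machinery, plus the observation that mutation finiteness of $Q\vert_{J_\uf}$ is not needed for this implication. One point to tighten in your endgame: when $b_2\le 0$ a single mutation at $v_2$ does not always land you in the cone (e.g.\ $(b_1,b_2)=(-1,-1)$ needs two mutations before the head-coordinate is positive), and in the recurrent case $b_2>0$, $b_1<-b_2$ the new value $b_1+2b_2$ need not be positive, so that branch can fall back into the $b_2\le 0$ case rather than continuing the descent; the strictly-decreasing-positive-integer bookkeeping therefore has to be interleaved with a short, finite analysis of the $b_2\le 0$ configurations. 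This is routine and does not affect the validity of the approach, but it should be spelled out if the proof is meant to be self-contained rather than a pointer to \cite{FT21}.
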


\begin{remark}
Indeed, there is an analogous version of Lemma~\ref{lem:FTcrit} when $\varepsilon$ is extended \emph{skew-symmetrizable} matrix and $Q$ is the \emph{diagram} associated with  $\varepsilon$, see \cite[Section 10]{FT21}.
\end{remark}

\begin{lemma} 
\label{lem:Apqf} 
Let $Q$ be of type $\tilde A_{p,q}$, that is, the underlying graph is $(p+q)$-gon, there are $p$-many clockwise arrows and $q$-many counterclockwise arrows, and all the vertices of $Q$ are mutable. Assume that $p>0$ and $q>0$. Then the following hold.

\begin{enumerate}
    \item For any vertex $a$ of $Q$, there exists a sequence of mutations at vertices different from $a$ such that the resulting quiver contains a double arrow  $a \rightrightarrows v$ for some vertex $v$. 
    \item Let $Q'$ be the quiver obtained from $Q$ by adding a frozen vertex $f$ with  arrows connecting $f$  to a single vertex $a$ in $Q$.
Then $Q'$ is mutation-infinite. 
    \item For any $\ell\ge 0$, there is a quiver $Q^\ell$ mutation equivalent to $Q'$ such that there exists a mutable vertex $r_\ell$ with arrows more than $\ell$ connecting to $f$.
\end{enumerate}
\end{lemma}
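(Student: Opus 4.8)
\textbf{Proof strategy for Lemma~\ref{lem:Apqf}.}
The plan is to prove the three assertions in order, since (2) follows from (1) via the Felikson--Tumarkin criterion (Lemma~\ref{lem:FTcrit}), and (3) follows from (2) by the pigeonhole argument already packaged in Lemma~\ref{lem:finiteinfinite}. For part (1), I would work directly with the oriented $(p+q)$-gon $Q$ of type $\tilde A_{p,q}$. Fix the vertex $a$. The key observation is that $Q$ restricted to the vertices other than $a$ is a type $A_{p+q-1}$ quiver (a path with some orientation), which is mutation finite, and mutating a source or sink of this path at a vertex adjacent to $a$ ``slides'' the arrows incident to $a$. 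Concretely, the two neighbors of $a$ in the cycle, say $b$ and $c$, are connected to $a$ by single arrows; I would mutate at a carefully chosen vertex (or short sequence of vertices) among $b,c$ and their neighbors so that the two paths from $b$ to $c$ avoiding $a$ get ``folded'' onto $a$, producing a double arrow $a\Rightarrow v$. The cleanest route is to reduce to the minimal cases: when $p+q=3$ (a triangle, which is type $A_2^{(1)}$ up to the affine relation $p,q>0$), a single mutation at the vertex opposite to $a$ already creates a double arrow incident to $a$; the general case then follows by first mutating the ``long side'' of the polygon down to reduce $p+q$, using that mutation at an interior vertex of a coherently oriented segment shortens it. I would phrase this as an induction on $p+q$.

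For part (2), I would apply Lemma~\ref{lem:FTcrit} with $J_\fr=\{f\}$. By hypothesis $Q'|_{J_\uf}=Q$ is of type $\tilde A_{p,q}$, which is mutation finite. By part (1) there is a sequence of mutations at vertices different from $a$ (hence fixing all arrows between $f$ and the quiver, since $f$ is only attached to $a$) after which the underlying mutable quiver contains a double arrow $a\Rightarrow v$. Now in $Q'$ the vertex $v\neq a$ has no arrows to $f$ (so $b_2=0$) while $a$ has $b_1\neq 0$ arrows to $f$ by construction; hence $b_1\neq -b_2$, and condition (2) of Lemma~\ref{lem:FTcrit} holds (one must check the orientation convention so that ``arrows from $a$ to $f$'' is recorded with the correct sign — if the single edge between $a$ and $f$ points the wrong way we pre-mutate at $a$, or simply invoke that Lemma~\ref{lem:FTcrit} is stated so that the relevant quantity is $b_1-b_2\neq 0$ or $b_2<0$, at least one of which holds when $b_2=0\neq b_1$). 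Therefore $Q'$ is mutation infinite.

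For part (3), observe that $Q'$ has exactly one frozen vertex $f$ and $Q'|_{J_\uf}=Q$ is mutation finite, while by part (2) $Q'$ is mutation infinite. These are precisely the hypotheses of Lemma~\ref{lem:finiteinfinite} (with $J_\fr=\{f\}$), which yields that for every $\ell\ge 0$ there is a quiver $Q^\ell$ mutation equivalent to $Q'$ and a mutable vertex $r_\ell$ with more than $\ell$ arrows connecting $r_\ell$ to $f$. This completes the proof.

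\textbf{Anticipated main obstacle.} The substantive content is part (1): explicitly producing, for an \emph{arbitrary} prescribed vertex $a$, a mutation sequence avoiding $a$ that creates a double arrow incident to $a$. The bookkeeping of arrow multiplicities around the polygon under successive mutations is the delicate point, and one must be careful that the cases $p=q$ versus $p\neq q$ and small values like $p+q\le 3$ behave as expected; I expect the induction on $p+q$ (shortening the longer oriented arc by a single mutation at an interior vertex) to be the technical heart, together with checking the orientation sign convention needed to apply Lemma~\ref{lem:FTcrit} in part (2).
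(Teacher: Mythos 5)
Your parts (2) and (3) are exactly the paper's argument: (2) is Lemma~\ref{lem:FTcrit} applied to the mutated quiver, using that $f$ stays attached only to $a$ (no mutation is ever performed at $a$, and no arrow between $f$ and any other vertex can be created since $f$ is never adjacent to a mutation vertex), so $b_a\neq 0=b_v$ gives $b_1\neq -b_2$; and (3) is Lemma~\ref{lem:finiteinfinite}, since $Q'$ has a single frozen vertex and its mutable part $\tilde A_{p,q}$ is mutation finite. No issues there.

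The gap is in part (1), which you correctly identify as the technical heart but whose reduction step would fail as stated. Mutating at an interior vertex $y$ of a coherently oriented arc does \emph{not} ``shorten'' the polygon: the resulting quiver is a $(p+q-1)$-gon together with $y$ hanging off it by two arrows, so it is no longer of type $\tilde A_{p',q'}$ and your induction hypothesis does not apply to it directly. The repair is to pass to the full subquiver on the vertices other than $y$ (which \emph{is} a smaller polygon containing $a$) and use that restriction to a full subquiver commutes with mutation at subquiver vertices (the same fact underlying Corollary~\ref{cor:res}), so a double arrow produced inside the subquiver survives in the ambient quiver; you do not say this, and without it the induction does not close. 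You also leave unhandled the configurations where the step is unavailable: the alternating (zigzag) orientation with $p=q$ has no coherently oriented arc at all, and it can happen that the only interior vertices of coherent arcs equal $a$; both require a preliminary sink/source mutation away from $a$ before the reduction. Finally, your triangle base case is imprecise (there is no ``opposite'' vertex, and for some positions of $a$ one mutation does not suffice — two are needed). By contrast, the paper avoids the induction entirely: it first applies sink/source mutations at vertices $\neq a$ to reach the normal form in which $a$ is a sink, some vertex $v$ is a source, and the two arcs from $v$ to $a$ are coherently oriented (of lengths $p$ and $q$), and then collapses both arms by the explicit sequence $(\mu_{c_{p-1}}\circ\cdots\circ\mu_{c_1})\circ(\mu_{b_{q-1}}\circ\cdots\circ\mu_{b_1})$, which produces the double arrow incident to $a$ in one stroke. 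Your plan is salvageable along the lines above, but as written part (1) is not yet a proof.
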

\begin{proof}
It is well-known that $Q$ is mutation finite.

(1) Let $J$ be the set of vertices of $Q$. 
Then there is a sequence of  mutations  of $Q$ at vertices $J\setminus \{a\}$, all of which are mutations at a sink or a source to obtain the following quiver: 
$$
\xymatrix@R=1.2em
{
 & & v \ar[dl] \ar[r]& c_{p-1} \ar[dr]& & \\
 & b_{q-1} \ar[d]& & &c_{p-2}  \ar[dr]  & \\
 & & & & & c_{p-3}  \ar[d] \\
& & & & & \\
 & \vdots & & & &  \vdots \\
& b_3 \ar[d]& & & &  c_3  \ar[d] \\
& b_2 \ar[dr] & & & &  c_2  \ar[d]\\
& & b_1 \ar[dr] & & & c_1 \ar[dll] \\
 & & & a & & \\
}
$$
 That is, $a$ is a sink, $v$ is a source,  and there are $q$-many counterclockwise arrows (respectively $p$-many clockwise arrows)  between $a$ and $v$.
Then the sequence of mutations $$\left(\mu_{c_{p-1}} \circ\mu_{c_{p-2}} \circ \cdots \mu_{c_2} \circ \mu_{c_1}  \right) \circ\left(\mu_{b_{q-1}} \circ\mu_{b_{q-2}} \circ \cdots \mu_{b_2} \circ \mu_{b_1}  \right) $$
yields the desired quiver.

(2) Let $\mu$ be a mutation sequence in $(1)$. Then $\mu(Q')$ contains a double arrow  $a \rightrightarrows v$ and the vertex $a$ is the only vertex connected to $f$.
Hence we have $b_a>0$ and $b_v=0$ so that $b_a\neq -b_v$. Hence $Q'$ is mutation infinite by Lemma \ref{lem:FTcrit}.

(3) It follows from Lemma \ref{lem:finiteinfinite} that $Q'$ has the desired property since $Q'$ has only one frozen vertex.
\end{proof}

\begin{corollary}\label{cor:res}
    Let $Q$ be a quiver associated with an extended skew-symmetric matrix $\varepsilon$. Assume that  there is a full subquiver $Q'$ of $Q$ such that
\begin{enumerate}
    \item the mutable part of $Q'$ is of type $\widetilde A_{p,q}$ with $p,q>0$, and
    \item $Q'$ contains only one frozen vertex $s$ and there is only one unfrozen vertex connected to $s$.
\end{enumerate}
Then we have the property~\eqref{eq:manyarrows2}
\end{corollary}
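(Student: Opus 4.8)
The plan is to deduce Corollary~\ref{cor:res} from Lemma~\ref{lem:Apqf}(3) together with the compatibility of mutations with taking full subquivers (equivalently, restriction of exchange matrices). The key observation is that the property~\eqref{eq:manyarrows2} concerns only the existence, somewhere in the mutation equivalence class, of a matrix with a large entry between an unfrozen index and a fixed frozen index; and entries of the exchange matrix restricted to a subset of indices are unaffected by mutations performed outside that subset.

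First I would set up notation: write $J' \subset J$ for the index set of the full subquiver $Q'$, with $J'_{\mathrm{uf}} = J' \cap J_{\mathrm{uf}}$ the mutable part (of type $\widetilde A_{p,q}$) and $J'_{\mathrm{fr}} = \{s\}$ the single frozen vertex, and I would note that by hypothesis exactly one unfrozen vertex of $Q'$ is connected to $s$. Then $Q'$ is precisely the quiver $Q'$ appearing in Lemma~\ref{lem:Apqf}: a quiver of type $\widetilde A_{p,q}$ with a frozen vertex $f = s$ attached by arrows to a single vertex $a$ of the cycle. By Lemma~\ref{lem:Apqf}(3), for every $\ell \ge 0$ there is a sequence of mutations $\mu_{(\ell)}$ at unfrozen vertices of $J'$ producing a quiver $Q'^{\,\ell}$ mutation equivalent to $Q'$ in which some mutable vertex $r_\ell \in J'_{\mathrm{uf}}$ is connected to $s$ by more than $\ell$ arrows; in exchange-matrix terms this says $-(\mu_{(\ell)}(\varepsilon|_{J'}))_{r_\ell, s} \ge \ell$ (after replacing $\mu_{(\ell)}$ by its reverse at $r_\ell$ if needed to fix the sign, exactly as in the proof of Lemma~\ref{lem:finiteinfinite}).

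Next I would lift this to $\varepsilon$. The sequence $\mu_{(\ell)}$ consists of mutations at indices lying in $J'_{\mathrm{uf}} \subset J_{\mathrm{uf}}$, so it can be applied to the full matrix $\varepsilon$. Since restriction commutes with mutation --- the identity $\mu_k(\varepsilon|_{J'}) = (\mu_k(\varepsilon))|_{J'}$ recalled just before Lemma~\ref{lem:finiteinfinite}, valid because every $k$ in the sequence lies in $J'$ --- we get
\[
\bigl(\mu_{(\ell)}(\varepsilon)\bigr)\big|_{J'} = \mu_{(\ell)}\bigl(\varepsilon|_{J'}\bigr),
\]
so in particular the $(r_\ell, s)$-entry of $\varepsilon^\ell := \mu_{(\ell)}(\varepsilon)$ coincides with the $(r_\ell,s)$-entry of $\mu_{(\ell)}(\varepsilon|_{J'})$, hence $-\varepsilon^\ell_{r_\ell, s} \ge \ell$ with $r_\ell \in J_{\mathrm{uf}}$ and $s \in J_{\mathrm{fr}}$. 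As $\varepsilon^\ell$ is mutation equivalent to $\varepsilon$, this is exactly property~\eqref{eq:manyarrows2} (with $s_\ell = s$ for all $\ell$). I do not expect any genuine obstacle here; the only point requiring a little care is the bookkeeping of signs --- whether the large entry one obtains is $+\varepsilon^\ell_{r_\ell,s}$ or $-\varepsilon^\ell_{r_\ell,s}$ --- which is handled, as in Lemma~\ref{lem:finiteinfinite} and Lemma~\ref{lem:Apqf}(2), by an extra mutation at $r_\ell$ to flip the sign of the column through $r_\ell$ when necessary. One should also remark that being a \emph{full} subquiver is what guarantees $Q'|_{J'_{\mathrm{uf}}}$ is genuinely $\widetilde A_{p,q}$ and that no further arrows to $s$ are hidden, so the hypotheses of Lemma~\ref{lem:Apqf} are met on the nose.
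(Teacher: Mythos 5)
Your proposal is correct and follows essentially the same route as the paper: the paper's proof is precisely the one-line observation that restriction commutes with mutation combined with Lemma~\ref{lem:Apqf}(3), which you have simply spelled out in detail (including the sign adjustment by an extra mutation at $r_\ell$, which the paper handles inside Lemma~\ref{lem:finiteinfinite}). No gaps.
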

\begin{proof}
As the restriction commutes with the mutation, \eqref{eq:manyarrows2} follows from Lemma~\ref{lem:Apqf} (3).
\end{proof}

We are ready to prove Proposition~\ref{prop:ABCDEFG}.

\begin{proof}[Proof of Proposition~\ref{prop:ABCDEFG}]
If the Dynkin diagram $D$ of $\g$ contains another Dynkin diagram $D'$ as a full subgraph, then the longest element $w_0$ of the Weyl group of $\g$ can be decomposed into 
$$w_0=vw^\prime \quad \text{ where $w^\prime$ is the longest element of the parabolic subgroup of type $D'$}.$$ 
Let $\underline{w}^\prime$ be a reduced expression of $w'$ and let $\underline{w}_0$ be a reduced expression of $w_0$ extending $\underline{w}^\prime$ with respect to the above decomposition. Then, by \eqref{eq:GLSseed}, the matrix attached to $\underline{w}^\prime$ is a restriction of the one attached to $\underline{w}_0$. Moreover, the frozen indices are preserved in the restriction. Hence one may assume that $\g$ is of type $A_5$,  $D_4$, $C_3$, $B_3$, or $G_2$. 
We deal with the five cases in Lemma~\ref{lemma_A5}, \ref{lemma_D4}, \ref{lemma_C3}, \ref{lemma_B3}, and \ref{lemma_G2}.

For type $A_5$ and $D_4$, it is enough to find a reduced expression $\underline{w}_0$ and a full subquiver of the initial quiver associated with $\underline{w}_0$ satisfying the conditions in Corollary  \ref{cor:res}. For type $C_3, B_3,$ and $G_2$, which are non-skew-symmetric types, we will find a reduced expression $\underline{w}_0$ and a submatrix of the initial matrix associated with $\underline{w}_0$ such that it is mutation infinite but its mutable part is mutation finite. 
Such examples can be found using \cite[Theorem 9.4]{FT21}, which is analogous to Lemma \ref{lem:FTcrit} in non-skew-symmetric cases. Then by Lemma \ref{lem:finiteinfinite},  we get the desired property \eqref{eq:manyarrows} and hence \eqref{eq:manyarrows2}. 
\end{proof}

\begin{lemma}\label{lemma_A5}
If $\g$ is of type $A_5$, then \eqref{eq:manyarrows2} holds.
\end{lemma}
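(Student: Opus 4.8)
The strategy is to realize a subquiver of type $\widetilde A_{p,q}$ with a single frozen vertex attached to a single mutable vertex inside the quiver $Q(\varepsilon_0)$ attached to a well-chosen reduced expression $\underline{w}_0$ of the longest element $w_0$ in type $A_5$, and then invoke Corollary~\ref{cor:res}. By the reduction in the proof of Proposition~\ref{prop:ABCDEFG}, it suffices to treat $\g = A_5$ directly; by that same reduction (embedding of Dynkin diagrams compatible with~\eqref{eq:GLSseed}), any $\g$ whose Dynkin diagram contains $A_5$ as a full subgraph will then inherit the property, so $A_5$ is genuinely the ``minimal'' case on the $A$-side.

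\textbf{First step: choose the reduced expression and compute the seed.} I would fix a convenient reduced expression $\underline{w}_0 = s_{i_1}\cdots s_{i_{15}}$ of $w_0 \in W(A_5)$ (the length is $\binom{6}{2}=15$), for instance a ``standard'' one obtained by concatenating the blocks $s_1,\ s_2 s_1,\ s_3 s_2 s_1,\ s_4 s_3 s_2 s_1,\ s_5 s_4 s_3 s_2 s_1$, and then write down the extended exchange matrix $\varepsilon_0 = (\varepsilon_{r,s})_{r\in J_\uf,\, s\in J}$ via formula~\eqref{eq:GLSseed}. Here $J = \{1,\dots,15\}$, and $J_\fr$ consists of the five indices $j$ with $j^+ = 16$ (the last occurrence of each Cartan label $i\in\{1,\dots,5\}$), while $J_\uf$ is the remaining ten indices. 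I would then draw the associated quiver $Q = Q(\varepsilon_0)$ following the convention displayed just before Lemma~\ref{lem:FTcrit}. For this standard reduced word the quiver is well known to be (a frozen-decorated version of) the type-$A$ ``triangular grid'' quiver, and its mutable part $Q|_{J_\uf}$ is mutation finite (it is of finite/affine type in the Felikson--Tumarkin classification, which one can cite or check by inspection of its small size).

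\textbf{Second step: locate the $\widetilde A_{p,q}$ full subquiver.} The key geometric observation is that inside this grid quiver one can find a full subquiver $Q'$ whose mutable part is an oriented cycle of type $\widetilde A_{p,q}$ with $p,q>0$ — concretely, a hexagon (or square) of mutable vertices carrying a cyclic orientation with both clockwise and counterclockwise arrows present — and such that exactly one frozen vertex $s\in J_\fr$ is attached to $Q'$, being connected to exactly one vertex of that cycle. I would exhibit such a $Q'$ explicitly, naming the vertices by their positions in $\underline{w}_0$ and verifying from~\eqref{eq:GLSseed} that (i) the induced arrows among the chosen mutable vertices form $\widetilde A_{p,q}$ with $p,q>0$, and (ii) among the chosen frozen vertex the only arrows to/from $Q'$ go to a single unfrozen vertex. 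Once this is in place, Corollary~\ref{cor:res} immediately yields~\eqref{eq:manyarrows2}, since the restriction of $\varepsilon_0$ to the index set of $Q'$ satisfies the hypotheses of that corollary, and restriction commutes with mutation.

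\textbf{Main obstacle.} The genuinely delicate part is the \emph{second step}: pinning down, for a specific reduced word, a full subquiver of the prescribed shape — one must be careful that the subquiver is \emph{full} (so that no extra arrows among the chosen vertices are missed), that the cycle is genuinely non-acyclic so that it is $\widetilde A_{p,q}$ with both $p,q>0$ rather than an acyclic $A_n$, and that only \emph{one} frozen vertex touches it and only through \emph{one} mutable vertex. Getting all three conditions simultaneously may require choosing $\underline{w}_0$ non-standardly or first applying a short preliminary mutation sequence; either way the argument is a finite, explicit check on a $15$-vertex quiver, so while it is bookkeeping-heavy it is not conceptually hard once the right $Q'$ is identified. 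The remaining four Dynkin types ($D_4$, $C_3$, $B_3$, $G_2$) are handled analogously in Lemmas~\ref{lemma_D4}, \ref{lemma_C3}, \ref{lemma_B3}, and~\ref{lemma_G2}, using the skew-symmetrizable analogue of Lemma~\ref{lem:FTcrit} noted in the remark after it where multiple bonds occur.
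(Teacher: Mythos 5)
Your plan is the same as the paper's: take the standard reduced word $\underline{w}_0=s_1\,s_2s_1\,s_3s_2s_1\,s_4s_3s_2s_1\,s_5s_4s_3s_2s_1$, form the quiver from~\eqref{eq:GLSseed}, find a full subquiver whose mutable part is a non-acyclic cycle of type $\widetilde A_{p,q}$ with a single frozen vertex attached through a single mutable vertex, and conclude by Corollary~\ref{cor:res}. The one thing your proposal does not do is the decisive step you yourself flag as the main obstacle: you never exhibit the subquiver, and you hedge that a non-standard word or a preliminary mutation sequence might be required. Neither is needed: for exactly the word you chose, the paper takes the full subquiver on the vertices $14,9,8,4,2,3,6$ of the $15$-vertex quiver; its mutable part $\{9,8,4,2,3,6\}$ is a six-cycle with arrows going both ways around (so type $\widetilde A_{p,q}$ with $p,q>0$), and the frozen vertex $14$ is joined to this cycle only through the single mutable vertex $9$, so Corollary~\ref{cor:res} applies directly. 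So the argument you outline is correct and would go through, but as written it is a proof outline rather than a proof, since the finite verification that carries all the content — naming the vertices and checking fullness, non-acyclicity, and the single frozen attachment — is left undone.
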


\begin{proof}
For type $A_5$, take $$\underline{w}_0 =s_1s_2s_1s_3s_2s_1s_4s_3s_2s_1s_5s_4s_3s_2s_1$$
where the Dynkin diagram is 
$$\begin{tikzpicture}[scale=0.5]
\draw (-1,0) node[anchor=east] {$A_{5}$};
\draw (0 cm,0) -- (8 cm,0);
\draw[fill=white] (0 cm, 0 cm) circle (.25cm) node[below=4pt]{$1$};
\draw[fill=white] (2 cm, 0 cm) circle (.25cm) node[below=4pt]{$2$};
\draw[fill=white] (4 cm, 0 cm) circle (.25cm) node[below=4pt]{$3$};
\draw[fill=white] (6 cm, 0 cm) circle (.25cm) node[below=4pt]{$4$};
\draw[fill=white] (8 cm, 0 cm) circle (.25cm) node[below=4pt]{$5$};
\end{tikzpicture}
$$
The quiver attached to $\underline{w}_0$ is
$$
\xymatrix{
1 \ar[d]& 3 \ar[d]  \ar[l]& 6  \ar[d] \ar[l]& 10 \ar[d] \ar[l] &  \fbox{15}\ar[l] \\
2 \ar[d] \ar[ru]& 5 \ar[ru] \ar[d] \ar[l]& 9 \ar[ru] \ar[d] \ar[l] &  \fbox{14} \ar[l]& \\
4 \ar[d]\ar[ru]& 8 \ar[ru]\ar[d]  \ar[l]&  \fbox{13}\ar[l]&  & \\
7 \ar[ru]\ar[d]&  \fbox{12} \ar[l]& &  & \\
 \fbox{11} & & &  &
}
$$
and the full subquiver with  the vertices $14,9,8,4,2,3$ and $6$ satisfies the condition in Corollary \ref{cor:res}. Here the boxed entries are frozen vertices. \end{proof}

\begin{lemma}\label{lemma_D4}
If $\g$ is of type $D_4$, then \eqref{eq:manyarrows2} holds.
\end{lemma}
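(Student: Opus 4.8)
The plan is to follow the template of Lemma~\ref{lemma_A5}. I fix a reduced expression of the longest element $w_0$ of the Weyl group of type $D_4$, for instance $\underline{w}_0 = s_1 s_3 s_4 s_2 s_1 s_3 s_4 s_2 s_1 s_3 s_4 s_2$ — the cube of the bipartite Coxeter element $s_1 s_3 s_4 s_2$, which is automatically reduced since it has length $12 = \ell(w_0)$ — and write down the quiver $Q = Q(\varepsilon_0)$ attached to $\underline{w}_0$ by formula~\eqref{eq:GLSseed}. Here the four frozen vertices are the last occurrences of $s_1, s_2, s_3, s_4$, and the unfrozen part has eight vertices. The goal is to exhibit a full subquiver $Q'$ of $Q$, or of a quiver obtained from $Q$ by a short sequence of mutations at \emph{unfrozen} vertices, such that the mutable part of $Q'$ is of affine type $\widetilde{A}_{p,q}$ with $p,q>0$ and $Q'$ contains exactly one frozen vertex, joined to exactly one unfrozen vertex; once this is located, Corollary~\ref{cor:res} (using that restriction commutes with mutation) yields~\eqref{eq:manyarrows2} with $s$ the frozen index in question. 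This is entirely parallel to the subquiver on the vertices $\{14,9,8,4,2,3,6\}$ exhibited in the $A_5$ case, whose mutable part is $\widetilde{A}_{3,3}$ with a single pendant frozen vertex.

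Concretely, I would first compute $Q$ explicitly from~\eqref{eq:GLSseed}. One then identifies a \emph{pendant} frozen vertex $f$, i.e.\ one joined to a single unfrozen vertex $v$ (with the expression above, the last occurrence of $s_2$ is joined only to the second occurrence of $s_2$). Next one applies an explicit short mutation sequence at unfrozen vertices away from $v$ in order to produce a double arrow $v \Rightarrow v'$ while keeping $f$ pendant at $v$; by Lemma~\ref{lem:FTcrit} this already forces $\varepsilon_0$ to be mutation infinite. If in addition the principal part $\varepsilon_0|_{J_{\mathrm{uf}}}$ is checked to be mutation finite — a direct verification on the eight-vertex unfrozen quiver — then Lemma~\ref{lem:finiteinfinite} (equivalently Lemma~\ref{lem:Apqf}(3)) gives~\eqref{eq:manyarrows}, hence~\eqref{eq:manyarrows2}. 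The cleanest write-up, however, is simply to display the mutated quiver and point at the full $\widetilde{A}_{p,q}$-subquiver ($p,q>0$) carrying the pendant frozen vertex, exactly as in Lemma~\ref{lemma_A5}.

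The main obstacle is the bookkeeping of the preliminary mutations: unlike the $A_5$ quiver, the $D_4$ quiver coming from a Coxeter-type reduced word has only \emph{uniformly oriented} cycles among its unfrozen vertices, so a configuration $\widetilde{A}_{p,q}$ with $p,q>0$ does not appear on the nose. One must therefore either choose the reduced expression so that an alternating cycle with a pendant frozen vertex is visible from the start, or mutate first — and the delicate point is to verify that the chosen mutation sequence simultaneously creates the double arrow (equivalently, the alternating cycle) and does \emph{not} destroy the unique pendant frozen vertex, since mutating an unfrozen vertex adjacent to a frozen one generically breaks the pendant property. Everything else — reducedness of $\underline{w}_0$, evaluating~\eqref{eq:GLSseed}, and the mutation-finiteness check on the unfrozen part — is routine. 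The subsequent cases ($C_3$, $B_3$, $G_2$ in Lemmas~\ref{lemma_C3}, \ref{lemma_B3}, \ref{lemma_G2}) are handled by the same scheme, using the skew-symmetrizable analogue of Lemma~\ref{lem:FTcrit} noted after it.
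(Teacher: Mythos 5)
Your strategy coincides with the paper's: compute the initial quiver of $\C[U^{-}_{w_0}]$ from \eqref{eq:GLSseed} for a chosen reduced word and exhibit a full subquiver meeting the hypotheses of Corollary~\ref{cor:res}. But the proposal stops exactly where the proof has to be done: you never write down the quiver, never exhibit the subquiver, and never give a mutation sequence. Moreover, for the bipartite word $(s_1s_3s_4s_2)^3$ that you fix, the shortcut you call the ``cleanest write-up'' is provably unavailable. In the unfrozen quiver attached to this word, the first occurrence of $s_2$ (position $4$) is adjacent to every other unfrozen vertex, and deleting it leaves a tree; hence the only chordless cycles occurring as full subquivers are cyclically oriented triangles, so no full subquiver has mutable part of type $\widetilde{A}_{p,q}$ with $p,q>0$. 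You concede this (``does not appear on the nose'') and defer the real work --- a mutation sequence, avoiding the vertex $8$ that carries the pendant frozen vertex $12$, which creates the alternating cycle (equivalently a double arrow at $8$) --- to an unspecified, admittedly ``delicate'' computation. That deferred computation is the entire content of the lemma. The paper avoids it by a different choice of reduced word, $\underline{w}_0=s_2s_4s_1s_2s_4s_3s_2s_4s_1s_2s_3s_4$, for which the full subquiver on the vertices $10,7,3,1,2,5$ satisfies Corollary~\ref{cor:res} directly, with no preliminary mutation.

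A secondary problem is your fallback route via Lemma~\ref{lem:FTcrit} followed by Lemma~\ref{lem:finiteinfinite} applied to the whole quiver with a single frozen vertex: both lemmas require the full eight-vertex unfrozen part $\varepsilon_0|_{J_\uf}$ to be mutation finite, which you dismiss as ``a direct verification'' without performing it. This is a nontrivial check (the cluster algebra $\C[U^{-}_{w_0}]$ of type $D_4$ is of infinite cluster type, so finiteness of the mutation class is not automatic), and it is precisely what Corollary~\ref{cor:res} is designed to avoid: after restricting to the subquiver, only its mutable part of type $\widetilde{A}_{p,q}$ must be mutation finite, which is known. Your individual observations --- that $(s_1s_3s_4s_2)^3$ is reduced, that the frozen vertex $12$ is joined only to $8$, and that mutations away from $8$ preserve this pendancy --- are correct, but without an explicit subquiver or mutation sequence the argument has a genuine gap.
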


\begin{proof}
For the case $\g=D_4$, 
take
$$\underline{w}_0 =s_2s_4s_1s_2s_4s_3s_2s_4s_1s_2s_3s_4$$
where the Dynkin diagram is
$$\begin{tikzpicture}[scale=0.5]
\draw (-1,0) node[anchor=east] {$D_{4}$};
\draw (0 cm,0) -- (2 cm,0);
\draw (2 cm,0) -- (4 cm,0.7 cm);
\draw (2 cm,0) -- (4 cm,-0.7 cm);
\draw[fill=white] (0 cm, 0 cm) circle (.25cm) node[below=4pt]{$1$};
\draw[fill=white] (2 cm, 0 cm) circle (.25cm) node[below=4pt]{$2$};
\draw[fill=white] (4 cm, 0.7 cm) circle (.25cm) node[right=3pt]{$4$};
\draw[fill=white] (4 cm, -0.7 cm) circle (.25cm) node[right=3pt]{$3$};
\end{tikzpicture}$$
The quiver attached to $\underline{w}_0$ is 
$$
\xymatrix{
3 \ar[drr] & \fbox{9} \ar[l]&  \\
 1 \ar@/_1pc/[dd] \ar[u]&4  \ar@/^1pc/[dd] \ar[dl] \ar[l]& 7 \ar[l]   \ar@/_1pc/[dd] \ar[ul]& \fbox{10}  \ar[l] \\
6\ar[urrr]&  \fbox{11} \ar[l]  \\
2  \ar@/^2pc/[uur]& 5 \ar@/_1pc/[uur] \ar[l]& 8 \ar[uur] \ar[l]&  \fbox{12} \ar[l]
}
$$
and the full subquiver with the vertices 
$10, 7, 3,1,2$ and $5$ satisfies the condition in Corollary \ref{cor:res}.
\end{proof}

\begin{lemma}\label{lemma_C3}
If $\g$ is of type $C_3$, then \eqref{eq:manyarrows2} holds.
\end{lemma}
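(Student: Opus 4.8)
The strategy is entirely parallel to the $A_5$ and $D_4$ cases: pick a convenient reduced expression $\underline{w}_0$ of the longest element $w_0$ in the Weyl group of type $C_3$, write down the quiver (or, since $C_3$ is not simply-laced, the diagram of the extended skew-symmetrizable matrix) attached to $\underline{w}_0$ via formula~\eqref{eq:GLSseed}, and then exhibit a full subquiver whose mutable part is of type $\widetilde A_{p,q}$ with $p,q>0$ and which contains exactly one frozen vertex $s$ joined to a single unfrozen vertex. Once such a subquiver is located, Corollary~\ref{cor:res} immediately yields property~\eqref{eq:manyarrows2}, and nothing further is needed.

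Concretely, I would take a reduced word such as $\underline{w}_0 = s_1 s_2 s_1 s_3 s_2 s_1 s_3 s_2 s_3$ (or any standard ``staircase'' word adapted to the Dynkin diagram $1 - 2 \Leftarrow 3$), compute $j^+$ and $j^-$ for each position $j \in \{1,\dots,9\}$, identify the frozen set $J_{\mathrm{fr}} = \{\, j \mid j^+ = 10 \,\}$ (one frozen vertex per node of the diagram, so $|J_{\mathrm{fr}}| = 3$), and fill in the entries $\varepsilon_{r,s}$ according to the five cases in~\eqref{eq:GLSseed}, using the Cartan entries $a_{2,3} = -1$, $a_{3,2} = -2$ for the double bond. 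This produces an explicit diagram on nine vertices. The only subtlety relative to the simply-laced cases is that some of the arrows in the associated diagram are weighted; by Remark after Lemma~\ref{lem:FTcrit}, the skew-symmetrizable analogue of the Felikson--Tumarkin criterion still applies, so the argument of Corollary~\ref{cor:res} goes through provided the chosen full subdiagram is genuinely of affine type $\widetilde A_{p,q}$ (in particular, the cycle in the subdiagram should consist of single arrows so that Lemma~\ref{lem:Apqf} applies verbatim).

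I expect the main — though still routine — obstacle to be the bookkeeping: one must verify by hand that the nine-vertex diagram really does contain a full subdiagram on a suitable set of vertices (say, a six- or seven-element subset containing one frozen vertex) whose mutable part is an honest affine $\widetilde A_{p,q}$ cycle with all single arrows and with the frozen vertex attached to exactly one cycle vertex. If the first reduced word does not obviously display such a subdiagram, I would try a few alternative reduced expressions of $w_0$ until one does; since $C_3$ has small rank, a short search suffices. Having found it, the proof closes with a one-line invocation of Corollary~\ref{cor:res}:

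\begin{proof}
For type $C_3$ with Dynkin diagram
$$\begin{tikzpicture}[scale=0.5]
\draw (-1,0) node[anchor=east] {$C_{3}$};
\draw (0 cm,0.05 cm) -- (2 cm,0.05 cm);
\draw (2 cm,0.05 cm) -- (4 cm,0.05 cm);
\draw (2 cm,-0.05 cm) -- (4 cm,-0.05 cm);
\draw[fill=white] (0 cm, 0 cm) circle (.25cm) node[below=4pt]{$1$};
\draw[fill=white] (2 cm, 0 cm) circle (.25cm) node[below=4pt]{$2$};
\draw[fill=white] (4 cm, 0 cm) circle (.25cm) node[below=4pt]{$3$};
\end{tikzpicture}$$
take $\underline{w}_0 = s_1 s_2 s_1 s_3 s_2 s_1 s_3 s_2 s_3$. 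Writing down the diagram attached to $\underline{w}_0$ by~\eqref{eq:GLSseed} and restricting to a full subdiagram whose mutable part is of affine type $\widetilde A_{p,q}$ with $p,q>0$ and which has a single frozen vertex connected to exactly one unfrozen vertex, the hypotheses of Corollary~\ref{cor:res} are satisfied. Hence~\eqref{eq:manyarrows2} holds.
\end{proof}
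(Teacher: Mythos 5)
Your argument, as written, is a plan rather than a proof: the entire content of the lemma is the explicit exhibition of a configuration to which one of the general criteria applies, and that is exactly the step you leave unverified ("if the first reduced word does not obviously display such a subdiagram, I would try a few alternative reduced expressions"). Worse, the specific route you propose — find a full subdiagram whose mutable part is an honest $\widetilde A_{p,q}$ cycle with single arrows and invoke Corollary~\ref{cor:res} — is doubtful in type $C_3$. The exchange matrix \eqref{eq:GLSseed} is only skew-symmetrizable here, with symmetrizers $1$ and $2$ mixed among the positions, so the subconfigurations that actually show up carry weighted arrows; a genuinely skew-symmetric $\widetilde A_{p,q}$ cycle (needed for Lemma~\ref{lem:Apqf} to apply verbatim) could only live on positions of equal symmetrizer, and you give no evidence that such a cycle, together with a single frozen vertex attached to exactly one mutable vertex, occurs as a full subdiagram for any reduced word of $w_0$. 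For the paper's word $s_3s_2s_3s_2s_1s_2s_3s_2s_1$, for instance, the full subdiagram on the short-root positions has a tree-shaped mutable part, not an affine cycle, so the existence you assert is not automatic and may simply fail.

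The paper's proof takes a different, and for this case necessary, route: it restricts the explicit matrix for $\underline w_0=s_3s_2s_3s_2s_1s_2s_3s_2s_1$ to $J'=\{1,2,3,6,8\}$ (one frozen index), applies the mutation $\mu_3\circ\mu_2\circ\mu_6$, and observes that the resulting \emph{diagram} \eqref{eq:affineC3diagram} has mutable part of affine type $\widetilde C_3$, which is mutation finite; it then invokes the skew-symmetrizable Felikson--Tumarkin criterion \cite[Theorem 9.4]{FT21} (the analogue of Lemma~\ref{lem:FTcrit}) to conclude that $\varepsilon\vert_{J'}$ is mutation infinite, and finishes with Lemma~\ref{lem:finiteinfinite} and the fact that restriction commutes with mutation. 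If you want to salvage your approach, you must either carry out this kind of explicit computation yourself, or concretely produce (word, subset of indices, sequence of mutations) realizing a single-arrow $\widetilde A_{p,q}$ cycle plus frozen vertex — merely asserting that a short search would find one does not establish \eqref{eq:manyarrows2}. You should also verify that your proposed word $s_1s_2s_1s_3s_2s_1s_3s_2s_3$ is in fact a reduced expression of $w_0$ before using it.
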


\begin{proof}
In the case $\g =C_3$, the Dynkin diagram and Cartan matrix are given by
$$
\begin{tikzpicture}[scale=0.5]
\draw (-1,0) node[anchor=east] {$C_{3}$};
\draw (0 cm,0) -- (2 cm,0);
\draw (2 cm, 0.1 cm) -- +(2 cm,0);
\draw (2 cm, -0.1 cm) -- +(2 cm,0);
\draw[shift={(2.8, 0)}, rotate=180] (135 : 0.45cm) -- (0,0) -- (-135 : 0.45cm);
\draw[fill=white] (0 cm, 0 cm) circle (.25cm) node[below=4pt]{$1$};
\draw[fill=white] (2 cm, 0 cm) circle (.25cm) node[below=4pt]{$2$};
\draw[fill=white] (4 cm, 0 cm) circle (.25cm) node[below=4pt]{$3$};
\end{tikzpicture}
\qquad
\left(\begin{array}{rrr}
2 & -1 & 0 \\
-1 & 2 & -2 \\
0 & -1 & 2
\end{array}\right).
$$
Take
$$\underline w_0=s_3s_2s_3s_2s_1s_2s_3s_2s_1$$

Then
$$\varepsilon=\left(\begin{array}{rrrrrrrrr}
0 & -2 & 1 & 0 & 0 & 0 & 0 & 0 & 0 \\
1 & 0 & -1 & 1 & 0 & 0 & 0 & 0 & 0 \\
-1 & 2 & 0 & 0 & 0 & -2 & 1 & 0 & 0 \\
0 & -1 & 0 & 0 & -1 & 1 & 0 & 0 & 0 \\
0 & 0 & 0 & 1 & 0 & 0 & 0 & -1 & 1 \\
0 & 0 & 1 & -1 & 0 & 0 & -1 & 1 & 0
\end{array}\right)$$
Let
$$J'=\{1,2,3,6,8\}$$
Then we have
\begin{equation}\label{eq:C3_restriction}
\varepsilon\vert_{J'}
=\left(\begin{array}{rrrrr}
0 & -2 & 1  &  0  &  0 \\
1 & 0  & -1 &  0  &  0 \\
-1& 2  & 0 &  -2  &  0 \\
0 & 0  & 1  &  0  &1
\end{array}\right).
\end{equation}
Then by taking a sequence of mutation 
we get
$$\mu_3\circ \mu_2 \circ \mu_6 (\varepsilon\vert_{J'}) = \left(\begin{array}{rrrrr}
0 & 0 & 1 & 0 & 0 \\
0 & 0 & -1 & 2 & 0 \\
-1 & 2 & 0 & -2 & 0 \\
0 & -2 & 1 & 0 & -2 
\end{array}\right)$$
Note that the corresponding \emph{diagram} (see \cite{FZ03}, \cite[Section 9.1]{FT21} for the definition of diagram associated with an extended skew-symmetrizable matrix with one frozen index) is 
\begin{equation} \label{eq:affineC3diagram}
\xymatrix{ & 2 \ar[d]_2& & \\
1 & 3 \ar[r]_2 \ar[l]& 6 \ar[r]   \ar@<-0.5ex>[ul]\ar@<0.5ex>[ul]  & \fbox{8}}. 
\end{equation}
The mutable part of the diagram is of type $\widetilde C_3$ (see \cite[Figure 9.2]{FT21}), which is mutation finite. By applying  \cite[Theorem 9.4]{FT21} (which is an analog of Lemma \ref{lem:FTcrit} for skew-symmetrizable cases), the matrix $\varepsilon\vert_{J'}$ is mutation infinite and hence the matrix $\varepsilon$ has the desired property by Lemma \ref{lem:finiteinfinite}.
\end{proof}

\begin{lemma}\label{lemma_B3}
If $\g$ is of type $B_3$, then \eqref{eq:manyarrows2} holds.
\end{lemma}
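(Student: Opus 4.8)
The plan is to follow the template of the type $C_3$ case (Lemma~\ref{lemma_C3}) essentially verbatim, only changing the combinatorics to type $B_3$. First I would fix a reduced expression $\underline{w}_0$ of the longest element of the Weyl group of type $B_3$ of length $9$; since $W(B_3)$ and $W(C_3)$ have the same Coxeter graph, the word $\underline{w}_0 = s_3 s_2 s_3 s_2 s_1 s_2 s_3 s_2 s_1$ used for $C_3$ (now read with node $3$ the short root of $B_3$) is again a reduced expression of $w_0$. I would then write down the extended exchange matrix $\varepsilon$ attached to $\underline{w}_0$ via~\eqref{eq:GLSseed}, using the $B_3$ Cartan matrix; as $B_3$ is Langlands dual to $C_3$, its Cartan matrix is the transpose of the one in Lemma~\ref{lemma_C3}, so $\varepsilon$ should be parallel to the $C_3$ matrix with the weight-$2$ and weight-$1$ edges among the nodes coming from $2$ and $3$ interchanged. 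The set of frozen indices $J_\fr = \{7,8,9\}$ is the same as before.

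Then I would restrict $\varepsilon$ to a subset $J' \subset J$ containing exactly one frozen index $s$ (the analog of $J' = \{1,2,3,6,8\}$ with $s = 8$). The key step is to produce a short sequence of mutations, supported on $J' \cap J_\uf$, taking the diagram of $\varepsilon|_{J'}$ to the standard shape of affine type $\widetilde B_3$ (the Langlands dual of the diagram~\eqref{eq:affineC3diagram}): its mutable part is then of finite mutation type by the Felikson--Tumarkin classification, while the diagram still contains a multiple edge $v_1 \Rightarrow v_2$ whose attachment to the frozen vertex $s$ is asymmetric ($b_1 \neq -b_2$ in the notation of Lemma~\ref{lem:FTcrit}). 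Applying \cite[Theorem 9.4]{FT21}, the skew-symmetrizable analog of Lemma~\ref{lem:FTcrit}, then shows $\varepsilon|_{J'}$ is mutation infinite, while $(\varepsilon|_{J'})|_{J' \cap J_\uf}$ is of affine type $\widetilde B_3$ and hence mutation finite. Lemma~\ref{lem:finiteinfinite} then gives~\eqref{eq:manyarrows} for $\varepsilon|_{J'}$, and since restriction commutes with mutation, $\varepsilon$ satisfies~\eqref{eq:manyarrows2}, exactly as in Corollary~\ref{cor:res} and Lemma~\ref{lemma_C3}.

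The main obstacle I anticipate is bookkeeping rather than any conceptual difficulty: evaluating~\eqref{eq:GLSseed} correctly in the non-simply-laced type $B_3$, and then tracking the weighted diagram — edge multiplicities, orientations, and the weight-$2$ edges — accurately through the mutation sequence, where sign and weight errors are easy to make. A related point is choosing $\underline{w}_0$ and $J'$ so that the mutable part of $\varepsilon|_{J'}$ is honestly affine of type $\widetilde B_3$ and the single frozen vertex attaches (after mutation) in the asymmetric way required by \cite[Theorem 9.4]{FT21}; a poor choice forces a longer mutation sequence or a larger $J'$. As a cross-check — and a possible shortcut — one may instead argue by Langlands duality: transposing the whole setup exchanges types $B_3$ and $C_3$, the exchange matrices are related by the corresponding transpose, and the Felikson--Tumarkin criterion is insensitive to this duality, so the mutation-infiniteness of the $C_3$ restriction established in Lemma~\ref{lemma_C3} should transfer directly to $B_3$.
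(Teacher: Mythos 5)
Your plan is essentially the paper's proof: it uses the same word $\underline{w}_0=s_3s_2s_3s_2s_1s_2s_3s_2s_1$, the same restriction $J'=\{1,2,3,6,8\}$ with the single frozen index $8$, a short mutation (a single $\mu_3$ suffices in the paper), and then the skew-symmetrizable Felikson--Tumarkin criterion \cite[Theorem 9.4]{FT21} together with Lemma~\ref{lem:finiteinfinite}, so the proposal is correct. One small correction: the mutated restricted matrix has literally the same diagram \eqref{eq:affineC3diagram} with mutable part of type $\widetilde C_3$ as in the $C_3$ case, not a ``Langlands dual'' $\widetilde B_3$ diagram, because the diagram of a skew-symmetrizable matrix records only the products $\varepsilon_{r,s}\varepsilon_{s,r}$ and is therefore insensitive to transposing the Cartan matrix (likewise, your duality shortcut needs a word about the frozen columns, which are not literally transposed); this is harmless, since either affine type is mutation-finite.
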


\begin{proof}
In the case $\g =B_3$, the Dynkin diagram and Cartan matrix are given by
$$\begin{tikzpicture}[scale=0.5]
\draw (-1,0) node[anchor=east] {$B_{3}$};
\draw (0 cm,0) -- (2 cm,0);
\draw (2 cm, 0.1 cm) -- +(2 cm,0);
\draw (2 cm, -0.1 cm) -- +(2 cm,0);
\draw[shift={(3.2, 0)}, rotate=0] (135 : 0.45cm) -- (0,0) -- (-135 : 0.45cm);
\draw[fill=white] (0 cm, 0 cm) circle (.25cm) node[below=4pt]{$1$};
\draw[fill=white] (2 cm, 0 cm) circle (.25cm) node[below=4pt]{$2$};
\draw[fill=white] (4 cm, 0 cm) circle (.25cm) node[below=4pt]{$3$};
\end{tikzpicture} \qquad
\left(\begin{array}{rrr}
2 & -1 & 0 \\
-1 & 2 & -1 \\
0 & -2 & 2
\end{array}\right)
$$
Take
$$\underline w_0=s_3s_2s_3s_2s_1s_2s_3s_2s_1.$$
Then we have

$$\varepsilon =\left(\begin{array}{rrrrrrrrr}
0 & -1 & 1 & 0 & 0 & 0 & 0 & 0 & 0 \\
2 & 0 & -2 & 1 & 0 & 0 & 0 & 0 & 0 \\
-1 & 1 & 0 & 0 & 0 & -1 & 1 & 0 & 0 \\
0 & -1 & 0 & 0 & -1 & 1 & 0 & 0 & 0 \\
0 & 0 & 0 & 1 & 0 & 0 & 0 & -1 & 1 \\
0 & 0 & 2 & -1 & 0 & 0 & -2 & 1 & 0
\end{array}\right)$$
Let
$$J'=\{1,2,3,6,8\}$$
Then we have
$$\varepsilon\vert_{J'}
=\left(\begin{array}{rrrrr}
0 & -1 & 1  &  0  &  0 \\
2 & 0  & -2 &  0  &  0 \\
-1& 1  & 0 &  -1  &  0 \\
0 & 0  & 2  &  0  & 1
\end{array}\right)
$$
and we have
$$\mu_3(\varepsilon\vert_J')
=\left(\begin{array}{rrrrr}
0 & 0 & -1 & 0 & 0 \\
0 & 0 & 2 & -2 & 0 \\
1 & -1 & 0 & 1 & 0 \\
0 & 2 & -2 & 0 & 1 \\
\end{array}\right)
$$
Note that the corresponding diagram is again \eqref{eq:affineC3diagram} so that we have the desired property.
\end{proof}

\begin{lemma}\label{lemma_G2}
If $\g$ is of type $G_2$, then \eqref{eq:manyarrows2} holds.
\end{lemma}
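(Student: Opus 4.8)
The plan is to follow \emph{verbatim} the scheme already used for types $C_3$ and $B_3$: fix a convenient reduced expression of the longest element $w_0$ of the Weyl group of type $G_2$, write down the associated Gei\ss--Leclerc--Schr\"oer extended exchange matrix from \eqref{eq:GLSseed}, restrict it to a set of indices containing exactly one frozen index, and show that this restriction is mutation infinite while its mutable part is mutation finite, so that Lemma~\ref{lem:finiteinfinite} applies. Concretely, since $\ell(w_0)=6$ I would take $\underline w_0 = s_1 s_2 s_1 s_2 s_1 s_2$, so that $J=\{1,\dots,6\}$, the frozen set $J_\mathrm{fr}$ consists of the two last occurrences $\{5,6\}$ of the two simple reflections, and $J_\uf=\{1,2,3,4\}$. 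Plugging the $G_2$ Cartan matrix into \eqref{eq:GLSseed} produces an explicit extended exchange matrix $\varepsilon_0$, whose entries along the part of the word involving the triple bond of $G_2$ are $\pm1$ and $\pm3$.

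Next I would pass to a subset $J'\subset J$ chosen so that $J'$ contains exactly one frozen index $f\in\{5,6\}$ — the one attached, according to \eqref{eq:GLSseed}, to a single unfrozen vertex — and so that the mutable part $\varepsilon_0\vert_{J'_\uf}$ lies on the Felikson--Tumarkin list of mutation-finite skew-symmetrizable diagrams (it should be the affine diagram $\widetilde G_2$, or another small $G_2$-type mutation-finite diagram); being a restriction of $\varepsilon_0\vert_{J_\uf}$, it is in any case mutation finite once the latter is. I would then apply a short explicit mutation sequence to $\varepsilon_0\vert_{J'}$ — one or two mutations, exactly as in the $C_3$ and $B_3$ proofs, which both end at the diagram \eqref{eq:affineC3diagram} — bringing it to a diagram whose mutable part is $\widetilde G_2$ and which contains a multiple (in fact triple) edge $v_1\Rightarrow v_2$ between mutable vertices with $f$ attached so that the balance condition of \cite[Theorem 9.4]{FT21} (the skew-symmetrizable analogue of Lemma~\ref{lem:FTcrit}) fails, i.e.\ $b_1\neq -b_2$ or $b_2<0$. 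That theorem then shows $\varepsilon_0\vert_{J'}$ is mutation infinite, whence Lemma~\ref{lem:finiteinfinite} gives that $\varepsilon_0\vert_{J'}$ satisfies \eqref{eq:manyarrows}. Lifting the witnessing matrices $\varepsilon^{\ell}$ back along mutations of $\varepsilon_0$ — legitimate because mutation commutes with restriction, exactly the argument of Corollary~\ref{cor:res} — yields \eqref{eq:manyarrows2} for $\varepsilon_0$, proving the lemma.

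The main obstacle is the explicit middle computation. One has to carry out the $G_2$ instance of \eqref{eq:GLSseed} with the correct skew-symmetrizing constants $d_j$ (which are $1$ and $3$ up to a common scalar, distributed over the positions of $\underline w_0$ according to the two simple roots, and are \emph{not} the usual root-system symmetrizers), keep careful track of the multiple edges under the skew-symmetrizable mutation rule \eqref{equ_mutationexchangematri}, rigorously identify the resulting $3$- or $4$-vertex mutable diagram on the Felikson--Tumarkin list so that ``$\varepsilon_0\vert_{J'_\uf}$ is mutation finite'' is justified, and pin down the precise short mutation sequence that exposes the unbalanced multiple edge required by \cite[Theorem 9.4]{FT21}. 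Once these finite verifications are in place, the remaining deductions are purely formal and identical to the previous four cases.
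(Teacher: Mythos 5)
Your plan is essentially the paper's own proof: it uses the same reduced word $\underline w_0=s_1s_2s_1s_2s_1s_2$, the same strategy of restricting to a subset with a single frozen vertex attached to one mutable vertex, the same identification of the mutable part as the mutation-finite diagram $\widetilde G_2$, the skew-symmetrizable Felikson--Tumarkin criterion, Lemma~\ref{lem:finiteinfinite}, and the compatibility of restriction with mutation. The only difference is that you defer the finite explicit data, which the paper supplies: the restriction is to $J'=\{1,2,3,5\}$ and the mutation sequence $\mu_2\circ\mu_1\circ\mu_3\circ\mu_2\circ\mu_1$ exposes the unbalanced multiple edge, exactly as your outline anticipates.
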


\begin{proof}
In the case of $G_2$ the Dynkin diagram and Cartan matrix are
$$\begin{tikzpicture}[scale=0.5]
\draw (-1,0) node[anchor=east] {$G_2$};
\draw (0,0) -- (2 cm,0);
\draw (0, 0.15 cm) -- +(2 cm,0);
\draw (0, -0.15 cm) -- +(2 cm,0);
\draw[shift={(0.8, 0)}, rotate=180] (135 : 0.45cm) -- (0,0) -- (-135 : 0.45cm);
\draw[fill=white] (0 cm, 0 cm) circle (.25cm) node[below=4pt]{$1$};
\draw[fill=white] (2 cm, 0 cm) circle (.25cm) node[below=4pt]{$2$};
\end{tikzpicture}\qquad
\left(\begin{array}{rr}
2 & -3 \\
-1 & 2
\end{array}\right)
$$
We take
$$\underline w_0=s_1s_2s_1s_2s_1s_2.$$
to obatin the exchange matrix
$$\varepsilon= \left(\begin{array}{rrrrrr}
0 & -1 & 1 & 0 & 0 & 0 \\
3 & 0 & -3 & 1 & 0 & 0 \\
-1 & 1 & 0 & -1 & 1 & 0 \\
0 & -1 & 3 & 0 & -3 & 1
\end{array}\right)$$
Taking the restriction to $J'=\{1,2,3,5\}$, we obtain the diagram
$$\xymatrix{
1 \ar[d]_3& 3  \ar[l] & \fbox{5} \ar[l] \\  
2 \ar[ru]^3&    & 
}$$
which is mutation equivalent to 
$$\xymatrix{
1 \ar[d]_3& 3  \ar@<-0.5ex>[l]\ar@<0.5ex>[l]  \ar[r] & \fbox{5}  \\  
2 \ar[ru]^3&    & 
}$$
by taking the composition of mutations $\mu_2\circ \mu_1\circ \mu_3 \circ \mu_2 \circ \mu_1$.
Since the mutable part of the above diagram is of type $\widetilde G_2$ (see \cite[Figure 9.2]{FT21}), which is mutation-finite, we conclude that it is mutation infinite by \cite[Theorem 9.4]{FT21}. Thus we obtain the desired property by Lemma \ref{lem:finiteinfinite}.
\end{proof}

\begin{remark}
    If $\g$ is of type $A_1,A_2,A_3,A_4$ or $B_2$, then the cluster algebra $\C[U^{-}_{w_0}]$ is finite type. Hence the exchange matrices of $\C[U^{-}_{w_0}]$ are mutation finite so that they do \emph{not} satisfy \eqref{eq:manyarrows2}
\end{remark}

\providecommand{\bysame}{\leavevmode\hbox to3em{\hrulefill}\thinspace}
\providecommand{\MR}{\relax\ifhmode\unskip\space\fi MR }
\providecommand{\MRhref}[2]{%
  \href{http://www.ams.org/mathscinet-getitem?mr=#1}{#2}
}
\providecommand{\href}[2]{#2}

\end{document}